\newcolumntype{L}[1]{>{\raggedright\arraybackslash}p{#1}}
\newcolumntype{C}[1]{>{\centering\arraybackslash}p{#1}}
\newcolumntype{R}[1]{>{\raggedleft\arraybackslash}p{#1}}
\newcommand{%
    
    \import{./Figures/}{.pdf_tex}
}[1]{%
    
    \import{./Figures/}{#1.pdf_tex}
}
\pgfplotsset{compat=1.18}
\tikzset{
    state/.style={
           rectangle,
           rounded corners,
           draw=black, very thick,
           minimum height=2em,
           inner sep=2pt,
           text centered,
           },
}
\numberwithin{equation}{section}
\theoremstyle{plain}
\newtheorem{theorem}{Theorem}[section]
\newtheorem*{theorem*}{Theorem}
\newtheorem{theoremx}{Theorem}
\newtheorem{corx}[theoremx]{Corollary}
\theoremstyle{plain} 
\newcommand{\thistheoremname}{}
\newtheorem{genericthm}[theorem]{\thistheoremname}
\newtheorem*{genericthm*}{\thistheoremname}
\newenvironment{namedthm*}[1]
  {\renewcommand{\thistheoremname}{#1}%
   \begin{genericthm*}}
  {\end{genericthm*}}
\theoremstyle{plain}
\newtheorem{prop}[theorem]{Proposition}
\newtheorem{lem}[theorem]{Lemma}
\newtheorem{cor}[theorem]{Corollary}
\newtheorem{question}[theorem]{Question}
\newtheorem*{question*}{Question}
\theoremstyle{definition}
\newtheorem{defn}[theorem]{Definition}
\newtheorem{rmk}[theorem]{Remark}
\newtheorem*{remark}{Remark}
\newcommand{\R}{\mathbb{R}}
\newcommand{\C}{\mathbb{C}}
\newcommand{\Z}{\mathbb{Z}}
\newcommand{\Hyp}{\mathbb{H}}
\newcommand{\N}{\mathbb{N}}
\DeclareMathOperator{\PSL}{PSL}
\DeclareMathOperator{\ext}{ext}
\DeclareMathOperator{\Isom}{Isom}
\DeclareMathOperator{\Conf}{Conf}
\DeclareMathOperator{\Mod}{Mod}
\DeclareMathOperator{\Int}{Int}
\DeclareMathOperator{\chull}{Cvx\, Hull}
\DeclareMathOperator{\Aut}{Aut}
\DeclareMathOperator{\Teich}{Teich}
\DeclareMathOperator{\core}{core}
\DeclareMathOperator{\Homeo}{Homeo}
\numberwithin{figure}{section}
\DeclareFontFamily{U}{tipa}{}
\DeclareFontShape{U}{tipa}{m}{n}{<->tipa10}{}
\newcommand{\arc@char}{{\usefont{U}{tipa}{m}{n}\symbol{62}}}
\newcommand{\arc}[1]{\mathpalette\arc@arc{#1}}
\newcommand{\arc@arc}[2]{%
  \sbox0{$\m@th#1#2$}%
  \vbox{
    \hbox{\resizebox{\wd0}{\height}{\arc@char}}
    \nointerlineskip
    \box0
  }%
}
\title{Circle packings, renormalizations and subdivision rules}
\address{Department of Mathematics, Cornell University, 212 Garden Ave, Ithaca, NY 14853, USA}
\email{yusheng.s.luo@gmail.com}
\address{Institute for Mathematical Sciences, Stony Brook University, 100 Nicolls Rd, Stony Brook, NY 11794-3660, USA}
\email{yqzhangmath@gmail.com}
\thanks{The first-named author is partially supported by NSF Grant DMS-2349929}
\begin{document}

\begin{abstract}
    In this paper, we use iterations of skinning maps on Teichm\"uller spaces to study circle packings and develop a renormalization theory for circle packings whose nerves satisfy certain subdivision rules.
    We characterize when the skinning map has bounded image.
    Under the corresponding condition, we prove that the renormalization operator $\mathfrak{R}$ is uniformly contracting.
    This allows us to give complete answers for the existence and moduli problems for such circle packings. 
    The exponential contraction of $\mathfrak{R}^n$ means that despite the non-rigidity of such circle packings, they are geometrically inflexible.
    As an application, we show that any geometrically finite Kleinian circle packing is combinatorially rigid.
\end{abstract}

\maketitle

\tableofcontents

\section{Introduction}
A {\em circle packing} $\mathcal{P}$ of the Riemann sphere $\widehat\C$ is a collection of round disks of $\widehat\C$ with disjoint interiors.
Its {\em limit set} $\Lambda(\mathcal{P})$ is the closure of the union of all circles in $\mathcal{P}$.
Circle packings have been extensively studied from many different perspectives.
In addition to their intrinsic beauty, they have wide applications to combinatorics, geometry, probability theory, number theory and complex analysis.

Given a circle packing $\mathcal{P}$, we can associate to it a graph $\mathcal{G}$, called the {\em nerve} or the {\em tangency graph} of $\mathcal{P}$, as follows.
The nerve $\mathcal{G}$ assigns a vertex $v$ to each disk $D_v$ of $\mathcal{P}$ and an edge between $v, w$ if $\partial D_v \cap \partial D_w \neq \emptyset$.

It is easy to see that the nerve is a {\em simple} graph, i.e., it contains no multi-edges or self loops.
Since there is a natural embedding of $\mathcal{G}$ in $\widehat\C$, it is also a {\em plane graph}, i.e.\ a {planar graph together with a chosen embedding} into $\widehat\C$.
It is usually convenient to {\em mark} the circle packing $\mathcal{P}$, i.e., label each circle in $\mathcal{P}$ by the corresponding vertex in $\mathcal{G}$ (see \S \ref{sec:teich}).

Two marked circle packings $\mathcal{P}, \mathcal{P}'$ are (quasiconformally) homeomorphic if there exists a (quasiconformal) homeomorphism $\Psi:\widehat\C\longrightarrow \widehat\C$ that satisfies $\Psi(\mathcal{P})=\mathcal{P'}$ and preserves the marking.
The Teichm\"uller space $\Teich(\mathcal{G})$ of circle packings associated to $\mathcal{G}$ is the set of marked circle packings with nerve $\mathcal{G}$ up to M\"obius transformations (see Definition \ref{defn:teichCP}). If $\Teich(\mathcal{G})$ consists of a single point, we say the circle packing is \emph{combinatorially rigid}.

In this paper, we always assume that a circle packing has connected nerve.
For finite circle packings, the Koebe-Andreev-Thurston circle packing theorem {(see \cite[Theorem~1.4]{RHD07} or \cite[Chapter 12]{Thu22})} characterizes their existence and uniqueness.
\begin{namedthm*}{Finite Circle Packing Theorem}
Given a finite plane graph $\mathcal{G}$.
\begin{enumerate}[label=(\roman*)]
\item The graph $\mathcal{G}$ is isomorphic to the nerve of a finite circle packing $\mathcal{P}$ if and only if it is simple.
\item Suppose $\mathcal{G}$ is simple. Then 
\begin{itemize}
    \item any two circle packings with nerve $\mathcal{G}$ are quasiconformally homeomorphic, and hence $\Teich(\mathcal{G})$ is endowed with the Teichm\"uller metric;
    \item the Teichm\"uller space $\Teich(\mathcal{G})$ is isometrically homeomorphic to a product of Teichm\"uller spaces of ideal polygons
$$\prod_{F \text{ face of }\mathcal{G}} \Teich(\Pi_{F}).$$
In particular, the circle packing is combinatorially rigid if and only if its nerve gives a triangulation of the sphere.
\end{itemize}
\end{enumerate}
\end{namedthm*}
A detailed discussion of the Teichm\"uller spaces $\Teich(\Pi_F)$ can be found in \S\ref{sec:teich}. Briefly speaking, they record the conformal structures on the ideal polygons bounded by circles corresponding to vertices on $\partial F$.

It is both natural and important to understand the situation for infinite circle packings.
\begin{question}\label{q:m}
Given an infinite simple plane graph $\mathcal{G}$, when is $\mathcal{G}$ isomorphic to the nerve of an infinite circle packing $\mathcal{P}$?

If such an infinite circle packing exists, when is it combinatorially rigid? Or more generally, what is its Teichm\"uller space?
\end{question}

The answer is more complicated in the infinite setting.
It has been extensively studied in the literature for {\em locally finite triangulations}, and has generated numerous new tools and techniques (see \cite{RS87, He91, Sch91, HS93, HS94} or \S \ref{subsubsec:rt} for more discussions).

In this paper, we adapt new methods to study circle packings for {\em subdivision graphs} (cf.\ \cite{CFP01} and see Definition \ref{defn:fsr} or \S \ref{subsubsec:fsr} for more discussions).
Such circle packings arise naturally in conformal dynamics and low dimensional topology.
In particular, we give a complete answer to Question \ref{q:m} for graphs with finite subdivision rule (see {{Theorems} \ref{thm:LR} and \ref{thm:A}).

Our method uses iterations of {\em skinning maps} on Teichm\"uller spaces, and also establishes a natural connection to renormalization theory.
As an overview, we prove a relative version of Thurston's Bounded Image Theorem {(see \cite{Thu82} or \cite{Ken10})} in our setting, which provides precompactness and allows us to derive existence by taking limits of finite circle packings.
{Since the derivative of the skinning map is strictly smaller than $1$ (see \cite[Theorem 1.2]{McM90}, or a more precise version \cite[Theorem 5.3]{McM90} and Lemma~\ref{lem:nuc} in our setting)}, our Bounded Image Theorem implies that the skinning map is uniformly contracting.
By iterating this skinning map, we prove that 
\begin{itemize}
    \item (zooming out) finite circle packings for subdivision graphs stabilize exponentially fast (see Theorem \ref{thm:LB} and Theorem \ref{thm:B});
    \item (zooming in) renormalizations of infinite circle packings for subdivision graphs converge exponentially fast (see Theorem \ref{thm:EC}).
\end{itemize}
This allows us to establish many universality results, such as universal scaling and asymptotic conformality of local symmetries (see Theorem \ref{thm:LS}).

Our theory shares similarities with the renormalization theories of quadratic polynomials and Kleinian groups. 
To put our results in perspective, we provide a summary of these comparisons in Table~\ref{tab:renorm}. The connections between the second and third columns have been thoroughly studied and discussed in \cite{McM96}.

{The introduction below is organized as follows. In \S~\ref{ss:gfsr}, we introduce the notion of a finite subdivision rule and subdivision graph. In \S~\ref{ss:teich}, we state the existence and moduli theorem for infinite circle packings of a subdivision graph in Theorem~\ref{thm:LR}. We also state exponential convergence of finite circle packing approximations in Theorem~\ref{thm:LB}. In \S~\ref{ss:intro_renom}, we introduce renormalization theory on circle packings and state the geometric inflexibility Theorem~\ref{thm:EC}. This leads to universality results stated in Theorem~\ref{thm:LS}.
With applications in conformal dynamics in mind, we state the analogous theorems (Theorem~\ref{thm:A}, Theorem~\ref{thm:B}) for spherical subdivision graphs in \S~\ref{ss:sphericalsub}.
This leads to rigidity result for Kleinian circle packings (Theorem~\ref{thm:C}) in \S~\ref{ss:rigidityKleinian}.
Finally, we refer the reader to \S~\ref{sec:discussion} for background, motivation, and related work.}
\begin{table}[htp]
\centering
\begin{tabular}{|C{0.33\textwidth} | C{0.31\textwidth}|C{0.3\textwidth}|}
     \hline & & \\
\textbf{Circle packings} & \textbf{Quadratic maps} &\textbf{Kleinian groups} \\ & & \\ \hhline{|===|}
Circle packings $\mathcal{P}$    & Quadratic-like maps $f$ & Kleinian groups $\Gamma$ \\ \hline
Limit set $\Lambda(\mathcal{P})$    & Julia set $J(f)$ & Limit set $\Lambda(\Gamma)$ \\ \hline
Subdivision rule $\mathcal{R}$    & Kneading combinatorics & Mapping class $\psi:S \rightarrow S$ \\ \hline
Finite circle packing $\mathcal{P}^n$ for $\mathcal{R}$     & Finitely-renormalizable polynomial & Quasi-Fuchsian group \\ \hline
Circle packing $\mathcal{P}$ for subdivision graph     & $\infty$-renormalizable polynomial $f$ & Singly degenerate group $\Gamma$ \\ \hline
QC homeomorphism among $\mathcal{P}$ (Theorem \ref{thm:LR})     & QC conjugacy among $\infty$-renormalizable $f$ & QC conjugacy among singly degenerate $\Gamma$ \\ \hline
Circle packing $\mathcal{Q}$ for spherical subdivision     & Tower of renormalization & Doubly degenerate group \\ \hline
Rigidity of circle packings $\mathcal{Q}$   (Theorem \ref{thm:A})    & Rigidity of towers & Rigidity of double limits \\ \hline
Exponential convergence (Theorem \ref{thm:LB}, \ref{thm:EC} and \ref{thm:B})         &  Exponential convergence of $\mathfrak{R}^n(f)$ & Exponential convergence of $\psi^n(M)$ \\ \hline
Universality and $C^{1+\alpha}$ regularity (Theorem \ref{thm:LS})        & Universality and $C^{1+\alpha}$ regularity & Universality and $C^{1+\alpha}$ regularity
\\ \toprule
\end{tabular}
\caption{Parallel theories of renormalization}
\label{tab:renorm}
\end{table}

\subsection{Graphs with finite subdivision rule}\label{ss:gfsr}
We first recall that a CW complex $Y$ is a {\em subdivision} of a CW complex $X$ if $X = Y$ (as topological spaces) and every closed cell of $Y$ is contained in a closed cell of $X$.
We define a {\em polygon} as a finite CW complex homeomorphic to a closed disk that contains one 2-cell, with at least three 0-cells.
We will also call 0-cells, 1-cells and 2-cells the {\em vertices, edges} and {\em faces} respectively.
We say two vertices are {\em adjacent} if they are on the boundary of an edge, and {\em non-adjacent} otherwise.
\begin{defn}\label{defn:fsr}
A {\em finite subdivision rule} $\mathcal{R}$ consists of
\begin{enumerate}
\item\label{defn:fsr:1} a finite collection of {oriented} polygons $\{P_i: i=1,..., k\}$;
\item\label{defn:fsr:2} a subdivision $\mathcal{R}(P_i)$ that decomposes each polygon $P_i$ into $m_i \geq 2$ closed 2-cells
$$P_i = \bigcup_{j=1}^{m_i} P_{i, j}$$
so that each edge of $\partial P_i$ contains no vertices of $\mathcal{R}(P_i)$ in its interior {and each $2$-cell $P_{i,j}$ inherits the orientation of $P_i$, and}
\item\label{defn:fsr:3} {a map $\sigma \colon \bigcup_{i=1}^k (\{i\}\times \{1,\dots,m_i\})\to \{1,\dots,k\}$ and a collection of {orientation-preserving} cellular maps, denoted by
$$
\psi_{i,j}: P_{\sigma(i,j)} \to P_{i,j},\quad i\in \{1,\dots,k\}, \,\, j\in \{1,\dots,m_i\},
$$
that are homeomorphisms between the open 2-cells. We call $P_{\sigma(i,j)}$ the \textit{type} of the closed 2-cell $P_{i,j}$.}
\end{enumerate}
For simplicity, we use the notation $\mathcal R=\{P_i\}_{i=1}^k$ for the finite subdivision rule $\mathcal R$, {suppressing the decomposition in \eqref{defn:fsr:2} and the cellular maps in \eqref{defn:fsr:3}.}
\end{defn}
By condition \eqref{defn:fsr:3}, we can iterate the procedure and obtain $\mathcal{R}^n(P_i)$.
By condition \eqref{defn:fsr:2}, we can identify the $1$-skeleton $\mathcal{G}_i^n$ of $\mathcal{R}^{n}(P_i)$ as a subgraph of the $1$-skeleton $\mathcal{G}_{i}^{n+1}$ of $\mathcal{R}^{n+1}(P_i)$.
We denote the direct limit  by 
$$
\mathcal{G}_{i} = \lim_{\rightarrow} \mathcal{G}_i^n= \bigcup_n \mathcal{G}_{i}^n
$$ 
for $i=1,.., k$.
We call $\mathcal{G}_i$ the {\em subdivision graphs} for $\mathcal{R}$.
We shall embed $P_i$ {and finite graphs $\mathcal{G}_i^n$} in $\widehat\C$ and {call each complementary component of $\mathcal{G}_i^n$ a face of $\mathcal{G}_i^n$.}
We denote $F_i^{\ext}:=\overline{\widehat\C - P_i}$, and call it the {\em external face}.
Note that {$F_i^{\ext}$} is not subdivided, and remains a face of $\mathcal{G}_i^n$ for all $n \geq 0$.


\begin{defn}\label{defn:cyl}
Let $\mathcal{R}=\{P_i\}_{i=1}^k$ be a finite subdivision rule. We say $\mathcal R$ is
\begin{itemize}
    \item {{\em simple} if, for each $i\in \{1,\dots,k\}$, $\mathcal{G}(P_i)$ is a simple graph, i.e., no pair of vertices is connected by multiple edges and no edge connects a vertex to itself;}
    \item {\em irreducible} if for each $i\in \{1,\dots,k\}$, $\partial P_i$ is an induced subgraph of $\mathcal{G}_i$ i.e., $\partial P_i$ contains all edges of $\mathcal{G}_i$ that connects vertices in $\partial P_i$;
    \item {\em acylindrical} if for each $i\in \{1,\dots,k\}$ and any pair of non-adjacent vertices $v,w \in \partial P_i$, the two components of $\partial P_i - \{v,w\}$ are connected in $\mathcal{G}_i - \{v, w\}$.
    We call $\mathcal{R}$ {\em cylindrical} otherwise.
\end{itemize}
\end{defn}
We remark that by cutting each polygon $P_i$ into finitely many pieces if necessary, we can always make a finite subdivision rule $\mathcal{R}$ irreducible.
{We remark that from the definition, if $\mathcal{R}$ is cylindrical, then there exists $i \in \{1,\dots, k\}$ and a pair of non-adjacent vertices $v, w\in \partial P_i$ so that for each $n \geq 1$, the two components of $\partial P_i - \{v, w\}$ are disconnected in $\mathcal{G}_i^n - \{v, w\}$.}

\begin{figure}[htp]
    \centering
    \begin{subfigure}[b]{0.7\textwidth}
        \centering
        \includegraphics[width=\textwidth]
    {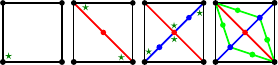}
    \caption{}
    \label{fig:subda}
    \end{subfigure}
    \begin{subfigure}[b]{0.7\textwidth}
        \centering
        \includegraphics[width=\textwidth]
    {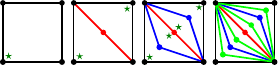}
    \caption{}
    \end{subfigure}
    \caption{An example of an acylindrical subdivision rule on the top, and a cylindrical subdivision rule on the bottom, {where each rule is iterated thrice. In both examples, a quadrilateral is divided into two quadrilaterals, but with different identifications. Each face is identified with the original quadrilateral by an orientation-preserving homeomorphism so that the corners marked by a star match.}}
    \label{fig:subd}
\end{figure}

\subsection{Teichm\"uller space and finite approximation}\label{ss:teich}
Our first result gives a precise description of the Teichm\"uller space of subdivision graphs for $\mathcal{R}$.
We remark that the techniques we use can be applied to more general subdivision rules. To simplify the presentation, we state our theorems for finite subdivision rules, and refer the readers to Appendix~\ref{sec:z2sr} for many generalizations.
\begin{theoremx}\label{thm:LR}
Let $\mathcal{R} =\{P_i\}_{i=1}^k$ be a simple, irreducible finite subdivision rule, with associated subdivision graphs $\{\mathcal{G}_i\}_{i=1}^k$.
\begin{enumerate}[label=(\roman*)]
\item The subdivision graphs $\mathcal{G}_i$ are isomorphic to the nerves of infinite circle packings for all $i\in \{1,\dots,k\}$ if and only if $\mathcal{R}$ is acylindrical;
\item Suppose that $\mathcal{R}$ is acylindrical. 
Then 
\begin{enumerate}
    \item any two circle packings $\mathcal{P}, \mathcal{P}' \in \Teich(\mathcal{G}_i)$ are quasiconformally homeomorphic;
    \item the Teichm\"uller space $\Teich(\mathcal{G}_i)$ 
is isometrically homeomorphic to the Teichm\"uller space $\Teich(\Pi_{F_i^{\ext}})$ of the ideal polygon associated to the external face $F_i^{\ext}$ of $\mathcal{G}_i$. 
\end{enumerate}
\end{enumerate}
\end{theoremx}

\begin{remark}
    We remark that by (a), there is a natural Teichm\"uller metric on $\Teich(\mathcal{G}_i)$, which is the metric used in (b).
    We also note that if $F_i^{\ext}$ has $e_i$ edges, then $\Teich(\Pi_{F_i^{\ext}})$ is homeomorphic to $\R^{e_i-3}$.
    Thus, the circle packing with nerve $\mathcal{G}_i$ is combinatorially rigid if and only if $e_i = 3$.
\end{remark}

\subsection*{Exponential convergence of finite approximation}
{We now consider the approximation of infinite circle packings with finite circle packings.}
Let $\mathcal{P}, \mathcal{P}' \in \Teich(\mathcal{G}^n_i)$ be two finite circle packings.
We say they have the same {\em external class} if $\pi_{F_i^{\ext}}(\mathcal{P}) = \pi_{F_i^{\ext}}(\mathcal{P}')$, where 
$$
\pi_{F_i^{\ext}}: \Teich(\mathcal{G}^n_i) = \prod_{F \text{ face of }\mathcal{G}^n_i} \Teich(\Pi_F) \longrightarrow \Teich(\Pi_{F_i^{\ext}})
$$ 
is the projection map to the Teichm\"uller space for the external face $F_i^{\ext}$.


Suppose $\mathcal{R}$ is acylindrical. 
Theorem \ref{thm:LR} implies that there exists a unique infinite circle packing for each external class.
The existence and uniqueness are intimately connected. 
The infinite circle packing is constructed as the limit of finite circle packings for $\mathcal{G}^n_i$ with fixed external class as $n\to\infty$.
To put our rigidity result in perspective, note that a circle packing for $\mathcal{G}^n_i$ with a fixed external class is not rigid unless all non-external faces of $\mathcal{G}^n_i$ are triangles.
Our second result shows that these finite circle packings stabilize exponentially fast (see Figure \ref{fig:fa}, c.f. \cite{He91} and \S~\ref{subsec:rc}).
\begin{theoremx}\label{thm:LB}
Let $\mathcal{R} =\{P_i\}_{i=1}^k$ be a simple, irreducible, acylindrical finite subdivision rule, with associated subdivision graphs $\{\mathcal{G}_i\}_{i=1}^k$.
Then there exist constants $C$, $n_0 \in \N$ and $ \delta < 1$ {depending only on the subdivision rule $\mathcal{R}$} so that for any $j$, the following holds.

Let $n \geq n_0$ and let $\mathcal{P}, \mathcal{Q}\in \Teich(\mathcal{G}_i^{n+j})$ be two finite circle packings with the same external class.
Let $\mathcal{P}^j, \mathcal{Q}^j$ be the corresponding sub-circle packings of $\mathcal{P}, \mathcal{Q}$ associated to the subgraph $\mathcal{G}^j_i \subseteq \mathcal{G}^{n+j}_i$.
Then
$$
d(\mathcal{P}^j, \mathcal{Q}^j) \leq \min\{C, d(\mathcal{P}, \mathcal{Q})\} \cdot \delta^{n-n_0},
$$
where $d$ is the Teichm\"uller distance between the two circle packings.
\end{theoremx}

We remark that this means that there exists a quasiconformal homeomorphism
$\Psi: \widehat\C \longrightarrow \widehat\C$ between $\mathcal{P}^j$ and $\mathcal{Q}^j$
whose dilatation $K(\Psi)$ satisfies
$$
\frac{1}{2} \log K(\Psi) \leq \min\{C, d(\mathcal{P}, \mathcal{Q})\} \cdot\delta^{n-n_0}.
$$
In particular, if we normalize the circle packings by M\"obius transformations appropriately so that  $\Psi$ fixes $0, 1, \infty$, then 
$$
\| \Psi - id\|_{C^0} = O(\delta^{n}),
$$
where $\| - \|_{C^0}$ is the $C^0$ norm with respect to the spherical metric.

\begin{figure}[htp]
    \centering
    \includegraphics[width=0.32\textwidth]{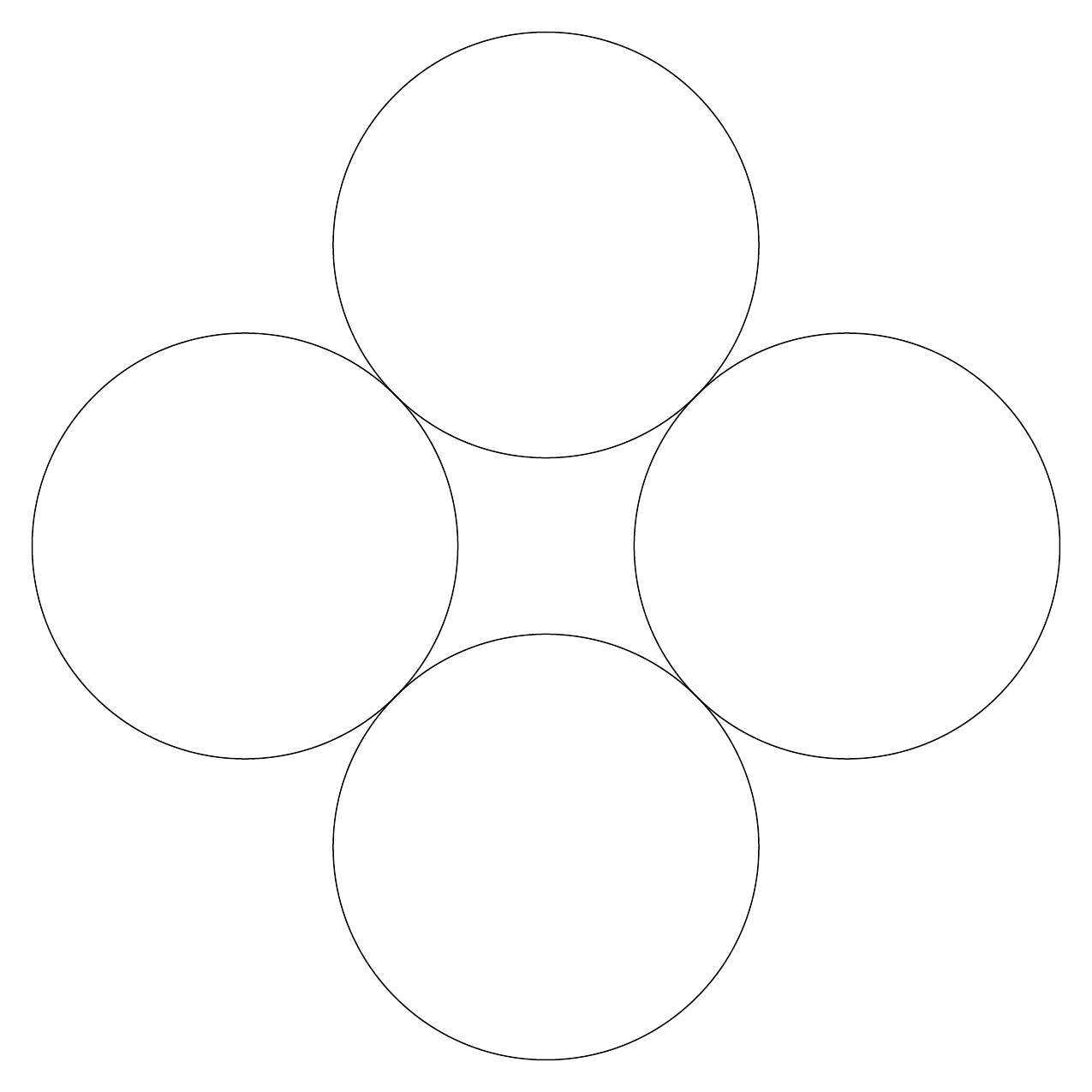}
    \includegraphics[width=0.32\textwidth]{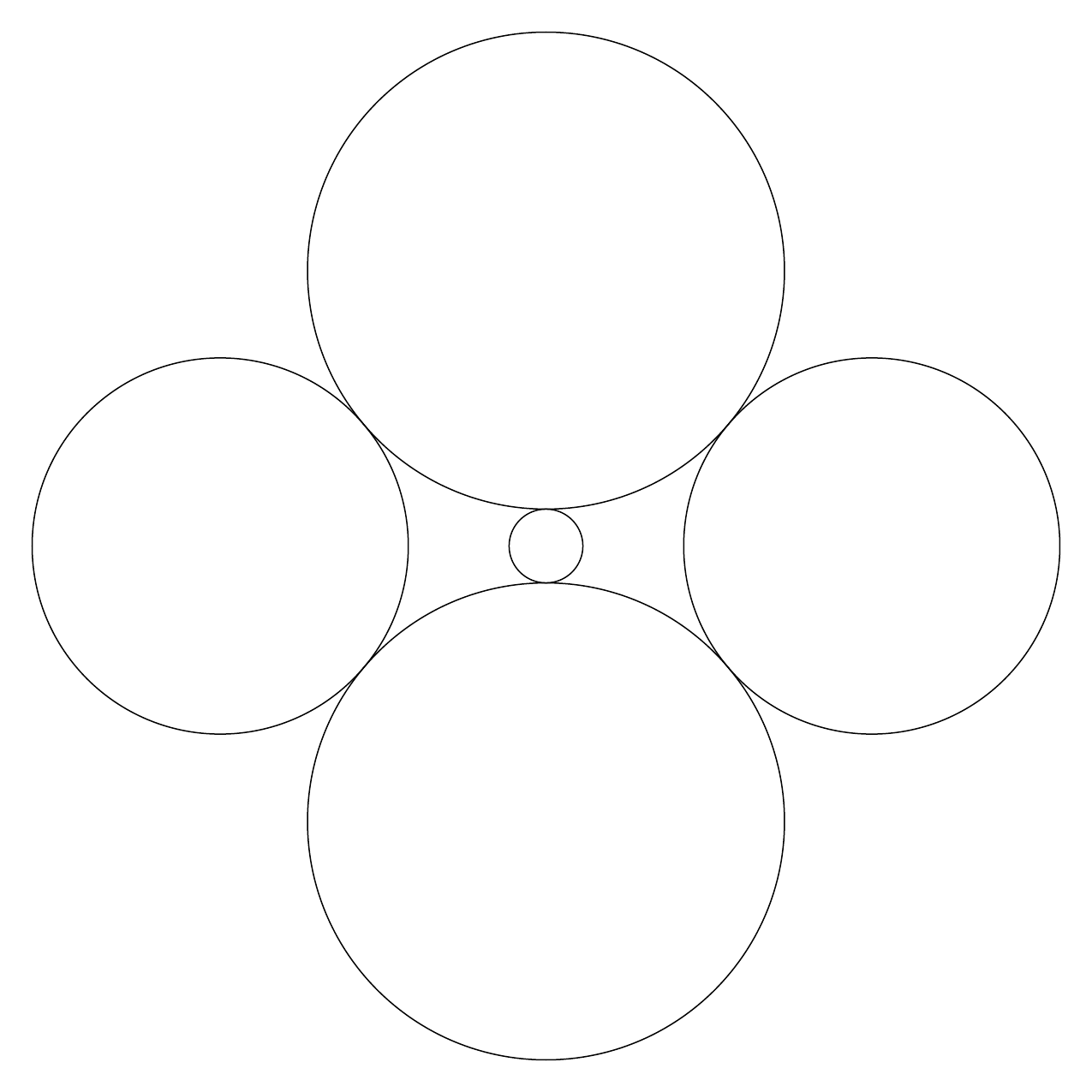}
    \includegraphics[width=0.32\textwidth]{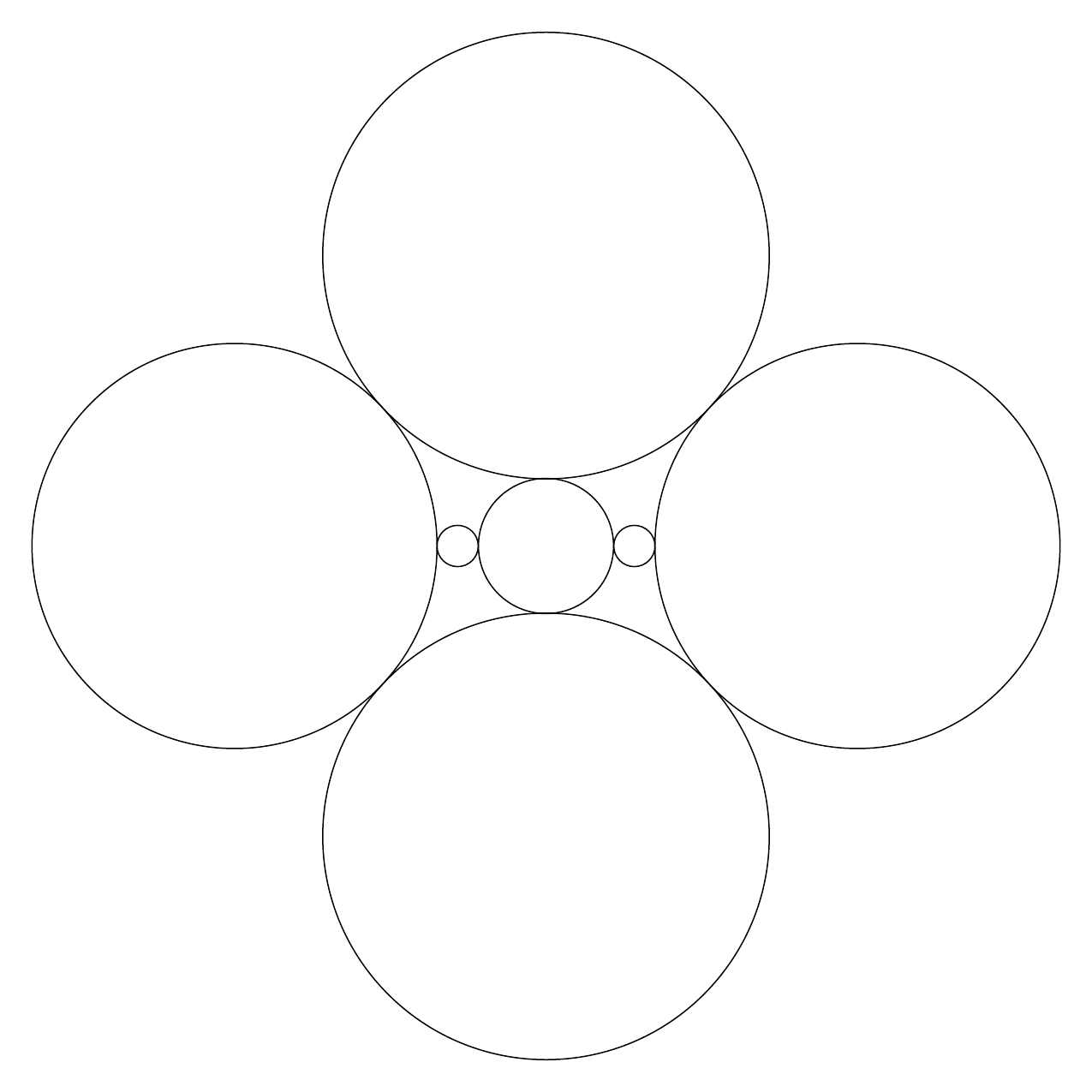}
    \includegraphics[width=0.32\textwidth]{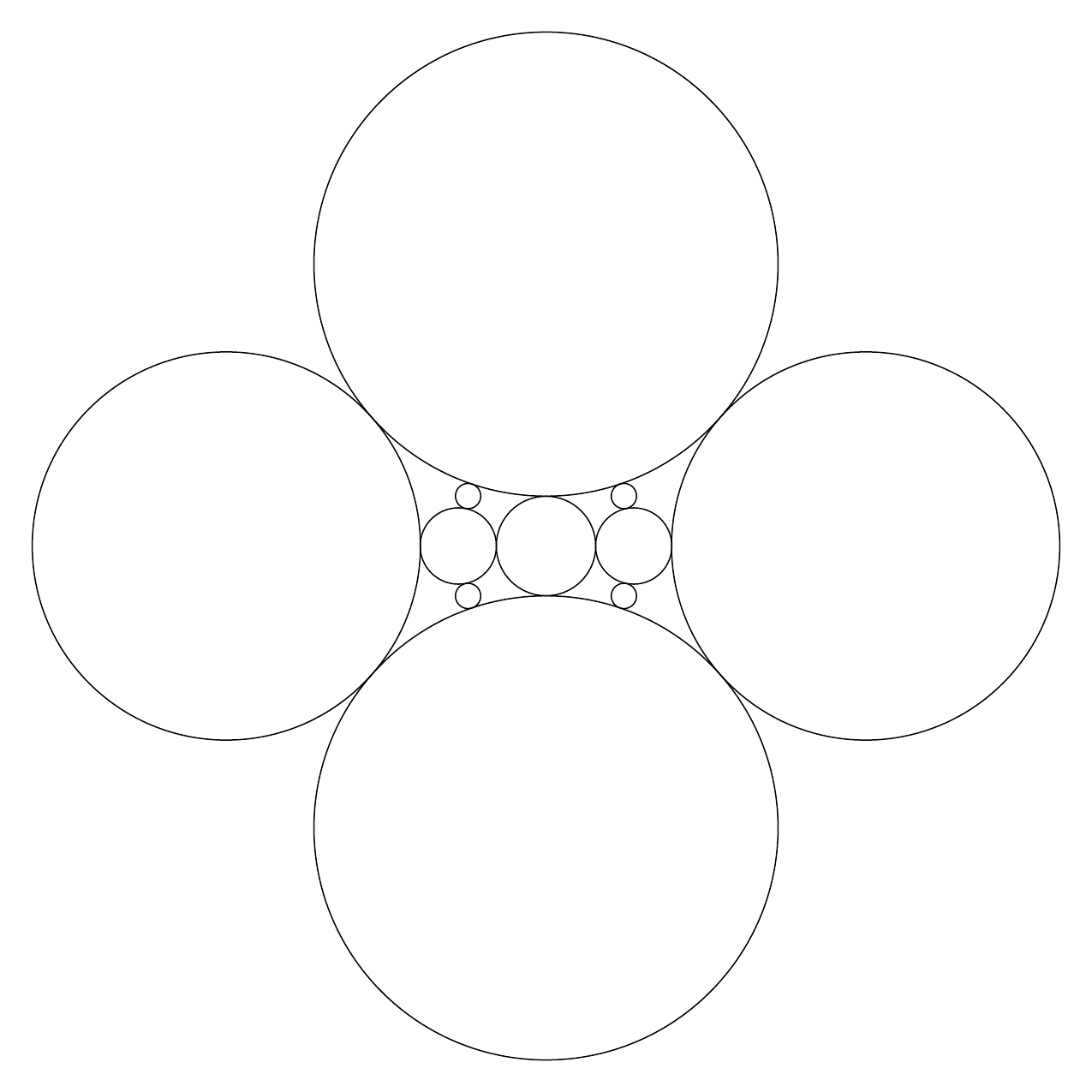}
    \includegraphics[width=0.32\textwidth]{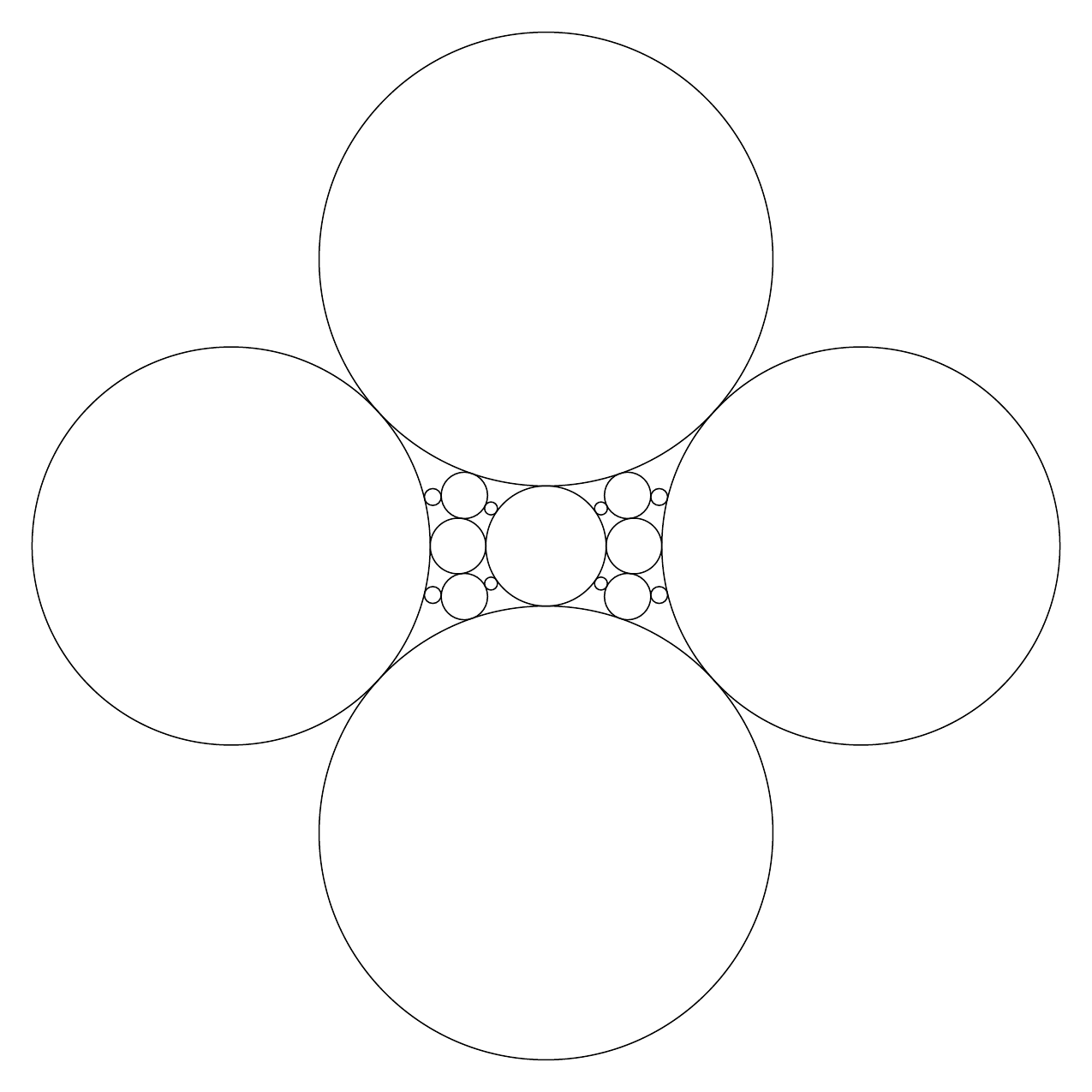}
    \includegraphics[width=0.32\textwidth]{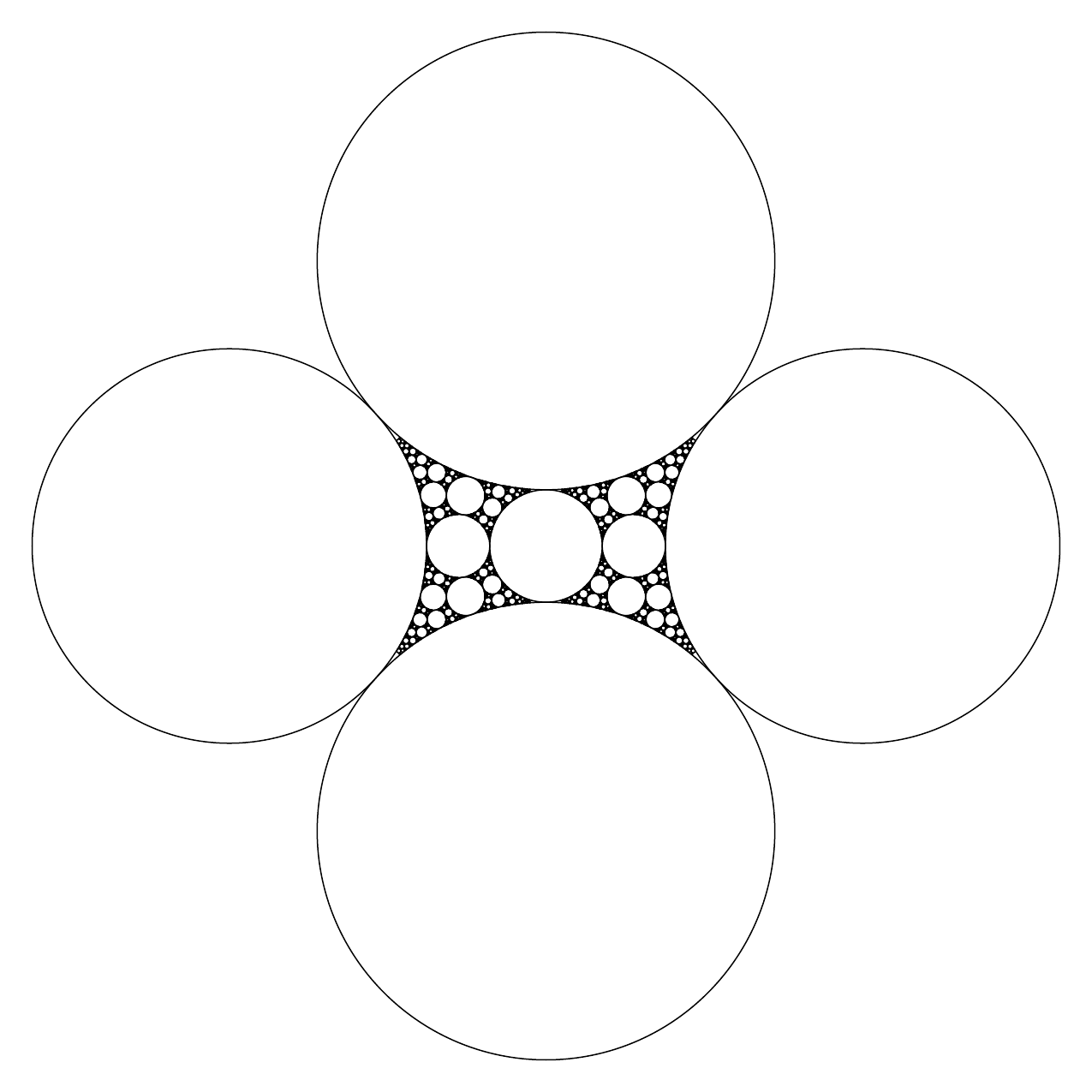}
    \caption{The finite circle packings of level $0, 1, 2, 3, 4, 15$ of the subdivision graph in Figure \ref{fig:subda}. 
    These circle packings are chosen so that one can fit in a circle touching all four sides of each complementary quadrilateral region. One notices that the circles at a given level stabilize quite rapidly as asserted by Theorem \ref{thm:LB}. {The region bounded by the outermost 4 circles is the skinning interstice for $\mathcal{P}$.}}
    \label{fig:fa}
\end{figure}

\subsection{Renormalizations and inflexibility}\label{ss:intro_renom}
{We now explain that infinite circle packings for a subdivision graph, while in general not rigid by Theorem~\ref{thm:LR}, are nevertheless {\em inflexible}. The inflexibility is manifest at small scales: any quasiconformal map between two such infinite circle packings is automatically differentiable and conformal on the limit set (Theorem~\ref{thm:EC}). To illustrate this phenomenon, we study renormalization on circle packings.}

Let $\mathcal{P}\in \Teich(\mathcal{G}_i)$ be an infinite circle packing.
{Since the finite subdivision rule $\mathcal{R}$ is assumed to be irreducible, the boundary of the external face $\partial F_i^{\ext} \subseteq \mathcal{G}_i$ is a Jordan curve. Therefore, the complement of the union of `outermost' closed disks 
$$
\bigcup_{v \text{ vertex } \partial F_i^{\ext}} \overline{D_v}
$$ 
consists of two regions, where $D_v$ is the disk in the circle packing $\mathcal{P}$ associated to the vertex $v$.}
One of the two, denoted by $\Pi(\mathcal{P})$, has non-trivial intersection with the limit set $\Lambda(\mathcal{P})$.
We call $\Pi(\mathcal{P})$ the {\em skinning interstice} for $\mathcal{P}$ (see Figure~\ref{fig:fa}).
Define the space
\begin{align*}
\Sigma:=\bigcup_{i=1}^k \bigg(\Big\{(\mathcal{P},x): &\mathcal{P} \text{ is a marked circle packing with nerve } \mathcal{G}_i, \\
&x\in \overline{\Pi(\mathcal{P})}\Big\}\big/\PSL_2(\C)\bigg).
\end{align*}
By Theorem \ref{thm:LR}, $\Sigma$ is homeomorphic to $\bigcup_{i=1}^k \R^{e_i-3} \times \overline{\Pi(\mathcal{P}_i)}$, where $\mathcal{P}_i$ is (any) circle packing with nerve $\mathcal{G}_i$. Moreover, any two pairs $(\mathcal{P}, x)$ and $(\mathcal{P}', x')$ on the same fiber are quasiconformally homeomorphic.

\begin{figure}[htp]
    \centering
    \includegraphics[width=0.8\textwidth]{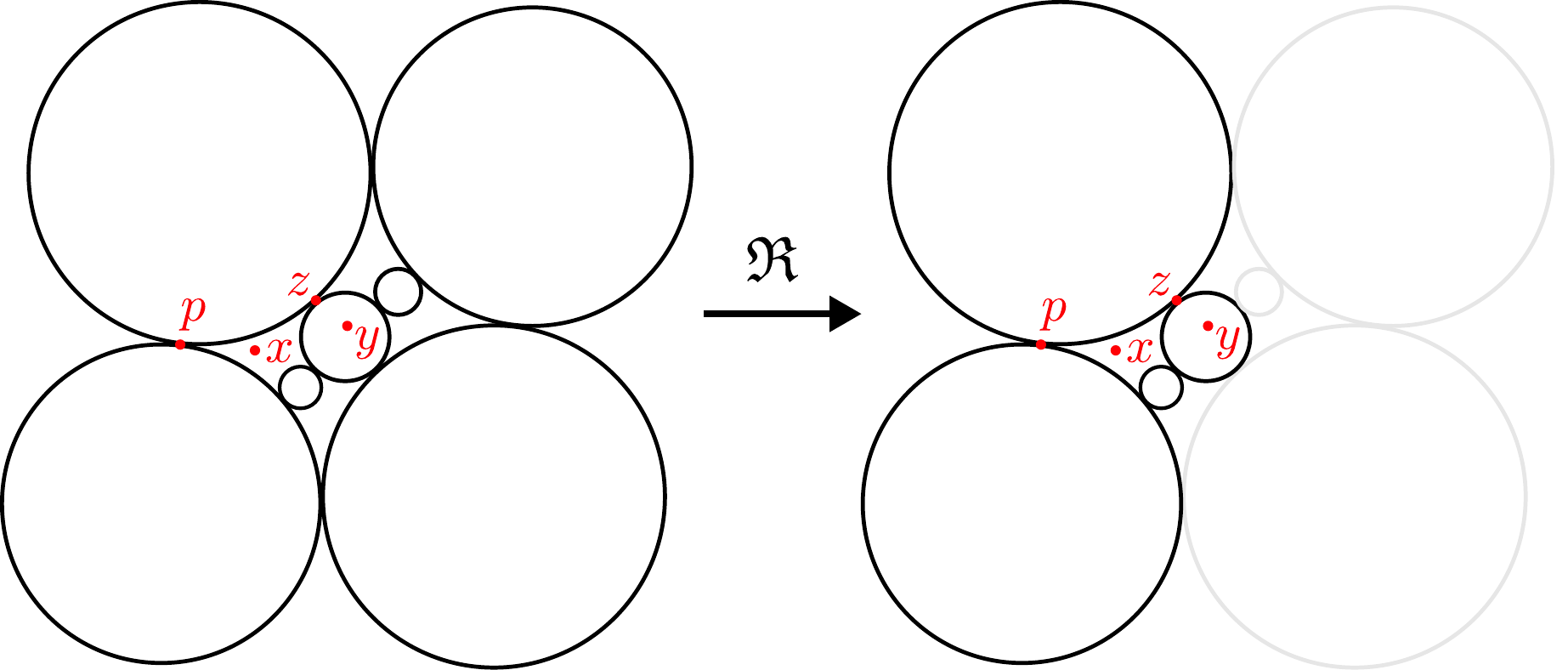}
    \caption{An illustration of the renormalization operator. Note that $(\mathcal{P}, x), (\mathcal{P}, z), (\mathcal{P}, p) \in \Sigma^1 \subseteq \Sigma$, but $(\mathcal{P}, x) \in \Sigma - \Sigma^1$. The point $z \in \partial \overline{\Pi(\mathcal{P}_F)} \cap \partial \overline{\Pi(\mathcal{P}_{F'})}$ for two non-external faces, so $(\mathcal{P}, z)$ lifts to two points in $\widetilde{\Sigma}^1$. On the other hand, $x, p \in \overline{\Pi(\mathcal{P}_F)}$ for some unique non-external face $F$, so $(\mathcal{P}, x)$ (or $(\mathcal{P}, p)$) lifts to a point in $\widetilde{\Sigma}^1$.}
    \label{fig:renom}
\end{figure}

{Let $F$ be a non-external face of $\mathcal{G}^1_i$, and $\mathcal{P}_F$ be the infinite sub-circle packing of $\mathcal{P}$ corresponding to the face $F$. We denote the corresponding skinning interstice by $\Pi(\mathcal{P}_F)$. Consider the space
\begin{align*}
\Sigma^1:=\bigcup_{i=1}^k \bigg(\Big\{(\mathcal{P},x): &\mathcal{P} \text{ is a marked circle packing with nerve } \mathcal{G}_i, \\
&x\in \bigcup_F\overline{\Pi(\mathcal{P}_F)}\Big\}\big/\PSL_2(\C)\bigg) \subseteq \Sigma.
\end{align*}
where the union inside $\{\}$ is taken over all non-external faces of $\mathcal{G}^1_i$.
Then we can define the renormalization of $(\mathcal{P}, x) \in \Sigma^1$ by
$$
\mathfrak{R}((\mathcal{P}, x)) = (\mathcal{P}_F, x) \in \Sigma,
$$
if $x \in \overline{\Pi(\mathcal{P}_F)}$.}
We remark that technically, if $x\in \partial \overline{\Pi(\mathcal{P}_F)} \cap \partial \overline{\Pi(\mathcal{P}_{F'})}$, then the definition of $\mathfrak{R}$ requires a choice.
This point $x$ is necessarily the tangent point where two circles touch.
We can resolve this issue by blowing up such a tangent point into two points, and denote this new space by $\widetilde{\Sigma}$ with the projection map $\pi: \widetilde{\Sigma} \longrightarrow \Sigma$ {(see \S~\ref{ss:renom} for more details)}.
In this way, we can construct a renormalization operator 
$$
\mathfrak{R}: \widetilde{\Sigma}^1 \subseteq \widetilde{\Sigma} \longrightarrow \widetilde{\Sigma}.
$$ 
We say $(\mathcal{P}, x)$ is {\em infinitely renormalizable} if $\mathfrak{R}^n$ is defined at $\pi^{-1}((\mathcal{P}, x))$ for all $n$.
{Note that $(\mathcal{P}, x)$ is infinitely renormalizable if and only if $x$ is in an infinite nested skinning interstices. This happens if and only if $x \in \Lambda(\mathcal{P})\cap \overline{\Pi(\mathcal{P})}$ (see also \S~\ref{ss:renom}).}
Thus, the non-escaping locus $\widetilde{\Sigma}^\infty$ is homeomorphic to a fibration over a Cantor set, whose fibers are Euclidean spaces.
It projects via $\pi$ to $\Sigma^\infty$, which is homeomorphic to $\bigcup_{i=1}^k \R^{e_i-3}\times (\Lambda(\mathcal{P}_i) \cap \overline{\Pi(\mathcal{P}_i)}).
$

The following theorem states that as we zoom in at a limit point, a homeomorphism between two circle packings for subdivision graphs converges exponentially fast to a conformal map.
\begin{theoremx}[Geometric inflexibility]\label{thm:EC}
Let $\mathcal{R} =\{P_i\}_{i=1}^k$ be a simple, irreducible, acylindrical finite subdivision rule, with associated subdivision graphs $\{\mathcal{G}_i\}_{i=1}^k$.
Then there exist constants $C$, $n_0 \in \N$ and $ \delta < 1$ {depending only on the subdivision rule $\mathcal{R}$} so that the following holds.

Suppose that $(\mathcal{P}, x), (\mathcal{P}', x') \in \Sigma^\infty$ are homeomorphic.
Then for all $n\geq n_0$,
\begin{align*}
 d(\mathfrak{R}^n((\mathcal{P}, x)), \mathfrak{R}^n((\mathcal{P}', x')){)} \leq \min\{C, d((\mathcal{P}, x), (\mathcal{P}', x'))\}\cdot \delta^{n-n_0},
\end{align*}
where $d$ is the Teichm\"uller metric.
\end{theoremx}

We remark that $\mathfrak{R}^n((\mathcal{P}, x))$ is interpreted as $\pi(\mathfrak{R}^{n}(\pi^{-1}((\mathcal{P}, x))))$.
In the case when $\pi^{-1}((\mathcal{P}, x))$ consists of two points,
$$
\mathfrak{R}^n((\mathcal{P}, x)) = \{(\mathcal{P}_{1,n}, x_{1,n}), (\mathcal{P}_{2,n}, x_{2,n})\}
$$ 
consists of two points for all sufficiently large $n$.
The Teichm\"uller distance is the maximum of the Teichm\"uller distance on pairs of circle packings.

\subsection*{Universality and regularity}\label{subsec:univeralregular}
{We now state some consequences of Theorem~\ref{thm:EC} regarding the local geometry and universality of such circle packings (cf.\ \cite[\S 9.4]{McM96}).}

Let $X \subseteq \C$ be a subset that is not necessarily open.
A map $\Psi:X \longrightarrow \C$ is $C^{1+\alpha}$-conformal at $z$ if the complex derivative 
$$
\Psi'(z):= \lim_{z+t \in X, t\to 0} \frac{\Psi(z+t) - \Psi(z)}{t}
$$ 
exists, and
\begin{equation}\label{eqn:asymp}
\Psi(z+t) = \Psi(z) + \Psi'(z)t + O(|t|^{1+\alpha})
\end{equation}
for all $z+t \in X$ and $t$ sufficiently small (cf.\ \cite[\S B.6]{McM96}). 
{We say $\Psi$ is $C^{1+\alpha}$-conformal on $X$ if it is $C^{1+\alpha}$ at every point $z$ and the exponent $\alpha$ and constants in $O(|t|^{1+\alpha})$ is independent of $z \in X$.}

\begin{theoremx}\label{thm:LS}
Let $\mathcal{R} =\{P_i\}_{i=1}^k$ be a simple, irreducible, acylindrical finite subdivision rule, with associated subdivision graphs $\{\mathcal{G}_i\}_{i=1}^k$.
\begin{enumerate}
    \item(Asymptotic conformality) Let $\mathcal{P}, \mathcal{P}'\in \Teich(\mathcal{G}_i)$ normalized by M\"obius map so that $\Pi(\mathcal{P}), \Pi(\mathcal{P}') \subseteq \C$ and let $\Psi$ be any homeomorphism between the two circle packings. Then $\Psi|_{\Lambda(\mathcal{P})\cap \overline{\Pi(\mathcal{P})}}$ is $C^{1+\alpha}$-conformal.
    \item(Renormalization periodic point) Let $(\mathcal{P}_0, x_0)\in \Sigma^\infty$ be a renormalization periodic point with period $q$. Then there exists a M\"obius transformation $\phi \in\PSL_2(\C)$ so that $\phi((\Lambda(\mathcal{P}_q), x_0)) = (\Lambda(\mathcal{P}_0), x_0)$ where $\mathcal{P}_q$ is the sub-circle packing of $\mathcal{P}_0$ corresponding to $\mathfrak{R}^q(\mathcal{P}_0, x_0)$. The multiplier $\mu:= \phi'(x_0)$ is called the {\em scaling factor} for $(\mathcal{P}_0, x_0)$.
    \item(Hyperbolic vs parabolic) Either 
    \begin{itemize}
        \item $x_0$ is a cusp point of $\partial \overline{\Pi(\mathcal{P}_0)}$, in which case $\phi$ is a parabolic M\"obius transformation with a fixed point at $x_0$; or
        \item $x_0$ is not a cusp point of $\partial \overline{\Pi(\mathcal{P}_0)}$, in which case the scaling factor satisfies $|\mu| > 1$.
    \end{itemize} 
    \item(Universality) The scaling factor is universal, in the following sense.
    Let $\mathcal{Q}$ be any circle packing.
    Suppose there exists a homeomorphism $\Psi: \widehat\C \longrightarrow \widehat\C$ so that $\Psi(\Lambda(\mathcal{P}_0)) \subseteq \Lambda(\mathcal{Q})$.
    Then 
    \begin{itemize}
        \item $\Lambda(\mathcal{Q})$ is asymptotically self-similar at $\Psi(x_0)$ by a parabolic M\"obius transformation if $|\mu|=1$.
        \item $\Lambda(\mathcal{Q})$ is asymptotically self-similar at $\Psi(x_0)$ with scaling factor $\mu$ if $|\mu|>1$;
    \end{itemize}
\end{enumerate}
\end{theoremx}
\begin{remark}
    {We remark that by adding $\Psi(z+t) = \Psi(z) + \Psi'(z)t + O(|t|^{1+\alpha})$ with $\Psi(z) = \Psi(z+t) - \Psi'(z+t)t + O(|t|^{1+\alpha})$, we see that $C^{1+\alpha}$-conformality implies that $\Psi'(z)$ is $\alpha$-H\"older continuous on the limit set $\Lambda(\mathcal{P})\cap \overline{\Pi(\mathcal{P})}$ (c.f \cite[Proposition 8.2]{Mer12} and see \S~\ref{sss:qr}).}
    We also remark that since there is no additional restriction for the homeomorphism $\Psi$ on the interior of the disks in the circle packing, the asymptotic conformality only applies to the restriction of $\Psi$ on the limit set.

    In \S \ref{subsec:tm}, we introduce the notion of a {\em Teichm\"uller mapping} between two circle packings, which is a homeomorphism between the circle packings that satisfies some additional desired properties.
    We prove that a Teichm\"uller mapping is asymptotically conformal, {without restriction to the limit set}, at all combinatorially deep points (see Theorem \ref{thm:tmac} and Remark \ref{rmk:cm}).

    We also remark that since the scaling factor is $1$ at a cusp point of $\partial \overline{\Pi(\mathcal{P}_0)}$, the renormalization operator $\mathfrak{R}$ is {\em not} a hyperbolic operator on $\Sigma^\infty$.
\end{remark}

\subsection{Existence and rigidity for spherical subdivision graphs}\label{ss:sphericalsub}
Subdivision graphs for a finite subdivision rule $\mathcal{R}$ has a special external face that is not subdivided.
For many applications in dynamics and geometry, the circle packing is dense on the whole Riemann sphere.
Such circle packings can be modeled by {\em spherical subdivision graphs} as follows.
\subsection*{$\mathcal{R}$-complexes}
Let $\mathcal{R}=\{P_i\}_{i=1}^k$ be a finite subdivision rule.
An $\mathcal{R}$-complex is a finite CW complex $X$ so that 
for any closed 2-cell $S$, there exist $P_{\sigma(S)} \in \{P_i\}_{i=1}^k$ {where $\sigma(S)$ is the index,} and an orientation-preserving cellular map $\psi_S: P_{\sigma(S)} \longrightarrow S$ that is a homeomorphism between open 2-cells.
We call $P_{\sigma(S)}$ the {\em type} of $S$.

The subdivision rule gives a sequence of subdivisions $\mathcal{R}^n(X)$ for $X$.
We always assume that $\mathcal{R}$ is the {\em minimal} finite subdivision rule that supports $X$, i.e. every polygon in $\{P_i\}_{i=1}^k$ appears as the type of some closed 2-cell of $\mathcal{R}^n(X)$ for some $n$.
The 1-skeleton of subdivisions $\mathcal{R}^n(X)$ of $X$ gives a nested sequences of graphs
$$
\mathcal{G}^0 \subseteq \mathcal{G}^1 \subseteq \mathcal{G}^2 \subseteq ..., \text{ and } \mathcal{G} = \lim_{\rightarrow} \mathcal{G}^n = \bigcup_{n} \mathcal{G}^n.
$$

If the $\mathcal{R}$-complex $X$ is homeomorphic to the sphere $S^2$, the plane graph $\mathcal{G}$ is called a {\em spherical subdivision graph for $\mathcal{R}$}.
We remark that any spherical subdivision graph for $\mathcal{R}$ can be constructed by piecing together finitely many subdivision graphs of $\mathcal{R}$ along their boundaries.

Our next result gives a characterization of existence and rigidity.
\begin{theoremx}\label{thm:A}
Let $\mathcal{G}$ be a simple spherical subdivision graph for an irreducible finite subdivision rule $\mathcal{R}$.
\begin{enumerate}[label=(\roman*)]
\item The graph $\mathcal{G}$ is isomorphic to the nerve of an infinite circle packing $\mathcal{P}$ if and only if $\mathcal{R}$ is acylindrical.
\item Moreover, if such a circle packing exists, it is combinatorially rigid.
\end{enumerate}
\end{theoremx}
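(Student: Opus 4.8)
The plan is to reduce everything to the single-piece theory of Theorem~\ref{thm:LR}: each top-level face of $X$ will be filled by an infinite packing of one of the graphs $\mathcal{G}_{i}$, whose only modulus is its external class in $\Teich(\Pi_{F_i})$, and on a sphere every external face is glued to a neighbour, so these classes satisfy a coupled system whose contraction yields both existence and rigidity — the exact analogue of the passage from quasi-Fuchsian groups (two free ends) to doubly degenerate groups (both ends pinned, hence rigid) in the dictionary. For the necessity of acylindricity in (i) I would argue by restriction. If $\mathcal{R}$ is cylindrical then some $\mathcal{G}_{i_0}$ fails the condition of Definition~\ref{defn:cyl}. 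Because $\mathcal{R}$ is the minimal rule supporting $X$, the polygon $P_{i_0}$ is the type of some closed $2$-cell $s$ of $\mathcal{R}^{N}(X)$; subdividing inside $s$ produces a copy of the direct limit $\mathcal{G}_{i_0}$ in $\mathcal{G}$. Since boundary edges are never subdivided (Definition~\ref{defn:fsr}(2)) the shared boundaries are fixed level-$0$ cycles, and irreducibility, applied on both sides of each such boundary, shows that $\mathcal{G}$ creates no edge among the vertices of this copy beyond those of $\mathcal{G}_{i_0}$; thus the induced subgraph is exactly $\mathcal{G}_{i_0}$. Were $\mathcal{G}$ the nerve of a packing $\mathcal{P}$, the circles indexed by this copy would form a circle packing with nerve $\mathcal{G}_{i_0}$, contradicting Theorem~\ref{thm:LR}(i).

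When $\mathcal{R}$ is acylindrical I would set up a gluing equation. Write the top-level $2$-cells as $F_1,\dots,F_m$ of types $i(1),\dots,i(m)$; their boundary cycles form $\mathcal{G}^0$, and by the Finite Circle Packing Theorem a packing of $\mathcal{G}^0$ is recorded by $\vec c=(c_1,\dots,c_m)\in\prod_l\Teich(\Pi_{F_l})$, where $c_l$ is the inside shape of $\partial F_l$. Any packing of $\mathcal{G}$ restricts on $F_l$ to an infinite packing with nerve $\mathcal{G}_{i(l)}$, which by Theorem~\ref{thm:LR}(ii) is determined up to M\"obius by its external class; fixing that class also fixes the piece's own boundary circles, hence the inside shape $c_l=\sigma_{i(l)}(\text{external class})$ through the skinning map $\sigma_{i(l)}$ of $\mathcal{G}_{i(l)}$. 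As the external data of $F_l$ is the inside data of the neighbours across the common circles, consistency of a global packing is precisely a fixed-point equation $\vec c=\Sigma(\vec c)$ for the coupled skinning maps on the complete space $\prod_l\Teich(\Pi_{F_l})$; for $X=F_1\cup F_2$ this reads $c_1=\sigma_{i(1)}(c_2)$ and $c_2=\sigma_{i(2)}(c_1)$.

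The bounded image theorem, which holds exactly under acylindricity, makes $\Sigma$ (or an iterate $\Sigma^{n_0}$, at the rate $\delta<1$ of Theorem~\ref{thm:LB}) a uniform contraction, so by the Banach fixed-point theorem it has a unique fixed point $\vec c^{\ast}$. Reassembling the pieces over $\vec c^{\ast}$ via Theorem~\ref{thm:LR} produces an infinite packing with nerve $\mathcal{G}$, giving existence; equivalently, as advertised in the introduction, $\mathcal{P}$ arises as a limit of finite packings $\mathcal{P}^n$ of $\mathcal{G}^n$, with the bounded image theorem supplying the compactness that prevents degeneration. Uniqueness of $\vec c^{\ast}$ forces the external, hence (Theorem~\ref{thm:LR}(ii)) the full, classes of any two packings of $\mathcal{G}$ to coincide, so they agree up to M\"obius on each piece; matching the common circles of $\mathcal{G}^0$ promotes these to one global M\"obius map, which is the rigidity in (ii).

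The main obstacle is the contraction statement for $\Sigma$ on the sphere: one must establish the relative bounded image theorem in the glued setting and verify that composing the skinning maps around the adjacency pattern, together with the inside/outside reflections relating neighbouring external and internal data, is genuinely contracting and uniformly so over the finitely many piece types — this is where Theorems~\ref{thm:LB} and \ref{thm:EC} do the real work. Two secondary points deserve care: that the fixed point $\vec c^{\ast}$ truly reassembles into a bona fide packing, i.e.\ the tangency points along each shared boundary coincide (which is exactly what $\vec c=\Sigma(\vec c)$ encodes), and that the piecewise M\"obius identifications glue to a single transformation of $\widehat\C$. The remaining work is bookkeeping on the subdivision combinatorics.
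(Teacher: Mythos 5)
Your overall philosophy (iterated contraction of skinning data) matches the paper, but both halves of your reduction to Theorem~\ref{thm:LR} rest on a claim that is false in general: that the copy of $\mathcal{G}_{i_0}$ inside a cell $s$ of $\mathcal{R}^N(X)$ is an \emph{induced} subgraph of $\mathcal{G}$. Irreducibility controls edges created \emph{inside} subdivisions, but says nothing about the adjacency pattern of the complex $X$ itself: two vertices of $\partial s$ that are non-adjacent along $\partial s$ may be joined by an edge of $\mathcal{R}^N(X)$ lying on the boundary of a \emph{neighbouring} cell. Concretely, let $X$ consist of a quadrilateral cell $s$ with boundary cycle $u\,a\,v\,b$ together with an exterior chord $uv$ splitting the complement of $s$ into two triangles; give the quadrilateral the cylindrical rule that repeatedly inserts a vertex $m$ with edges $um,mv$ (with cellular maps $(u,a,v,b)\mapsto(u,a,v,m)$ and $(u,m,v,b)$), and the triangle any simple irreducible rule. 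This is a simple spherical subdivision graph, every copy of the quadrilateral type has cylindrical pair exactly $(u,v)$, and $uv$ \emph{is} an edge of $\mathcal{G}$. The sub-packing on the copy's vertices then has nerve $\mathcal{G}_{i_0}$ plus the edge $uv$, not $\mathcal{G}_{i_0}$, so Theorem~\ref{thm:LR}(i) yields no contradiction; worse, since $C_u$ and $C_v$ are now tangent, the quadrilateral $(\overline{\Pi}, I_u\cup I_v)$ degenerates and the naive modulus argument fails (two tangent circles \emph{do} admit infinitely many disjoint circles tangent to both, as in the Apollonian gasket). Nonexistence still holds, but one needs the paper's actual mechanism, Proposition~\ref{prop:cs} plus a chain-counting/extremal-length argument: normalizing the tangency to $\infty$, all circles tangent to both $C_u$ and $C_v$ become congruent, so only finitely many disjoint chains of bounded length fit between $C_a$ and $C_b$.

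The same phenomenon undermines your gluing scheme for (ii). When the complex forces a tangency between two non-consecutive boundary vertices of a cell, the filling of that cell does not lie in $\mathcal{M}(\mathcal{G}_{i(l)})$ at all --- its external class is a cusp, i.e.\ a boundary point of $\Teich(\Pi_{F_{i(l)}})$ --- so Theorem~\ref{thm:LR}(ii), which you invoke to parametrize each piece, does not apply, and your fixed-point space $\prod_l\Teich(\Pi_{F_l})$ misses the actual configuration. (Your coupling is also mis-stated even in the nondegenerate case: the external class of a cell is the class of the ideal polygon complementary to its entire boundary necklace, determined by the global level-$0$ packing, not by ``the inside data of the neighbours'' one at a time.) Finally, you defer the contraction of $\Sigma$ to Theorems~\ref{thm:LB} and~\ref{thm:EC}, but in the paper these are \emph{consequences} of the machinery you would need --- the Relative Bounded Image Theorem~\ref{thm:bitr} applied to the glued graphs $\mathcal{G}^k\subseteq\mathcal{G}^n$ together with Lemma~\ref{lem:cfg} --- so the citation is circular. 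The paper itself never decomposes into per-cell external classes: it proves existence by geometric limits of finite packings of $\mathcal{G}^n$ with compactness supplied by the Bounded Image Theorem (Theorem~\ref{thm:exc}), and rigidity directly from $d(\mathcal{P}_1^k,\mathcal{P}_2^k)\le C\delta^n\to 0$ for every $k$ (Theorem~\ref{thm:rig}), working throughout with the global graphs $\mathcal{G}^n$, whose plane-graph faces remain honest Jordan domains even in the presence of the extra adjacencies that break your per-cell decomposition.
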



Similar to the case of subdivision graphs, finite approximations stabilize exponentially fast.
\begin{theoremx}\label{thm:B}
Let $\mathcal{G}$ be a simple spherical subdivision graph for an irreducible, acylindrical finite subdivision rule $\mathcal{R}$.
Then there exist constants $C$, $n_0 \in \N$ and $ \delta < 1$ {depending only on $\mathcal{G}$} so that for any $k$, the following holds.

Let {$n\geq n_0$ and let} $\mathcal{P}, \mathcal{Q}\in \Teich(\mathcal{G}^{n+j})$.
Let $\mathcal{P}^j, \mathcal{Q}^j$ be the corresponding sub-circle packings for the subgraph $\mathcal{G}^j \subseteq \mathcal{G}^{n+j}$.
Then 
$$
d(\mathcal{P}^j, \mathcal{Q}^j) \leq \min\{C, d(\mathcal{P}, \mathcal{Q})\} \cdot \delta^{n-n_0}.
$$
\end{theoremx}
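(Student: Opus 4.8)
The plan is to deduce Theorem \ref{thm:B} from the single-polygon estimate of Theorem \ref{thm:LB} together with the combinatorial rigidity of Theorem \ref{thm:A}, by cutting the sphere into the finitely many subdivision graphs out of which $\mathcal{G}$ is built and then propagating the contraction across the seams. First I would record the decomposition. Since $\mathcal{G}$ is a spherical subdivision graph, its coarsest skeleton $\mathcal{G}^0$ is the $1$-skeleton of the $\mathcal{R}$-complex $X$, and each closed $2$-cell $s$ of $X$ carries a type $P_{\sigma(s)}$. For every $n+k$ the finer skeleton restricts over $s$ to a copy of the subdivision-graph skeleton $\mathcal{G}_{\sigma(s)}^{n+k}$, and $\mathcal{G}^{n+k}=\bigcup_s \mathcal{G}_{\sigma(s)}^{n+k}$, glued along $\mathcal{G}^0$; note that no face of $\mathcal{G}^k$ crosses a seam, since the edges of $\mathcal{G}^0$ are never subdivided. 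Accordingly a circle packing $\mathcal{P}$ for $\mathcal{G}^{n+k}$ restricts, over each $s$, to a finite circle packing $\mathcal{P}_s$ for $\mathcal{G}_{\sigma(s)}^{n+k}$, whose external class $\tau_s(\mathcal{P})\in\Teich(\Pi_{F_{\sigma(s)}})$ is the shape of the external face of $s$. Under this correspondence $\mathcal{P}^k$ is assembled from the sub-packings $\mathcal{P}_s^k$ for the $\mathcal{G}_{\sigma(s)}^k$, and since the Teichm\"uller metric on the product moduli space is the supremum of the factor metrics, it suffices to bound each $d(\mathcal{P}_s^k,\mathcal{Q}_s^k)$.

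If the two global packings happened to induce the same external class on every seam, i.e.\ $\tau_s(\mathcal{P})=\tau_s(\mathcal{Q})$ for all $s$, then Theorem \ref{thm:LB}, applied to the irreducible acylindrical rule $\mathcal{R}$ and the polygon $P_{\sigma(s)}$, would give $d(\mathcal{P}_s^k,\mathcal{Q}_s^k)\le C\delta^{n-n_0}$ directly, and gluing the finitely many estimates would finish the proof. The entire difficulty of the spherical case is therefore the coupling across seams: on a closed surface there is no distinguished external face to hold fixed, so the boundary data $\tau_s(\mathcal{P})$ and $\tau_s(\mathcal{Q})$ are not prescribed but are themselves determined by the global packings, and in general they differ.

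To control this mismatch I would run the contraction globally rather than face by face. The key input, which is exactly the relative form of Thurston's bounded image theorem established in this setting (and which already underlies Theorem \ref{thm:LB}), is that under acylindricity the skinning map for one application of $\mathcal{R}$ has bounded image and is uniformly contracting, with a factor $\delta<1$ independent of the level. One level of coarsening $\mathcal{M}(\mathcal{G}^{m+1})\to\mathcal{M}(\mathcal{G}^{m})$ is governed by this skinning map simultaneously over all faces; passing from level $n+k$ down to level $k$ composes $n$ such steps, so on the bounded region cut out by the bounded-image theorem the induced map on the level-$k$ data contracts by $\delta^{n-n_0}$, the first $n_0$ steps and the a priori diameter being absorbed into $C$. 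This yields $d(\mathcal{P}^k,\mathcal{Q}^k)\le C\delta^{n-n_0}$ even when the seam data disagree. Here Theorem \ref{thm:A} enters conceptually: combinatorial rigidity guarantees that the global contraction has a single fixed point, namely the unique infinite packing, so the seam classes $\tau_s(\mathcal{P}),\tau_s(\mathcal{Q})$ are both driven to a common limit at the uniform rate $\delta^{n}$, rather than merely staying bounded.

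The step I expect to be the main obstacle is establishing that the per-level contraction is genuinely uniform over all faces at once in the closed setting. In Theorem \ref{thm:LB} the fixed external class supplies a fixed boundary condition that localizes the contraction to a single polygon; on the sphere one must instead show that the composite skinning map on the full product $\prod_s\Teich(\Pi_{F_{\sigma(s)}})$ still has bounded image and is still contracting, i.e.\ that gluing acylindrical pieces along $\mathcal{G}^0$ does not produce an essential annulus along which the seam data could drift. Verifying that the acylindricity of $\mathcal{R}$ survives the gluing into $X$, so that the relative bounded image theorem applies to the closed complex, is the crux; once it is in hand, the exponential estimate follows by the same iteration argument used in the bounded-image proof of Theorem \ref{thm:LB}.
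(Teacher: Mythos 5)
Your route is essentially the paper's: Theorem \ref{thm:B} is deduced there from Lemma \ref{lem:cfg}, the spherical analogue of Lemma \ref{lem:cf}, which is precisely your ``global contraction'' --- one application of $\tau_{\mathcal{G}^m,\mathcal{G}^{m+1}}$ per level, with the Relative Bounded Image Theorem (Theorem \ref{thm:bitr}) confining the data of every face into the fixed compact sets $K_j$ of \S\ref{subsec:su}, on which the norm of the differential of the skinning map is at most $\delta<1$; composing $n$ levels yields $\min\{C,d(\mathcal{P},\mathcal{Q})\}\cdot\delta^{n-n_0}$, and since on the sphere there is no external face, every face contracts and no seam data needs to be held fixed.

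Two corrections, though. First, the step you single out as the crux --- that acylindricity ``survives the gluing into $X$'', i.e.\ that assembling the pieces could create an essential annulus along which seam data drifts --- is in fact immediate, because acylindricity of a face $F$ of $\mathcal{G}^k$ in $\mathcal{G}^n$ is by definition a condition \emph{internal} to $F$: it only involves $F\cap\mathcal{G}^n-\{v,w\}$, which is a copy of the iterated subdivision of the polygon typing $F$ and is unaffected by the rest of the sphere. Likewise Theorem \ref{thm:bitr} is applied one face at a time (its modulus bound in the proof is local to the interstice $\Pi_F$), so the compact sets $K_j$ and the contraction constant $\delta$ from the subdivision-graph setting carry over verbatim; this is exactly why the paper dispatches Lemma \ref{lem:cfg} with ``a similar proof''. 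Second, your appeal to Theorem \ref{thm:A} is logically backwards: in the paper, rigidity (Theorem \ref{thm:rig}, hence Theorem \ref{thm:A}) is a \emph{consequence} of this contraction estimate, so it cannot serve as an input --- and indeed your argument never actually uses it, since the uniform rate alone drives the two level-$k$ sub-packings together with no fixed point needed. Finally, one preprocessing step you omit: before Theorem \ref{thm:bitr} can be invoked, $\mathcal{R}$ must be replaced so that all faces of $\mathcal{G}^n$ are Jordan domains (Proposition \ref{prop:jdg}); the counterexample in \S\ref{sec:sbit} shows this hypothesis cannot be dropped.
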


\subsection{Rigidity of Kleinian circle packings}\label{ss:rigidityKleinian}
A circle packing $\mathcal{P}$ is called {\em Kleinian} if its limit set $\Lambda(\mathcal{P})$ is equal to the limit set $\Lambda(\Gamma)$ of some Kleinian group $\Gamma$.
The nerve of a Kleinian circle packing may not be a spherical subdivision graph for a finite subdivision rule, but instead follows a more general subdivision rule (see \S \ref{sec:z2sr} and \S \ref{sec:kcp}).
In fact, the nerve of a geometrically finite Kleinian circle packing is a graph with finite subdivision rule if and only if the Kleinian group has no rank 2 cusps {(see Theorem~\ref{thm:ksdr} and Appendix~\ref{appx:topology})}.
Our theory is well-suited for this broader context.
As an application, we prove the following.
\begin{theoremx}\label{thm:C}
Let $\mathcal{P}$ be a Kleinian circle packing. Suppose the corresponding Kleinian group $\Gamma$ is geometrically finite. Then $\mathcal{P}$ is combinatorially rigid.
\end{theoremx}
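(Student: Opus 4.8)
The plan is to deduce Theorem~\ref{thm:C} from the rigidity of spherical subdivision graphs, Theorem~\ref{thm:A}, together with its extension to the more general subdivision rules of \S\ref{sec:z2sr}. The mechanism is the following: once the nerve $\mathcal{G}$ of $\mathcal{P}$ is identified as a spherical subdivision graph for an irreducible subdivision rule $\mathcal{R}$, the very fact that $\mathcal{P}$ exists forces $\mathcal{R}$ to be acylindrical by the existence criterion of Theorem~\ref{thm:A}(i), and combinatorial rigidity is then immediate from Theorem~\ref{thm:A}(ii). Thus essentially all of the work lies in exhibiting the subdivision structure on $\mathcal{G}$.

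First I would produce this structure from geometric finiteness. Since $\Lambda(\mathcal{P}) = \Lambda(\Gamma)$, the disks of $\mathcal{P}$ are the components of the domain of discontinuity $\Omega(\Gamma) = \widehat{\C} \setminus \Lambda(\Gamma)$; as $\Gamma$ preserves $\Lambda(\Gamma)$ and sends round disks to round disks, it permutes these disks and hence acts on $\mathcal{G}$. By Ahlfors finiteness together with geometric finiteness, there are only finitely many $\Gamma$-orbits of disks and of complementary interstices, giving finitely many combinatorial types. The self-similarity of $\Lambda(\Gamma)$ then organizes these types into a subdivision rule: each interstice subdivides into finitely many sub-interstices of the next generation, and each sub-interstice is the image under a group element of one of the standard types, which supplies the cellular maps $\psi_{i,j}$ of Definition~\ref{defn:fsr}. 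Because the disks together with $\Lambda(\Gamma)$ exhaust $\widehat{\C}$, the resulting $\mathcal{R}$-complex is the whole sphere, so $\mathcal{G}$ is a \emph{spherical} subdivision graph. The nerve is automatically simple, and by cutting polygons as noted after Definition~\ref{defn:cyl} I may assume $\mathcal{R}$ is irreducible.

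The cusps dictate which flavor of subdivision rule appears. Rank-one cusps have $\Z$-parabolic stabilizers and contribute the cusp points of the boundary polygons; they are compatible with a genuine \emph{finite} subdivision rule, so when $\Gamma$ has no rank-two cusps I may invoke Theorem~\ref{thm:A} directly. Rank-two cusps have $\Z^2$-parabolic stabilizers and produce a doubly periodic local pattern in $\Lambda(\Gamma)$ that cannot be captured by finite data; here $\mathcal{G}$ is instead a spherical subdivision graph for a $\Z^2$-subdivision rule, and I would appeal to the extension of Theorem~\ref{thm:A} established in \S\ref{sec:z2sr}. In either case, applying the existence direction to the hypothesis that $\mathcal{G}$ is a nerve yields acylindricity of $\mathcal{R}$, and the rigidity direction then gives that $\mathcal{P}$ is combinatorially rigid.

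The main obstacle is the construction in the second paragraph: turning the geometric self-similarity of $\Lambda(\Gamma)$ into a bona fide subdivision rule in the precise sense of Definition~\ref{defn:fsr} (and its $\Z^2$-analog), with honest cellular maps $\psi_{i,j}$ between polygons of matching type rather than a merely abstract scaling symmetry. The delicate points are checking that the finitely many interstice types close up under the induced subdivision, and encoding the rank-two cusps so that the $\Z^2$-periodicity is absorbed into the $\Z^2$-framework of \S\ref{sec:z2sr} and the corresponding generalization of Theorem~\ref{thm:A} is genuinely in force. Once this translation is complete, acylindricity and hence rigidity follow formally from the existence of $\mathcal{P}$.
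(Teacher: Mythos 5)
Your high-level reduction is exactly the paper's: identify the nerve $\mathcal{G}$ as a spherical subdivision graph for a finite subdivision rule (no rank-2 cusps) or a $\Z^2$-subdivision rule (with rank-2 cusps), then quote Theorem~\ref{thm:A} resp.\ Theorem~\ref{thm:Z2EU}; and you correctly observe, as the paper does, that acylindricity is free because the packing already exists. But the step you yourself flag as ``the main obstacle'' is a genuine gap, and the heuristic you offer for it would not go through as stated. The phrase ``each interstice subdivides into finitely many sub-interstices of the next generation'' is circular: for an \emph{infinite} packing the closed disks together with $\Lambda(\Gamma)$ already exhaust $\widehat\C$, so there are no interstitial regions intrinsic to $\mathcal{P}$; interstices only make sense relative to a finite sub-packing, i.e.\ relative to a level structure on $\mathcal{G}$ --- which is precisely the structure the subdivision rule is supposed to create. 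Finiteness of $\Gamma$-orbits of disks does not by itself produce a hierarchy of nested combinatorial cells, matching polygon types, or cellular maps $\psi_{i,j}$ satisfying condition (2) of Definition~\ref{defn:fsr} (no new vertices on boundary edges).

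The paper supplies this missing structure topologically, in \S\ref{sec:kcp}, and this is where essentially all the work lies. First, $M=\Gamma\backslash\Hyp^3$ is shown (via tameness plus the connectedness/$\pi_1$-surjectivity of $\partial\core'_\epsilon(M)$, Lemma~\ref{lem:comp_body}) to be the interior of a compression body $N$ with empty or toroidal interior boundary (Proposition~\ref{prop:comp_body}, Corollary~\ref{cor:geom_finite}). One then chooses a maximal collection $\mathcal{E}$ of disjoint compressing disks cutting $N$ into balls and thickened tori, and --- this is the delicate part --- modifies it to be \emph{admissible} by an inductive surgery on disks guided by the parabolic locus curves $\gamma_i$ (Figures~\ref{fig:disk-pushing-torus} and \ref{fig:disk_pushing_ball}); admissibility is exactly what guarantees that the boundary-at-infinity cycles of lifted disks bound Jordan domains in $\mathcal{G}$. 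Proposition~\ref{prop:curve_and_subdivision} then converts an admissible $\mathcal{E}$ into the subdivision rule: the polygons are the cycles $P_j^\pm$ cut out by the lifted disk boundaries, the subdivision of $P_j^\pm$ comes from the other disks bounding the same cut-off ball, the cellular maps are induced by explicit group elements $g_j^\pm\in\Gamma$ identifying complementary regions, and the level structure corresponds to moving through the tiling of $\Hyp^3$ by orbits of the cut pieces. Thickened-torus pieces are what encode the rank-2 cusps and force the $\Z^2$-framework, with $Q_i=\overline{\widehat\C-P_i}$ tiled by the $\Z^2$-orbit of $P_i$. None of this is a formal consequence of ``self-similarity of $\Lambda(\Gamma)$''; your proposal names the right target and the right endgame, but omits the mechanism (compression body plus admissible compressing disks) that makes the translation to Definition~\ref{defn:fsr} actually work.
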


\begin{remark}
In contrast, geometrically infinite Kleinian circle packings are in general not combinatorially rigid.
Indeed, by changing the ending lamination associated to a geometrically infinite end carefully, it is possible to change the limit set while the nerve remains the same. {See the discussion in Appendix~\ref{appx:topology}, which relies on \cite[Theorem~1.3]{NS12}}.
\end{remark}

As an immediate corollary, we have that the {orientation-preserving} homeomorphism group equals the conformal symmetry group, extending the result for generalized Apollonian gaskets in \cite{LLMM23}.
\begin{corx}\label{cor:H=C}
  Let $\mathcal{P}$ be a geometrically finite Kleinian circle packing.
  Then 
  $$
  {\Homeo^+}(\Lambda(\mathcal{P})) = \Conf(\Lambda(\mathcal{P})).
  $$
\end{corx}

Since any Kleinian group with circle packing limit set is a finite index subgroup of the full conformal symmetry group of the limit set, we have
\begin{corx}\label{cor:comm}
  Let $\Gamma_1, \Gamma_2$ be two geometrically finite Kleinian groups with homeomorphic circle packing limit set.
  Then they are commensurable.
\end{corx}


\subsection{Discussion on related work}\label{sec:discussion}
\subsubsection{Rigidity of hexagonal circle packing and other triangulations}\label{subsubsec:rt}
Circle packings provide a conformally natural way to embed a planar graph into a surface.
In the 1980s, Thurston proposed a constructive, geometric approach to the Riemann mapping theorem using circle packings.
This conjecture was proved by Rodin-Sullivan in \cite{RS87}.
The proof depends crucially on the non-trivial {\em uniqueness} of the hexagonal packing as the only packing in the plane with the triangular lattice as its nerve.

Using combinatorial arguments, Schramm gave a generalization of the above rigidity result.
Let $\mathcal{P}$ be a circle packing on the Riemann sphere $\widehat \C$ whose nerve $\mathcal{G}$ is a planar triangulation.
In \cite[Theorem 1.1]{Sch91}, Schramm proved that if $\widehat\C - \text{carrier}(\mathcal{P})$ is at most countable, then $\mathcal{P}$ is combinatorially rigid.
Here the {\em carrier} of a packing $\mathcal{P}$ is the union of the closed disks in $\mathcal{P}$ and the `interstices' bounded by three mutually touching circles in the complement of the packing.
The rigidity of the hexagonal packing follows immediately, since the carrier is the whole complex plane.

He and Schramm continued to develop many new tools to weaken the hypothesis of above result.
In \cite{HS93}, they proved a rigidity result with the hypothesis that $\widehat\C - \text{carrier}(\mathcal{P})$ is a countable union of points or disks.
In \cite{HS94}, 
they proved a rigidity result for $\mathcal{P}$ with possibly uncountably many one-point components in $\widehat\C - \text{carrier}(\mathcal{P})$, but under the hypothesis that the boundary has $\sigma$-finite linear measure.
These rigidity results have a close connection to the Koebe's conjecture that every planar domain can be mapped conformally onto a circle domain, and the techniques in \cite{HS93, HS94} led to a breakthrough towards the Koebe's conjecture.

The concept of circle packings can be generalized to any closed orientable surface with a projective structure. A version of the Koebe-Andreev-Thurston theorem in this setting already appeared in \cite{Thu82}. Since a surface of genus $g\ge1$ in general supports many different projective structures, a natural question is to determine the subspace of projective structures supporting a circle packing with a fixed nerve. See \cite{KMT03, KMT06} for results in this direction.

In the above mentioned work, the methods require crucially that the nerve is a triangulation.
We remark that in our setting, the graph $\mathcal{G}$ is never a planar triangulation, nor locally finite.

\subsubsection{Rate of convergence of finite circle packings}\label{subsec:rc}
In his thesis \cite{He91}, He provided a quantitative estimate for the rate of convergence of finite approximation of the hexagonal circle packings.
Let $\mathcal{G}$ be the nerve of the regular hexagonal circle packing.
Then we have a natural level structure
$
\mathcal{G}^0 \subseteq \mathcal{G}^1 \subseteq \mathcal{G}^2 \subseteq ...$
where $\mathcal{G}^0$ is a single point, and $\mathcal{G}^n$ is the subgraph generated by $\mathcal{G}^{n-1}$ and all vertices adjacent to $\mathcal{G}^{n-1}$.
Let $\mathcal{P}^n, \mathcal{Q}^n$ be two circle packing of $\C$ with nerve $\mathcal{G}^n$.
Let $\mathcal{P}^1, \mathcal{Q}^1$ be the subpackings corresponding to the subgraph $\mathcal{G}^1 \subseteq \mathcal{G}^n$.
Then with appropriate normalization, He proved that there exists a homeomorphism 
$\Psi: \C \longrightarrow \C$ that sends the circle packing $\mathcal{P}^1$ to $\mathcal{Q}^1$ with
$\| \Psi - id\|_{C^0} = O(\frac{1}{n})$ (cf.\ exponential convergence in Theorem \ref{thm:LB} and \ref{thm:B}).
This estimate gives a quantitative bound on the convergence of the discrete Riemann mapping in \cite{RS87}.
See also \cite{HR93, HS96, HS98, HL10} for some generalizations, which are closely related to regularity of local symmetries.


\subsubsection{Finite subdivision rules}\label{subsubsec:fsr}
With a motivation from the Cannon's conjecture, finite subdivision rules are introduced and studied extensively in a series of papers by Cannon, Floyd and Parry \cite{CFP01, CFP06a, CFP06b}.
They appear naturally in the study of dynamics of rational maps and geometric group theory (see \cite{CFKP03, CFP07, CFPP09}).
Some connections between finite subdivision rules and circle packings have been explored in \cite{BS97, Ste03, BS17}.

In those previous {works} mentioned above, it is usually assumed that {the} subdivision has bounded valence and it satisfies some expansion property, such as {\em mesh} approaching zero (see \cite{CFP06a, CFP06b}).
This expansion property requires that the edges are cut into smaller and smaller pieces for iterations of the subdivision.
This is in contrast with the setting of this paper.
The subdivision we obtain always have infinite valence.
Moreover, in order to create an infinite graph from the 1-skeleton, the mesh is never shrinking to zero (see Assumption (2) in Definition \ref{defn:fsr}).

\subsubsection{Quasisymmetric rigidity}\label{sss:qr}
A {\em Schottky set} is the complement of a union of disjoint open disks in $\widehat\C$.
This includes examples like circle packings and round Sierpinski carpets.
In \cite{BKM09}, it is proved that a Schottky set $X$ with zero area is quasisymmetrically rigid.
More precisely, any quasisymmetric homeomorphism of $X$ is the restriction of a M\"obius transformation.
Also see \cite{BM13} for other quasisymmetric rigidity results.
We remark that it is crucial that the homeomorphisms are quasisymmetric in this general setting as any two Sierpinski carpets are homeomorphic.


{Quasisymmetries} of {\em relative Schottky sets} are studied in \cite{Mer12, Mer14}.
We remark that a Schottky set has a different definition in \cite{Mer12, Mer14} compared to \cite{BKM09}.
In order to apply He-Schramm's uniformization results for relative circle domains, and with the main application on quasisymmetry groups of a Sierpiski carpet Julia set in mind, it is assumed in \cite{Mer12, Mer14} that the complementary disks of a Schottky set have disjoint closures.
In particular, it does not cover the case that we are considering in this paper.
It is proved that {any} local {quasisymmetry} has complex derivative on the Schottky set (see \cite[Theorem 1.2]{Mer12}), and the derivative is bi-Lipschitz (see \cite[Proposition 8.2]{Mer12}) (cf.\ Theorem \ref{thm:LS}).
It is conjectured that a local {quasisymmetry} possesses higher degree of regularity (see \cite[Conjecture 1.3]{Mer12}).



\subsubsection{Kleinian circle packings}
Let $\Gamma$ be a geometrically finite Kleinian group whose limit set $\Lambda(\Gamma)$ is equal to the limit set $\Lambda(\mathcal{P})$ of an infinite circle packing $\mathcal{P}$.
Then the corresponding three manifold $M = \Gamma\backslash\Hyp^3$ is {\em acylindrical} (cf.\ Theorem~\ref{thm:GFAC1}).
For geometrically finite acylindrical Kleinian group, it follows from \cite{McM90} that there exists a unique Kleinian group with totally geodesic boundary in its quasiconformal conjugacy class.
Such Kleinian groups have Schottky sets as their limit sets, and provide many examples of Kleinian circle packings.

The Apollonian circle packing is an example of a Kleinian circle packing, and its arithmetic, geometric, and dynamical properties have been extensively studied in the literature (for a non-exhaustive list, see e.g. \cite{GLMWY03, GLMWY05, BF11, KO11, OS12, BK14, Zha22}).
More recently, there have been many new and exciting developments in the study of more general Kleinian circle packings \cite{KN19, KK21, BKK22, LLM22}.

\subsubsection{Relative hyperbolic groups and the Cannon conjecture}\label{subsec:ggt}
The {\em Cannon conjecture} \cite{Can91} and the {\em Kapovich-Kleiner conjecture} \cite{KK00} are two important conjectures in geometric group theory.
They assert that if a Gromov hyperbolic group has boundary homeomorphic to $S^2$ or a Sierpinski carpet, then it is virtually Kleinian.
For the Kapovich-Kleiner conjecture, the major difficulty lies in finding a quasi-symmetric embedding of the boundary of the group into $S^2$.
Once this is achieved, the quasi-symmetric rigidity theorem for Sierpinski carpet \cite{Bon11} can be applied and obtain the result.

It is natural to ask the analogous question for circle packing boundaries.
Due to the touchings of the circles, the formulation involves {\em relative hyperbolic groups}. 
This was formulated as a more general conjecture in \cite{HW20}.
Recently, the conjecture was proved for circle packings in \cite{HPW23}.
In line with Bonk's approach, our combinatorial rigidity will give an alternative proof as long as the boundary of the relative hyperbolic group is homeomorphic to a Kleinian circle packing.

Our rigidity result for Kleinian circle packings has more applications for relative hyperbolic groups.
It is a general philosophy in geometric group theory that many properties of the group are determined by its boundary. For example, Paulin proved that the quasi-isometry class of a hyperbolic group is uniquely determined by the topology and quasi-symmetric structure of its Gromov boundary \cite{Pau96}, and Mackay-Sisto generalized this result to relative hyperbolic groups with its Bowditch boundary \cite{MS24}.
For finitely generated relative hyperbolic groups whose boundary is homeomorphic to a circle packing, it follows from our rigidity result and the general Cannon conjecture in this case confirmed in \cite{HPW23} that the topology of the boundary alone uniquely determines the group up to commensurability.

\subsubsection{Renormalization and iterations on Teichm\"uller spaces}
Renormalization was introduced into dynamics in the mid 1970s by Feigenbaum, Coullet and Tresser, and it has established itself as a powerful tool for the study of nonlinear systems whose essential form is repeated at infinitely many scales (for a non-exhaustive list, see e.g. \cite{DH85, McM94, McM96, McM98, Lyu99, IS08, AL11, DLS20, AL22}).
The connections between renormalization theories of quadratic polynomials and Kleinian groups are explained in \cite{McM96}.
Iterations on Teichm\"uller spaces have many applications in dynamics and geometry (see e.g. \cite{Thu88, McM90, DH93, Sel11}).

\subsubsection{Further applications}
It is conjectured that besides some trivial examples, the Julia set of a rational map is not quasiconformally homeomorphic to the limit set of a Kleinian group (see \cite{LLMM23}).
This conjecture is proved in various settings when the Julia set or the limit set is a Sierpinski carpet (see \cite{BLM16, QYZ19}).
In an upcoming sequel \cite{LZ23}, the rigidity results in this paper will be used to prove the conjecture for circle packing limit sets of some reflection groups.

In an upcoming paper of the first author and D. Ntalampekos, the exponential contraction of renormalizations will play an important role in characterizing Julia sets that can be quasiconformally uniformized to a circle packing (c.f. \cite{Bon11, BLM16} for Sierpinski carpet Julia sets, and \cite{McM90} for Sierpinski carpet/circle packing limit sets).

\subsection{Methods and outline}\label{subsec:mandt}
In \S \ref{sec:teich}, we recall some classical results on the Teichm\"uller spaces of finite circle packings, and relate these spaces to classical Teichm\"uller spaces of surfaces.

The {\em skinning map} for circle packings is introduced in \S \ref{sec:sbit}.
Roughly speaking, let $\mathcal{H}$ be a subgraph of a finite graph $\mathcal{G}$.
The natural pullback of the inclusion map $i:\mathcal{H} \longrightarrow\mathcal{G}$ gives a map on the Teichm\"uller space
$$
\tau=\tau_{\mathcal{H}, \mathcal{G}}: \Teich(\mathcal{G}) \longrightarrow \Teich(\mathcal{H}).
$$
We then characterize when the image $\tau(\Teich(\mathcal{G}))$ is bounded in $\Teich(\mathcal{H})$ based on how $\mathcal{H}$ sits in $\mathcal{G}$ (see Theorem \ref{thm:bit} and \ref{thm:bitr}).
The construction and the theorems are motivated by Thurston's Bounded Image Theorem for skinning maps of hyperbolic 3-manifolds (see the discussion in \S \ref{subsec:td}).
In \S \ref{sec:ecp}, we prove the existence part of Theorem \ref{thm:LR} and \ref{thm:A} by taking geometric limit.

{By a theorem of McMullen \cite[Theorem 5.3]{McM90}, the skinning map has derivative $< 1$.
The skinning map we constructed is the restriction on some real slice of the usual skinning map.
Thus it is also a contraction.}
Together with our Bounded Image Theorem {(Theorem \ref{thm:bit} and \ref{thm:bitr})}, the contraction is uniform.
In \S \ref{sec:ism}, we apply iterations of the skinning map to harvest compounding contraction.
This allows us to prove uniqueness, and more generally, the exponential convergence results.
In \S \ref{sec:url}, we apply exponential convergence to prove the regularity and universality results.
With the appropriate setup, the argument is similar to the one in renormalization theory of quadratic polynomials.

Finally, in \S \ref{sec:z2sr}, we illustrate how our methods can be generalized to other subdivision rules.
In particular, we consider the case of interpolations of finite subdivision rules (see Theorem \ref{thm:ILR}) as well as $\Z^2$-subdivision rule (see Theorem \ref{thm:Z2EU}). 
The former allows us to change the combinatorics of circle packings, and lays down the foundations to study parameter spaces of circle packings in the future.
The latter provides a model for rank 2 cusps in a Kleinian group, which is used in \S \ref{sec:kcp} to show geometrically finite Kleinian circle packings are combinatorially rigid.

\subsection*{Future work}
The skinning map mentioned above plays an important role in Thurston's hyperbolization theorem. Y.~Minsky conjectured that the diameter of the image of the skinning map of an acylindrical manifold is bounded above by a constant depending only on the topological complexity of the boundary. In an upcoming paper \cite{LZ24}, we use some of the ideas developed in this paper to prove this conjecture for all acylindrical reflection groups, which include kissing reflection groups discussed in \S\ref{sec:teich}.


\subsection*{Acknowledgement}
The authors thank {Sergiy} Merenkov for explaining to us the literature and his work on rigidity results for circle packings, along with many invaluable suggestions.
The authors also thank Dima Dudko, Misha Lyubich and Curt McMullen for many useful discussions. The circle packings in Figure~\ref{fig:fa} are produced with the program \textbf{GOPack} developed by C.\ Collins, G.\ Orick and K. Stephenson \cite{COK17}. The limit sets in Figures~\ref{fig:pinched_nbhd}, \ref{fig:apollonian_ball} and \ref{fig:z2sreg} are produced with C. McMullen's program \textbf{lim}.

\section{Teichm\"uller space and finite circle packings}\label{sec:teich}
In this section, we recall some facts about finite circle packings and their deformation spaces. Many details can be found in \cite{LLM22}; see also \cite{Bro85, Bro86}.

\subsection{Teichm\"uller space of circle packings}
Let $\mathcal{G}$ be a connected ({finite or infinite}) simple plane graph {with embedding $f:\mathcal{G}\longrightarrow\hat{\mathbb{C}}$. Another plane graph $\mathcal{G}'$ (with embedding $f':\mathcal{G}'\longrightarrow\hat{\mathbb{C}}$) is said to be \emph{isomorphic} to $\mathcal{G}$ if there exists an abstract graph isomorphism $g:\mathcal{G}\to\mathcal{G}'$ and an orientation-preserving homeomorphism $\varphi:\hat{\mathbb{C}}\longrightarrow \hat{\mathbb{C}}$ so that $f'=\varphi\circ f\circ g^{-1}$. For simplicity, we will refer to $g$ as a \emph{plane graph isomorphism} between $\mathcal{G}$ and $\mathcal{G}'$ and suppress the homeomorphism $\varphi$.} 

{\begin{defn}
    A circle packing $\mathcal{P}$ is called \emph{marked} by $\mathcal{G}$ if there exists a plane graph isomorphism $g$ between $\mathcal{G}$ and the nerve of $\mathcal{P}$. For each vertex $v$ of $\mathcal{G}$, we denote by $D_{v,\mathcal{P}}$ the disk in $\mathcal{P}$ corresponding to $g(v)$, and $C_{v,\mathcal{P}}$ its boundary circle.
\end{defn}
Note that the existence of a circle packing marked by $\mathcal{G}$, when {$\mathcal{G}$ is finite}, is guaranteed by the Finite Circle Packing Theorem.}

{We also want to discuss the deformation space of circle packings marked by a plane graph.}
\begin{defn}\label{defn:teichCP}
    The \emph{Teichm\"uller space of circle packings associated to $\mathcal{G}$} is defined as
{\begin{equation*}
\Teich(\mathcal{G}):=\left\{\mathcal{P} \text{ is a finite circle packing marked by $\mathcal{G}$}\middle\}\right/\sim
\end{equation*}}
where $\mathcal{P}'\sim\mathcal{P}$ if there exists a M\"obius transformation $\eta$ so that $D_{v,\mathcal{P}'}=\eta(D_{v,\mathcal{P}})$ for every vertex $v$ of $\mathcal{G}$.
\end{defn}
A natural topology on the space is defined by declaring {$[\mathcal{P}_n]\to[\mathcal{P}]$ if there exist representatives $\mathcal{P}_n,\mathcal{P}$ so that} $D_{v,\mathcal{P}_n}\to D_{v,\mathcal{P}}$ for every vertex $v$.

From now on, generally for simplicity, we drop the dependence on $\mathcal{P}$ in the notations when it is clear which packing we refer to. {We will also use the same notation to denote a circle packing $\mathcal{P}$ and its equivalence class under $\sim$, and it will be clear from context which one we refer to.}

{Given any pair of circle packings $\mathcal{P},\mathcal{P}'\in\Teich(\mathcal{G})$, there exists a homeomorphism $\Psi:\hat{\mathbb{C}}\longrightarrow\hat{\mathbb{C}}$ so that $D_{v,\mathcal{P}'}=\Psi(D_{v,\mathcal{P}})$. In other words, $\mathcal{P},\mathcal{P}'$ are homeomorphic via a marking-preserving homeomorphism of $\hat{\mathbb{C}}$. When $\mathcal{G}$ is finite, such a homeomorphism may be chosen as quasiconformal (in fact, even smooth).}

{The Teichm\"uller space of circle packings carries a natural (extended) metric.
\begin{defn}
    The \emph{Teichm\"uller (extended) metric} on $\Teich(\mathcal{G})$ is given by
    $$d(\mathcal{P},\mathcal{P}')=\frac12\log K,$$
    where $K$ is the minimal dilatation among all marking-preserving quasiconformal homeomorphisms between $\mathcal{P},\mathcal{P}'$. If no such quasiconformal homeomorphism exists, then $d(\mathcal{P},\mathcal{P}'):=\infty$.
\end{defn}
By our remark above, when $\mathcal{G}$ is finite, this gives an actual metric.}

\subsection{Finite circle packings and reflection groups}
From now on, we assume $\mathcal{G}$ is finite.
Given $\mathcal{P}\in\Teich(\mathcal{G})$, let $\Gamma:=\Gamma_{\mathcal{P}}$ be the group generated by reflections in the circles $C_v$. This group is called the \emph{kissing reflection group} associated to $\mathcal{P}$ in \cite{LLM22}.

Let $\Aut^\pm(\widehat{\mathbb{C}})\cong\Isom(\mathbb{H}^3)$ be the group of M\"obius and anti-M\"obius transformations, which is also the group of isometries of $\mathbb{H}^3$. A \emph{quasiconformal deformation} of $\Gamma$ is a discrete and faithful representation $\xi:\Gamma\longrightarrow\Aut^\pm(\widehat{\mathbb{C}})$ induced by a quasiconformal map $f:\widehat{\mathbb{C}}\longrightarrow\widehat{\mathbb{C}}$ (i.e.~$\xi(\gamma)=f\circ\gamma\circ f^{-1}$ for all $\gamma\in\Gamma$). {In particular the representation $\xi$ is \emph{type-preserving}, i.e.~it sends elliptic (resp. parabolic, hyperbolic) elements to elliptic (resp. parabolic, hyperbolic) elements.}

The \emph{quasiconformal deformation space} of $\Gamma$ is defined as
$$\mathcal{QC}(\Gamma):=\{\xi:\Gamma\longrightarrow \Aut^\pm(\widehat{\mathbb{C}})\text{ is a quasiconformal deformation} \}/\sim$$
where $\xi\sim\xi'$ if they are conjugates of each other by a M\"obius transformation. We can put the algebraic topology on the space, i.e. {$[\xi_n]\to[\xi]$ if there exist representatives $\xi_n,\xi$ so that $\xi_n(\gamma)\to\xi(\gamma)$ for all $\gamma\in\Gamma$. As in the case of circle packings, for simplicity, we will use the same notation to denote a representation $\xi$ and its equivalence class under $\sim$; which one we refer to will be clear from context.}

{Given another circle packing $\mathcal{P}'\in\Teich(\mathcal{G})$ and the associated reflection group $\Gamma_{\mathcal{P}'}$, the marking induces a type-preserving group isomorphism between $\Gamma$ and $\Gamma_{\mathcal{P}'}$, although it is not immediately clear that this isomorphism is induced by a quasiconformal map. On the other hand, the discussion in \cite[\S 3]{LLM22} (especially the paragraph before Definition 3.14 and Lemma 3.15) implies the following:
\begin{prop}\label{prop:deformation_space}
For any connected finite simple plane graph $\mathcal{G}$, there exists a continuous injection
$$f:\mathcal{QC}(\Gamma)\longrightarrow \Teich(\mathcal{G})$$
so that for any $\Gamma'\in\mathcal{QC}(\Gamma)$, the group generated by reflections in $f(\Gamma')$ is precisely $\Gamma'$.
\end{prop}
The proof goes roughly as follows. Given any $\xi\in\mathcal{QC}(\Gamma)$, first we observe that $\xi(g)$ is also a circular reflection if $g$ is a reflection in any circles in $\mathcal{P}$. In this way we obtain a new collection $\mathcal{P}'$ of circles whose tangency pattern must be the same as $\mathcal{P}$, with marking provided by the representation $\xi$. See \cite[\S 3.3]{LLM22} for details.}

{To conclude that $\mathcal{QC}(\Gamma)\cong \Teich(\mathcal{G})$, we need to discuss some geometric objects associated to a finite circle packing.}

\subsection{Teichm\"uller space of interstices}
Let $F$ be a face of $\mathcal{G}$. {Note that in general $F$ may not be a Jordan domain (recall that the plane graph $\mathcal{G}$ is embedded in $\hat{\mathbb{C}}$).} Then there exists a polygon $P_{F}$ and a cellular map $\psi_F:P_{F} \longrightarrow F$ which restricts to a homeomorphism between the interiors {(see Figure~\ref{fig:ideal_boundary} for an illustration).}
\begin{figure}[htp]
    \centering
    \includegraphics[width=0.8\textwidth]{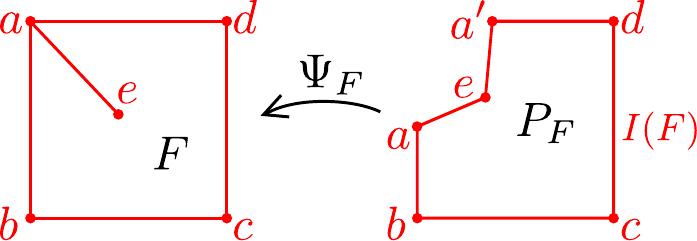}
    \caption{{A non-Jordan face $F$ and its ideal boundary $I(F)$. The map $\Psi_F$ is a homeomorphism between the interiors. Both vertices $a,a'$ on the right is mapped to $a$ on the left.}}\label{fig:ideal_boundary}
\end{figure}
Note that the map $\psi_F$ is a homeomorphism on the entire polygon $P_F$ if and only if $F$ is a Jordan domain.

{Adopting the terminology from \cite{LLM22}, the boundary $\partial P_{F}$ is called the {\em ideal boundary} of $F$, denoted by $I(F)$. Informally, $I(F)$ is the splitting and unfolding of the boundary edges of $F$ to a Jordan curve.} Abusing notations, we often use the same symbols to denote vertices and edges of $P_F$ and those of $F$, especially when $F$ is a Jordan domain.

{
\begin{defn}
Consider the following set
$$\Pi:=\widehat{\mathbb{C}}-\bigcup_v\overline{D_v}.$$
Each face $F$ of $\mathcal{G}$ gives a connected component $\Pi_{F}$ of $\Pi$, which we call the \emph{interstice} of the packing $\mathcal{P}$ for the face $F$.
\end{defn}}
Note that $\Pi_{F}$ is the interior of {a polygon with the same number of edges as $I(F)$, bounded by circular arcs tangent at the vertices. We call such a polygon an \emph{ideal polygon}.}

The group $\Gamma$ contains a subgroup of index $2$ of orientation-preserving elements, which we denote by $\widetilde\Gamma$. Let $\Lambda$ and $\Omega$ be its limit set and domain of discontinuity respectively. Note that $\Pi_{F}$ is contained in a unique {connected} component $\Omega_{F}$ of $\Omega$. Let $\Gamma_{F}$ be the subgroup of $\Gamma$ generated by circles corresponding to vertices of $F$. Then $\Pi_{F}$ is the interior of a fundamental domain of the action of $\Gamma_{F}$ on $\Omega_{F}$.

 It is easy to see that $\widetilde\Gamma$ is \emph{geometrically finite}, i.e.\ its action on the hyperbolic 3-space $\mathbb{H}^3$ has a finite-sided fundamental polyhedron {(see \cite[Corollary~3.16]{LLM22})}.
 Consider the Kleinian manifold
$$M:=\widetilde\Gamma\backslash(\mathbb{H}^3\cup\Omega).$$
Its boundary $\partial M=\bigcup_{F}X_{F}$ has a connected component for each face $F$ of $\mathcal{G}$. Each $X_{F}$ is topologically a punctured sphere, and its number of punctures is the same as the number of vertices on {$I(F)$}. Conformally, $X_{F}\cong \widetilde\Gamma_{F}\backslash \Omega_{F}$, where $\widetilde\Gamma_{F}$ is the index 2 subgroup of $\Gamma_{F}$ consisting of orientation-preserving elements. In fact, it is conformally the double of $\Pi_{F}$, {obtained by gluing two copies of $\Pi_F$ along their boundary edges while leaving vertices as punctures}. The anti-holomorphic reflection that exchanges the two copies of $\Pi_{F}$ is an involution, denoted by $\sigma_F:X_{F}\longrightarrow X_{F}$.

By the quasiconformal deformation theory of Ahlfors, Bers, Maskit and others, including a nicely presented version of Sullivan \cite{Sul81}, there exists a holomorphic surjection
$$\Psi:\prod_{F}\Teich(X_{F})\longrightarrow\mathcal{QC}(\widetilde\Gamma),$$
which is a homeomorphism if each component of $\Omega$ is simply connected. The failure of injectivity comes from a component of $\Omega$ that is not simply connected, e.g.~by performing a Dehn twist along an essential curve in that component which is homotopically trivial in the three manifold $M$. {The map $\Psi$} restricts to a surjection
$$\Psi:\prod_{F}\Teich^{\sigma_F}(X_{F})\longrightarrow\mathcal{QC}(\Gamma),$$
where $\Teich^{\sigma_F}(X_{F})$ denotes the quasiconformal deformation space of $X_{F}$ invariant under $\sigma_F$. Note that since $X_{F}$ is determined by $\Pi_{F}$, we also view $\Teich^{\sigma_F}(X_{F})\cong \Teich(\Pi_{F})$ as a deformation space of the ideal polygon $\Pi_F$.

We claim this restriction is also injective.
{\begin{lem}\label{lem:bij}
    The map
    $$\Psi:\prod_{F}\Teich^{\sigma_F}(X_{F})\longrightarrow\mathcal{QC}(\Gamma)$$
    is a bijection.
\end{lem}}
\begin{proof}
    By the discussion above, it remains to show injectivity. Indeed, non-injectivity is only possible if there exist two different ways of decomposing some $X_{F}$ into two equal pieces of marked ideal polygon, together with an anti-holomorphic involution that exchanges the two pieces.
    
    Conformally, we can put one piece in the upper half plane, and the other piece the lower half plane, with punctures on the extended real line. The only anti-holomorphic involution fixing all the punctures is  $z\mapsto\bar z$, as the invariant set of such an involution is always a circle.

    {Finally, note that the punctures of $X_F$ are marked by edges of $P_F$. This marking can be recovered from any representation $\xi:\Gamma\longrightarrow\Aut^{\pm}(\mathbb{C})$ in $\mathcal{QC}(\Gamma)$, as we recall from the discussion following Proposition~\ref{prop:deformation_space} that $\xi$ recovers the marking on the corresponding circle packing.}
    
    {It follows that there is only one way of decomposing $X_F$, so the map is injective, as desired.}
\end{proof}

Combined with Proposition~\ref{prop:deformation_space}, we have
\begin{theorem}\label{thm:dc}
For any connected finite simple plane graph $\mathcal{G}$,
$$\Teich(\mathcal{G})\cong\prod_{F}\Teich(\Pi_F).$$
\end{theorem}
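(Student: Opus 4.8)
The plan is to assemble the chain of identifications developed in the preceding discussion into a single homeomorphism, reducing everything to the structure of the quasiconformal deformation space of the kissing reflection group. First I would invoke Proposition~\ref{prop:deformation_space} to obtain $\mathcal{M}(\mathcal{G})\cong\mathcal{QC}(\Gamma)$. Next, the Ahlfors--Bers--Maskit--Sullivan deformation theory supplies the holomorphic surjection $\Psi\colon\prod_{F}\Teich(X_{F})\longrightarrow\mathcal{QC}(\widetilde\Gamma)$, and restricting to the $\sigma_F$-invariant loci $\Teich^{\sigma_F}(X_F)$ yields a surjection onto $\mathcal{QC}(\Gamma)$. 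The final step identifies each factor $\Teich^{\sigma_F}(X_F)$ with $\Teich(\Pi_F)$, using that $X_F$ is the conformal double of the interstice $\Pi_F$ and that the $\sigma_F$-symmetric deformations correspond precisely to deformations of the single ideal polygon. Chaining these gives
$$
\mathcal{M}(\mathcal{G})\cong\mathcal{QC}(\Gamma)\cong\prod_{F}\Teich^{\sigma_F}(X_F)\cong\prod_{F}\Teich(\Pi_F).
$$

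The crux of the argument is injectivity of the restricted map, and this is where I expect the main difficulty to lie. On the full deformation space, $\Psi$ can fail to be injective exactly when some component $\Omega_F$ is not simply connected: a Dehn twist along an essential curve in $\Omega_F$ that is homotopically trivial in $M$ produces a nontrivial mapping class acting trivially on $\mathcal{QC}(\widetilde\Gamma)$. The key observation I would exploit is that such twists do not respect the $\sigma_F$-symmetry, so they are not visible after restriction. To make this precise, I would realize $X_F$ as the double of $\Pi_F$, placing one copy of the ideal polygon in the upper half plane and the other in the lower half plane with the ideal vertices along $\widehat{\R}$; the only anti-holomorphic involution exchanging the two halves is then $z\mapsto\bar z$, which is rigid. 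This rigidity forces any two $\sigma_F$-symmetric structures inducing the same point of $\mathcal{QC}(\Gamma)$ to coincide, giving injectivity of the restricted map.

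The remaining pieces are comparatively routine: the surjectivity onto $\mathcal{QC}(\Gamma)$ is inherited from the surjectivity of $\Psi$ onto $\mathcal{QC}(\widetilde\Gamma)$ together with the index-two relationship between $\Gamma$ and $\widetilde\Gamma$, and the identification $\Teich^{\sigma_F}(X_F)\cong\Teich(\Pi_F)$ is the content of viewing the symmetric deformation space of the double as the deformation space of the underlying ideal polygon. I anticipate the only genuine obstacle is verifying that the $\sigma_F$-invariance eliminates \emph{all} sources of non-injectivity, i.e.\ that no twist or other mapping class compatible with the anti-holomorphic involution survives; the half-plane normalization above is the mechanism that rules these out.
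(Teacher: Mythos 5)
Your proposal is correct and follows essentially the same route as the paper: the identification $\mathcal{M}(\mathcal{G})\cong\mathcal{QC}(\Gamma)$ from Proposition~\ref{prop:deformation_space}, the Ahlfors--Bers--Maskit--Sullivan surjection restricted to the $\sigma_F$-invariant loci, and injectivity via the normalization placing the two halves of $X_F$ in the upper and lower half planes with punctures on $\widehat{\R}$, where $z\mapsto\bar z$ is the unique exchanging anti-holomorphic involution. The paper phrases the injectivity step as ruling out two distinct decompositions of some $X_F$ into equal ideal-polygon halves, which is exactly the mechanism you identify.
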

{\begin{proof}
    First note that we can define a map
    $$h:\Teich(\mathcal{G})\longrightarrow\prod_{F}\Teich(\Pi_F)$$
    by recording the conformal structure on each interstice. Moreover, it is easy to see $\Psi^{-1}=h\circ f$, where $f$ is the map from Proposition~\ref{prop:deformation_space}, and $\Psi$ is the map from Lemma~\ref{lem:bij}. Since $\Psi$ is a bijection and $f$ is an injection, we conclude that $h$ is surjective and injective, and hence a bijection.
\end{proof}}
Moreover, it is easy to see that the identification above is isometric with respect to the Teichm\"uller metric on $\Teich(\mathcal{G})$ and that of the product $\prod_{F}\Teich(\Pi_F)$, defined as the maximum of the Teichm\"uller metrics on $\Teich(\Pi_F)$.

\subsection{Fenchel-Nielsen coordinates}
Given a face $F$ with the cellular map $\psi_F:P_F\longrightarrow F$, any pair of non-adjacent vertices $v,w$ of $I(F)$ gives rise to an essential simple closed curve $\widetilde\gamma^F_{vw}$ of $X_F$, invariant under the involution $\sigma_F$. {See for example the right picture in Figure~\ref{fig:double}.}

More precisely, if $\psi_F(v)\neq\psi_F(w)$, let $g_v$ and $g_w$ be the reflections in the circles $C_v$ and $C_w$ respectively (note that here we use the same symbols to denote the corresponding vertices of $F$). Then the element $g_vg_w\in\widetilde\Gamma_F$ is loxodromic (whose two fixed points are contained in $D_v$ and $D_w$ respectively) and determines the curve $\widetilde\gamma^F_{vw}$ on $X_F$. On the other hand, if $\psi_F(v)=\psi_F(w)$, then this curve can be represented by a nontrivial loop in $\Omega_F$, but is null-homotopic in $M$, so cannot be represented by a nontrivial element in $\widetilde{\Gamma}$ (for an illuminating picture, {see Figure~\ref{fig:double} as well as \cite[Fig.~3.1]{LLM22}}).

Geometrically, note that $v,w$ determine two non-adjacent sides $I_v$ and $I_w$ of the interstice $\Pi_F$. In the hyperbolic metric induced from $X_F$, let $\gamma^F_{vw}$ be the geodesic segment perpendicular to both $I_v$ and $I_w$. The double of $\gamma^F_{vw}$ in $X_F$ is the simple closed geodesic $\widetilde\gamma^F_{vw}$. We remark that every $\sigma_F$-invariant simple closed geodesic on $X_F$ arises this way.

\begin{figure}[htp]
    \centering
    \includegraphics[width=0.45\textwidth]{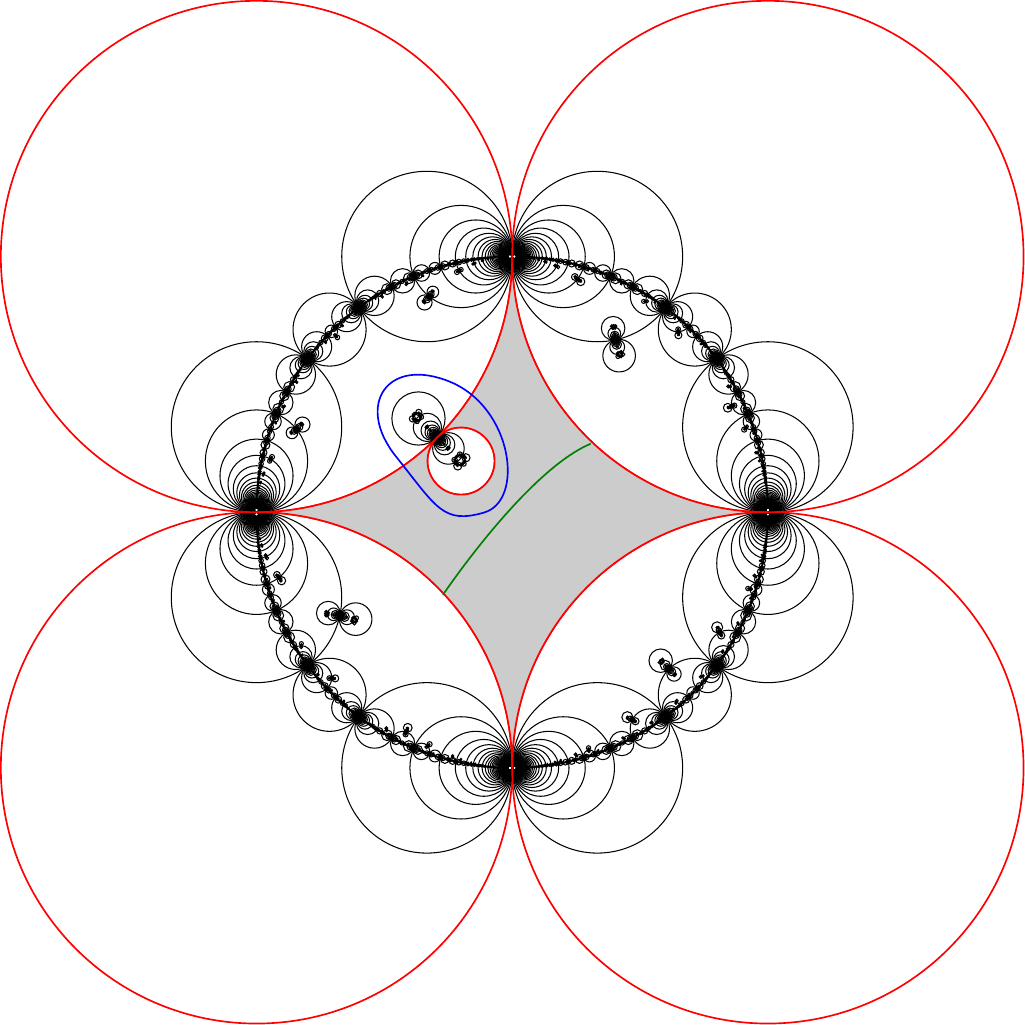}
    \includegraphics[width=0.25\textwidth]{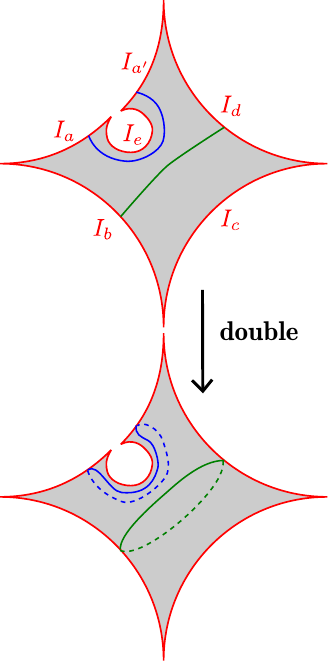}
    \caption{{Interstice for a non-Jordan face with the same combinatorics as Figure~\ref{fig:ideal_boundary}. Note that each vertex gives a side of the interstice and each non-adjacent pair in $I(F)$ gives an invariant simple closed curve on the double $X_F$, obtained by the double of a path connecting the pair of sides. Since $\Psi_F(a)=\Psi_F(a')$, a lift of the corresponding invariant curve is a loop in the domain of discontinuity. On the other hand, since $\Psi_F(b)\neq\Psi_F(d)$, a lift of the corresponding invariant curve is an axis of a loxodromic element.}}\label{fig:double}
\end{figure}

Note that we can decompose $X_F$ into pairs of pants along a maximal collection of disjoint $\sigma_F$-invariant simple closed geodesics. Indeed, any such decomposition comes from a triangulation of $F$, where added edges between two non-adjacent vertices of $F$ determine the collection of simple closed geodesics in the way described above. 

The hyperbolic structure on $X_F$ is completely determined by the hyperbolic lengths of geodesics in such a collection. {Indeed, the fixed set of $\sigma_F$ consists of complete geodesics from cusp to cusp (these come from where we glue two copies of $\Pi_F$), and relative to this collection of transversals, the pants are glued together without twists. This gives Fenchel-Nielsen coordinates without any twist parameters on
$$\Teich(\Pi_F)\cong \Teich^{\sigma_F}(X_{F})\cong \mathbb{R}_+^{m-3}$$
where $m$ is the number of vertices of $F$.}

Finally, we show that to go to infinity in $\Teich(\Pi_F)$, some geodesic arc $\gamma_{vw}^F$ must shrink to zero. Given $\epsilon>0$, define $\Teich_{\ge\epsilon}(\Pi_F)\subseteq\Teich(\Pi_F)$ to be the subset of ideal polygons satisfying the condition that for any non-adjacent pair of vertices $v,w$, the geodesic arc $\gamma_{vw}^F$ has hyperbolic length $\ge\epsilon$.
\begin{prop}\label{prop:fnb}
    For any $\epsilon>0$, $\Teich_{\ge\epsilon}(\Pi_F)$ is compact.
\end{prop}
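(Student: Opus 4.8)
The plan is to realize $\Teich_{\ge\epsilon}(\Pi_F)$ as a superlevel set of a continuous length function that decays at the ideal boundary, so that compactness reduces to the geometric fact recorded just above the statement: a path leaving every compact subset of $\Teich(\Pi_F)$ must pinch some arc. Concretely, for each non-adjacent pair of vertices $v,w$ of $I(F)$ let $\ell_P(\gamma^F_{vw})$ be the hyperbolic length of the common perpendicular $\gamma^F_{vw}$ in the ideal polygon $P\in\Teich(\Pi_F)$, and set $s(P):=\min \ell_P(\gamma^F_{vw})$, the minimum taken over the finitely many non-adjacent pairs. Each individual length function is continuous on $\Teich(\Pi_F)$, hence so is $s$, and by definition $\Teich_{\ge\epsilon}(\Pi_F)=\{P : s(P)\ge\epsilon\}$ is closed. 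It therefore suffices to show this set is bounded away from infinity, i.e. contained in a compact subset of $\Teich(\Pi_F)$; equivalently, that $s(P_n)\to 0$ whenever $P_n$ leaves every compact set.

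For this I would use the configuration-space model of $\Teich(\Pi_F)$. Writing $m$ for the number of vertices of $F$, an ideal $m$-gon is determined by its $m$ ideal vertices $q_1,\dots,q_m$, placed in cyclic order on $\partial\Hyp^2\cong S^1$, modulo the (triply transitive) action of $\PSL_2(\R)$. Normalizing three of them identifies $\Teich(\Pi_F)$ with an open $(m-3)$-cell $\Delta$ whose closure is compact and whose boundary $\partial\Delta$ consists exactly of the degenerate configurations in which some adjacent ideal vertices collide. Thus $P_n$ leaves every compact subset of $\Teich(\Pi_F)$ precisely when, after passing to a subsequence, its configuration converges to a point of $\partial\Delta$, i.e. some block of consecutive ideal vertices $q_a,\dots,q_{a+t}$ coalesces to a single point. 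The two sides flanking this block then acquire a common ideal endpoint in the limit, hence become asymptotic geodesics, whose common perpendicular has length tending to $0$. These two flanking sides are non-adjacent (this uses $m\ge 4$; the case $m=3$ is vacuous, since there are then no non-adjacent pairs and $\Teich(\Pi_F)$ is a single point), so the corresponding arc is one of the $\gamma^F_{vw}$ and we conclude $s(P_n)\to 0$.

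Combining the two steps, $\Teich_{\ge\epsilon}(\Pi_F)=\{s\ge\epsilon\}$ is a closed subset of $\Delta$ that avoids a neighborhood of $\partial\Delta$, hence is contained in a compact subset of $\Delta$ and is itself compact. Alternatively, granting the remark preceding the statement that going to infinity forces some $\gamma^F_{vw}$ to shrink to zero, the first paragraph already finishes the proof and the second paragraph is precisely the justification of that remark. The main obstacle is the geometric input of the second paragraph, namely ruling out the a priori possibility of escaping to infinity while all arcs remain $\ge\epsilon$. The crux is the classification of boundary degenerations of an ideal polygon as collisions of adjacent ideal vertices, together with the elementary fact that two geodesics sharing an ideal endpoint have common-perpendicular length $0$; once each diverging coordinate in the Fenchel--Nielsen/length parametrization is shown to be accompanied by such a collision, the upper bounds on all the length coordinates follow and compactness is immediate.
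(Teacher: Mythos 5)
Your proof is correct, but it takes a genuinely different route from the paper's. You compactify the parameter space extrinsically: identifying $\Teich(\Pi_F)$ with cyclically ordered configurations of $m$ ideal vertices on $\partial\Hyp^2$ modulo $\PSL_2(\R)$, you pass to the closed simplex, classify boundary degenerations as collisions of blocks of consecutive ideal vertices, and observe that the two sides flanking a colliding block form a non-adjacent pair whose common perpendicular pinches, so $s(P_n)\to 0$ along any sequence escaping every compact set; together with closedness of $\{s\ge\epsilon\}$ this yields compactness. (The one point you leave implicit is why a colliding block cannot consist of $m-1$ vertices, which would make the flanking sides adjacent; this is immediate from your normalization, since the three fixed vertices remain at distinct points and hence a block contains at most $m-2$ vertices.) The paper instead argues intrinsically: the hypothesis $\ell(\gamma^F_{vw})\ge\epsilon$ for all non-adjacent pairs forces, at each point $x$ of any arc $\gamma^F_{vw}$, an embedded perpendicular segment of length $\epsilon/2$ on one side of the arc inside $\Pi_F$, and since the ideal polygon has fixed finite area this collar bounds every $\ell(\gamma^F_{vw})$ from above; the Fenchel--Nielsen length coordinates of points of $\Teich_{\ge\epsilon}(\Pi_F)$ therefore lie in a compact box, and closedness finishes. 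The paper's argument is shorter and quantitative, giving an explicit upper bound on arc lengths in terms of $\epsilon$ and the area of $\Pi_F$, whereas yours is soft but buys more: it simultaneously proves the remark preceding the proposition --- that going to infinity in $\Teich(\Pi_F)$ forces some $\gamma^F_{vw}$ to shrink to zero --- which the paper asserts without justification.
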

\begin{proof}
    Note that in $\Teich_{\ge\epsilon}(\Pi_F)$, given a pair of non-adjacent vertices $v,w$, for any point $x$ on $\gamma_{vw}^F$, on one of the two sides, a perpendicular geodesic segment starting at $x$ of length $\epsilon/2$ is contained entirely in $\Pi_F$. Since the hyperbolic area of $\Pi_F$ is fixed, this gives an upper bound on the length of $\gamma_{vw}^F$ as well.
\end{proof}
{We remark that this compact set depends only on $\epsilon$ and the number of vertices of $I(F)$, as $\Teich(\Pi_F)$ may be identified as a slice of the Teichm\"uller space of genus 0 surfaces with the same number of punctures as vertices of $I(F)$.}

\section{The skinning map and bounded image theorem}\label{sec:sbit}
In this section, we will define the skinning map for circle packings, and characterize when a skinning map has bounded image. {The main objective of this section is to prove the Bounded Image Theorem, stated as Theorem~\ref{thm:bit} and Theorem~\ref{thm:bitr} below.}

Let $\mathcal{G}$ be a connected finite simple plane graph and $\mathcal{H} \subseteq \mathcal{G}$ be a connected {\em induced subgraph}, i.e., a connected graph formed by a subset of the vertices of $\mathcal{G}$ and {\em all} of the edges connecting pairs of vertices in the subset.

Then there exists a natural pullback of the inclusion map $i: \mathcal{H} \xhookrightarrow{} \mathcal{G}$ on the Teichm\"uller space of finite circle packings
$$
\tau= \tau_{\mathcal{H}, \mathcal{G}}: \Teich(\mathcal{G}) \longrightarrow \Teich(\mathcal{H}).
$$
Indeed, if $\mathcal{P} \in \Teich(\mathcal{G})$ is a circle packing with nerve $\mathcal{G}$, and $\mathcal{Q}$ be the corresponding sub-circle packing associated to $\mathcal{H}$, then $\tau(\mathcal{P}) = \mathcal{Q}$.
We call this map $\tau$ the {\em skinning map} associated to $\mathcal{H} \subseteq \mathcal{G}$.



We now define an important notion of {\em cylindrical} and {\em acylindrical} subgraphs (cf.\ Definition \ref{defn:cyl} and Figure \ref{fig:subd}).
\begin{defn}
Let $\mathcal{G}$ be a connected finite simple plane graph and $\mathcal{H} \subseteq \mathcal{G}$ be a connected induced subgraph.
Let $F$ be a face of $\mathcal{H}$ that is a Jordan domain.
It is called {\em acylindrical} in $\mathcal{G}$ if for any two non-adjacent vertices $v, w\in \partial F$, the two components of $\partial F - \{v,w\}$ are connected {via a sequence of edges} in $F\cap \mathcal{G}  - \{v,w\}$.
Otherwise, it is called {\em {cylindrical}} in $\mathcal{G}$.

Suppose all faces of $\mathcal{H}$ are Jordan domains.
The subgraph $\mathcal{H}$ is called {\em acylindrical} in $\mathcal{G}$ if every face of $\mathcal{H}$ is acylindrical in $\mathcal{G}$.
Otherwise, it is called {\em cylindrical} in $\mathcal{G}$.
\end{defn}
\begin{figure}[htp]
    \centering
    \includegraphics[width=0.5\textwidth]{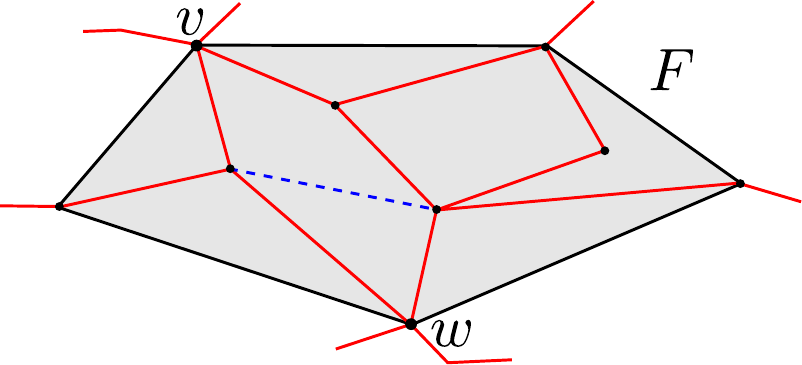}
    \caption{Let $F$ be the shaded face. It is cylindrical without the dashed edge, and acylindrical with it. In the cylindrical case, the two non-adjacent vertices $v, w \in \partial F$ are on the boundary of a face $F'$ of $\mathcal{G}$.}
\end{figure}

{
\begin{rmk}\label{rmk:cylin}
    We remark that it is easy to see that a Jordan domain face $F$ is cylindrical if and only if there are two non-adjacent vertices $v, w\in \partial F$, and a face $F'$ of $\mathcal{G}$ such that $F' \subseteq F$ and $v, w \in \partial F'$.
\end{rmk}
}

The following is the analogue of {\em Thurston's Bounded Image Theorem} in our setting (see \cite{Thu82}).
\begin{theorem}[Bounded Image Theorem]\label{thm:bit}
    Let $\mathcal{G}$ be a connected finite simple plane graph, and $\mathcal{H} \subseteq \mathcal{G}$ be a connected induced subgraph. Suppose that every face of $\mathcal{H}$ is a Jordan domain.
    Then the image $\tau(\Teich(\mathcal{G})) \subseteq \Teich(\mathcal{H})$ is bounded if and only if $\mathcal{H}$ is acylindrical in $\mathcal{G}$.
\end{theorem}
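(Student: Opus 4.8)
The plan is to reduce the statement, via the identification $\mathcal{M}(\mathcal{H}) \cong \prod_F \Teich(\Pi_F)$ of Theorem~\ref{thm:dc} (product over faces $F$ of $\mathcal{H}$, with the sup metric), to a statement about a single face and a single non-adjacent pair of vertices. Since the Teichm\"uller metric is the maximum over the factors, $\tau(\mathcal{M}(\mathcal{G}))$ is bounded iff its projection to each $\Teich(\Pi_F)$ is bounded. Using the Fenchel--Nielsen description $\Teich(\Pi_F)\cong\mathbb{R}_+^{m-3}$ together with the proof of Proposition~\ref{prop:fnb} (where each arc length $\ell(\gamma_{vw}^F)$ is automatically bounded above by the fixed area, and $\Teich_{\ge\epsilon}(\Pi_F)$ is compact), a subset of $\Teich(\Pi_F)$ is bounded iff every $\ell(\gamma_{vw}^F)$ stays bounded below away from $0$. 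Thus the theorem reduces to showing, for each face $F$ of $\mathcal{H}$ and each non-adjacent pair $v,w\in\partial F$,
$$\inf_{\mathcal{P}\in\mathcal{M}(\mathcal{G})}\ell\big(\gamma_{vw}^F(\tau\mathcal{P})\big)>0 \iff \text{the two components of } \partial F-\{v,w\} \text{ are connected in } F\cap\mathcal{G}-\{v,w\},$$
since acylindricity of $\mathcal{H}$ is exactly this connectivity for all such $F,v,w$.

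The key geometric input for both directions is that $\ell(\gamma_{vw}^F)\to 0$ is exactly the pinching of the $\sigma_F$-invariant simple closed geodesic $\widetilde\gamma_{vw}^F$ on $X_F$: the interstice $\Pi_F$ develops a node along $\gamma_{vw}^F$ and splits into two ideal polygons joined at a forming cusp, at which $C_v$ and $C_w$ become tangent; the two boundary arcs $\beta_1,\beta_2$ of $\partial\Pi_F$ coming from the two components $\alpha_1,\alpha_2$ of $\partial F-\{v,w\}$ then land in the two different pieces. For the direction acylindrical $\Rightarrow$ bounded I would argue by contradiction. A connecting path in $F\cap\mathcal{G}-\{v,w\}$ from $\alpha_1$ to $\alpha_2$ produces a chain of mutually tangent disks of $\mathcal{P}$ contained in $\overline{\Pi_F}$ and running from $\beta_1$ to $\beta_2$; viewed as an arc in the disk $\overline{\Pi_F}$, whose boundary is cyclically $I_v,\beta_1,I_w,\beta_2$, this chain separates the side $I_v$ from the side $I_w$. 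If $\ell(\gamma_{vw}^F)\to 0$ along a sequence in $\mathcal{M}(\mathcal{G})$, then by the dictionary above the chain must cross the forming neck, whose modulus blows up; but a chain of disjoint tangent disks crossing an annular neck of modulus $M$ needs on the order of $M$ disks, whereas the combinatorial length of the chain is bounded by the number of vertices of $\mathcal{G}$. This yields a lower bound on $\ell(\gamma_{vw}^F)$ uniform over $\mathcal{M}(\mathcal{G})$, a contradiction. I expect this modulus-versus-length estimate --- that a uniformly bounded number of packed disks cannot traverse a degenerating neck --- to be the main obstacle, and the step where the uniformity over all of $\mathcal{M}(\mathcal{G})$ must be argued carefully.

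For the mirror direction cylindrical $\Rightarrow$ unbounded, I would produce an explicit degenerating family. When $\{v,w\}$ is a cut separating $\alpha_1$ from $\alpha_2$ inside $F$, there is no chain obstructing the approach of $C_v$ to $C_w$: the disks of $\mathcal{P}$ lying in $F$ partition into an $\alpha_1$-group and an $\alpha_2$-group meeting only through $v,w$. One then builds a sequence $\mathcal{P}_n\in\mathcal{M}(\mathcal{G})$, using the Finite Circle Packing Theorem to realize the prescribed combinatorics, in which $C_v$ and $C_w$ are driven toward tangency --- equivalently, the curve $\widetilde\gamma_{vw}^F$ is pinched --- so that $\ell\big(\gamma_{vw}^F(\tau\mathcal{P}_n)\big)\to 0$ and hence $\tau$ is unbounded. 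The content here is the construction of the family as genuine elements of $\mathcal{M}(\mathcal{G})$; the cut structure is precisely what removes the combinatorial obstruction and guarantees such packings exist.
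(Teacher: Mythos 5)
Your overall architecture coincides with the paper's: reduce via Theorem~\ref{thm:dc} and Proposition~\ref{prop:fnb} to a uniform positive lower bound on the arc lengths $\ell(\gamma^F_{vw})$, prove boundedness by a chain-of-tangent-circles modulus estimate, and unboundedness by pinching (the paper actually proves the stronger per-face statement, Theorem~\ref{thm:bitr}, but your face-by-face reduction amounts to the same thing). However, both halves of your sketch have gaps precisely at the steps you yourself flag as the content. In the cylindrical direction, ``the cut structure removes the obstruction and guarantees such packings exist'' is not a construction. The paper's mechanism, which is missing from your proposal, is this: if $\{v,w\}$ is a cut pair separating the two arcs of $\partial F$ inside $F\cap\mathcal{G}$, then by planarity $v$ and $w$ lie on the boundary of a common face $F'$ of $\mathcal{G}$ contained in $F$; since $\mathcal{H}$ is an \emph{induced} subgraph and $v,w$ are non-adjacent, they are non-adjacent on the ideal boundary $I(F')$, so the element $g_vg_w$ determines a $\sigma$-invariant simple closed curve on $X_{F'}$, i.e.\ a Fenchel--Nielsen length coordinate of the factor $\Teich(\Pi_{F'})$ of $\mathcal{M}(\mathcal{G})$, which can be pinched freely within $\mathcal{M}(\mathcal{G})$; and finally the inclusion $\Omega_{F'}\subseteq\Omega_F$ gives, by the Schwarz lemma, $l_{X_F}(\gamma)\le l_{X_{F'}}(\gamma')\to 0$, transferring the degeneration through the skinning map to the face $F$ of $\mathcal{H}$. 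Without identifying the common face $F'$ (this is where planarity and inducedness enter) and without the length monotonicity, you have no way to certify that your sequence $\mathcal{P}_n$ exists in $\mathcal{M}(\mathcal{G})$, nor that the tangency you ``drive'' $C_v,C_w$ toward is seen by the subpacking $\tau(\mathcal{P}_n)$.

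In the acylindrical direction, your key quantitative claim --- that a chain of disjoint tangent disks crossing an annular neck of modulus $M$ needs on the order of $M$ disks --- is false as stated: in the round annulus $\{1<|z|<R\}$, a single round disk of radius $(R-1)/2$ centered at distance $(R+1)/2$ from the origin touches both boundary circles, no matter how large $R$ is. Crossing a neck in the thin direction is cheap for round disks; what is expensive is spanning the \emph{long} side of the degenerating quadrilateral, and this is what the paper's estimate exploits. Normalize so that $C_v,C_w$ are concentric of radii $1$ and $R$: every disk of the chain is disjoint from $D_v$ and $D_w$, hence lies in $\overline{A(R)}$ and has diameter at most $h=R-1$, while the chain, in order to separate $I_v$ from $I_w$ in $(\overline{\Pi_F},I_v\cup I_w)$, must sweep past the full angular extent of the arc $I_v$ of length $l$; this forces $l/h\le N$ with $N$ bounded by the number of vertices of $\mathcal{G}$, uniformly over $\mathcal{M}(\mathcal{G})$. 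A test-metric computation (density $1/h$ on an $h$-neighborhood of $I_v$) then bounds $\Mod\left((\overline{\Pi_F},I_v\cup I_w)\right)$ by roughly $N$, which yields the uniform lower bound on $\ell(\gamma^F_{vw})$ and, via Proposition~\ref{prop:fnb}, boundedness. So your intended conclusion is right and the confinement of the chain between $D_v$ and $D_w$ is the missing hypothesis that rescues your lemma; with that correction this half of your argument is the paper's proof.
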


Recall that by Theorem \ref{thm:dc}, we have
$$
\Teich(\mathcal{H}) = \prod_{F \text{ face of }\mathcal{H}} \Teich(\Pi_{F}).
$$
Let $\pi_F: \Teich(\mathcal{H}) \longrightarrow \Teich(\Pi_{F})$ be the corresponding projection.
We actually prove the following stronger version of the Bounded Image Theorem.
\begin{theorem}[Relative Bounded Image Theorem]\label{thm:bitr}
    Let $\mathcal{G}$ be a connected finite simple plane graph, and $\mathcal{H} \subseteq \mathcal{G}$ be a connected induced subgraph. Let $F$ be a face of $\mathcal{H}$ that is a Jordan domain.
    Then the image $\pi_{F}(\tau(\Teich(\mathcal{G}))) \subseteq \Teich(\Pi_{F})$ is bounded if and only if $F$ is acylindrical in $\mathcal{G}$.
\end{theorem}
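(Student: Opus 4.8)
The plan is to reduce the statement to a single uniform length estimate via Fenchel--Nielsen coordinates, and then treat the two directions by a geometric limit argument and an explicit degeneration respectively. Recall from Theorem \ref{thm:dc} and the preceding discussion that $\Teich(\Pi_F)\cong\R_+^{m-3}$ is coordinatized by the hyperbolic lengths $\ell(\gamma^F_{vw})$ of the perpendicular arcs between non-adjacent sides $I_v,I_w$, and that by Proposition \ref{prop:fnb} a subset of $\Teich(\Pi_F)$ is bounded precisely when all these lengths are bounded below (an upper bound is automatic from the fixed area of $\Pi_F$, exactly as in the proof of Proposition \ref{prop:fnb}). Thus the theorem is equivalent to the assertion that $\inf_{\mathcal{P}\in\mathcal{M}(\mathcal{G})}\ell(\gamma^F_{vw})(\tau(\mathcal{P}))>0$ for every non-adjacent pair $v,w\in\partial F$ if and only if $F$ is acylindrical in $\mathcal{G}$. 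The key structural observation is that $\pi_F(\tau(\mathcal{P}))$ depends only on the boundary circles $\{C_u:u\in\partial F\}$ of the sub-packing; the role of the interior vertices of $F$ is to constrain, through the requirement that $\mathcal{P}$ be a genuine packing with nerve $\mathcal{G}$, how far the shape of the ideal polygon $\Pi_F$ can degenerate.

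For the direction that acylindricity implies boundedness --- the analogue of Thurston's Bounded Image Theorem and the harder half --- I would argue by contradiction through a geometric limit. Suppose $v,w$ is non-adjacent and there is a sequence $\mathcal{P}_n\in\mathcal{M}(\mathcal{G})$ with $\ell(\gamma^F_{vw})(\tau(\mathcal{P}_n))\to 0$. A vanishing arc length means exactly that the two sides $I_v,I_w$ of $\Pi_F$ coalesce, i.e.\ the boundary circles $C_v$ and $C_w$ approach tangency. After normalizing by M\"obius transformations so that a fixed triple of circles stays in standard position, I would extract a limiting packing $\mathcal{P}_\infty$ in which $C_v$ and $C_w$ are tangent at a single point $p$, pinching the limiting interstice into two Jordan regions $R_1,R_2$ meeting only at $p$. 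Acylindricity now supplies a path in $\mathcal{G}\cap\overline{F}$ joining the arc $\alpha_1$ to the arc $\alpha_2$ of $\partial F-\{v,w\}$ while avoiding $v$ and $w$; this path is realized by a connected chain of mutually tangent disks, and by a Jordan-curve argument in the disk $\overline{F}$ it separates $v$ from $w$, hence has points in both $R_1$ and $R_2$. Since $\overline{R_1}\cap\overline{R_2}=\{p\}$, connectedness forces $p$ to lie in the limiting chain, so some chain disk has $p$ on its closure and must therefore be tangent to the configuration at $p$ (any other contact would overlap interiors). But at most two disks with disjoint interiors can meet at one point, and those two slots are already taken by $C_v$ and $C_w$ --- a contradiction.

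The main obstacle is precisely making this qualitative obstruction quantitative and uniform: one must organize the geometric limit (controlling which arcs and which chain disks degenerate, using the fixed finite combinatorics of $\mathcal{G}$ and the normalization), track the case where interior chain disks shrink to the point $p$ by passing to their non-degenerate neighbors, and thereby upgrade the contradiction into the required constant $\epsilon>0$ with $\ell(\gamma^F_{vw})\ge\epsilon$ on all of $\mathcal{M}(\mathcal{G})$. This is the technical heart of the theorem and mirrors the difficulty of the bounded image theorem for acylindrical $3$-manifolds. For the converse, cylindricity implies unboundedness, I would produce an explicit degenerating family: if $F$ is cylindrical there is a non-adjacent pair $v,w\in\partial F$ such that $\{v,w\}$ disconnects $\mathcal{G}\cap\overline{F}$, splitting the interior vertices into a cluster attached to $\alpha_1$ and one attached to $\alpha_2$, with no edge crossing except through $v$ or $w$. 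This cut pair is exactly an essential cylinder in the associated Kleinian manifold, and because nothing in $\mathcal{G}$ links the two clusters across the neck, the tangency $C_v\sim C_w$ is unobstructed; I would then construct a sequence $\mathcal{P}_n\in\mathcal{M}(\mathcal{G})$ pinching $\gamma^F_{vw}$ by sliding one cluster toward the other, giving $\ell(\gamma^F_{vw})(\tau(\mathcal{P}_n))\to 0$. The point to check is that this deformation keeps the nerve exactly $\mathcal{G}$ with no forced new tangencies, which the cut structure guarantees.

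Finally, Theorem \ref{thm:bit} follows at once from Theorem \ref{thm:bitr}: since $\mathcal{M}(\mathcal{H})=\prod_{F}\Teich(\Pi_F)$ is a finite product over the faces of $\mathcal{H}$, the image $\tau(\mathcal{M}(\mathcal{G}))$ is bounded if and only if each factor $\pi_F(\tau(\mathcal{M}(\mathcal{G})))$ is bounded, that is, if and only if every face $F$ is acylindrical in $\mathcal{G}$, which is the definition of $\mathcal{H}$ being acylindrical in $\mathcal{G}$.
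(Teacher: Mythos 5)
Your reduction via Proposition \ref{prop:fnb} to a uniform lower bound on the lengths $\ell(\gamma^F_{vw})$, and your deduction of Theorem \ref{thm:bit} from the relative statement, both agree with the paper. The genuine gap is exactly at the step you yourself flag as ``the technical heart'': the geometric-limit argument for the acylindrical direction is not carried out, and it is not clear it can be completed as sketched. Your normalization fixes only three circles, so along a sequence with $\ell(\gamma^F_{vw})(\tau(\mathcal{P}_n))\to 0$ \emph{every other} circle --- including every disk of the separating chain and even the boundary disks of $\partial F$ at which the chain terminates --- may collapse to a point, and may collapse precisely to the pinch point $p$. In that case the Hausdorff limit of the chain is a connected compact set through $p$ containing no nondegenerate disk near $p$, and your ``at most two disks meet at a point'' contradiction evaporates; your proposed fix of ``passing to non-degenerate neighbors'' fails when an entire sub-chain together with its terminal boundary circles degenerates, and nothing in the hypothesis rules this out, since the remaining coordinates of $\mathcal{M}(\mathcal{G})$ are free to degenerate arbitrarily along the sequence. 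The paper avoids limits altogether and proves the bound directly and uniformly: normalize $C_v,C_w$ to concentric circles of radii $1$ and $R$; acylindricity (through the connecting paths of Proposition \ref{prop:cp}) yields a chain of circles of combinatorially bounded number trapped in the round annulus $A(R)$, each of diameter at most $R-1$ and spanning the arc $I_v$, so $|I_v|/(R-1)$ is bounded, whence $\Mod(A(R))$ is bounded below, $\Mod\big((\overline{\Pi_F}, I_v\cup I_w)\big)$ is bounded above, and $\ell(\gamma^F_{vw})$ is bounded below --- a one-step extremal-length estimate in place of the delicate degeneration bookkeeping your compactness argument would require.

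Your converse direction also leaves its key point unproved: you assert that ``sliding one cluster toward the other'' yields packings with nerve exactly $\mathcal{G}$, with no forced new tangencies, but the existence of such a family is precisely what must be shown. The paper obtains it for free from the moduli description: a cut pair $\{v,w\}$ lies on a common face $F'$ of $\mathcal{G}$, the factor $\Teich(\Pi_{F'})$ is a coordinate of $\mathcal{M}(\mathcal{G})=\prod_{F'}\Teich(\Pi_{F'})$ by Theorem \ref{thm:dc}, so the curve $\widetilde\gamma^{F'}_{vw}$ can be pinched \emph{inside} the moduli space, and the inclusion $\Omega_{F'}\subseteq\Omega_F$ together with the Schwarz lemma gives $\ell_{X_F}(\gamma)\le \ell_{X_{F'}}(\gamma')\to 0$, hence unboundedness in $\Teich(\Pi_F)$. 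Even granting your sliding construction, you would still need this comparison (or a direct collar estimate) to conclude that the arc in $\Pi_F$, and not merely some geometric tangency, degenerates.
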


{The rest of this section is organized as follows. In \S~\ref{subsec:td}, we state the connections between the combinatorial properties of the graph and the topological properties of the corresponding 3 manifold. In \S~\ref{ss:wellconn}, we state and prove the key combinatorial lemma for acylindrical subgraphs. In \S~\ref{ss:bit}, we use the combinatorial lemma to prove Theorem~\ref{thm:bit} and ~\ref{thm:bitr}. Finally in \S~\ref{ss:non-Jordan}, we illustrate the importance of the assumption of Jordan domain in our Bounded Image Theorem.}



\subsection{Topological descriptions}\label{subsec:td}
In this subsection, we give a topological description of acylindricity, and compare the skinning maps for circle packings with Thurston's skinning maps.
The aim is to provide motivation and connect our combinatorial definition to many well-known concepts in hyperbolic geometry and topology.
Our proof of the Bounded Image Theorem (Theorem~\ref{thm:bit} and \ref{thm:bitr}) is self-contained and does not require this connection.

\subsection*{Compressing disks}
Let $\mathcal{P}$ be a circle packing with nerve $\mathcal{H}$. As in \S \ref{sec:teich},
let $\Gamma$ be the group generated by reflections along circles in $\mathcal{P}$, and $\widetilde\Gamma$ be the index 2 subgroup of $\Gamma$.
Let $F$ be a face of $\mathcal{H}$.
Then $F$ corresponds to a boundary component $X_F$ of the Kleinian manifold $M = \widetilde\Gamma\backslash(\mathbb{H}^3\cup\Omega(\widetilde\Gamma))$.

A {\em compressing disk} $D$ is an embedded disk in $M$ with $\partial D \subseteq \partial M$ such that $\partial D$ does not bound a disk in $\partial M$.
We say a component $X_F\subseteq\partial M$ is \emph{incompressible} if there does not exist any compressing disk $D$ with $\partial D\subseteq X_F$.

The following gives a topological characterization for Jordan domain faces.
\begin{theorem}\label{thm:jordan}
    Let $F$ be a face of $\mathcal{H}$.
    Then $F$ is a Jordan domain if and only if $X_F$ is incompressible.
\end{theorem}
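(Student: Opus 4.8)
The plan is to translate both the combinatorial condition (\emph{$F$ is a Jordan domain}) and the topological condition (\emph{$X_F$ is incompressible}) into a single algebraic statement about the inclusion-induced homomorphism $\pi_1(X_F)\longrightarrow\pi_1(M)$, and then show that each is equivalent to its injectivity. Since $\mathbb{H}^3\cup\Omega(\widetilde\Gamma)$ is contractible and $\widetilde\Gamma$ acts freely and properly discontinuously on it, we have $\pi_1(M)\cong\widetilde\Gamma$. By the Loop Theorem and Dehn's Lemma, $X_F$ admits a non-trivial compressing disk if and only if $\pi_1(X_F)\to\pi_1(M)=\widetilde\Gamma$ fails to be injective; thus it suffices to prove that this map is injective precisely when $F$ is a Jordan domain.

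For the direction that a non-Jordan face is compressible, I would use the curves constructed in \S\ref{sec:teich}. If $F$ is not a Jordan domain, then $\psi_F:P_F\to F$ is not a homeomorphism, so its boundary walk revisits some vertex; equivalently there are two distinct vertices $v,w$ of $I(F)$ with $\psi_F(v)=\psi_F(w)$. Since $\mathcal{H}$ is simple, $v$ and $w$ cannot be adjacent, and each of the two arcs of $\partial F$ between them carries at least one tangency point, so the associated $\sigma_F$-invariant curve $\widetilde\gamma^F_{vw}$ is essential on the punctured sphere $X_F$. On the other hand $C_v=C_w$ forces $g_vg_w=\mathrm{id}$, so $\widetilde\gamma^F_{vw}$ is null-homotopic in $M$, exactly as noted in \S\ref{sec:teich}. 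Hence $\pi_1(X_F)\to\widetilde\Gamma$ has non-trivial kernel, and the Loop Theorem produces a non-trivial compressing disk, so $X_F$ is compressible.

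For the converse, suppose $F$ is a Jordan domain. Then the circles around $F$ form a cyclically tangent ``necklace,'' and the component $\Omega_F$ of the domain of discontinuity containing the interstice $\Pi_F$ is simply connected. Consequently $\pi_1(X_F)\cong\widetilde\Gamma_F$, where $\widetilde\Gamma_F$ is the index-$2$ orientation-preserving subgroup of the group generated by the reflections in the circles around $F$, and the inclusion $X_F\hookrightarrow M$ induces the subgroup inclusion $\widetilde\Gamma_F\hookrightarrow\widetilde\Gamma$, which is injective. By the equivalence above, $X_F$ is incompressible.

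I expect the main obstacle to be the rigorous justification that $\Omega_F$ is simply connected in the Jordan case and the identification $\pi_1(X_F)\cong\widetilde\Gamma_F$, together with a clean verification that the inclusion-induced map is exactly the algebraic inclusion $\widetilde\Gamma_F\hookrightarrow\widetilde\Gamma$, so that no relation of $\widetilde\Gamma$ collapses $\widetilde\Gamma_F$. A secondary care point is confirming that $\widetilde\gamma^F_{vw}$ is genuinely essential on $X_F$ in the non-Jordan case, i.e.\ neither bounds a disk nor is boundary-parallel, which reduces to the non-adjacency of $v$ and $w$ and the resulting distribution of punctures on the two sides of the curve.
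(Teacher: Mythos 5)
Your proposal is correct, and its opening reduction is the paper's in disguise: the paper's proof begins by observing that $X_F$ is incompressible if and only if $\Omega_F$ is simply connected, and this is exactly your $\pi_1$-injectivity criterion, since the inclusion-induced map $\pi_1(X_F)\to\pi_1(M)\cong\widetilde\Gamma$ factors as $\pi_1(X_F)\twoheadrightarrow\widetilde\Gamma_F\hookrightarrow\widetilde\Gamma$ with kernel generated by the image of $\pi_1(\Omega_F)$. Where you genuinely diverge is in closing the two directions. The paper stays on the limit-set side: $\Omega_F$ simply connected is equivalent to $\Lambda_F$ connected, $F$ fails to be a Jordan domain exactly when $\partial F$ fails to be $2$-connected, and the equivalence of these two conditions is outsourced to \cite[Prop.~3.4]{LLM22}, which equates connectivity of the limit set of a kissing reflection group with $2$-connectivity of its nerve. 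You instead treat the non-Jordan direction by hand: a repeated vertex on the boundary walk gives distinct $v,w\in I(F)$ with $\psi_F(v)=\psi_F(w)$, necessarily non-adjacent by simplicity of $\mathcal{H}$ (the right observation); the curve $\widetilde\gamma^F_{vw}$ is essential on $X_F$ --- indeed each complementary side carries at least \emph{two} punctures, not just one, precisely because $v,w$ are non-adjacent --- while it is null-homotopic in $M$ because its lift already closes up as a loop in $\Omega_F\subseteq\mathbb{H}^3\cup\Omega$, exactly as remarked in \S\ref{sec:teich}; the Loop Theorem then produces the non-trivial compressing disk. This explicit-curve argument is more self-contained than the paper's citation and makes the compressing disk visible, at the cost of invoking the Loop Theorem; the paper's route is shorter but leans entirely on the quoted proposition from \cite{LLM22}.

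The one step you still owe, and correctly flag, is that $F$ Jordan implies $\Omega_F$ simply connected --- this is precisely the content the paper draws from \cite[Prop.~3.4]{LLM22}, in the form that $2$-connectivity of $\partial F$ forces $\Lambda_F$ to be connected (and a component of the complement of a connected closed subset of $\widehat\C$ is automatically simply connected). You can close it by the same citation or by a direct necklace argument, but be careful not to claim $\Lambda_F$ is literally a Jordan curve: if $\mathcal{H}$ contains a chord of the cycle $\partial F$, the circles around $F$ carry tangencies beyond the cyclic ones and $\Lambda_F$ is pinched; connectivity, hence simple connectivity of $\Omega_F$, nevertheless survives. Two smaller points worth a line each: your identification $\pi_1(M)\cong\widetilde\Gamma$ uses that $\widetilde\Gamma$ acts freely, i.e.\ is torsion-free, which holds because torsion in the free product of the $\mathbb{Z}/2$'s generated by the reflections is conjugate into the generators, all orientation-reversing; and the equivalence ``non-trivial compressing disk iff non-injective $\pi_1$'' uses that on the punctured sphere $X_F$ a simple closed curve bounds a disk if and only if it is null-homotopic, since the cusps are not capped off.
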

\begin{proof}
    It is easy to see that $X_F$ is incompressible if and only if the corresponding domain of discontinuity $\Omega_F$ is simply connected {(see Figure~\ref{fig:double})}. This is equivalent to the fact that the limit set $\Lambda_F$ of the group $\Gamma_F$ is connected.

    The following argument is a special case of \cite[Prop.~3.4]{LLM22}, but we include it for completeness. For more details, see \cite[Prop.~3.4]{LLM22} and the discussion preceding it.

    {For each $v\in\partial F$, let $g_v$ be the reflection in $D_v$. Let $\mathcal{D}^0:=\bigcup_{v\in\partial F}\overline{D_v}$ be the union of the closed disks corresponding to vertices on $\partial F$. And define inductively
    $$\mathcal{D}^{i+1}=\bigcup_{v\in\partial F}g_v\cdot\overline{\mathcal{D}^i-\overline{D_v}}$$
    which is the union of the images under $g_v$ of all closed disks in $\mathcal{D}^i$ outside $D_v$.}

    {Note that $\{\mathcal{D}^i\}$ is nested, and $\Lambda_F=\bigcap_{i=0}^\infty\mathcal{D}^i$. Moreover, if $F$ is a Jordan domain, $D^0$ is the union of a cycle of closed disks, and so is $\mathcal{D}^i$ for all $i\ge0$. In particular, $\mathcal{D}^i$ is connected, and so is $\Lambda_F$.}

    {On the other hand, if $F$ is not a Jordan domain, then there exists a vertex $v$ on $\partial F$ so that deleting it disconnects $\partial F$. This means that disks outside $D_v$ form at least two disjoint chains, and it follows that $\mathcal{D}^1$ is disjoint, and so is $\Lambda_F$ {(see Figure~\ref{fig:double})}.}
\end{proof}

\subsection*{Cylinders}
Let $F$ be a face of $\mathcal{H} \subseteq \mathcal{G}$ that is a Jordan domain.
Let $\mathcal{P}_F$ be a finite circle packing with nerve $F \cap \mathcal{G}$.
Denote the corresponding reflection group, Kleinian group and Kleinian manifold by {$\Gamma_{F\cap\mathcal{G}}$, $\widetilde\Gamma_{F\cap\mathcal{G}}$ and $M_{F\cap\mathcal{G}}$}.

Let $F_{\ext}$ be the external face of $F \cap \mathcal{G}$, i.e., $F_{\ext}=\overline{\widehat\C - F}$.
Let {$X_{\ext} \subseteq \partial M_{F\cap\mathcal{G}}$} be the corresponding surface.

A {\em cylinder} is a continuous map 
$$
f:(S^1\times[0,1],S^1\times\{0,1\})\longrightarrow(M_{F\cap\mathcal{G}},\partial M_{F\cap\mathcal{G}}).
$$
It is called {\em boundary parallel} if it can be homotoped rel boundary to a cylinder in {$\partial M_{F\cap\mathcal{G}}$}.
A cylinder is \emph{essential} if its boundary components $f(S^1\times\{0\}),  f(S^1\times\{1\})$ are essential curves of $\partial M_{F\cap\mathcal{G}}$, and it is not boundary parallel.

The following theorem provides an equivalent topological definition of acylindrical faces in $\mathcal{G}$.
\begin{theorem}\label{thm:acy}
    Let $F$ be a face of $\mathcal{H} \subseteq \mathcal{G}$ that is a Jordan domain with at least 4 vertices on its boundary.
    Then $F$ is acylindrical in $\mathcal{G}$ if and only if 
    {there exists no essential cylinder in $M_{F\cap\mathcal{G}}$ with one boundary component in $X_{\ext}$.}
\end{theorem}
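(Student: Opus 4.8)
The plan is to translate the topological condition into the combinatorial one via the dictionary between boundary curves and vertex pairs established in \S\ref{sec:teich}. First I would use that $F$ is a Jordan domain: by Theorem~\ref{thm:jordan} the surface $X_{ext}$ is incompressible, and in fact each component $\Omega_{F'}$ of the domain of discontinuity is simply connected, so every $\widetilde\Gamma_{F'}$ is a Fuchsian surface group. Under this identification, the essential $\sigma$-invariant simple closed curves on a boundary component $X_{F'}$ of $M_F$ are exactly the geodesics $\widetilde\gamma^{F'}_{v'w'}$ indexed by non-adjacent pairs of vertices $v',w'$ of $F'$, and such a curve represents the loxodromic element $g_{v'}g_{w'}\in\widetilde\Gamma_F$. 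The crucial observation is that this element depends only on the unordered pair of circles $\{C_{v'},C_{w'}\}$: if a single non-adjacent pair $\{v,w\}$ lies on two distinct faces $F'$ and $F''$, then $\widetilde\gamma^{F'}_{vw}$ and $\widetilde\gamma^{F''}_{vw}$ represent the \emph{identical} group element $g_vg_w$, and are therefore freely homotopic in $M_F$.

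With this dictionary, I would first dispose of cylinders having both ends on $X_{ext}$. Since $X_{ext}$ is incompressible and $\widetilde\Gamma_{ext}$ is a surface group, any essential annulus with both boundary curves on $X_{ext}$ is homotopic into $X_{ext}$, hence boundary parallel. Thus a non-boundary-parallel cylinder with one end on $X_{ext}$ must have its other end on a different component $X_{F'}$ with $F'\neq F_{ext}$, its two boundary curves being freely homotopic in $M_F$. After homotoping the cylinder onto the geodesic annulus joining the canonical $\sigma$-invariant geodesics, its free homotopy class is pinned down by the conjugacy class of the corresponding element of $\widetilde\Gamma_F$; using geometric finiteness together with the simple connectivity of each $\Omega_{F'}$, I would argue that this forces the two boundary curves to arise from the \emph{same} pair of circles $\{C_v,C_w\}$. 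Consequently, a non-boundary-parallel cylinder with one end on $X_{ext}$ exists if and only if some non-adjacent pair $v,w\in\partial F$ occurs as a non-adjacent pair of vertices of a face $F'\neq F_{ext}$ of $F\cap\mathcal{G}$.

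It then remains to prove the purely combinatorial equivalence: a non-adjacent pair $v,w\in\partial F$ lies on a common internal face of $F\cap\mathcal{G}$ as non-adjacent vertices if and only if the two components of $\partial F-\{v,w\}$ fail to be connected in $(F\cap\mathcal{G})-\{v,w\}$, i.e.\ $F$ is cylindrical in $\mathcal{G}$. For the forward direction, such a face $F'$ has $v$ and $w$ on its boundary with both lying on $\partial F$, and since a face contains no edges in its interior it acts as a barrier inside the disk $F$: every path in the $1$-skeleton joining the two arcs of $\partial F$ must pass through $v$ or $w$. For the reverse direction, if $\{v,w\}$ is a $2$-cut separating the two boundary arcs, the planar embedding in the disk yields a face incident to both $v$ and $w$ that bridges the two sides, and non-adjacency of $v,w$ guarantees they are non-adjacent vertices of this face. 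Combining the two equivalences and taking contrapositives yields the theorem: every cylinder with one boundary component in $X_{ext}$ is boundary parallel precisely when $F$ is acylindrical in $\mathcal{G}$.

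I expect the main obstacle to be the claim in the second paragraph that a free homotopy between boundary curves forces the same pair of circles, i.e.\ ruling out exotic conjugacies $g_vg_w\sim g_{v'}g_{w'}$ with $\{v,w\}\neq\{v',w'\}$, on which the truth of the theorem genuinely depends. I would handle this by realizing any essential annulus by a geodesic annulus and invoking the geometrically finite structure of $\widetilde\Gamma_F$, so that the axis of the loxodromic element, together with the simple connectivity of the relevant $\Omega_{F'}$, recovers the unordered pair of circles unambiguously; alternatively one can appeal to the corresponding analysis of kissing reflection groups in \cite{LLM22}.
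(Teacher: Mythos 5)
Your forward direction agrees with the paper's proof: when a non-adjacent pair $v,w\in\partial F$ lies on a common face $F'$ of $\mathcal{G}$ inside $F$, the element $g_vg_w$ belongs to both $\widetilde\Gamma_{F'}$ and $\widetilde\Gamma_{F_{ext}}$, the two canonical curves are freely homotopic in $M_F$, and the resulting cylinder joins distinct boundary components, hence cannot be boundary parallel. Your explicit combinatorial lemma equating this face condition with cylindricity (the ``barrier'' arc through the open face in one direction, a bridging face for a separating pair in the other) is a correct elaboration of a step the paper leaves implicit. The converse, however, contains genuine gaps. First, you assert that each $\Omega_{F'}$ is simply connected and each $\widetilde\Gamma_{F'}$ is a surface group; but the hypothesis only makes $F$ (hence $F_{ext}$) a Jordan domain, while the interior faces of $F\cap\mathcal{G}$ need not be Jordan domains, so by Theorem~\ref{thm:jordan} the component carrying the other end of the cylinder may be compressible and your dictionary is unavailable there. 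Second, and more fundamentally, you homotope the cylinder ``onto the geodesic annulus joining the canonical $\sigma$-invariant geodesics'': the end of an essential cylinder on $X_{ext}$ represents an \emph{arbitrary} non-peripheral conjugacy class of $\widetilde\Gamma_{ext}$ --- say a long cyclically reduced word $g_ag_bg_cg_d$ --- which is in general neither simple nor $\sigma$-invariant, so the curves $\widetilde\gamma_{vw}$ do not exhaust the cases, and the exotic-conjugacy question $g_vg_w\sim g_{v'}g_{w'}$ that you isolate as the main obstacle is not even the general problem. Your proposed resolution (``the axis \ldots recovers the unordered pair of circles unambiguously'') is precisely the unproved crux, and axes together with geometric finiteness do not by themselves deliver it.

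What the paper does instead, and what your outline is missing, is a concrete covering-space and fundamental-domain argument that requires none of these reductions: lift the (arbitrary, essential, non-boundary-parallel) cylinder to $\mathbb{H}^3\cup\Omega_F$, so that the two boundary lines share their endpoints; after replacing the boundary element by a cyclically reduced conjugate, these endpoints lie in two distinct circles $C_v$, $C_w$ with $v,w\in\partial F$ non-adjacent; the top line lies in a translate $g\cdot\Omega_{F'}$, and some translate $gg'\cdot\Pi_{F'}$ of the interstice is bounded by arcs on both $C_v$ and $C_w$. Since every nontrivial $\Gamma_F$-translate of $\Pi_{F'}$ is contained in a single closed disk of $\mathcal{P}_F$ and therefore cannot meet two packing circles, this forces $gg'\cdot\Pi_{F'}=\Pi_{F'}$, hence $v,w\in F'$ and $F$ is cylindrical --- no classification of conjugacies is needed. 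Note also that your dismissal of cylinders with both ends on $X_{ext}$ rests on a false general principle: incompressibility of a boundary component whose fundamental group injects does \emph{not} make essential annuli with both ends on it boundary parallel (knot exteriors, with their incompressible torus boundary and non-parallel cabling annuli, are standard counterexamples). In the present setting the conclusion does hold, but only via the same lifting argument --- the shared endpoints force the second boundary line into $\Omega_{ext}$ itself, so the two ends are homotopic within $X_{ext}$ --- and not via ``incompressible plus surface group.''
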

\begin{proof}
    {We prove the contrapositive of both directions.}
    Suppose first $F$ is cylindrical in $\mathcal{G}$. Then there exists two non-adjacent vertices $v,w$ on $\partial F$, and a face $F'$ of $\mathcal{G}$ contained in $F$ so that $v,w\in \partial F'$ {(see Remark~\ref{rmk:cylin})}. Let $g_v$ and $g_w$ be reflections in $C_v$ and $C_w$ respectively. Then $g_vg_w$ is in both $\widetilde\Gamma_{F'}$ and $\widetilde\Gamma_{F_{\ext}}$. In other words, the curves $\tilde\gamma_{vw}^{F'}$ and $\tilde\gamma_{vw}^{F_{\ext}}$ are homotopic in $M_{F\cap\mathcal{G}}$. The homotopy between them gives an essential cylinder.

    Conversely, suppose $f:(S^1\times[0,1],S^1\times\{0,1\})\longrightarrow(M_{F\cap\mathcal{G}},\partial M_{F\cap\mathcal{G}})$ is an essential cylinder with $f(S^1\times\{0\})\subset X_{\ext}$ and $f(S^1\times\{1\})\subset X_{F'}$ for some face $F'$ of $\mathcal{G}$. Then there exists a lift of $f$
    $$\tilde f:\mathbb{R}\times[0,1]\longrightarrow \mathbb{H}^3\cup\Omega(\Gamma_{F\cap \mathcal{G}})$$
    so that $\tilde f(\mathbb{R}\times\{0\})\subset\Omega_{\ext}$ and $\tilde f(\mathbb{R}\times\{1\})\subset g\cdot\Omega_{F'}$ for some $g\in\Gamma_{F\cap\mathcal{G}}$, {and $\Omega_{\ext}, \Omega_{F'}$ are components of $\Omega(\Gamma_{F\cap \mathcal{G}})$ corresponding to $F_{\ext}, F'$ respectively}. {Moreover, since $F$ is a Jordan domain, by choosing a different lift if necessary} we may assume the two endpoints of $\tilde f(\mathbb{R}\times\{0\})$ lie in two different disks $D_v$ and $D_w$ for a pair of distinct vertices $v,w\in\partial F$.

    {We first show that $g\cdot \Omega_{F'}$ must meet both $D_v$ and $D_w$.}
    {Set $\mathcal{S}:=f(S^1\times[0,1])$ and $\tilde{\mathcal{S}}:=\tilde f(\mathbb{R}\times[0,1])$ its lift.
    Let $g_0$ be the generator of the deck group of the covering $\tilde{\mathcal{S}}\longrightarrow\mathcal{S}$. This is a loxodromic element representing the core curve of the cylinder $S$. A fundamental domain $\mathcal{F} \subseteq \tilde{\mathcal{S}}$ for the action of $g_0$ on $\tilde{\mathcal{S}}$ is a strip intersecting $\Omega_{\ext}$ and $g\cdot\Omega_{F'}$ in compact segments. Thus $\mathcal{F}$ is itself a compact subset of $\mathbb{H}^3\cup\Omega(\Gamma_{F\cap \mathcal{G}})$. Note that the action of $g_0\in\pi_1(M_{F\cap\mathcal{G}})$ is properly discontinuous on $\mathbb{H}^3\cup\Omega(\Gamma_{F\cap \mathcal{G}})$, so $g_0^n\mathcal{F}$ limits to the attracting fixed point of $g_0$ as $n\to\infty$, and to the repelling fixed point as $n\to-\infty$. It follows that the endpoints of both $\tilde f(\mathbb{R}\times\{0\})$ and $\tilde f(\mathbb{R}\times\{1\})$ are precisely the fixed points of $g_0$. We conclude that $g\cdot\Omega_{F'}$ meets both $D_v$ and $D_w$.}

    {We now show that $g$ stabilizes $\Omega_{F'}$. 
    Suppose not, then $g\cdot\Omega_{F'}$ is contained entirely in one of the disks $D_u$ for some vertex $u$ of $F\cap \mathcal{G}$ (see \cite[Lemma 3.1]{LLM22}). But this is not possible.}

    {We now show that the fixed points of $g_0$ are contained in $\bigcup_{u\in \partial F'}\overline{D}_{u}$.}
    {From the discussions in \cite[\S3.1]{LLM22}, we know that the limit set of the stabilizer of $\Omega_F'$ in $\Gamma_{F\cap\mathcal{G}}$ is contained in the union of the closed disks corresponding to vertices on $\partial F'$. In particular, the fixed points of $g_0$ must be contained in the union of $\overline{D}_{u}$ as $u$ ranges over all vertices on $\partial F'$.}
    Hence $v,w\in \partial F'$.
    
    {If $v$ and $w$ are nonadjacent in $\partial F$, then we can conclude that $F$ is cylindrical (see Remark~\ref{rmk:cylin}). Otherwise, note that since the two circles $C_v$ and $C_w$ are tangent, $g_wg_v$ is a parabolic element fixing the tangent point between the two circles.}
    {Denote the fixed points of $g_0$ by $a, b$ with $a\in D_v, b \in D_w$. Now consider a different lift $g_v\tilde f$ of $f$. Since $b \notin D_v$, $g_vb \in D_v$ and $g_va \in D_{s_1}$ for some $s_1 \in \partial F_{\ext}$.}

    {If $s_1\neq w$, then replacing $\tilde f$ with $g_v\tilde f$ in the argument above gives $s_1\in\partial F'$ as well. Among $s_1,v,w$, two of them must be non-adjacent. Then we can again conclude that $F$ is cylindrical.}
    
    {If $s_1=w$ then we in turn consider the lift $g_wg_v\tilde f$ of $f$. Note $g_wg_v\tilde f (\mathbb{R}\times\{0\})$ has one endpoint $g_wg_va \in D_{s_2}$ for some vertex $s_2 \in \partial F_{\ext}$ and the other one $g_wg_vb \in D_{w}$. As above, either $s_2\neq v$ (and we are done), or we consider $g_vg_wg_v\tilde f$. Inductively, after finite steps $k$, we must have $s_k \notin \{v, w\}$, for otherwise the endpoints $a, b$ of $\tilde f(\mathbb{R}\times\{0\})$ are the fixed points of $g_wg_v$. But this is not possible as $g_wg_v$ is parabolic.}
\end{proof}


\subsection*{Skinning maps}
{In the following, we briefly discuss the construction of the skinning map for $3$-manifold, and refer the readers to \cite[\S 3]{McM90} and \cite[\S 1.4]{Ken10} for more details.}
Let $\Gamma$ be a geometrically finite Kleinian group {without accidental parabolics}, and $M$ the corresponding Kleinian manifold.
Suppose $\partial M$ is incompressible. Then the quasiconformal deformation space of $\Gamma$ can be identified with $\Teich(\partial M)$, {which parametrizes the conformal structure on $\partial M$}.
The cover of $\Int(M)$ corresponding to $\partial M$ is a finite union of quasi-Fuchsian manifolds, {and homeomorphic to $\partial M\times\mathbb{R}$}. {Each quasi-Fuchsian manifold has two ends, one of which is shared with $M$.} 
We then obtain a map
$$
\Teich(\partial M)\cong\mathcal{QC}(\Gamma) \longrightarrow \Teich(\partial M) \times \Teich(\overline{\partial M}).
$$
The first coordinate is the identity map ({corresponding to shared ends}), and the second coordinate is Thurston's skinning map 
$$
\tau_M: \Teich(\partial M) \longrightarrow \Teich(\overline{\partial M}),
$$
{which corresponds to the other end of the quasi-Fuchsian manifolds not shared with $M$}.

To see the connection with our setting, let us consider the following situation.
Let $\mathcal{G}$ be a finite simple plane graph. 
Suppose that the faces $F_1,..., F_k$ of $\mathcal{G}$ are all Jordan domains.
Let $\mathcal{H}_i = \partial F_i$.
{Note that $\mathcal{H}_i$ has two faces, one shared with $\mathcal{G}$, i.e. $F_i$, and the other one denoted by $F_i^{\ext}$.}

Let $\mathcal{P}$ be a circle packing with nerve $\mathcal{G}$, and $\Gamma$ be the group generated by reflection along circles in $\mathcal{P}$.
Let $M$ be the Kleinian manifold for the index 2 subgroup of $\Gamma$. {The reflections induce an orientation-reversing isometry $\sigma: M\longrightarrow M$.}
We denote by $\Gamma_i$ and $M_i$ the subgroup and Kleinian manifold associated to $\mathcal{H}_i$.
Note that each $M_i$ is a quasi-Fuchsian manifold, {with one end coming from the shared face $F_i$, and the other end coming from $F_i^{\ext}$.}

By Theorem \ref{thm:dc}, we have
$$
\Teich(\mathcal{G}) = \Teich^\sigma(\partial M) \text{ and } \Teich(\mathcal{H}_i) = \Teich^\sigma(\partial M_i).
$$
Note that 
$$
\Teich(\partial M) \times \Teich(\overline{\partial M}) = \prod_i \Teich(\partial M_i).
$$
Putting everything together, we have the following theorem giving a precise connection of Thurston's skinning map and the skinning maps in our setting.
\begin{theorem}\label{thm:csk}
    With the notations above, Thurston's skinning map
    $$
    (id, \tau_M): \Teich(\partial M) \longrightarrow \Teich(\partial M) \times \Teich(\overline{\partial M}) = \prod_i \Teich(\partial M_i)
    $$
    restricts to a map
    $$
    (id, \tau_M): \Teich^\sigma(\partial M) = \Teich(\mathcal{G})\longrightarrow  \Teich^\sigma(\partial M) \times \Teich^\sigma(\overline{\partial M}) = \prod_i \Teich(\mathcal{H}_i).
    $$
    Moreover, we have the identity $(id, \tau_M)|_{\Teich^\sigma(\partial M)} = \prod_i \tau_{\mathcal{H}_i, \mathcal{G}}$
\end{theorem}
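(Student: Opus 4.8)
The plan is to unwind Thurston's skinning construction and the combinatorial skinning maps $\tau_{\mathcal{H}_i,\mathcal{G}}$ simultaneously, and to check that the covering-space construction underlying $(id,\tau_M)$ reproduces the sub-circle-packing construction underlying $\prod_i \tau_{\mathcal{H}_i,\mathcal{G}}$ in a $\sigma$-equivariant way. First I would set up the product identification of the target. Since every face $F_i$ is a Jordan domain, Theorem \ref{thm:jordan} shows that $\partial M$ is incompressible, so $\tau_M$ is defined on all of $\Teich(\partial M)$. The cover of $\Int(M)$ associated to the boundary component $X_{F_i}$ has fundamental group the image of $\pi_1(X_{F_i})$ in $\widetilde\Gamma$, namely $\widetilde\Gamma_{F_i}$; hence this cover is exactly the quasi-Fuchsian manifold $M_i$. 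Its two ends are carried by the two components of $\Omega(\widetilde\Gamma_{F_i})$: the component containing $\Pi_{F_i}$ gives the front boundary $X_{F_i}$ (the identity coordinate), and the other gives the back boundary $\overline{X_{F_i}}$ (the skinning coordinate). Writing $\partial M_i = X_{F_i} \sqcup \overline{X_{F_i}}$ and summing over $i$ yields the identification $\Teich(\partial M)\times\Teich(\overline{\partial M}) = \prod_i \Teich(\partial M_i)$ that defines the target of $(id,\tau_M)$.

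Next I would record that the reflection $\sigma$ is a global orientation-reversing isometric involution of $M$ whose restriction to each $X_{F_i}$ is $\sigma_{F_i}$; in particular $\sigma$ fixes each boundary component setwise. Consequently $\sigma$ lifts to an involution of each cover $M_i$, restricting to $\sigma_{F_i}$ on $X_{F_i}$ and to an anti-holomorphic involution of $\overline{X_{F_i}}$. All the ingredients of Thurston's map --- the marked cover, the quasi-Fuchsian uniformization, and the readout of the back conformal structure --- are natural, hence $\sigma$-equivariant; therefore $(id,\tau_M)$ intertwines the $\sigma$-actions on domain and target and carries the fixed locus $\Teich^\sigma(\partial M) = \mathcal{M}(\mathcal{G})$ into $\prod_i \Teich^\sigma(\partial M_i) = \prod_i \mathcal{M}(\mathcal{H}_i)$, the last equality being Theorem \ref{thm:dc} applied to each $\mathcal{H}_i$. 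This establishes the first assertion of the theorem.

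Finally, to prove the identity $(id,\tau_M)|_{\Teich^\sigma(\partial M)} = \prod_i \tau_{\mathcal{H}_i,\mathcal{G}}$, I would fix $\mathcal{P}\in\mathcal{M}(\mathcal{G})$ and compare the two sides componentwise in $\mathcal{M}(\mathcal{H}_i)$. The sub-packing $\mathcal{Q} = \tau_{\mathcal{H}_i,\mathcal{G}}(\mathcal{P})$ consists of precisely the circles of $\mathcal{P}$ labeled by the vertices of $\partial F_i$, so its reflection group is $\Gamma_{F_i}$ and its Kleinian manifold is the quasi-Fuchsian $M_i$. Under Theorem \ref{thm:dc} for $\mathcal{H}_i = \partial F_i$, the class of $\mathcal{Q}$ is the pair of interstices $(\Pi_{F_i}(\mathcal{Q}),\Pi_{F_{i,ext}}(\mathcal{Q}))$: the interior interstice $\Pi_{F_i}(\mathcal{Q})$ equals $\Pi_{F_i}(\mathcal{P})$ since it is bounded by the same circles, matching the identity coordinate, while the external interstice $\Pi_{F_{i,ext}}(\mathcal{Q})$ is the double giving the back boundary $\overline{X_{F_i}}$ that $\tau_M$ extracts. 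I expect the only real difficulty to lie in this last step, namely verifying that the obscured surface $\overline{X_{F_i}}$ produced by the quasi-Fuchsian cover is conformally the external interstice of the sub-packing rather than some a priori different double; this is resolved by the observation that the cover $M_i$ is literally the Kleinian manifold of $\mathcal{Q}$, so both of its boundary conformal structures are by construction the two interstices of $\mathcal{Q}$. Matching coordinates then gives the desired identity.
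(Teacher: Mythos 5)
Your proposal is correct and takes essentially the same route as the paper's proof: the paper's brief argument rests on exactly your two key observations, namely that the stabilizer of $\Omega_{F_i}$ is $\Gamma_{F_i}$ --- equivalently, that the quasi-Fuchsian cover associated to $X_{F_i}$ is literally the Kleinian manifold of the sub-packing $\tau_{\mathcal{H}_i,\mathcal{G}}(\mathcal{P})$ --- and that this reflection symmetry forces the skinning image back into the reflection locus. Your writeup merely spells out the $\sigma$-equivariance and the componentwise matching of the two interstices that the paper leaves implicit.
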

\begin{proof}
    For each face $F_i$ of $\mathcal{G}$, the stabilizer of the corresponding domain of discontinuity $\Omega_i$ is precisely $\Gamma_i$. So the restriction of the Thurston's skinning map to the reflection locus has image in the reflection locus. 
    
    {For each quasi-Fuchsian manifold $M_i$, recall that one of its ends comes from face $F_i$ shared with $\mathcal{G}$, and the other end comes from $F_i^{\ext}$. This end is precisely the image of the skinning map. In particular, the map $\tau_{\mathcal{H}_i,\mathcal{G}}$ precisely gives $(id,\tau_M)$ composed with projection onto the corresponding factor.}
\end{proof}


\subsection{Well-connectedness}\label{ss:wellconn}
In this subsection, we prove the following proposition which says acylindrical subgraphs contains a lot of connecting paths.
\begin{prop}\label{prop:cp}
    Let $F$ be a face of $\mathcal{H} \subseteq \mathcal{G}$ that is a Jordan domain.
    Suppose that $F$ is acylindrical in $\mathcal{G}$.
    Then for any pair of non-adjacent vertices $v, w\in \partial F$, there exists a simple path $\gamma \in F \cap \mathcal{G}$ connecting $v, w$ so that $\Int(\gamma) \subseteq \Int(F)$.    
\end{prop}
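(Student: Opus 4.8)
The plan is to prove the contrapositive packaged as a direct argument: I will show that if no such connecting path exists, then $F$ is cylindrical in $\mathcal{G}$. Fix a pair of non-adjacent vertices $v,w\in\partial F$. The two arcs of $\partial F-\{v,w\}$ divide the boundary into two sides; call them $A$ and $B$. By acylindricity, $A$ and $B$ are connected in $F\cap\mathcal{G}-\{v,w\}$, i.e.\ there is some path in $F\cap\mathcal{G}$ from a vertex of $A$ to a vertex of $B$ avoiding $v,w$. The first step is to promote this mere connectedness into the existence of a genuine \emph{simple} path $\gamma$ from $v$ to $w$ whose interior lies in $\Int(F)$.

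First I would set up the combinatorial topology carefully. Since $F$ is a Jordan domain, $\partial F$ is a simple closed curve in $\widehat\C$, and $F\cap\mathcal{G}$ is a finite plane graph embedded in the closed disk $\overline F$ with its boundary cycle equal to $\partial F$. The key step is to produce a simple path $\gamma$ from $v$ to $w$ inside this disk with interior vertices and edges in $\Int(F)$. I would argue by contradiction: suppose no such $\gamma$ exists. The natural tool is a planar separation / Menger-type argument. In a plane graph drawn in a disk, the obstruction to an interior path joining two boundary vertices $v,w$ is a ``cut'' — but here I must be careful, because removing $v,w$ themselves is exactly what the acylindricity hypothesis controls. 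Concretely, if there is no simple $v$--$w$ path through the interior, then I expect to extract a vertex cut consisting solely of boundary vertices (other than $v,w$) that separates the two sides $A$ and $B$ of $\partial F - \{v,w\}$; by planarity such a separating set must itself be supported on a single interstitial ``chord'' region, and this is precisely the data of a pair of non-adjacent vertices whose removal disconnects the two sides — contradicting acylindricity.

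The cleanest route, which I would pursue, uses the acylindricity directly to build the path rather than to derive a contradiction. Since $A$ and $B$ are connected in $F\cap\mathcal{G}-\{v,w\}$, choose a simple path $\delta$ in $F\cap\mathcal{G}-\{v,w\}$ from a vertex $a\in A$ to a vertex $b\in B$. Because $F$ is a Jordan domain and the graph is embedded in $\overline F$, I can take $\delta$ to have its interior in $\Int(F)$ (any excursion of $\delta$ onto $\partial F$ at a vertex $u\neq v,w$ can be shortcut, as $\partial F$ is itself a path in the graph by irreducibility, and since $u\neq v,w$ we may route along $\partial F$ toward $v$ or $w$). Now concatenate: follow $\partial F$ from $v$ along side $A$ to $a$, traverse $\delta$ from $a$ to $b$, then follow $\partial F$ along side $B$ from $b$ to $w$. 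This gives a $v$--$w$ walk; extracting a simple subpath and pushing the boundary portions slightly into $\Int(F)$ — using that every boundary vertex other than $v,w$ has an incident edge into $\Int(F)$, which again follows from acylindricity applied at that vertex — yields the desired $\gamma$ with $\Int(\gamma)\subseteq\Int(F)$.

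The main obstacle I anticipate is the final simplification step: ensuring the interior of the concatenated path genuinely avoids $\partial F$ while remaining simple, without accidentally re-touching $v$ or $w$. Here the planar embedding is essential, and I would lean on Theorem~\ref{thm:acy}, which recasts acylindricity as the statement that every cylinder with one boundary component in $X_{ext}$ is boundary parallel; a non-interior path, once closed up using the two outermost circles, would furnish an essential non-boundary-parallel cylinder, giving an alternative and perhaps cleaner contradiction. I expect the combinatorial push-off argument to be elementary but fiddly, whereas the conceptual content is entirely carried by the definition of acylindricity together with the Jordan domain hypothesis on $F$.
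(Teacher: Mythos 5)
There is a genuine gap, and it sits exactly where the proposition's content lies: the ``push-off'' step. The required conclusion $\Int(\gamma)\subseteq\Int(F)$ is a statement about the graph --- the interior vertices and edges of $\gamma$ must be actual cells of $F\cap\mathcal{G}$ lying in the open face --- so a boundary arc cannot be pushed slightly into $\Int(F)$ topologically; it must be \emph{rerouted} through genuine interior vertices. Your supporting observation, that every boundary vertex $u\neq v,w$ has an edge into $\Int(F)$, is in fact provable (apply acylindricity to the two neighbours of $u$ on the cycle $\partial F$, which are non-adjacent in the intended sense of adjacency along the boundary polygon, and note that since $\mathcal{H}$ is induced and $F$ is a face of $\mathcal{H}$, no edge of $\mathcal{G}$ joining two vertices of $\partial F$ can run through $\Int(F)$, so the escaping edge must go to an interior vertex). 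But one interior edge per boundary vertex is far from enough: to shortcut even a single boundary vertex of your concatenated walk you need a path \emph{between two interior vertices} avoiding $\partial F$, i.e.\ connectivity of the interior part of $F\cap\mathcal{G}$, which nothing you have established provides --- that connectivity is essentially what the proposition asserts, so the argument is circular. Your two fallback routes are likewise not proofs: in the Menger-type sketch, a $v$--$w$ cut in the relevant graph consists of \emph{interior} vertices (boundary vertices other than $v,w$ are not even available), and you give no argument converting an obstruction into a single non-adjacent boundary pair $\{x,y\}$ disconnecting the two arcs; and in the cylinder alternative, the absence of an interior $v$--$w$ path does not furnish an essential non-boundary-parallel cylinder --- Theorem~\ref{thm:acy} matches cylinders with non-adjacent pairs lying on a common face of $\mathcal{G}$, which is a different statement, and the paper deliberately keeps this proof independent of that topological dictionary.

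The paper's actual proof contains the two ideas you are missing. First, a separation lemma (Lemma~\ref{lem:sep}): for non-adjacent $x,y\in\partial F$, acylindricity yields a path $\alpha$ with $\Int(\alpha)\subseteq\Int(F)$ joining the two arcs of $\partial F-\{x,y\}$, hence separating $x$ from $y$ in $F$. Applied to the two boundary neighbours $x_1,y_1$ of $v$, the arc on $v$'s side is the single vertex $v$, so the separating path is \emph{forced to start at $v$ itself} and ends at some $v_1$ in the arc $I_1$ containing $w$ --- this is how one gets a path emanating from $v$ through the interior, which your construction never achieves. Second, an induction with a planarity argument: applying the lemma to the endpoints of the successively shrinking arcs $I_k\ni w$ produces paths $\gamma_k$ whose endpoint pairs are linked on the boundary circle with those of $\gamma_{k-1}$, so their interiors must cross inside the disk and hence share a vertex of the plane graph, making $\Int(\gamma_1)\cup\cdots\cup\Int(\gamma_k)$ connected; since $|I_k|$ strictly decreases, the process terminates with $w$ an endpoint of the last path, and a simple $v$--$w$ path through $\Int(F)$ is extracted by deleting loops. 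If you attempt to repair your rerouting step honestly, you will find yourself rebuilding precisely this induction.
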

To prove this, we first state the following separation lemma.
\begin{lem}\label{lem:sep}
    Let $F$ be a face of $\mathcal{H} \subseteq \mathcal{G}$ that is a Jordan domain.
    Suppose that $F$ is acylindrical in $\mathcal{G}$.
    Then for any pair of non-adjacent vertices $v, w \in \partial F$, there exists a path $\alpha \in F\cap \mathcal{G}$ with $\Int(\alpha) \subseteq \Int(F)$ that separates $v, w$ in $F$.
    More precisely, $v, w$ are in two different components of $F - \alpha$.
\end{lem}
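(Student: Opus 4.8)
The plan is to convert the connectivity supplied by acylindricity into a separation statement via the Jordan arc theorem in the closed disk $\overline{F}$. Write the two components of $\partial F - \{x,y\}$ as arcs $A$ and $B$. Applying the definition of acylindricity to the non-adjacent pair $x,y$, the arcs $A$ and $B$ lie in the same component of $F \cap \mathcal{G} - \{x,y\}$, so there is a simple path $\gamma = (u_0, \ldots, u_n)$ in $F \cap \mathcal{G}$ avoiding $x$ and $y$ with $u_0 \in A$ and $u_n \in B$. The separating arc $\alpha$ I seek will be extracted as a suitable subpath of $\gamma$.

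First I would carve out of $\gamma$ a subpath whose interior lies in $\Int(F)$. Set $i = \max\{k : u_k \in A\}$ and $j = \min\{k > i : u_k \in B\}$; both are well defined since $u_0 \in A$ and $u_n \in B$, and $i < j$. For $i < k < j$ the vertex $u_k$ lies on neither $A$ nor $B$ and differs from $x,y$, hence $u_k \in \Int(F)$. I claim $\alpha := (u_i, \ldots, u_j)$ satisfies $\Int(\alpha) \subseteq \Int(F)$. Its intermediate vertices are interior by construction, so it suffices to check that no edge of $\alpha$ lies on $\partial F$: any edge incident to one of $u_{i+1}, \ldots, u_{j-1}$ has an endpoint in $\Int(F)$ and hence is not a boundary edge, while in the degenerate case $j = i+1$ the single edge joins the $A$-vertex $u_i$ to the $B$-vertex $u_j$, which cannot be a boundary edge since consecutive boundary vertices lying on opposite arcs must be separated by $x$ or $y$. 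Because a plane-graph edge meets $\partial F$ only when it is itself a boundary edge, every edge of $\alpha$ has interior in $\Int(F)$; moreover $\alpha$ is simple, being a subpath of the simple path $\gamma$.

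It then remains to verify that $\alpha$ separates $x$ from $y$. Here I would invoke the Jordan arc theorem for the disk $\overline{F}$: a simple arc with endpoints $u_i, u_j$ on $\partial F$ and interior in $\Int(F)$ divides $\overline{F}$ into exactly two components, whose closures meet $\partial F$ in the two arcs of $\partial F \setminus \{u_i, u_j\}$. Reading off the cyclic order around $\partial F$, in which $x$ and $y$ sit between $A$ and $B$, the point $x$ belongs to one of these arcs and $y$ to the other; since $\alpha$ avoids both, they lie in distinct components of $F - \alpha$, as required.

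The genuinely delicate points are the two bookkeeping steps: guaranteeing that the extracted subarc never runs along $\partial F$ (so that $\Int(\alpha) \subseteq \Int(F)$ holds, including the single-edge degeneracy), and correctly matching the endpoints $u_i \in A$, $u_j \in B$ with the cyclic positions of $x$ and $y$ so that the two boundary arcs really do separate them. Both reduce to the planarity of $\mathcal{G}$ together with the Jordan-domain structure of $F$; once these are in place the separation is immediate. This lemma will then feed directly into Proposition \ref{prop:cp}, where such separating arcs are assembled into the desired connecting path between $v$ and $w$.
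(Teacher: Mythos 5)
Your proof is correct and follows essentially the same route as the paper, whose entire argument is that acylindricity yields a path in $F\cap\mathcal{G}-\{x,y\}$ joining the two components of $\partial F-\{x,y\}$, and that such a path separates $x$ from $y$. Your additional bookkeeping --- trimming to the subpath $(u_i,\ldots,u_j)$ so that $\Int(\alpha)\subseteq\Int(F)$ (including the single-edge case) and invoking the Jordan arc theorem in $\overline{F}$ --- simply makes explicit the details the paper leaves implicit, and all of these checks are sound.
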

\begin{proof}
    Since $F$ is acylindrical in $\mathcal{G}$, there exists a path $\alpha \in F\cap \mathcal{G}$ that connects the two components of $\partial F - \{v, w\}$.
    This path $\alpha$ separates $v, w$.
\end{proof}
\begin{proof}[Proof of Proposition \ref{prop:cp}]
    First note that since we can construct a simple path from a path by deleting closed loops, it suffices to construct a path $\gamma \in F \cap \mathcal{G}$ connecting $v, w$ so that $\Int(\gamma) \subseteq \Int(F)$.

    To construct this path, we first pick the two {neighboring} vertices $x_1, y_1 \in \partial F$ of $v$, and let $I_1$ be the component of $\partial F - \{x_1, y_1\}$ that contains $w$.
    By Lemma \ref{lem:sep}, there exists a path $\gamma_1$ connecting $v$ to $v_1$ where $v_1 \in I_1$ with $\Int(\gamma_1) \subseteq \Int(F)$.

    Let $I_2$ be the component of $\partial F - \{v, v_1\}$ that contains $w$.
    Clearly, the two boundary points in $\partial I_2$ are non-adjacent.
    Now applying Lemma \ref{lem:sep} to the two points in $\partial I_2$ and obtain a path $\gamma_2$ connecting a point $v_2 \in I_2$ to $\partial F - \overline{I_2}$.
    Since the end points of $\gamma_1$ and $\gamma_2$ are linked, $\Int(\gamma_1) \cup \Int(\gamma_2)$ is connected.

    Inductively, suppose $w \notin \{v, v_1, v_2, ..., v_{k-1}\}$.
    Then let $I_k$ be the component of $\partial F - \{v, v_1, v_2, ..., v_{k-1}\}$ that contains $w$.
    We can apply Lemma \ref{lem:sep} to $\partial I_k$ and obtain a path $\gamma_k$ connecting $v_k \in I_k$ to $\partial F - \overline{I_k}$ with $\Int(\gamma_k) \subseteq \Int(F)$.
    By construction, it is easy to see that $\Int(\gamma_1) \cup ... \cup \Int(\gamma_k)$ is connected (see Figure~\ref{fig:path}).

    \begin{figure}[htp]
    \centering
    \includegraphics[width=0.5\textwidth]{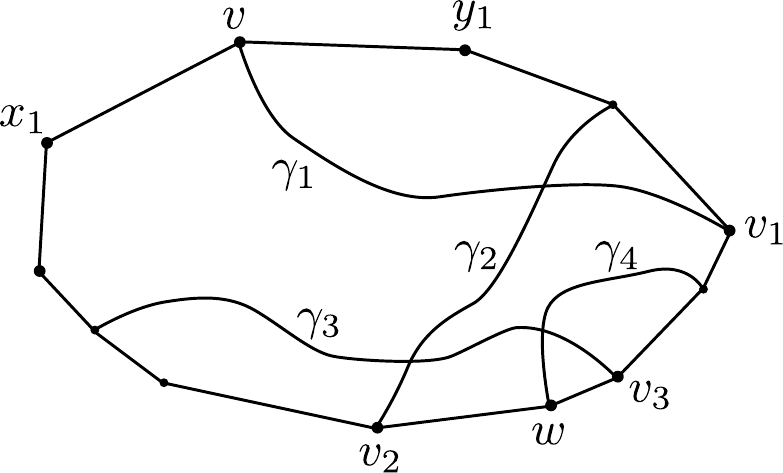}
    \caption{An illustration of the the construction of the path connecting $v, w$ in the proof of Lemma~\ref{lem:sep}.}
    \label{fig:path}
\end{figure}
    
    Since the number of vertices in $I_k$ by at least $1$ less than the number of vertices in $I_{k-1}$, this process eventually terminates, say at $k$.
    Thus, $w = v_k$ is an end point of $\gamma_k$.
    Since $\Int(\gamma_1) \cup ... \cup \Int(\gamma_k)$ is connected, we can find a path connecting $v, w$ as desired.
\end{proof}

\subsection{Bounded image theorem}\label{ss:bit}
In this subsection, we prove Theorem \ref{thm:bitr}, from which Theorem \ref{thm:bit} follows immediately.
We remark that it is possible to give a similar argument as in Thurston's original proof of the Bounded Image Theorem \cite{Thu82}.
\begin{proof}[Proof of Theorem \ref{thm:bitr}]
    Suppose that $F$ is cylindrical.
    Then there exist non-adjacent vertices $v, w \in \partial F$, and a face $F'$ of $\mathcal{G}$ contained in $F$ so that $v, w \in \partial F'$ (see Remark~\ref{rmk:cylin}).
    Let $\Gamma_{\mathcal{G}}$ be the reflection groups associated to a circle packing $\mathcal{P}_{\mathcal{G}}$ with nerve $\mathcal{G}$.
    Similarly, let $\Gamma_{\mathcal{H}} \subseteq \Gamma_{\mathcal{G}}$ be the subgroup associated to the induced subgraph $\mathcal{H}$.

    As in \S\ref{sec:teich}, let $\Omega_{F'}$ (and $\Omega_{F}$) be the component of domain of discontinuity for $\Gamma_{\mathcal{G}}$ (and $\Gamma_{\mathcal{H}}$) associated to $F'$ (and $F$ respectively).
    Note that $\Omega_{F'} \subseteq \Omega_{F}$.
    We also denote by $\Gamma_F \subseteq \Gamma_{\mathcal{H}}$, $\Gamma_{F'}\subseteq \Gamma_{\mathcal{G}}$ the groups generated by circles corresponding to vertices of $\partial F$, $\partial F'$ respectively, and $\widetilde\Gamma_F$, $\widetilde\Gamma_{F'}$ their index 2 subgroups of orientation preserving elements.
    Let $g_{v}, g_{w} \in \Gamma_{\mathcal{H}} \subseteq \Gamma_{\mathcal{G}}$ be reflections along circles $C_v, C_w$ respectively.
    Then $g_v, g_w \in \Gamma_F \cap \Gamma_{F'}$.
    Since $v, w$ are non-adjacent in $\partial F$ and $\mathcal{H}$ is an induced subgraph of $\mathcal{G}$, they are non-adjacent in the ideal boundary $I(F')$ of $F'$. 
    Thus, the product $g_vg_w$ corresponds to $\sigma$-invariant simple closed curves $\gamma$ and $\gamma'$ in $X_{F}:= \widetilde\Gamma_F\backslash\Omega_F$ and $X_{F'}:= \widetilde\Gamma_{F'}\backslash\Omega_{F'}$.
    Since $\Omega_{F'} \subseteq \Omega_{F}$, by Schwarz lemma, we have
    $$
    l_{X_F}(\gamma) \leq l_{X_{F'}}(\gamma').
    $$
    By quasiconformally deforming the surface $X_{F'}$, we can find a sequence in $\Teich(\mathcal{G})$ so that the length of $\gamma'$ is shrinking to $0$.
    By the above inequality, the length of $\gamma$ is also shrinking to $0$.
    Thus, the image of this sequence under 
    $$
    \pi_F\circ \tau: \Teich(\mathcal{G}) \longrightarrow \Teich(\Pi_F)
    $$ 
    is not bounded.
    This proves one direction.

    \begin{figure}[htp]
    \centering
    \begin{subfigure}[b]{0.45\textwidth}
        \centering
        \includegraphics[width=\textwidth]{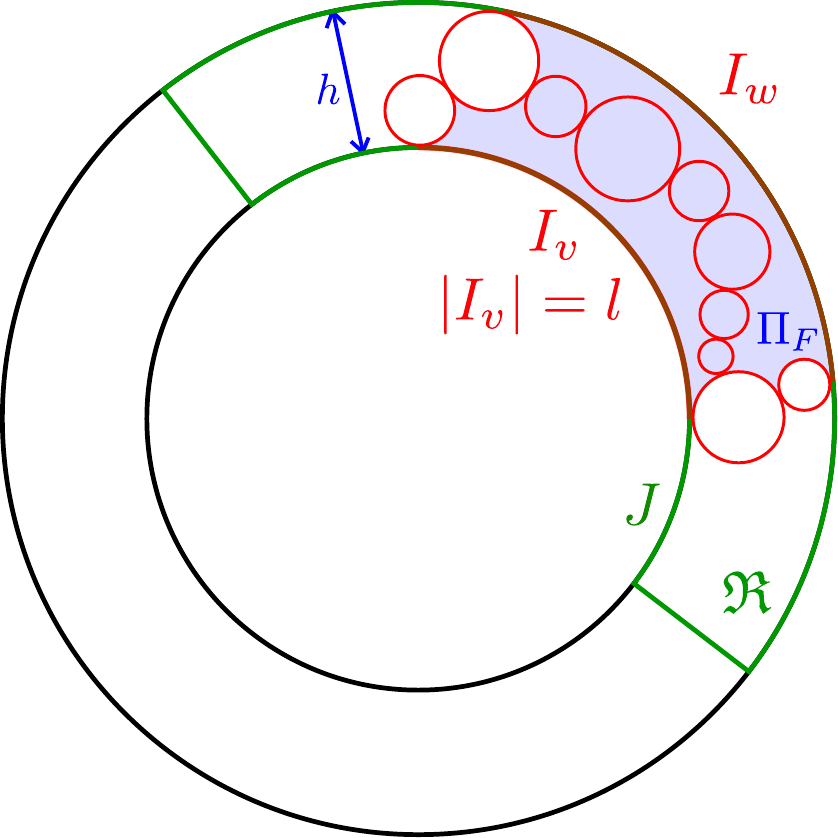}
        \caption{}\label{fig:bda}
    \end{subfigure}
    \begin{subfigure}[b]{0.45\textwidth}
        \centering
        \includegraphics[width=\textwidth]{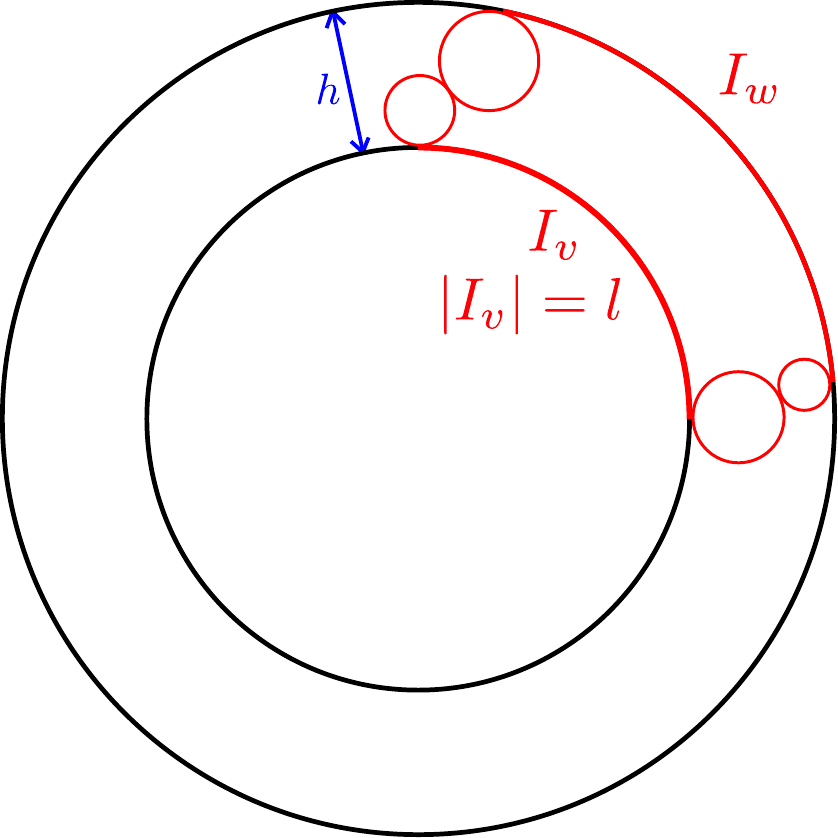}
        \caption{}\label{fig:bdb}
    \end{subfigure}
    \caption{An acylindrical face on the left, where the chain of touching circles in $\Pi_F$ gives an upper bound on $\frac{l}{h}$. A cylindrical face on the right, where $\frac{l}{h}$ can be arbitrarily large by squeezing the red circles.}
    \label{fig:bd}
\end{figure}

    Conversely, suppose that $F$ is acylindrical.
    Let $\mathcal{P}_\mathcal{G}$ be a circle packing with nerve $\mathcal{G}$ and $\mathcal{P}_\mathcal{H}$ be the sub-circle packing associated to $\mathcal{H}$.
    
    Let $\Pi_F$ be the interstice of the circle packing $\mathcal{P}_\mathcal{H}$ for the face $F$, i.e., the interior of the fundamental domain associated to $F$.
    Note that $\Pi_F$ is bounded by circular arcs from circles of $\mathcal{P}_\mathcal{H}$ associated to $\partial F$.
    Let $v, w$ be two non-adjacent vertices on $\partial F$, and let $I_v, I_w$ be the two corresponding circular arcs on $\partial \Pi_F$.
    Then we obtain a {conformal} quadrilateral $(\overline{\Pi_F}, I_v\cup I_w)$ {(i.e, a Jordan domain with two arcs on its boundary, see\cite[p.~52]{A73}).} It is conformally equivalent to 
    $$
    ([0,t]\times [0, 1], [0,t]\times \{0, 1\}).
    $$
    Here $t = \Mod((\overline{\Pi_F}, I_v\cup I_w))$ is the conformal modulus of $(\overline{\Pi_F}, I_v\cup I_w)$.

    {We claim that $\Mod((\overline{\Pi_F}, I_v\cup I_w))$ has a uniform upper bound.}
    Note that $v, w$ are not adjacent in $\partial F$, but they might be adjacent in $\mathcal{H} \subseteq \mathcal{G}$.
    
    Suppose $v, w$ are not adjacent in $\mathcal{H}$.
    Let us normalize by a M\"obius transformation so that $C_v$ and $C_w$ are circles centered at $0$ with radius $1$ and $R>1$ (see Figure~\ref{fig:bda}).
    Since $F$ is acylindrical, there exists a chain of circles in the closed annulus $\overline{A(R)}=\overline{B(0, R)} - B(0,1)$ that bounds the arc $I_v$.
    {Let $N$ be the number of vertices of $\mathcal{G}$ in $F$. Then the number of circles in this chain is bounded by $N$.}
    Let $l$ be the length of the arc $I_v$ and $h = R-1$.
    {Since each circle is contained in the annulus, its diameter $\leq h$. Since $l$ is less than the sum of the diameters of the circles this chain, we have $l \leq N h$.
    Let $J$ be the arc on $\partial B(0,1)$ by extending $I_v$ on both sides by $Nh$. If $l + 2Nh \geq 2\pi$, then we take $J = \partial B(0,1)$. Let $\alpha(t) = te^{i\theta}, \alpha'(t) = te^{i\theta'}, t\in [1,R]$ be the radial line segments connecting $\partial J$ to $\partial B(0, R)$. Let $\mathfrak{R}$ be the circular rectangle bounded by $J, \alpha, \alpha'$ and the corresponding arc $J' \subseteq \partial B(0, R)$. If $J = \partial B(0, 1)$, we set $\mathfrak{R} = A(R)$ (see Figure~\ref{fig:bda}). 
    Since the number of vertices of $\partial F$ is bounded by $N$, $\Pi_F\subseteq \mathfrak{R}$. By the comparison principle (see \cite[\S 4.3]{A73} or \cite[\S 4]{KL}), $\Mod((\overline{\Pi_F}, I_v\cup I_w)) \leq \Mod((\mathfrak{R}, J \cup J'))$. Note the logarithm map takes $\mathfrak{R}$ to an Euclidean rectangle. Thus, $\Mod(\mathfrak{R}, J, J') = \frac{l+2Nh}{\log (1+h)}$ if $l+2Nh \leq 2\pi$ and $\frac{2\pi}{\log(1+h)}$ if $l+2Nh \geq 2\pi$.}
    {In the former case, $h\le\pi/N$, and so
    $$
    \Mod((\overline{\Pi_F}, I_v\cup I_w)) \leq \Mod(\mathfrak{R}, J, J') \leq \frac{3Nh}{\log(1+h)} \leq \frac{3\pi}{\log(1+\pi/N)}.
    $$
    In the latter case, $h\ge2\pi/(3N)$, and so
    $$
    \Mod((\overline{\Pi_F}, I_v\cup I_w)) \leq \Mod(\mathfrak{R}, J, J')\le\frac{2\pi}{\log(1+\frac{2\pi}{3N})}.
    $$}

    Suppose $v, w$ are adjacent in $\mathcal{H}$.
    Then we normalize by a M\"obius transformation so that $C_v$ and $C_w$ are $\{z:\Im(z) = 0\}$ and $\{z:\Im(z) = 1\}$. Then a similar argument shows that 
    $$
    \Mod((\overline{\Pi_F}, I_v\cup I_w)) \leq 3N,
    $$
    where $N$ is again the number of vertices of $\mathcal{G}$ in $F$.

    Let $\gamma_{vw}$ be the simple closed curve on $X_F$ associated to the non-adjacent pair $(v,w)$.
    {Note that the double of $\overline{\Pi_F}$ along $I_v\cup I_w$ gives a tubular neighborhood of $\gamma_{vw}$. Since $\Mod((\overline{\Pi_F}, I_v\cup I_w))$ is bounded, the modulus of any tubular neighborhood of $\gamma_{vw}$ has an upper bound.} Thus, there exists a lower bound for the hyperbolic length $l_{X_F}(\gamma_{vw})$.
    Since this holds for any pair of non-adjacent vertices on $\partial F$, the image $\pi_F(\tau(\Teich(\mathcal{G})))$ is bounded in $\Teich(\Pi_F)$ by Proposition \ref{prop:fnb}.
\end{proof}

\subsection{A counterexample with non-Jordan domains}\label{ss:non-Jordan}
We end our discussion with a counterexample showing that the Jordan domain assumption in the Bounded Image Theorem is essential.

Consider the example as Figure \ref{fig:ce}.
The graph $\mathcal{H}$ is in black, and the graph $\mathcal{G}$ is the union of black and red edges.
The bounded face $F$ of $\mathcal{H}$ is not a Jordan domain.
Let $\Psi_F:P_F \longrightarrow F$ be the cellular map which restricts to a homeomorphism in the interior. 
Note that $P_F$ is acylindrical to the pullback of the graph $\mathcal{G}$.
A circle packing with nerve $\mathcal{G}$ is depicted in the bottom of Figure \ref{fig:ce}.
It is easy to see that we can shrink the small black circle to a point while fixing the other {four} black circles in $\Teich(\mathcal{G})$.
Therefore, $\pi_F\circ \tau(\Teich(\mathcal{G}))$ is not bounded in $\Teich(\Pi_F)$.

\begin{figure}[htp]
    \centering
    \begin{subfigure}[b]{0.45\textwidth}
        \centering
        \includegraphics[width=\textwidth]{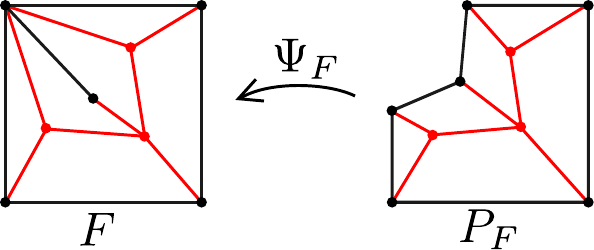}
    \end{subfigure}\hspace{0.02\textwidth}
    \begin{subfigure}[b]{0.35\textwidth}
        \centering
        \includegraphics[width=\textwidth]{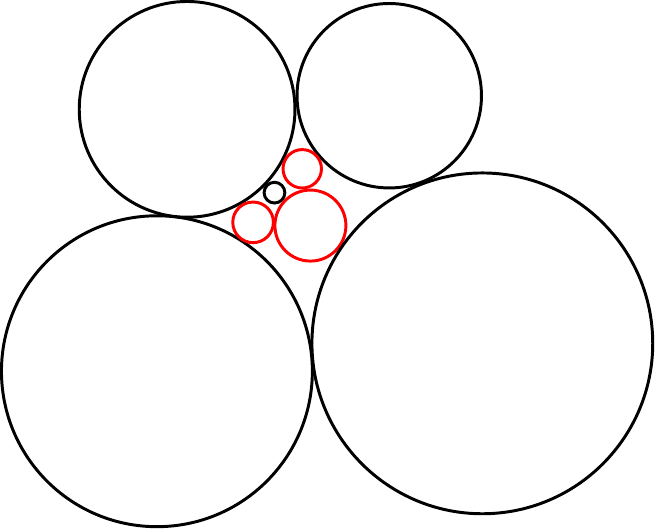}
    \end{subfigure}
    \caption{A counterexample with non-Jordan domains. {This example is a subdivision of Figure~\ref{fig:ideal_boundary}; the induced subdivision on $P_F$ is acylindrical. It is possible to shrink the inner black circle while still able to add three red circles. This corresponds to shrinking the blue curve in Figure~\ref{fig:double}.}}
    \label{fig:ce}
\end{figure}

\section{Existence of circle packings}\label{sec:ecp}
{In this section, we will apply our Bounded Image Theorem (Theorem~\ref{thm:bit}) to prove the existence part of Theorem \ref{thm:LR} and \ref{thm:A}. 
}

{
We first recall that the definition of finite subdivision rule is introduced in \S~\ref{ss:gfsr} (Definition~\ref{defn:fsr}). The main theorem of this section is the following.
}
\begin{theorem}\label{thm:exc}
    Let $\mathcal{R}$ be an irreducible simple finite subdivision rule, with subdivision graphs $\mathcal{G}_i, i=1,..., k$.
    Let $\mathcal{G}$ be a simple spherical subdivision graph for $\mathcal{R}$.
    \begin{itemize}
        \item The subdivision graphs $\mathcal{G}_i$ are isomorphic to the nerves of infinite circle packings $\mathcal{P}_i$ if and only if $\mathcal{R}$ is acylindrical.
        \item The spherical subdivision graph $\mathcal{G}$ is isomorphic to the nerve of an infinite circle packing $\mathcal{P}$ if and only if $\mathcal{R}$ is acylindrical.
    \end{itemize}
\end{theorem}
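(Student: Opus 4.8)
The plan is to prove the two bullets in parallel, in each case separating the \emph{acylindrical} direction (existence) from the \emph{cylindrical} direction (non-existence). For a subdivision graph $\mathcal{G}_i=\bigcup_n\mathcal{G}_i^n$ I would build the infinite packing as a limit of finite circle packings $\mathcal{P}^n\in\mathcal{M}(\mathcal{G}_i^n)$ sharing a fixed external class, using the Relative Bounded Image Theorem (Theorem \ref{thm:bitr}) to make the approximating family precompact. The spherical case is handled the same way, except there is no external face to fix — one normalizes by $\PSL_2(\C)$ (say, fixing three points) — and every face of each $\mathcal{G}^j$ is an interior cell, so the (unrelativized) Bounded Image Theorem (Theorem \ref{thm:bit}) applies directly. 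Since a spherical subdivision graph is glued from finitely many subdivision graphs along their boundaries, the spherical estimates follow from the planar ones face by face.

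For existence, fix a level $j$. I would first record two combinatorial facts: because $\mathcal{R}$ is irreducible, each $\partial P_\sigma$ is induced in $\mathcal{G}_\sigma$, so $\mathcal{G}_i^j$ is an induced subgraph of $\mathcal{G}_i^n$ for all $n\ge j$; and because $\mathcal{R}$ is simple and irreducible, every non-external face $F$ of $\mathcal{G}_i^j$ (a cell of some type $P_\sigma$) is a Jordan domain, so Theorems \ref{thm:bit}--\ref{thm:bitr} apply to the pair $\mathcal{G}_i^j\subseteq\mathcal{G}_i^n$. Acylindricity of $\mathcal{R}$ says exactly that for each type $P_\sigma$ and each non-adjacent pair on $\partial P_\sigma$, the two boundary arcs are joined inside $\mathcal{G}_\sigma$; such a joining path is finite, hence already appears in some $\mathcal{G}_\sigma^{m_0}$, so each interior face $F$ of $\mathcal{G}_i^j$ becomes acylindrical in $\mathcal{G}_i^n$ once $n\ge j+m_0$. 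By Theorem \ref{thm:bitr}, $\pi_F\big(\tau_{\mathcal{G}_i^j,\mathcal{G}_i^n}(\mathcal{M}(\mathcal{G}_i^n))\big)$ is then bounded in $\Teich(\Pi_F)$. The point I would be careful about is to extract a bound \emph{uniform in} $n$: the joining chain lives in the fixed finite graph $\mathcal{G}_i^{j+m_0}$, and its presence inside every $\mathcal{P}^n$ with $n\ge j+m_0$ forces the modulus estimate from the proof of Theorem \ref{thm:bitr} with a combinatorial, $n$-independent constant. Together with the fixed external class, this places the restrictions $\{\tau_{\mathcal{G}_i^j,\mathcal{G}_i^n}(\mathcal{P}^n)\}_n$ in a fixed compact subset of $\mathcal{M}(\mathcal{G}_i^j)=\prod_F\Teich(\Pi_F)$, using Proposition \ref{prop:fnb}.

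With uniform compactness in hand, a diagonal argument over $j$ yields a subsequence along which the restriction to each $\mathcal{G}_i^j$ converges to some $\mathcal{Q}^j\in\mathcal{M}(\mathcal{G}_i^j)$, compatibly under the skinning maps. Normalizing all $\mathcal{P}^n$ by M\"obius maps so that the fixed external interstice is a fixed ideal polygon, these moduli-space limits lift to honest circle configurations $\{C_v^\infty\}_{v\in\mathcal{G}_i}$ that agree on overlaps, whose union is the candidate packing $\mathcal{P}_i$. The substance of this step is to upgrade convergence in moduli space to a genuine non-degenerate packing with nerve \emph{exactly} $\mathcal{G}_i$: staying in a compact subset of each $\Teich(\Pi_F)$, i.e.\ away from $\partial\Teich(\Pi_F)$, prevents any circle from collapsing and any interstice from degenerating, so adjacent circles stay tangent, non-adjacent circles stay at definite distance, and interiors stay disjoint. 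Since every vertex of $\mathcal{G}_i$ occurs at some finite level, this determines $\mathcal{P}_i$ and identifies $\mathcal{G}_i$ as its nerve.

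For the converse, suppose $\mathcal{R}$ is cylindrical, witnessed by a type $P_\sigma$ and a non-adjacent pair $v,w\in\partial P_\sigma$ whose two boundary arcs $A,B$ lie in distinct components $G_A,G_B$ of $\mathcal{G}_\sigma-\{v,w\}$; by minimality such a cell occurs, and $\mathcal{G}_\sigma$ is itself one of the $\mathcal{G}_i$, so it suffices to rule out a packing exhibiting this separation. Assuming a packing existed, I would normalize $C_v=\{|z|=1\}$ and $C_w=\{|z|=R\}$, so every other circle lies in the closed annulus $\{1\le|z|\le R\}$; the graph-connected families $G_A,G_B$ then give two disjoint crosscuts joining the two boundary circles, cutting the annulus into two Jordan regions. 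The region not containing the external interstice contains no circle, yet is a definite quadrilateral bounded in part by arcs of $C_v$ and $C_w$ — a bounded interstice that is never subdivided, since the separation forbids any chain bridging its two sides. This contradicts the fact that in an infinite subdivision packing every interior interstice is a nested limit of subdivided interstices and hence degenerates to a point; it is the geometric manifestation of the essential, non-boundary-parallel cylinder of Theorem \ref{thm:acy}. The main obstacle I anticipate is precisely making this degeneration argument rigorous and robust — controlling the limiting carrier and showing the separating region persists with definite modulus even when $\mathcal{G}_\sigma-\{v,w\}$ has more than two components — hand in hand with the uniform-in-$n$ compactness of the second paragraph, which is what converts the Bounded Image Theorem into genuine precompactness of the approximating packings.
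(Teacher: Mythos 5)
Your proposal follows the paper's broad strategy (finite approximations with fixed external class, precompactness via the Bounded Image Theorem, a limit, and a modulus contradiction in the cylindrical case), but both halves contain a genuine gap.

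In the existence direction, your application of Theorem \ref{thm:bitr} rests on the claim that simplicity and irreducibility of $\mathcal{R}$ force every non-external face of $\mathcal{G}_i^j$ to be a Jordan domain. That is false: the cellular map $\psi_{i,j}\colon P_{\sigma(i,j)}\to P_{i,j}$ is only required to be a homeomorphism between the \emph{open} $2$-cells, so a closed face of the subdivision may touch itself along its boundary, and neither simplicity of the $1$-skeleton nor irreducibility rules this out. The hypothesis is not cosmetic: the counterexample of Figure \ref{fig:ce} shows that for a non-Jordan face the image of the skinning map can be unbounded even though the pulled-back polygon is acylindrical, so Theorems \ref{thm:bit} and \ref{thm:bitr} are simply unavailable as you invoke them. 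The paper devotes \S\ref{subsec:sjd} to precisely this issue: after passing to an iterate, it constructs an enlarged subdivision rule $\widetilde{\mathcal{R}}$ with the same limit graphs $\mathcal{G}_i$ but with all faces Jordan domains (Proposition \ref{prop:jd}), and that construction itself uses acylindricity through the connecting paths of Proposition \ref{prop:cp}. Your proposal needs this reduction before the second paragraph can start. (Your worry about uniformity in $n$, by contrast, is a non-issue once the reduction is made: since $\tau_{k,n}=\tau_{k,k+1}\circ\tau_{k+1,n}$, one has $\pi_F(\tau_{k,n}(\mathcal{M}(\mathcal{G}^n_i)))\subseteq\pi_F(\tau_{k,k+1}(\mathcal{M}(\mathcal{G}^{k+1}_i)))$, a single bounded set independent of $n$; no re-examination of the chain constant in the proof of \ref{thm:bitr} is needed.)

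In the cylindrical direction, your separating region is not empty of circles, and the contradiction you aim for does not exist. Cylindricity only says the two arcs of $\partial P_i-\{v,w\}$ lie in different components of $\mathcal{G}_i-\{v,w\}$; there can be, and typically are, infinitely many further components attached only to $v$ and $w$. In the cylindrical rule of Figure \ref{fig:subd}, every level of subdivision creates new vertices adjacent to \emph{both} $v$ and $w$, whose circles are tangent to both $C_v$ and $C_w$ and sit exactly in the region you claim contains no circle; moreover the face with $v,w$ on its boundary is subdivided at every level, so ``a bounded interstice that is never subdivided'' never occurs. The degeneration principle you invoke to finish (nested interior interstices shrink to points) is neither proved at this stage nor true in the form you need it. The paper's proof instead converts cylindricity into a purely combinatorial statement (Proposition \ref{prop:cs}): there exist $K$ and a non-adjacent pair $v,w\in\partial P_i$ joined by infinitely many paths in $\mathcal{G}_i$ of length at most $K$ with pairwise disjoint interiors. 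In a putative packing these yield infinitely many pairwise disjoint chains of at most $K$ touching circles crossing the quadrilateral $(\overline{\Pi(\mathcal{P}_i)},\,I_v\cup I_w)$, contradicting the finiteness of its conformal modulus. You should replace your crosscut-and-degeneration argument by this extraction of disjoint bounded-length chains; note that the extraction is itself nontrivial (an induction on the path length, passing to a nearer non-adjacent pair whenever interiors overlap), so it cannot be waved through.
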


{The section is organized as follows. To apply Theorem~\ref{thm:bit}, we construct a modification of the subdivision rule in \S~\ref{subsec:sjd}, so that each face is an induced Jordan domain. In \S~\ref{ss:cylindricalsub}, we give combinatorial characterization of cylindrical subdivision rules. Finally, we prove Theorem~\ref{thm:exc} in \S~\ref{ss:existence}.}

\subsection{Subdivision with induced Jordan domain faces}\label{subsec:sjd}
To apply our Bounded Image Theorem (Theorem~\ref{thm:bit}) to $\mathcal{H} = \partial F$ where $F$ is a face of $\mathcal{G}^n_i$, we first introduce the following notion.
\begin{defn}
    Let $F$ be a face of a finite plane graph $\mathcal{G} \subseteq \widehat\C$. We say $F$ is an {\em induced Jordan domain} if $\overline{F}$ is homeomorphic to a closed disk and $\partial F$ is an induced subgraph of $\mathcal{G}$.
\end{defn}
In the following, we construct a modification of $\mathcal{R}$ so that all the faces of the subdivision are induced Jordan domains.

Let $\mathcal{R}=\{P_i\}_{i=1}^k$ be an irreducible simple finite subdivision rule.
Suppose furthermore that $\mathcal{R}$ is acylindrical.

{
Recall that $\mathcal{R}^n(P_i)$ is a $2$-complex homeomorphic to $P_i$ and $\mathcal{G}^n_i$ is its 1-skeleton, which is identified as a graph in $\widehat\C$.
There is a unique face of $\mathcal{G}^n_i$, called external, which is not subdivided. This face corresponds to the complement of $P_i$. The other faces are called non-external, and correspond to faces of $\mathcal{R}^n(P_i)$. We emphasize that faces of $\mathcal{G}^n_i$ include both external and non-external ones, while faces of $\mathcal{R}^n(P_i)$ consist of only non-external ones.}

Since $\mathcal{R}$ is irreducible, $\mathcal{G}^n_i$ is an induced subgraph of $\mathcal{G}^{n+1}_i$.
Since $\mathcal{R}$ is acylindrical, by replacing $\mathcal{R}$ with an iterate if necessary, for the remainder of this section, we assume that 
\begin{itemize}
    \item For each $i = 1,..., k$, the face $P_i$ of $\mathcal{G}^0_i$ is acylindrical in $\mathcal{G}^1_i$.
\end{itemize}
\begin{lem}
    For each $i$, there exist graphs $\widetilde{\mathcal{G}}^1_i$ with
    $$
    \mathcal{G}^1_i \subseteq \widetilde{\mathcal{G}}^1_i \subseteq \mathcal{G}^2_i
    $$
    so that every face of $\widetilde{\mathcal{G}}^1_i$ is an induced Jordan domain.
\end{lem}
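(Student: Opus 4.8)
The plan is to repair the non-Jordan faces of $\mathcal{G}^1_i$ one at a time by inserting, into the interior of each such face, a system of arcs taken from the next subdivision $\mathcal{G}^2_i$. Because an arc placed in the interior of a face leaves the boundary walks of all neighbouring faces untouched, the faces may be treated independently; we then set $\widetilde{\mathcal{G}}^1_i$ to be $\mathcal{G}^1_i$ together with all the inserted arcs, which by construction satisfies $\mathcal{G}^1_i\subseteq\widetilde{\mathcal{G}}^1_i\subseteq\mathcal{G}^2_i$. The external face requires no work: its boundary is $\partial P_i$, which is a simple cycle since $P_i$ has at least three vertices and, by condition (2) of Definition \ref{defn:fsr}, carries no subdivision vertices in the interiors of its boundary edges. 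So only the bounded faces $P_{i,j}$ need attention.

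Fix a bounded face $F=P_{i,j}$. It is the image $\psi_{i,j}(Q)$ of the model polygon $Q:=P_{\sigma(i,j)}$ under a map that is a homeomorphism on the open $2$-cell, so $\Int(F)$ is embedded and, by the criterion in the proof of Theorem \ref{thm:jordan}, $F$ fails to be a Jordan domain exactly when $\psi_{i,j}$ identifies two or more vertices of $\partial Q$. The decisive observation is that any two distinct vertices $a,b\in\partial Q$ with $\psi_{i,j}(a)=\psi_{i,j}(b)$ are \emph{non-adjacent} in $Q$: an edge of $\partial Q$ joining them would map to a self-loop at $\psi_{i,j}(a)$, which is forbidden by the simplicity of $\mathcal{G}^1_i$. (An identification of two boundary edges likewise forces its endpoints to be identified, and so is covered by the same analysis.)

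I would now push acylindricity down to the model polygon. Having passed to an iterate, $P_\ell$ is acylindrical in $\mathcal{G}^1_\ell$ for every $\ell$; taking $\ell=\sigma(i,j)$ shows $Q$ is a Jordan face that is acylindrical in $\mathcal{G}^1_{\sigma(i,j)}=\mathcal{R}(Q)$. Proposition \ref{prop:cp} then provides, for every identified non-adjacent pair $a,b\in\partial Q$, a simple path in $\mathcal{R}(Q)$ from $a$ to $b$ whose interior lies in $\Int(Q)$. Transporting such a path by the interior homeomorphism $\psi_{i,j}$ produces a simple closed curve based at the pinch point $\psi_{i,j}(a)$, with embedded interior inside $\Int(F)$ and supported on $\psi_{i,j}(\mathcal{R}(Q))\subseteq\mathcal{G}^2_i$; inserting it separates the two branches of $\partial F$ that meet at that pinch point.

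The hard part is the global bookkeeping that guarantees \emph{every} resulting piece is a genuine Jordan domain, rather than merely resolving one pinch point at a time. I would handle this entirely inside the Jordan polygon $Q$: choose, for each identification class of $\partial Q$, a tree of interior arcs (each arc joining a non-adjacent pair, hence available from Proposition \ref{prop:cp}) spanning the cyclically ordered preimages of that class, and arrange all chosen arcs to be pairwise disjoint using planarity and, if necessary, further refinement within $\mathcal{R}(Q)$. Cutting the disk $Q$ along this disjoint arc system yields finitely many disks, and the separation ensures that no two points identified by $\psi_{i,j}$ survive on the $\partial Q$-portion of a single piece's boundary; consequently each piece maps to a region bounded by a simple cycle, i.e.\ a Jordan domain. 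The remaining verification---that a suitable disjoint tree exists and that cutting strictly decreases the number of repeated boundary vertices---I expect to run by induction on the total number of identified pairs, and this combinatorial-planarity step is where the real care is needed.
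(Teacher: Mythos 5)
Your proposal follows the paper's proof essentially step for step: both pull the level-two subdivision back to the model polygon via the cellular map, observe that (after passing to an iterate) the polygon is acylindrical in the pulled-back graph $\psi_F^*(\mathcal{G}^2_i)$, invoke Proposition \ref{prop:cp} to produce simple paths supported on $\mathcal{G}^2_i$ with interiors in the open face joining the identified (necessarily non-adjacent) boundary vertices, and insert finitely many such paths to cut each non-Jordan face into Jordan pieces. The only difference is expository: the paper compresses your final ``bookkeeping'' paragraph into the single sentence that adding finitely many such paths makes every face of $\partial F \cup \psi_F(\bigcup\gamma_i)$ a Jordan domain, so your extra care there (non-adjacency of identified vertices via simplicity, non-crossing identification classes, a tree of disjoint arcs per class) is a fleshed-out version of the same step rather than a different route.
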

\begin{proof}
    Let $F$ be a non-external face of $\mathcal{G}^1_i$.
    Suppose that $F$ is not a Jordan domain.
    Let $\psi_F:P_F \longrightarrow F$ be the cellular map given by the subdivision rule, where $P_F \in \{P_1,..., P_k\}$ {is the type of the face.}
    We denote the pullback of $\mathcal{G}^2_i$ by $\psi_F^*(\mathcal{G}^2_i)$.
    More precisely,
    $$
    \psi_F^*(\mathcal{G}^2_i):= \psi_F^{-1}(F \cap \mathcal{G}^2_i).
    $$
    Note that $P_F$ is acylindrical in $\psi_F^*(\mathcal{G}^2_i)$.
    Therefore, by Proposition \ref{prop:cp}, for any pair of non-adjacent vertices $v, w\in \partial P_F$, there exists a simple path $\gamma$ in $\psi_F^*(\mathcal{G}^2_i)$ with $\Int(\gamma) \subseteq \Int(P_F)$ connecting $v, w$.
    By adding finitely many such paths $\gamma_1,..., \gamma_s$, we can make sure that every face $F' \subseteq F$ of $\partial F \cup \psi_F(\bigcup \gamma_j)$ is an induced Jordan domain.
    
    Let $\widetilde{\mathcal{G}}^1_i$ be the graph {obtained} by adding all such paths in every non-Jordan domain face of $\mathcal{G}^1_i$.
    This graph $\widetilde{\mathcal{G}}^1_i$ satisfies the requirement.
\end{proof}

\subsubsection{Construction of the new subdivision rule $\widetilde{\mathcal{R}}$}
Now we can construct a new finite subdivision rule $\widetilde{\mathcal{R}}$ (see Figure~\ref{fig:modificationR} for an illustration).
It contains polygons $\{P_1,..., P_k, Q_1,..., Q_l\}$, where $P_i$ is the polygon in the original finite subdivision rule $\mathcal{R}$, and $Q_1,..., Q_l$ are non-external faces of $\widetilde{\mathcal{G}}^1_i$ that are not faces of $\mathcal{G}^1_i$ where $i =1,.., k$.

For each polygon $P_i$, the subdivision $\widetilde{\mathcal{R}}(P_i)$ is defined so that the 1-skeleton of $\widetilde{\mathcal{R}}(P_i)$ is $\widetilde{\mathcal{G}}^1_i$.
By construction, each face of $\widetilde{\mathcal{R}}(P_i)$ can be identified with a polygon in $\{P_1,..., P_k, Q_1,..., Q_l\}$, and each edge of $\partial P_i$ contains no vertices of $\widetilde{\mathcal{R}}(P_i)$.

Now consider a polygon $Q_j$.
By definition, we can identify $Q_j$ as a face of $\widetilde{\mathcal{G}}^1_i$ for some $i$.
Since each face of $\mathcal{R}(P_i)$ can be identified with a polygon in $\{P_1,.., P_k\}$, we can consider the subdivision $\widetilde{\mathcal{R}}(\mathcal{R}(P_i))$.
Then we have
$$
P_i \preceq \mathcal{R}(P_i) \preceq \widetilde{\mathcal{R}}(P_i) \preceq \mathcal{R}^2(P_i) \preceq \widetilde{\mathcal{R}}(\mathcal{R}(P_i)) \preceq \mathcal{R}^3(P_i)
$$
where $A\preceq B$ means that $B$ is a subdivision of $A$.
Equivalently, denote the 1-skeleton of $\widetilde{\mathcal{R}}(\mathcal{R}(P_i))$ by $\widetilde{\mathcal{G}}^2_i$. Then we have
$$
\mathcal{G}^1_i \subseteq \widetilde{\mathcal{G}}^1_i \subseteq \mathcal{G}^2_i \subseteq \widetilde{\mathcal{G}}^2_i \subseteq \mathcal{G}^3_i.
$$
Since $Q_j$ is a face of $\widetilde{\mathcal{G}}^1_i$, we define the subdivision $\widetilde{\mathcal{R}}(Q_j)$ so that the 1-skeleton of $\widetilde{\mathcal{R}}(Q_j)$ is $\widetilde{\mathcal{G}}^2_i \cap Q_j$.
By construction, each face of $\widetilde{\mathcal{R}}(Q_j)$ can be identified with a polygon in $\{P_1,..., P_k, Q_1,..., Q_l\}$, and each edge of $\partial Q_j$ contains no vertices of $\widetilde{\mathcal{R}}(Q_j)$.

\begin{figure}[htp]
    \centering
    \includegraphics[width=0.33\textwidth]{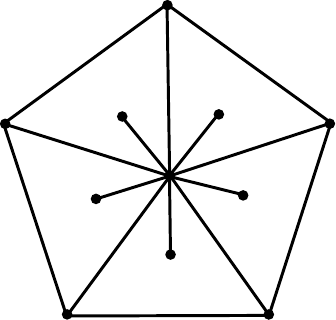}
    \includegraphics[width=0.33\textwidth]{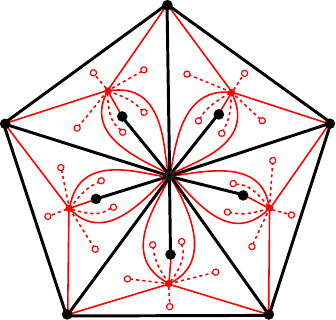}
    \includegraphics[width=0.3\textwidth]{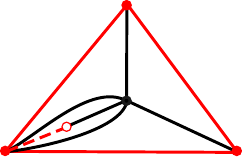}
    \caption{An illustration of the construction of the modification $\widetilde{\mathcal{R}}$. The original subdivision rule is on the left figure, where each face is non-Jordan, and is identified with the origin pentagon.
    The graph $\widetilde{\mathcal{G}}^1$ is the union of the solid red and black edges in the middle figure. The graph $\mathcal{G}^2$ is the union of $\widetilde{\mathcal{G}}^1$ and the dotted red edges.
    Each non-external face of $\widetilde{\mathcal{G}}^1$ is a triangle which is an induced Jordan domain. The subdivision of this triangle $Q$ is illustrated in the right figure.}
    \label{fig:modificationR}
    \end{figure}

In this way, we obtain a new finite subdivision rule $\widetilde{\mathcal{R}}$.
\begin{prop}\label{prop:jd}
    Let $\mathcal{R}=\{P_i\}_{i=1}^k$ be an irreducible simple finite subdivision rule.
    Suppose that $\mathcal{R}$ is acylindrical.
    Then by replacing $\mathcal{R}$ with an iterate if necessary, there exists an irreducible simple acylindrical finite subdivision rule $\widetilde{\mathcal{R}}$ consisting of polygons 
    $$
    \{P_1,..., P_k, Q_1,..., Q_l\}
    $$ 
    with the following properties.
    \begin{enumerate}
        \item The subdivisions satisfy
        $$
        \mathcal{R}^n(P_i) \preceq \widetilde{\mathcal{R}}^n(P_i) \preceq \mathcal{R}^{n+1}(P_i);
        $$
        or equivalently, the 1-skeletons satisfy
        $$
        \mathcal{G}^n_i \subseteq \widetilde{\mathcal{G}}^n_i \subseteq \mathcal{G}^{n+1}_i
        $$
        for all $i=1,.., k$ and $n \geq 1$.
        In particular, $\mathcal{G}_i = \widetilde{\mathcal{G}}_i$ for all $i=1,.., k$.
        \item Every face of $\widetilde{\mathcal{R}}^n(P_i)$ or $\widetilde{\mathcal{R}}^n(Q_j)$ is an induced Jordan domain for all $i=1,..., k, j=1,..., l$ and $n \geq 1$.
    \end{enumerate}
\end{prop}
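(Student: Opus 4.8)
The construction of $\widetilde{\mathcal{R}}$ has already been carried out above, so the plan is to verify that it is a finite subdivision rule with the three asserted adjectives (simple, irreducible, acylindrical) and the two numbered conclusions. Both conclusions are proved by induction on $n$, with the base cases $n=1$ supplied by the Lemma above and by the paragraphs defining $\widetilde{\mathcal{R}}(P_i)$ and $\widetilde{\mathcal{R}}(Q_j)$; once conclusion (1) gives $\mathcal{G}_i = \widetilde{\mathcal{G}}_i$, the three adjectives are inherited from $\mathcal{R}$.

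For conclusion (1), the first step is to prove the \emph{half-step identity}
$$
\widetilde{\mathcal{R}}^n(P_i) = \widetilde{\mathcal{R}}(\mathcal{R}^{n-1}(P_i))
$$
for every $i$ and $n \geq 1$. The case $n=2$ is the computation already recorded in the interleaving $\mathcal{G}^1_i \subseteq \widetilde{\mathcal{G}}^1_i \subseteq \mathcal{G}^2_i \subseteq \widetilde{\mathcal{G}}^2_i \subseteq \mathcal{G}^3_i$: the definition of $\widetilde{\mathcal{R}}(Q_j)$ together with $\widetilde{\mathcal{G}}^1_i \subseteq \mathcal{G}^2_i \subseteq \widetilde{\mathcal{G}}^2_i$ forces the two subdivisions to agree face by face over the faces of $\mathcal{G}^1_i$. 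For the inductive step I would use the level $n$ identity to rewrite $\widetilde{\mathcal{R}}^{n+1}(P_i) = \widetilde{\mathcal{R}}^2(\mathcal{R}^{n-1}(P_i))$ and then apply the case $n=2$ on each $P_{i'}$-face of $\mathcal{R}^{n-1}(P_i)$; since the two model subdivisions of $P_{i'}$ coincide, transplanting them through the (possibly non-injective) cellular map of that face produces identical complexes, giving $\widetilde{\mathcal{R}}^{n+1}(P_i) = \widetilde{\mathcal{R}}(\mathcal{R}^n(P_i))$. The inclusions $\mathcal{G}^n_i \subseteq \widetilde{\mathcal{G}}^n_i \subseteq \mathcal{G}^{n+1}_i$ then follow from the identity together with the facewise bounds $\mathcal{R}(T) \preceq \widetilde{\mathcal{R}}(T) \preceq \mathcal{R}^2(T)$, and taking direct limits yields $\mathcal{G}_i = \widetilde{\mathcal{G}}_i$.

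For conclusion (2), I would argue by induction that every face of $\widetilde{\mathcal{R}}^n(P_i)$ and of $\widetilde{\mathcal{R}}^n(Q_j)$ is a Jordan domain. The base case holds because the faces of $\widetilde{\mathcal{R}}(P_i)$ are the faces of $\widetilde{\mathcal{G}}^1_i$, which are Jordan by the Lemma, while the faces of $\widetilde{\mathcal{R}}(Q_j)$ are faces of $\widetilde{\mathcal{R}}(\mathcal{R}(P_i))$ contained in $Q_j$, hence faces of various $\widetilde{\mathcal{R}}(P_{i'})$ and again Jordan. The key observation for the inductive step is that a cellular map $\psi_s: P_{\sigma(s)} \longrightarrow \overline{s}$ which is a homeomorphism on the open $2$-cell is a homeomorphism of closed disks precisely when $s$ is a Jordan domain. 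Thus if every face $s$ at level $n$ is a Jordan domain, each $\psi_s$ is a homeomorphism of closed disks and therefore carries the all-Jordan model subdivision $\widetilde{\mathcal{R}}(\sigma(s))$ to subfaces whose boundaries remain simple closed curves in $\widetilde{\mathcal{G}}^{n+1}_i$ (no self-identification can arise along $\partial s$, since $\partial s$ is itself a simple closed curve). Hence every face at level $n+1$ is Jordan, closing the induction.

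The main obstacle is precisely this last point: in a general subdivision rule the cellular maps need not be injective on closed cells, so a subface that is Jordan in the abstract model can fail to be Jordan once the boundary of a non-Jordan parent face is collapsed. The construction is arranged so that this never happens along the iteration, and the closed-cell homeomorphism observation is exactly what makes the Jordan property self-propagating. It then remains to check the three adjectives. Simplicity and irreducibility hold for the types $P_i$ from $\widetilde{\mathcal{G}}_i = \mathcal{G}_i$, and for the types $Q_j$ because their subdivision graphs are the restrictions $\mathcal{G}_i \cap \overline{Q_j}$, which are subgraphs of the simple, irreducible $\mathcal{G}_i$ (absorbing an internal chord into $\partial Q_j$ by cutting, as in the remark after Definition \ref{defn:cyl}, if necessary). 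Acylindricity for the $P_i$ is inherited from $\mathcal{R}$, and for the $Q_j$ it follows from the acylindricity of the enclosing polygon in $\mathcal{G}_i$ via the well-connectedness of Proposition \ref{prop:cp}: for each non-adjacent pair on $\partial Q_j$, the separating paths produced there supply an arc in $\mathcal{G}_i \cap \overline{Q_j}$ joining the two sides of its complement.
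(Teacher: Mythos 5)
Your treatment of conclusion (1) --- the telescoping identity $\widetilde{\mathcal{R}}^n(P_i) = \widetilde{\mathcal{R}}(\mathcal{R}^{n-1}(P_i))$ followed by the facewise bounds $\mathcal{R}(F) \preceq \widetilde{\mathcal{R}}(F) \preceq \mathcal{R}^2(F)$ --- is exactly the paper's argument, and your reduction of (2) to the level-one statement via the closed-cell homeomorphism observation is also how the paper proceeds. But there is a genuine gap at your base case for $\widetilde{\mathcal{R}}(Q_j)$. You write that its faces are faces of $\widetilde{\mathcal{R}}(\mathcal{R}(P_i))$ contained in $Q_j$, ``hence faces of various $\widetilde{\mathcal{R}}(P_{i'})$ and again Jordan.'' This is precisely the step at which the boundary-collapsing problem you yourself flag actually occurs: such a face $F$ lies inside a face $F'$ of $\mathcal{R}(P_i)$, and $F'$ may fail to be a Jordan domain --- that is the whole reason $\widetilde{\mathcal{R}}$ is being built. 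The cellular map $\psi_{F'}: P_{F'} \longrightarrow F'$ is then not injective on $\partial P_{F'}$, so the Jordan model face $\psi_{F'}^{-1}(F)$ of $\widetilde{\mathcal{R}}(P_{F'})$ can a priori acquire boundary identifications in the image. Your closing assertion that ``the construction is arranged so that this never happens along the iteration'' is stated but never proved, and your inductive mechanism cannot supply it, since the closed-cell homeomorphism observation applies only once the parent face is already known to be Jordan --- which is false here.

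What is missing is the paper's key claim, and it uses acylindricity in an essential way. Since (after the standing replacement of $\mathcal{R}$ by an iterate) $P_{F'}$ is acylindrical in $\mathcal{G}^1_{F'}$, and $\mathcal{R}(F') \preceq \widetilde{\mathcal{R}}(F')$, the boundary $\partial P_{F'}$ is acylindrical in the pullback $\psi_{F'}^*(\widetilde{\mathcal{G}}^2_i)$. Consequently no single face of $\widetilde{\mathcal{R}}(P_{F'})$ can contain two non-adjacent vertices $v, w$ of $\partial P_{F'}$ on its closure: an arc through the open face from $v$ to $w$ would be a crosscut of $P_{F'}$ separating the two components of $\partial P_{F'} - \{v,w\}$, while the connecting path guaranteed by acylindricity lies in the $1$-skeleton and so is disjoint from the open face --- a contradiction. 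Hence $\psi_{F'}^{-1}(F) \cap \partial P_{F'}$ is empty, a single vertex, or a single edge; since $\psi_{F'}$ identifies only points of $\partial P_{F'}$ and is injective on such a connected piece (simplicity of $\mathcal{G}^2_i$ rules out identifying the endpoints of a boundary edge), the image $F$ is again a Jordan domain, even inside a non-Jordan parent. Without this claim, or some substitute controlling how the model faces of $\widetilde{\mathcal{R}}(P_{F'})$ meet $\partial P_{F'}$, your base case --- and with it conclusion (2) --- does not go through; the remainder of your write-up (the three adjectives, the inductive step, conclusion (1)) is in line with the paper.
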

\begin{proof}
    It is clear that $\widetilde{\mathcal{R}}$ constructed above is simple, and acylindrical.
    By subdividing the polygons in $\{P_1,..., P_k, Q_1,..., Q_l\}$, we can also make sure that $\widetilde{\mathcal{R}}$ is irreducible.

    To prove $(1)$, we note that by construction, we have $\widetilde{\mathcal{R}}^2(P_i) = \widetilde{\mathcal{R}}(\mathcal{R}(P_i))$ for all $i$.
    Since each face of $\mathcal{R}^{j}(P_i)$ can be identified with one of the polygons in $\{P_1,.., P_k\}$,
    $$
    \widetilde{\mathcal{R}}(\mathcal{R}^{n-1}(P_i)) = \widetilde{\mathcal{R}}(\mathcal{R}((\mathcal{R}^{n-2}(P_i))) = \widetilde{\mathcal{R}}^2(\mathcal{R}^{n-2}(P_i)) = ... =\widetilde{\mathcal{R}}^n(P_i).
    $$
    Let $F$ be a face of $\mathcal{R}^{n-1}(P_i)$.
    Then $\mathcal{R}(F) \preceq \widetilde{\mathcal{R}}(F) \preceq \mathcal{R}^2(F)$.
    Therefore, we have $\mathcal{R}^n(P_i) \preceq \widetilde{\mathcal{R}}^n(P_i) = \widetilde{\mathcal{R}}(\mathcal{R}^{n-1}(P_i)) \preceq \mathcal{R}^{n+1}(P_i)$.

    To prove $(2)$, it suffices to show that every face of $\widetilde{\mathcal{R}}(P_i)$ and $\widetilde{\mathcal{R}}(Q_j)$ is an induced Jordan domain.
    The statement is clear from our construction for $\widetilde{\mathcal{R}}(P_i)$.

    To prove the statement for $\widetilde{\mathcal{R}}(Q_j)$, we first claim that every face of $\widetilde{\mathcal{R}}^2(P_i)$ is an induced Jordan domain.
    \begin{proof}[Proof of the claim]
        Let $F$ be a face of $\widetilde{\mathcal{R}}^2(P_i) = \widetilde{\mathcal{R}}(\mathcal{R}(P_i))$.
        Let $F'$ be a face of $\mathcal{R}(P_i)$ so that $F \subseteq F'$.
        Let $\psi_{F'}:P_{F'} \longrightarrow F'$ be the cellular map.
        Then by the construction of $\widetilde{\mathcal{R}}$, $\psi_{F'}^{-1}(F)$ is an induced Jordan domain.
        Since for each $i$, $P_i$ is acylindrical in $\mathcal{G}^1_i$, and $\mathcal{R}(F')\preceq \widetilde{\mathcal{R}}(F')$, we have that $\partial P_{F'}$ is acylindrical in $\psi_{F'}^*(\widetilde{\mathcal{G}}^2_i)$.
        Thus, {the boundary $\partial \psi_{F'}^{-1}(F)$ does not contain two non-adjacent vertices of $\partial P_{F'}$(see Remark~\ref{rmk:cylin}).}
        Therefore
        \begin{itemize}
            \item either the face $\psi_{F'}^{-1}(F)$ is disjoint from $\partial P_{F'}$; or
            \item $\psi_{F'}^{-1}(F) \cap \partial P_{F'}$ is a single vertex;
            or
            \item $\psi_{F'}^{-1}(F) \cap \partial P_{F'}$ is a single edge.
        \end{itemize}
        {Since $\psi_{F'}$ is a homeomorphism in the interior, and a cellular map, we conclude that $F$ is also an induced Jordan domain in all three cases.}
    \end{proof}
     Since we can identify the polygon $Q_j$ with a face of $\widetilde{\mathcal{R}}(P_i)$ and $\widetilde{\mathcal{R}}(Q_j) = \widetilde{\mathcal{R}}^2(P_i) \cap Q_j$ by construction, the statement for $\widetilde{\mathcal{R}}(Q_j)$ follows immediately from the claim.
\end{proof}

A similar argument gives the following for spherical subdivision graphs.
{
\begin{prop}\label{prop:jdg}
    Let $\mathcal{G}$ be a simple spherical subdivision graph with irreducible finite subdivision rule $\mathcal{R}$.
    Suppose that $\mathcal{R}$ is acylindrical.
    Then there exists a modification of the subdivision rule $\widetilde{\mathcal{R}}$ so that
    $$
    \mathcal{G} = \lim_{\rightarrow} \widetilde{\mathcal{G}}^n \bigcup \widetilde{\mathcal{G}}^n,
    $$
    and for each $n \geq 0$, each face of $\widetilde{\mathcal{G}}^n$ is an induced Jordan domain.
\end{prop}
}

\subsection{Cylindrical subdivisions}\label{ss:cylindricalsub}
In this subsection, we shall prove the following characterization of cylindrical subdivisions.
\begin{prop}\label{prop:cs}
    Let $\mathcal{R}=\{P_i\}_{i=1}^k$ be a simple irreducible finite subdivision rule.
    Suppose that $\mathcal{R}$ is cylindrical.
    Then there exist {$i = 1,,..., k$,} a finite number $K$ and a pair of non-adjacent vertices $v, w \in \partial P_i$ so that there are infinitely many paths of length $\leq K$ in $\mathcal{G}_i$ connecting $v, w$ with pairwise disjoint interiors.
\end{prop}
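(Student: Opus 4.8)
The plan is to extract a \emph{self-similar tube} from the cylindrical configuration and then exploit the self-similarity to manufacture infinitely many parallel connecting paths of bounded length.

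First I would unwind the definition. By Definition~\ref{defn:cyl}, cylindricity gives some $i$ and a pair of non-adjacent vertices $v,w\in\partial P_i$ for which the two arcs $A_1,A_2$ of $\partial P_i-\{v,w\}$ are not connected in $\mathcal{G}_i-\{v,w\}$. Since any connecting path is finite and hence lies in some $\mathcal{G}_i^n$, this is equivalent to saying that $\{v,w\}$ separates $A_1$ from $A_2$ in \emph{every} finite-level graph $\mathcal{G}_i^n$. Working in the disk $\overline{P_i}$ and using planarity, at level $1$ this separation forces $v$ and $w$ to lie on the boundary of a common face $F$ of $\mathcal{R}(P_i)$ whose boundary meets both the $A_1$- and $A_2$-sides (a \emph{straddling} face); this is the same planar input used in the proof of Theorem~\ref{thm:acy}. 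Via the cellular map $\psi_F$, the pair $(v,w)$ pulls back to a non-adjacent pair on the boundary of the polygon $P'$ giving the type of $F$, and this pair is again separating in $\mathcal{G}_{P'}$, so the configuration reproduces itself on a polygon of smaller type.

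Next I would iterate and invoke finiteness. Repeating the previous step produces a nested sequence of straddling faces together with a sequence of polygon types; since there are only $k$ polygons, some type $P_s$ recurs. Passing to the corresponding iterate of $\mathcal{R}$ (which leaves every $\mathcal{G}_i$ unchanged) and relabelling, I may assume that $P_s$ carries a non-adjacent separating pair $v,w$ together with a cellular self-embedding $\Phi\colon\mathcal{G}_s\hookrightarrow\mathcal{G}_s$, induced by the subdivision, with $\Phi(v)=v$, $\Phi(w)=w$, and $\Phi(P_s)=F$ a straddling sub-face of $\mathcal{R}(P_s)$. Writing $F^{(t)}=\Phi^t(P_s)$ gives strictly nested straddling faces, all containing $v,w$ on their boundary. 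The mechanism is then as follows: suppose I can find a single $v$-$w$ path $\gamma$ in $\mathcal{G}_s$, of length at most $K:=\max_j|\partial P_j|$, whose interior is disjoint from the closed sub-face $\overline{F}=\overline{F^{(1)}}$. Since $\Phi$ is a cellular embedding fixing $v,w$, each $\Phi^t(\gamma)$ is again a $v$-$w$ path of length $\le K$. For $a<b$ one has $\Phi^b(\gamma)\subseteq\overline{F^{(b)}}\subseteq\overline{F^{(a+1)}}=\Phi^a(\overline{F})$, while applying the injective map $\Phi^a$ to $\Int(\gamma)\cap\overline{F}=\emptyset$ gives $\Int(\Phi^a\gamma)\cap\Phi^a(\overline{F})=\emptyset$; hence $\Int(\Phi^a\gamma)\cap\Phi^b(\gamma)=\emptyset$, so the interiors are pairwise disjoint. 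This produces the desired infinite family $\{\Phi^t(\gamma)\}_{t\ge0}$.

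\textbf{The main obstacle} consists of the two planar-topology inputs: the existence of the common straddling face, and the production of the path $\gamma$ avoiding $\overline{F}$. For the latter, the clean route is to arrange that the recurring sub-face meets the outer boundary only at the pinch points, i.e.\ $\overline{F}\cap\partial P_s=\{v,w\}$; then the boundary arc of $\partial P_s$ on the $A_1$-side is itself such a $\gamma$. I expect to secure this by choosing the separating pair and the recurring tube extremally (an \emph{innermost} straddling face), so that no third vertex of $\partial P_s$ is shared with $\overline{F}$: were a third shared vertex $a$ to occur, one of $\{v,a\},\{a,w\}$ would give a strictly tighter separating configuration, contradicting extremality. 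Alternatively, one can build $\gamma$ directly inside the outermost layer $\overline{P_s}\setminus F$ using the separation and well-connectedness arguments of Lemma~\ref{lem:sep} and Proposition~\ref{prop:cp}. Verifying one of these two routes carefully is where the real work lies.
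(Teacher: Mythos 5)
Your reduction to a recurring type and the iteration mechanism $\{\Phi^t(\gamma)\}$ are a genuinely different strategy from the paper's, and the disjointness computation at the end is fine \emph{granted} its two inputs; but the input you yourself flag as ``the main obstacle'' --- producing a $v$--$w$ path $\gamma$ with $\Int(\gamma)\cap\overline{F}=\emptyset$ --- is a real gap, and neither of your proposed routes closes it. The extremality argument is unsubstantiated: if a third vertex $a\in\overline{F}\cap\partial P_s$ occurs, there is no reason $\{v,a\}$ or $\{a,w\}$ should be a \emph{separating} pair in $\mathcal{G}_s$ at all levels (separation by $\{v,w\}$ says nothing about these new pairs once one passes below the straddling face), and $a$ may simply be adjacent to $v$ or $w$, so the pair is not even eligible. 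Worse, the desired configuration can fail persistently: since the closures $\overline{F^{(t)}}$ are nested, the trace $\overline{F^{(t)}}\cap\partial P_s$ stabilizes to some set $S\supseteq\{v,w\}$, and nothing prevents $S$ from containing interior vertices of \emph{both} arcs of $\partial P_s-\{v,w\}$, or even a whole arc (the straddling face may share an entire boundary arc with $\partial P_s$); in that situation every $v$--$w$ path is pinched through $\overline{F^{(t)}}$ and no admissible $\gamma$ exists for any iterate. Your fallback via Lemma~\ref{lem:sep} and Proposition~\ref{prop:cp} is not available either: both have \emph{acylindricity} as a hypothesis, which is precisely what fails here.

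The paper's proof handles exactly this degenerate case by a mechanism absent from your proposal: a descent on the length bound. It takes the nested faces $F_n$ with $v,w\in\partial F_n$, the boundary paths $\gamma_n\subseteq\partial F_n$ of length $\leq K$, and, when infinitely many of the $\gamma_n$ share interior points, it extracts a \emph{new} non-adjacent pair $v',w'$ lying on infinitely many $\gamma_n$ and replaces the $\gamma_n$ by their subpaths of length $\leq K-1$; since lengths are positive integers this terminates, and at termination the interiors are pairwise disjoint. Some change of pair of this kind is unavoidable, so your argument cannot be completed as written without importing it. Two secondary points also need repair: the faces $F$ need not be Jordan domains, so $\psi_F$ need not be injective on boundaries and your ``cellular self-embedding $\Phi$'' (whose injectivity you use in the disjointness step) is not automatic; and the claim that the pulled-back pair is ``again separating in $\mathcal{G}_{P'}$'' requires the planar argument that the two arcs of $\partial F-\{v,w\}$ lie in the respective components of the two arcs of $\partial P_i$, which you assert but do not prove. (The passage from type recurrence to recurrence of the marked pair is fine --- finitely many pairs --- though it deserves a sentence rather than ``relabelling.'')
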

\begin{proof}
    {We first claim that there exist $i = 1,..., k$ and a pair of vertices $v, w \in \partial P_i$ and an infinite sequence paths $\gamma_n$ in $\mathcal{G}_i$ connecting $v, w$ with $K = \max_n \text{length}(\gamma_n) < \infty$ and $\gamma_n \neq \gamma_m$ for all $n\neq m$.}
    Indeed, since $\mathcal{R}$ is cylindrical, there exist $i \in \{1,\dots k\}$ and a pair of non-adjacent $v, w \in \partial P_i$ so that the two components of $\partial P_i -\{v,w\}$ are in different components of $\mathcal{G}^n_i-\{v, w\}$ for all $n$ {(see Definition~\ref{defn:cyl} and the remark in the preceding paragraph)}.
    Therefore, there exists a sequence of non-external face $F_n, n\in\N$ of $\mathcal{G}^n_i$ so that $v,w \in \partial F_n$ (see Remark~\ref{rmk:cylin}).
    {By passing to a subsequence}, we may assume the faces are nested, i.e., $F_{n+1} \subseteq F_n$.
    Since $\mathcal{R}(P_i)$ consists of at least two faces, $F_{n+1}$ is a proper subset of $F_n$.
    Let $\gamma_n \subseteq \partial F_n$ be a path connecting $v, w$. {Note that there are two such choices.}
    Since $F_n$ are all different, we can choose $\gamma_n$ to be all different.
    {Since each non-external face of $\mathcal{G}^n_i$ is identified with one of the polygons in $\{P_1,..., P_k\}$, the lengths of $\gamma_n$ are all bounded, so $K = \max_n \text{length}(\gamma_n) < \infty$.} 
    
    {We now claim that either 
    \begin{enumerate}
        \item there are infinitely many $\gamma_n$ with pairwise disjoint interior, or
        \item there are two non-adjacent vertices $v', w' \in \partial P_j$ for some $j = 1,..., k$ with infinitely many distinct paths in $\mathcal{G}_j$ of length $\leq K-1$.
    \end{enumerate} 
    The proposition follows directly from the first case of the claim. In the second case, we inductively apply the claim. Since the lengths of paths are positive integers, eventually we will obtain the first case. Therefore, the proposition follows from the claim.
    }

    {
    To prove the claim, we first consider the case that for each $\gamma_l$, there are only finitely many $\gamma_n$ with $\Int(\gamma_l) \cap \Int(\gamma_n) \neq \emptyset$. Then we can inductively select $\gamma_{n_j}$ so that $\gamma_{n_j}$ has disjoint interior with $\gamma_{n_i}$ for all $i \leq j-1$. Thus we are in the first case of the claim. 
    }

    {Now consider the case that there exists some $\gamma_l$ so that there are infinitely many $\gamma_n$ with
    $\Int(\gamma_l) \cap \Int(\gamma_n) \neq \emptyset$. 
    After passing to a subsequence if necessary, we may assume $\Int(\gamma_l) \cap \Int(\gamma_n) \neq \emptyset$ for all $n\geq l$.
    Thus, there exists $x \in \Int(\gamma_l)$ so that $x \in \gamma_n$ for all $n\geq l$.
    Let $\gamma_n'$ (and $\gamma_n''$) be the sub-path of $\gamma_n$ that connects $v, x$ (and $x, w$ respectively).
    Since $\gamma_n, n \geq l$ are distinct, either $\gamma_n'$ are distinct or $\gamma_n''$ are distinct. 
    Without loss of generality and after passing to a subsequence if necessary, we assume $\gamma'_n \neq \gamma'_m$ for all $n > m \geq l$. 
    Thus, $v, x$ are not adjacent in the ideal boundary of $F_{l+1}$.
    Note that the length of $\gamma'_n$ is at most $K-1$.
    Let $P_j$ be the type of the face $F_{l+1}$. Then $\gamma'_n, n \geq l+1$ induces the required list of paths for the second case of the claim. The proposition follows.}
\end{proof}
{
\begin{rmk}
    We remark that it can be proved directly that the converse of Proposition~\ref{prop:cs} holds. The converse also follows from the existence of circle packings $\mathcal{P}$ for acylindrical subdivision rule (Theorem~\ref{thm:exc}), since for any two disjoint circles in $\mathcal{P}$, there are only finitely many chains of touching circles of bounded length connecting them.
\end{rmk}}

\subsection{Existence of circle packings}\label{ss:existence}
\begin{proof}[Proof of Theorem \ref{thm:exc}]
    Let us prove the case for subdivision graphs.
    The statement for spherical subdivision graph can be proved in a similar way.
    
    Suppose that $\mathcal{R}$ is acylindrical.
    Consider the sequence $\mathcal{G}^n_i$.
    By Proposition \ref{prop:jd}, we may assume that all faces of $\mathcal{G}^n_i$ are induced Jordan domains. 
    Let $F^{\ext}_i$ be the external face.
    By Theorem \ref{thm:dc}, we have
    $$
    \Teich(\mathcal{G}^n_i) = \prod_{F \text{ face of }\mathcal{G}^n_i} \Teich(\Pi_{F}) = \prod_{F \text{ non-external}} \Teich(\Pi_{F}) \times \Teich(\Pi_{F^{\ext}_i}).
    $$
    Fix a conformal structure $\sigma_{\ext} \in \Teich(\Pi_{F^{\ext}_i})$. {Denote the fiber by
    $$
    \Teich_{\sigma_{\ext}}(\mathcal{G}_i^n) = \prod_{F  \text{ non-external}} \Teich(\Pi_{F}) \times \{\sigma_{\ext}\}.
    $$}

    {Consider a sequence of finite circle packings $(\mathcal{P}^n_i: n \in \N)$, where $\mathcal{P}^n_i \in \Teich_{\sigma_{\ext}}(\mathcal{G}^n_i)$.
    Fix an integer $k \in \N$.}
    Let $\tau_{k,n}: \Teich(\mathcal{G}^{n}_i) \longrightarrow \Teich(\mathcal{G}^k_i)$ be the skinning map, where $n> k$.
    {Note that for each $n \geq k$, $\tau_{k,n}(\mathcal{P}^{n}_i)$ is a finite circle packing with nerve $\mathcal{G}^k_i$.}
    By construction, we have
    \begin{equation}\label{eqn:ext}
        \pi_{F^{\ext}_i}(\tau_{k,n}(\mathcal{P}^{n}_i)) = \sigma_{\ext}.
    \end{equation}
    Let $F$ be a non-external face of $\mathcal{G}^k_i$.
    Then $F$ is acylindrical in $\mathcal{G}^{n}_i$ for all $n>k$.
    Therefore, by Theorem \ref{thm:bitr}, 
    \begin{equation}\label{eqn:nonext}
    \pi_{F}(\tau_{k,n}(\Teich(\mathcal{G}^n_i))) = \pi_{F}(\tau_{k,k+1}(\tau_{k+1,n}(\Teich(\mathcal{G}^n_i))))\subseteq \pi_{F}(\tau_{k,k+1}(\Teich(\mathcal{G}^{k+1}_i)))
    \end{equation}
    is uniformly bounded {in $\Teich(\Pi_{F})$} for all $n>k$.
    {Combining the equations \eqref{eqn:ext} and \eqref{eqn:nonext}, we conclude that the infinite sequence of finite circle packings $(\mathcal{P}^n_i: n \in \N_{\geq k})$ are mapped to a bounded set, i.e., 
    \begin{equation}\label{eqn:boundedlevelk}
        \{\tau_{k,n}(\mathcal{P}^{n}_i): n\geq k \} \text{ is bounded in }\Teich(\mathcal{G}^k_i).
    \end{equation}}

    {We normalize the circle packings so that three marked tangent points between level $0$ circles are $0, 1, \infty$. Fix $k \in \N$.
    By \eqref{eqn:boundedlevelk} and Theorem~\ref{thm:dc}, $\Lambda(\tau_{k,n}(\mathcal{P}^n_i))$ is uniformly quasiconformally homeomorphic to $\Lambda(\mathcal{P}^k_i)$ for all $n \geq k$. Thus, by compactness of $K$-quasiconformal maps, after passing to a subsequence, $\Lambda(\mathcal{P}^n_i)$ converges in Gromov-Hausdorff topology to a closed set 
    $$
    \Lambda^k := \lim_{n\to\infty}\Lambda(\tau_{k,n}(\mathcal{P}^n_i)),
    $$  
    which is again a finite circle packing with nerve $\mathcal{G}^k_i$.
    By a diagonal argument, after passing to a further subsequence if necessary, we assume that the limit $\Lambda^k = \lim_{n\to\infty}\Lambda(\tau_{k,n}(\mathcal{P}^n_i))$ exists for all $k \in \N$ and is the limit set of a finite circle packing with nerve $\mathcal{G}_i^k$.
    Note that $\Lambda^k \subseteq \Lambda^l$ if $l\geq k$.}
    
    {
    Let $\Lambda^\infty = \overline{\bigcup_k \Lambda^k}$.
    We will now show that $\Lambda^\infty$ is the limit set of an infinite circle packing with nerve $\mathcal{G}_i$.
    We claim that each complementary component of $\Lambda^\infty$, other than the one that corresponds to the external face, is a disk of the finite circle packing for $\Lambda^k$ for some $k$.
    Let $(F^n)$ be a nested sequence of non-external faces of $\mathcal{G}^n_i$, and let $\Pi(F^n)$ be the corresponding component of $\widehat\C - \Lambda^n$. Note that each complementary component of $\Lambda^\infty$ corresponds to either a disk of the finite circle packing, or the external face, or $\bigcap \Pi(F^{n})$ for some nested sequence of faces.
    Since $\partial F^n$ is acylindrical in $\mathcal{G}^{n+1}_i$, $\partial F^n \cap \partial F^{n+1}$ is either empty, a vertex, or an edge together with the two boundary vertices. In all three cases, since the number of vertices in $\partial F^n$ is uniformly bounded, the diameter of $\Pi(F^n)$ must go to $0$. Thus, $\bigcap \overline{\Pi(F^{n})}$ is a singleton. This proves the claim. By construction, two complementary disks components touch if and only if the corresponding vertices are adjacent in $\mathcal{G}_i$. Therefore, $\Lambda^\infty$ is the limit set of an infinite circle packing with nerve $\mathcal{G}_i$.}

    Conversely, suppose that $\mathcal{R}$ is cylindrical.
    Suppose for contradiction that $\mathcal{P}_i$ is a circle packing with nerve $\mathcal{G}_i$.
    By Proposition \ref{prop:cs}, there exists a pair of non-adjacent vertices $v, w\in \partial P_i$ with infinitely many paths in $\mathcal{G}_i$ of uniformly bounded length connecting $v, w$ with pairwise disjoint interiors.
    {Let $C_v, C_w$ be the corresponding circles in $\mathcal{P}_i$. Since $v, w$ are non-adjacent and $\mathcal{R}$ is irreducible, $C_v, C_w$ are disjoint. Normalize by a M\"obius map, we assume that $C_v = \partial B(0,1), C_w = \partial B(0, R), R> 1$. By Proposition \ref{prop:cs}, there are infinitely many chains of touching disks connecting $C_v$ and $C_w$ in $A(R) = B(0, R) - \overline{B(0, 1)}$ of bounded length $\leq N$ with pairwise disjoint interior. Since each chain connects $C_v$ and $C_w$, the maximum diameters of disks in this chain must be $\geq (R-1)/N$. Thus the area of the union of disks in each chain has a lower bound. Since the disks have pairwise disjoint interior, the total area is infinite. But the area of the annulus $A(R)$ is a finite number.  This is a contradiction, and the theorem follows.}    
\end{proof}

Let $\Teich(\mathcal{G}_i) := \{\mathcal{P}_i: \mathcal{P}_i \text{ has nerve } \mathcal{G}_i\}/\PSL_2(\C)$ be the Teichm\"uller space with nerve $\mathcal{G}_i$.
Let $F^{\ext}_{i}$ be the external face of $\mathcal{G}_i$.
Then there exists a map
$$
\Phi_i: \Teich(\mathcal{G}_i) \longrightarrow \Teich(\Pi_{F^{\ext}_{i}}).
$$
Note that { in the proof of Theorem \ref{thm:exc}, we show that for each fixed conformal class $\sigma_{\ext} \in \Teich(\Pi_{F^{\ext}_{i}})$, we can construct an infinite circle packing $\Lambda^\infty$ with external class $\sigma_{\ext}$. Therefore, we have the following}
\begin{prop}\label{prop:surj}
    Suppose that $\mathcal{R}$ is acylindrical. Then $\Phi_i$ is surjective.
\end{prop}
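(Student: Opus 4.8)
The plan is to revisit the construction carried out in the proof of Theorem \ref{thm:exc} and to observe that the external conformal structure fed into it can be prescribed arbitrarily and survives to the limit. Concretely, I would start from an arbitrary target $\sigma \in \Teich(\Pi_{F_i})$ and produce an infinite circle packing $\mathcal{P}_i$ with nerve $\mathcal{G}_i$ realizing it, i.e.\ with $\Phi_i(\mathcal{P}_i) = \sigma$. The essential structural fact to exploit is that the external face $F_{ext}$ of $\mathcal{G}^n_i$ is never subdivided, so it coincides as a face, together with its ideal polygon $\Pi_{F_{ext}}$, for every level $n$, and it agrees with the external face $F_i$ of the direct limit $\mathcal{G}_i$. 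In particular $\Teich(\Pi_{F_{ext}}) = \Teich(\Pi_{F_i})$, and I may legitimately take $\sigma_{ext} = \sigma$ as the fixed external class in the construction.

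With that choice, I would run the existence argument verbatim: by Proposition \ref{prop:jd} I may assume every face of $\mathcal{G}^n_i$ is a Jordan domain, and I choose finite packings $\mathcal{P}^n_i \in \mathcal{M}(\mathcal{G}^n_i)$ whose external coordinate is $\sigma$, i.e.\ $\pi_{F_{ext}}(\mathcal{P}^n_i) = \sigma$. The Relative Bounded Image Theorem (Theorem \ref{thm:bitr}) then shows that $\{\tau_{k,n}(\mathcal{P}^n_i) : n > k\}$ is bounded in $\mathcal{M}(\mathcal{G}^k_i)$, and after normalizing so that the level-$0$ circles do not degenerate and passing to a subsequence, the limit sets converge to the limit set of an infinite circle packing $\mathcal{P}_i$ with nerve $\mathcal{G}_i$. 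All of this is already granted by Theorem \ref{thm:exc}; the only new content is bookkeeping the external coordinate.

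The key remaining point is that $\Phi_i(\mathcal{P}_i) = \sigma$, and this is where the proof must actually check something. Here I would use that the skinning maps restrict to the identity on the external factor, since $F_{ext}$ is a common face of $\mathcal{G}^k_i$ and $\mathcal{G}^n_i$ with the same boundary vertices on $\partial P_i$; hence $\pi_{F_{ext}}(\tau_{k,n}(\mathcal{P}^n_i)) = \sigma$ for all $n > k$, exactly as recorded in the proof of Theorem \ref{thm:exc}. Because convergence in the product $\mathcal{M}(\mathcal{G}^k_i) = \prod_F \Teich(\Pi_F)$ implies convergence in each factor, and the external factor is constantly $\sigma$ along the sequence, the sub-packing $\mathcal{P}_i|_{\mathcal{G}^k_i}$ has external class $\sigma$; since $F_{ext} = F_i$ this gives $\Phi_i(\mathcal{P}_i) = \sigma$. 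As $\sigma$ was arbitrary, $\Phi_i$ is surjective.

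I expect the main obstacle to be not existence, which Theorem \ref{thm:exc} already supplies, but the persistence of the external coordinate under the Gromov--Hausdorff limit: one must be certain that the outermost circles attached to the unsubdivided vertices of $\partial P_i$ neither degenerate nor move their ideal-polygon shape in the limit. I anticipate this follows cleanly from the non-degeneracy built into the normalization together with the fact that $F_{ext}$ is literally the same face at every level, so that its Fenchel--Nielsen coordinates are constant along the approximating sequence and thus pass to the limit without a subsequence being needed in that factor.
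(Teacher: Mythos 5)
Your proposal is correct and takes essentially the same route as the paper: the paper presents Proposition \ref{prop:surj} as an immediate byproduct of the proof of Theorem \ref{thm:exc}, in which the external class $\sigma_{ext}$ is arbitrary, is held constant along the approximating sequence via $\pi_{F_{ext}}(\tau_{k,n}(\mathcal{P}^n_i)) = \sigma_{ext}$, and persists to the limit packing. Your explicit bookkeeping of the external coordinate (using that $F_{ext}$ is never subdivided, so $\Teich(\Pi_{F_{ext}}) = \Teich(\Pi_{F_i})$, and that convergence in $\mathcal{M}(\mathcal{G}^k_i)$ is factorwise) simply spells out what the paper leaves implicit.
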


We will see in \S \ref{subsec:vec} that this map is an isometric bijection with respect to the Teichm\"uller metrics.

\section{Iterations of the skinning map}\label{sec:ism}
{In this section, we will prove exponential convergence of finite approximations (Theorem~\ref{thm:LB} and \ref{thm:B}) using iterations of the skinning map introduced in \S~\ref{sec:sbit}. This allows us to describe the moduli space of infinite circle packings and complete the proof of Theorem~\ref{thm:LR} and \ref{thm:A}, as well as to prove the geometric inflexibility of circle packings (Theorem \ref{thm:EC}).}

{
The section is organized as follows. In \S~\ref{subsec:su}, we set up the notations to apply the iteration. The key objective is to construct the diameter constant $C$ and the contraction constant $\delta < 1$. In \S~\ref{ss:fixedexternalclass}, we prove Theorem~\ref{thm:LB},  \ref{thm:B} and \ref{thm:A}. In \S~\ref{subsec:vec}, we prove Theorem~\ref{thm:LR}. The proof of Theorem~\ref{thm:EC} is contained in \S~\ref{ss:renom}. Finally, we introduce a nice class of quasiconformal homeomorphism between two infinite circle packings in \S~\ref{subsec:tm}.
}

\subsection{Diameter and contraction constant}\label{subsec:su}
Let $\mathcal{R}=\{P_i\}_{i=1}^k$ be a simple irreducible acylindrical finite subdivision rule.
By Proposition \ref{prop:jd} and replacing $\mathcal{R}$ with an appropriate modification if necessary, we may assume that
\begin{enumerate}
    \item every face $F$ of $\mathcal{G}_i^k$ is an induced Jordan domain; and
    \item every non-external face $F$ of $\mathcal{G}_i^k$ is acylindrical in $\mathcal{G}_i^n$ for all $n > k$.
\end{enumerate}

{In the following, we will construct two constants
\begin{itemize}
    \item $C$: a constant that bounds the diameter of the skinning image; and 
    \item $\delta < 1$: the contraction constant for the derivative of the skinning map.
\end{itemize}
We summarize the construction as follows (see \S~\ref{sss:maps} and \S~\ref{sss:compactset} for more details).  
We first associate a map $\tau_F$ for each face of $\mathcal{G}^1_i$. Note that $F$ can be either external or non-external. 
If $F$ is non-external, we use Theorem \ref{thm:bitr} to construct a compact set $K_F \subseteq \Teich(\Pi_F)$ that contains the image of $\tau_F$. It is more subtle if $F$ is external. In this case, we first note that $P_i$ is identified with finitely many faces, listed by $F_l, l =1,..., m$, of $\mathcal{G}^1_j, j \in \{1,\dots k\}$ by the subdivision rule. We can identify $F$ with the complement of such faces. We construct the compact set by applying Theorem \ref{thm:bitr} to $F_l \subset \mathcal{G}^1_j$. These compact sets give rise to a compact set
$$
K_i  = \prod K_F \subseteq \Teich(\mathcal{G}_i^1), i \in \{1,\dots, k\}.
$$
We define $C$ as the maximal diameter of $K_i$, and define $\delta$ as the supremum of the derivative of the skinning map in $K_i$. We conclude by a theorem of McMullen (see Lemma~\ref{lem:nuc}) that $\delta< 1$.
}
\subsubsection{A finite list of maps $\tau_F$}\label{sss:maps}
Let $F$ be a face of $\mathcal{G}_i^1$ ({which may be external or non-external}).
As $F$ is an induced Jordan domain, let $\mathcal{H}_F = \partial F$ be the induced subgraph of $\mathcal{G}_i^1$. Then $\mathcal{H}_F$ has exactly two faces, one of which being $F$; denote by $F^c$ the other face.
Thus, we have
$$
\Teich(\mathcal{H}_F) = \Teich(\Pi_F) \times \Teich(\Pi_{F^c}).
$$
We define the map
$$
\tau_F: \Teich(\mathcal{G}_i^1) = \prod_{S \text{ face of }\mathcal{G}_i^1} \Teich(\Pi_S) \longrightarrow \Teich(\Pi_{F^c})
$$
by $\tau_F(\mathcal{P}) = \pi_{F^c}(\tau_{\mathcal{H}_F, \mathcal{G}^1_i}(\mathcal{P}))$, where $\tau_{\mathcal{H}_F, \mathcal{G}^1_i}: \Teich(\mathcal{G}_i^1) \longrightarrow \Teich(\mathcal{H}_F)$ is the skinning map and $\pi_{F^c}: \Teich(\mathcal{H}_F) \longrightarrow \Teich(\Pi_{F^c})$ is the projection map.

By Theorem \ref{thm:csk}, $\tau_F$ is the composition of a projection map and the restriction of Thurston's skinning map.
Since Thurston's skinning map is always a contraction with respect to the Teichm\"uller metric (see \cite[Theorem 5.3]{McM90}), we have the following lemma.
\begin{lem}\label{lem:nuc}
    Let $F$ be a non-external face of $\mathcal{G}_i^1$.
    Then for any $\mathcal{P} \in \Teich(\mathcal{G}_i^1)$, we have
    $$
    \| d\tau_F (\mathcal{P}) \| < 1,
    $$
    where the norm is computed with respect to the Teichm\"uller metrics on $\Teich(\mathcal{G}_i^1)$ and $\Teich(\Pi_{F^c})$.
\end{lem}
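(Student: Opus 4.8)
The plan is to reduce Lemma \ref{lem:nuc} to the strict contraction property of Thurston's skinning map, using the two structural inputs already assembled in the excerpt: the identification of $\tau_F$ with a composition involving Thurston's skinning map (Theorem \ref{thm:csk}), and the Relative Bounded Image Theorem (Theorem \ref{thm:bitr}) guaranteeing that the relevant face is acylindrical. The key conceptual point is that a holomorphic map between Teichm\"uller spaces is always weakly contracting for the Teichm\"uller (= Kobayashi) metric, so $\|d\tau_F\| \le 1$ comes for free; the content of the lemma is the \emph{strict} inequality $\|d\tau_F(\mathcal{P})\| < 1$ at each individual point $\mathcal{P}$.

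First I would unwind the definitions: by construction $\tau_F = \pi_{F^c} \circ \tau_{\mathcal{H}_F, \mathcal{G}_i^1}$, where $\mathcal{H}_F = \partial F$ is an induced subgraph of $\mathcal{G}_i^1$ whose two complementary faces are $F$ and $F^c$. Since $F$ is a non-external face of $\mathcal{G}_i^1$, under the standing assumptions (the face is acylindrical in $\mathcal{G}_i^n$ for $n>1$, and all faces are Jordan domains) Theorem \ref{thm:csk} lets me realize $\tau_F$ as the composition of a coordinate projection with the restriction of Thurston's skinning map $\tau_M$ to the reflection locus $\Teich^\sigma(\partial M)$. The crucial fact I would invoke is that $\tau_M$ is the second coordinate of a holomorphic map $\Teich(\partial M) \to \Teich(\partial M)\times \Teich(\overline{\partial M})$ whose first coordinate is the identity; since the \emph{total} map is a holomorphic embedding (in particular distance-nonincreasing and with derivative of norm $\le 1$ in each coordinate), and the identity already accounts for a contribution of norm exactly $1$ in the first factor, the norm of the derivative of the second factor $\tau_M$ must be strictly less than $1$ wherever the domain Teichm\"uller space has positive dimension.

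To make the strict inequality rigorous, the cleanest route is to cite the Schwarz–Pick / Royden contraction directly: McMullen's theorem (\cite[Theorem 2.1]{McM90}) states that Thurston's skinning map $\tau_M$ for a quasi-Fuchsian (or more generally acylindrical) piece is a contraction, meaning $\|d\tau_M\| < 1$ pointwise, as soon as the cover associated to $\partial M$ has nontrivial topology obscured by $M$ — which is exactly what the non-external, acylindrical condition on $F$ guarantees. I would then note that composing with the coordinate projection $\pi_{F^c}$ (norm $\le 1$, since the product Teichm\"uller metric is the max) and restricting to the $\sigma$-invariant real slice $\Teich^\sigma$ can only decrease the operator norm, so $\|d\tau_F(\mathcal{P})\| \le \|d\tau_M(\mathcal{P})\| < 1$ for every $\mathcal{P}$.

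The main obstacle is ensuring that the strict inequality from \cite[Theorem 2.1]{McM90} genuinely applies to $\tau_F$ rather than to the full Thurston skinning map, and that restricting to the real slice $\Teich^\sigma(\partial M)$ does not produce a degenerate direction along which the derivative has norm $1$. The first concern is handled by the non-external hypothesis, which ensures $F^c$ carries the obscured topology so that the relevant factor of $\tau_M$ is genuinely non-identity. The second concern is the subtler one: I would verify that the complexification of the real tangent space at $\mathcal{P}$ sits inside the holomorphic tangent space on which McMullen's strict contraction is stated, so that the real operator norm is bounded by the complex one; since Teichm\"uller distance on $\Teich^\sigma$ agrees with the restriction of the ambient Teichm\"uller distance (as established in \S\ref{sec:teich}), the strict contraction descends to the real slice without loss.
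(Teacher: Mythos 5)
Your proposal is correct and follows essentially the same route as the paper, whose entire proof is the observation that by Theorem \ref{thm:csk} the map $\tau_F$ is a coordinate projection composed with the restriction of Thurston's skinning map, which is a pointwise strict contraction for the Teichm\"uller metric by \cite[Theorem 2.1]{McM90}; your additional checks (that $\pi_{F^c}$ is non-expanding for the max metric and that restricting to the $\sigma$-invariant slice, which is isometrically embedded, preserves the strict bound) are the right ones and are left implicit in the paper. One caution: your intermediate heuristic --- that because the first coordinate of $(id,\tau_M)$ is an isometry the second must be strictly contracting --- is not a valid argument on its own (e.g.\ $(id,id)$ is distance-nonincreasing for the max metric with both coordinates isometries), but this does not matter since you correctly discard it in favor of citing McMullen's theorem directly, exactly as the paper does.
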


\subsubsection{A finite list of compact sets $K_i \subseteq  \Teich(\mathcal{G}_i^1)$}\label{sss:compactset}
Let $F$ be a face of $\mathcal{G}_i^1$. {We now construct a compact set $K_F \subseteq \Teich(\Pi_F)$ that contains all geodesic segments connecting two points in the skinning image.}

We consider two cases.

Case (1): {$F$ is non-external.}
Then $F$ is acylindrical in $\mathcal{G}_i^2$.
So by Theorem \ref{thm:bitr}, we have that $\pi_{F}(\tau_{\mathcal{G}_i^1, \mathcal{G}_i^2}(\Teich(\mathcal{G}_i^2)))$ is a bounded set.
{Let $B(a, R)\subseteq\Teich(\Pi_F)$ be a ball of radius $R$ covering $\pi_{F}(\tau_{\mathcal{G}_i^1, \mathcal{G}_i^2}(\Teich(\mathcal{G}_i^2)))$. Let $K_F = \overline{B(a, 2R)}$.
Then $K_F$ is a compact set that contains every geodesic segments connecting two points in $\pi_{F}(\tau_{\mathcal{G}_i^1, \mathcal{G}_i^2}(\Teich(\mathcal{G}_i^2)))$.}


Case (2): {$F$ is external.}
Let $j = 1,..., k$.
Then there is a finite list of non-external faces $F_l$ of $\mathcal{G}^1_j$ that are identified with $P_i$ by the subdivision rule.
{Let $F_l$ be such a face. Let $\mathcal{H}_{l} = \partial F_l$ be the induced subgraph of $\mathcal{G}^1_j$ and let $F^c_l$ be the complementary face of $\mathcal{H}_{l}$.
Since $F$ is the external face, we have a natural identification between $\Teich(\Pi_{F^c_l})$ and $\Teich(\Pi_F)$.
Since $\mathcal{R}$ is acylindrical, it is easy to see that $F^c_l$ is acylindrical in $\mathcal{G}_j^2$.
Thus, by Theorem \ref{thm:bitr}, we have that
$$
\pi_{F^c_l}(\tau_{\mathcal{H}_{l}, \mathcal{G}_j^2}(\Teich(\mathcal{G}_j^2))) \subseteq \Teich(\Pi_{F^c_l}) \cong \Teich(\Pi_F)
$$
is a bounded set.
Therefore $A_F:= \bigcup_l \pi_{F^c_l}(\tau_{\mathcal{H}_{l}, \mathcal{G}_j^2}(\Teich(\mathcal{G}_j^2))) \subseteq \Teich(\Pi_F)$ is bounded, where the union is over all non-external faces of $\mathcal{G}^1_j, j = 1,..., k$ that are identified with $P_i$.
We use a similar construction as in Case (1) to obtain a compact set $K_F \subseteq \Teich(\Pi_F)$ that contains all every geodesic segments connecting two points in $A_F$.}

We call $K_F$ the corresponding compact set for the face $F$, and denote
$$
K_i:= \prod_{F \text{ face of } \mathcal{G}^1_i} K_F \subseteq \prod_{F \text{ face of } \mathcal{G}^1_i} \Teich(\Pi_F) = \Teich(\mathcal{G}_i^1)
$$
the corresponding compact set for $\mathcal{G}_i^1$.

\subsection*{Diameter $C$ and contraction constant $\delta$}
Let $C_i$ be the diameter of $K_i$, and $C := \max_{i=1,..., k} C_i$.
Let 
$$
\delta_i:= \max_{F \text{ non-external face of }\mathcal{G}^1_i}\max_{\mathcal{P} \in K_i} \| d\tau_F (\mathcal{P}) \|,
$$
and let $\delta:= \max_{i=1,..., k} \delta_i$.
By Lemma \ref{lem:nuc}, $\delta < 1$.

\subsection{Fixed external class}\label{ss:fixedexternalclass}
In this subsection, we shall discuss exponential convergence and rigidity of circle packings for subdivision graphs with a fixed external class. Similar arguments give the same results for circle packings for spherical subdivision graphs.

Let $F^{\ext}_i$ be the external face for $\mathcal{G}_i^n$.
Recall that
$$
    \Teich(\mathcal{G}^n_i) = \prod_{F \text{ non-external}} \Teich(\Pi_{F}) \times \Teich(\Pi_{F^{\ext}_i}).
$$
Fix a conformal structure $\sigma_{\ext} \in \Teich(\Pi_{F^{\ext}_i})$, and consider the fiber
$$
\Teich_{\sigma_{\ext}}(\mathcal{G}_i^n) = \prod_{F  \text{ non-external}} \Teich(\Pi_{F}) \times \{\sigma_{\ext}\}.
$$

To ease the notations, we shall often drop the subscripts for the skinning maps, and denote
$$
\tau = \tau_{\mathcal{G}_i^n, \mathcal{G}_i^{n+1}}:\Teich(\mathcal{G}_i^{n+1}) \longrightarrow \Teich(\mathcal{G}_i^{n}).
$$
We shall denote by $\tau^l: \Teich(\mathcal{G}_i^{n+l}) \longrightarrow \Teich(\mathcal{G}_i^{n})$ the composition of the skinning maps.

\begin{lem}\label{lem:cf}
    Let $k \geq 1$, and let $\mathcal{P}_1, \mathcal{P}_2 \in \Teich_{\sigma_{\ext}}(\mathcal{G}_i^{k+2})$.
    Then
    $$
    d(\tau^2(\mathcal{P}_1), \tau^2(\mathcal{P}_2)) \leq \delta \cdot d(\tau(\mathcal{P}_1), \tau(\mathcal{P}_2)) \leq \min\{C, d(\mathcal{P}_1, \mathcal{P}_2)\}\cdot \delta,
    $$
    where $C$ is the diameter and $\delta$ is the contraction constant defined in \S \ref{subsec:su}, and $d$ is the Teichm\"uller distance on the corresponding spaces.
\end{lem}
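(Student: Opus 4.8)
The plan is to split the two displayed inequalities and to reduce everything, coordinate by coordinate, to the finite list of local skinning maps $\tau_F$ and compact sets $K_F$ from \S\ref{subsec:su}, whose very existence is what the finiteness of $\mathcal{R}$ buys us. Write $\tau^2=\tau\circ\tau$, with the inner $\tau:\mathcal{M}(\mathcal{G}_i^{k+2})\to\mathcal{M}(\mathcal{G}_i^{k+1})$ and the outer $\tau:\mathcal{M}(\mathcal{G}_i^{k+1})\to\mathcal{M}(\mathcal{G}_i^k)$. The point of using two iterates rather than one is that the first iterate carries the arbitrary packings $\mathcal{P}_1,\mathcal{P}_2$ into the compact set on which the derivative of the skinning map is uniformly bounded by $\delta<1$; the second iterate then contracts.

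For the second inequality I would argue as follows. Since the external face is never subdivided, $\tau$ preserves the external class, so $\tau(\mathcal{P}_1),\tau(\mathcal{P}_2)\in\mathcal{M}_{\sigma_{ext}}(\mathcal{G}_i^{k+1})$. The skinning map is the restriction to a real slice of a holomorphic map between Teichm\"uller spaces (Theorem \ref{thm:csk}), hence weakly contracting because holomorphic maps do not increase the Kobayashi (= Teichm\"uller) metric; this gives $d(\tau\mathcal{P}_1,\tau\mathcal{P}_2)\le d(\mathcal{P}_1,\mathcal{P}_2)$. For the bound by $C$, I use the product description $\mathcal{M}(\mathcal{G}_i^{k+1})=\prod_F\Teich(\Pi_F)$ with the max metric (Theorem \ref{thm:dc}): for each non-external face $F$ of $\mathcal{G}_i^{k+1}$ the coordinate $\pi_F(\tau\mathcal{P}_j)$ is determined by the subpacking over $\overline F$, and the Relative Bounded Image Theorem (Theorem \ref{thm:bitr}) for this acylindrical configuration places it in the compact set $K_F$ attached to the type of $F$. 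Since the external coordinate is frozen (distance $0$), the maximum over faces is at most $\max_F\diam(K_F)\le C$.

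For the first inequality I would analyze the outer $\tau$ coordinatewise at the points $\mathcal{Q}_j:=\tau(\mathcal{P}_j)\in K$. For a non-external face $G$ of $\mathcal{G}_i^k$, the coordinate $\pi_G\circ\tau$ is given, after restricting to the subpacking over $\overline G$ and passing to the canonical identification of the relevant interstice Teichm\"uller space with a model in the finite list, by one of the contracting maps $\tau_F$ of \S\ref{subsec:su}; the same bounded-image reasoning puts the restricted packings into $K_F$. Because $K_F$ was built to contain the geodesic segment between any two of its points, integrating the bound $\|d\tau_F\|\le\delta$ (Lemma \ref{lem:nuc} together with the definition of $\delta$) along that geodesic, and using that restriction to a subpacking is $1$-Lipschitz, yields $d(\pi_G\tau\mathcal{Q}_1,\pi_G\tau\mathcal{Q}_2)\le\delta\,d(\mathcal{Q}_1,\mathcal{Q}_2)$. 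The external coordinate is again frozen, so taking the maximum over the faces $G$ gives $d(\tau^2\mathcal{P}_1,\tau^2\mathcal{P}_2)\le\delta\,d(\mathcal{Q}_1,\mathcal{Q}_2)=\delta\,d(\tau\mathcal{P}_1,\tau\mathcal{P}_2)$; combining with the second inequality completes the proof.

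The main obstacle is exactly the coordinatewise reduction in the previous paragraph: one must check that every coordinate of the skinning maps between the deep graphs $\mathcal{G}_i^{k+2}\supseteq\mathcal{G}_i^{k+1}\supseteq\mathcal{G}_i^k$ coincides, under the identifications of interstices coming from the subdivision rule, with one of the finitely many local maps $\tau_F$ and lands in one of the finitely many compact sets $K_F$. This is where the finiteness of $\mathcal{R}$, the Jordan-domain and acylindricity normalizations of Proposition \ref{prop:jd}, and the two-case construction of the $K_F$ (which treats the external face of a polygon by realizing that polygon as a non-external tile one level up) all enter. It is also what forces the two-iterate formulation, since the external coordinate contributes no contraction and the strict bound $\delta<1$ of Lemma \ref{lem:nuc} is only available on the compact set reached after the first iterate.
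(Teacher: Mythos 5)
Your proposal is correct and follows essentially the same route as the paper's proof: the paper likewise factors $\pi_F\circ\tau$ through the restriction $\tau_{\mathcal{G}_F,\mathcal{G}_i^{k+1}}$ to the subpacking over a face $F$ of $\mathcal{G}_i^k$ (identified with $\mathcal{M}(\mathcal{G}_j^1)$ by the subdivision rule) followed by the local map $\tau_{F_j}$, uses the Bounded Image Theorem to land in $K_j$ after the first iterate, integrates $\|d\tau_{F_j}\|\le\delta$ along the geodesic contained in $K_j$, and treats the frozen external coordinate separately before taking the max over faces. Your handling of the integration step (geodesic between the restricted points, plus $1$-Lipschitzness of restriction) is in fact a slightly more careful rendering of the paper's own phrasing, and your closing remarks on why $k\ge1$ and two iterates are needed match the paper's setup exactly.
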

\begin{proof}
    Let $F$ be a face of $\mathcal{G}_i^k$. We consider two cases.
    
    Case(1): Suppose that $F$ is non-external.
    Let 
    $$
    \mathcal{G}_F:= \mathcal{G}_i^{k+1} \cap F.
    $$
    Then $\mathcal{G}_F$ is identified by the subdivision rule to $\mathcal{G}^1_j$ for some $j$.
    Therefore, we can identify $\Teich(\mathcal{G}_F)$ with $\Teich(\mathcal{G}^1_j)$.
    Since $k\geq 1$ and $F$ is a non-external face of $\mathcal{G}_i^k$, by our construction of the compact sets, we have
    $$
    \tau_{\mathcal{G}_F, \mathcal{G}_i^{k+2}}(\Teich(\mathcal{G}_i^{k+2})) \subseteq K_j \subseteq \Teich(\mathcal{G}^1_j).
    $$
    Let $F^{\ext}_{j}$ be the external face for $\mathcal{G}^1_j$.
    Note that $(F^{\ext}_{j})^c$ is $P_j$, which can be identified with $F$.
    Thus, we have the following commutative diagram
    \[ \begin{tikzcd}
\Teich(\mathcal{G}^{k+2}_i) \arrow{r}{\tau} \arrow[swap]{rd}{\tau_{\mathcal{G}_F, \mathcal{G}^{k+2}_i}} & \Teich(\mathcal{G}^{k+1}_i) \arrow{r}{\tau} \arrow{d}{\tau_{\mathcal{G}_F, \mathcal{G}^{k+1}_i}} & \Teich(\mathcal{G}^{k}_i) \arrow{d}{\pi_F} \\%
& K_j \subseteq \Teich(\mathcal{G}_F) \cong \Teich(\mathcal{G}^1_j) \arrow{r}{\tau_{F_{j}}} & \Teich(\Pi_{(F^{\ext}_{j})^c}) \cong \Teich(\Pi_F)
\end{tikzcd}
\]
    Note that no map in the commutative diagram expands the Teichm\"uller metric.
    Let $\mathcal{P} \in \tau(\Teich(\mathcal{G}_i^{k+2}))$.
    Then $\tau_{\mathcal{G}_F, \mathcal{G}^{k+1}_i} (\mathcal{P}) \in K_j$.
    Thus
    \begin{align*}
    \|d (\pi_F \circ \tau) (\mathcal{P})\| &= \|d (\tau_{F_{j}} \circ \tau_{\mathcal{G}_F, \mathcal{G}^{k+1}_i}) (\mathcal{P})\|\\ 
    &= \|d \tau_{F_{j}} (\tau_{\mathcal{G}_F, \mathcal{G}^{k+1}_i} (\mathcal{P}))\| \cdot \|d \tau_{\mathcal{G}_F, \mathcal{G}^{k+1}_i} (\mathcal{P})\| \\
    &\leq \delta \cdot 1 = \delta.
    \end{align*}
    Since the Teichm\"uller geodesic connecting $\tau_{\mathcal{G}_F, \mathcal{G}^{k+2}_i}(\mathcal{P}_1)$ and $\tau_{\mathcal{G}_F, \mathcal{G}^{k+2}_i}(\mathcal{P}_2)$ is contained in $K_j$, we conclude that
    $ \|d (\pi_F \circ \tau) (\mathcal{P})\| \leq \delta$ along the geodesic connecting $\tau(\mathcal{P}_1)$ and $\tau(\mathcal{P}_2)$. Therefore, by integration along this geodesic,
    $$
    d((\pi_F \circ \tau^2)(\mathcal{P}_1), (\pi_F \circ \tau^2)(\mathcal{P}_2)) \leq \delta \cdot d(\tau(\mathcal{P}_1), \tau(\mathcal{P}_2)).
    $$

    Case(2): Suppose $F$ is the external face. By construction, we have
    $(\pi_{F} \circ \tau^2)(\mathcal{P}) = \sigma_{\ext}$ for all $\mathcal{P} \in \Teich_{\sigma_{\ext}}(\mathcal{G}_i^{k+2})$.
    Thus 
    $$
    d((\pi_F \circ \tau^2)(\mathcal{P}_1), (\pi_F \circ \tau^2)(\mathcal{P}_2)) = 0 \leq \delta \cdot d(\tau(\mathcal{P}_1), \tau(\mathcal{P}_2)).
    $$

    Combining the two, we have 
    $$
    d(\tau^2(\mathcal{P}_1), \tau^2(\mathcal{P}_2)) \leq \delta \cdot d(\tau(\mathcal{P}_1), \tau(\mathcal{P}_2)).
    $$

    Since the diameter of $K_j$ is bounded by $C$, we have $d(\tau(\mathcal{P}_1), \tau(\mathcal{P}_2)) \leq C$.
    Since $d(\tau(\mathcal{P}_1), \tau(\mathcal{P}_2)) \leq d(\mathcal{P}_1, \mathcal{P}_2)$, the lemma follows.
\end{proof}

By induction, we have the following immediate corollary.
\begin{cor}\label{cor:cf}
    Let $k \geq 1$ and $n \geq 1$.
    Let $\mathcal{P}_1, \mathcal{P}_2 \in \Teich_{\sigma_{\ext}}(\mathcal{G}_i^{k+n+1})$.
    Then
    $$
    d(\tau^{n+1}(\mathcal{P}_1), \tau^{n+1}(\mathcal{P}_2))\leq \delta^n \cdot d(\tau(\mathcal{P}_1), \tau(\mathcal{P}_2)) \leq \min\{C, d(\mathcal{P}_1, \mathcal{P}_2)\}\cdot\delta^{n}
    $$
\end{cor}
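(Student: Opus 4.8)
The plan is to deduce Corollary \ref{cor:cf} by iterating Lemma \ref{lem:cf} one level at a time, with no additional geometric input. I would first set up notation: write $\mathcal{Q}_j^{(l)} := \tau^l(\mathcal{P}_j)$ for $j=1,2$ and $0 \le l \le n+1$, so that $\mathcal{Q}_j^{(0)} = \mathcal{P}_j \in \mathcal{M}_{\sigma_{ext}}(\mathcal{G}_i^{k+n+1})$ and $\mathcal{Q}_j^{(l)} \in \mathcal{M}(\mathcal{G}_i^{k+n+1-l})$. The first thing to verify is that each skinning map $\tau$ preserves the external class, since the external face is never subdivided and its conformal structure is carried along untouched; consequently $\mathcal{Q}_j^{(l)} \in \mathcal{M}_{\sigma_{ext}}(\mathcal{G}_i^{k+n+1-l})$ for every $l$. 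This is exactly what keeps the intermediate iterates inside the fibers to which Lemma \ref{lem:cf} applies.

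The heart of the argument is the following per-step estimate: for each $l$ with $1 \le l \le n$,
$$
d\bigl(\mathcal{Q}_1^{(l+1)}, \mathcal{Q}_2^{(l+1)}\bigr) \le \delta \cdot d\bigl(\mathcal{Q}_1^{(l)}, \mathcal{Q}_2^{(l)}\bigr).
$$
To obtain this I would apply Lemma \ref{lem:cf} to the pair $\mathcal{Q}_1^{(l-1)}, \mathcal{Q}_2^{(l-1)}$, which lies in $\mathcal{M}_{\sigma_{ext}}(\mathcal{G}_i^{k'+2})$ with $k' = k+n-l$. The bookkeeping to check is that $k' \ge 1$: since $l \le n$ and $k \ge 1$ we have $k' = k+n-l \ge k \ge 1$, so the lemma is indeed applicable. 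Because $\tau^2(\mathcal{Q}_j^{(l-1)}) = \mathcal{Q}_j^{(l+1)}$ and $\tau(\mathcal{Q}_j^{(l-1)}) = \mathcal{Q}_j^{(l)}$, the first inequality of Lemma \ref{lem:cf} is precisely the displayed per-step estimate.

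Chaining this estimate from $l=1$ up to $l=n$ then yields
$$
d\bigl(\tau^{n+1}(\mathcal{P}_1), \tau^{n+1}(\mathcal{P}_2)\bigr) \le \delta^n \cdot d\bigl(\mathcal{Q}_1^{(1)}, \mathcal{Q}_2^{(1)}\bigr) = \delta^n \cdot d\bigl(\tau(\mathcal{P}_1), \tau(\mathcal{P}_2)\bigr),
$$
which is the first inequality of the corollary. For the second inequality I would invoke Lemma \ref{lem:cf} once more, now with parameter $k' = k+n-1 \ge 1$ applied to the original pair $\mathcal{P}_1, \mathcal{P}_2 \in \mathcal{M}_{\sigma_{ext}}(\mathcal{G}_i^{k+n+1})$: its conclusion bounds $d(\tau(\mathcal{P}_1), \tau(\mathcal{P}_2))$ above by $\min\{C, d(\mathcal{P}_1, \mathcal{P}_2)\}$, using both that $\tau$ is non-expanding for the Teichm\"uller metric and that the non-external components of $\tau(\mathcal{P}_j)$ land in the compact sets of diameter at most $C$. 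Substituting this bound gives the stated estimate.

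I do not expect a serious obstacle here, since this is genuinely an iteration of the lemma. The only points that demand care are the invariance of $\sigma_{ext}$ under $\tau$ (so that the iterates never leave the relevant fibers) and the index arithmetic guaranteeing $k' \ge 1$ at each application; both are routine but must be tracked honestly, as the whole scheme collapses if at some step the level drops below the range where the compact sets $K_j$ and the contraction constant $\delta$ were constructed in \S\ref{subsec:su}.
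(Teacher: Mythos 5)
Your proposal is correct and is exactly the argument the paper intends: the corollary is stated there as an immediate consequence of Lemma \ref{lem:cf} ``by induction,'' and your iteration spells out that induction, including the two points the paper leaves implicit (that $\tau$ preserves the external class, so the iterates stay in the fibers $\mathcal{M}_{\sigma_{ext}}$, and that the level index $k'=k+n-l$ stays $\geq 1$ at every application). No gaps; this matches the paper's proof.
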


    Let $\mathcal{G}$ be a simple spherical subdivision graph with finite subdivision rule $\mathcal{R}$.
    By Proposition \ref{prop:jdg}, we may assume that all faces of $\mathcal{G}^n$ are induced Jordan domains.
    Abusing the notations, let
    $$
    \tau = \tau_{\mathcal{G}^n, \mathcal{G}^{n+1}}: \Teich(\mathcal{G}^{n+1}) \longrightarrow\Teich(\mathcal{G}^{n}).
    $$
A similar proof also gives the following version for spherical subdivision graphs.
\begin{lem}\label{lem:cfg}
    Let $k \geq 1$ and $n \geq 1$.
    Let $\mathcal{P}_1, \mathcal{P}_2 \in \Teich(\mathcal{G}^{k+n+1})$.
    Then
    $$
    d(\tau^{n+1}(\mathcal{P}_1), \tau^{n+1}(\mathcal{P}_2)) \leq \min\{C, d(\mathcal{P}_1, \mathcal{P}_2)\}\cdot \delta^{n}.
    $$
\end{lem}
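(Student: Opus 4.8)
The plan is to repeat the proof of Lemma~\ref{lem:cf} essentially verbatim, exploiting the one structural simplification that a spherical subdivision graph has \emph{no} distinguished external face: every face of $\mathcal{G}^k$ gets genuinely subdivided and therefore contributes an honest contraction, rather than an external factor that (in the fixed-external-class setting) contributed distance $0$. Concretely, I would first reduce to the two-step estimate: for every $k\geq 1$ and $\mathcal{P}_1,\mathcal{P}_2\in\mathcal{M}(\mathcal{G}^{k+2})$,
$$
d(\tau^2(\mathcal{P}_1),\tau^2(\mathcal{P}_2))\leq \delta\cdot d(\tau(\mathcal{P}_1),\tau(\mathcal{P}_2)).
$$
Since the Teichm\"uller metric on $\mathcal{M}(\mathcal{G}^k)=\prod_{F}\Teich(\Pi_F)$ is the maximum over the faces $F$ of $\mathcal{G}^k$, it suffices to bound $d\big((\pi_F\circ\tau^2)(\mathcal{P}_1),(\pi_F\circ\tau^2)(\mathcal{P}_2)\big)$ for each such $F$.

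For a fixed face $F$ of $\mathcal{G}^k$, I would set $\mathcal{G}_F:=\mathcal{G}^{k+1}\cap F$ and use the subdivision rule to identify it with $\mathcal{G}^1_j$ for the appropriate polygon type $j$, so that $\mathcal{G}^{k+2}\cap F$ is identified with $\mathcal{G}^2_j$ and the complement of $F$ in the sphere plays the role of the external face $F_j$ of $\mathcal{G}^1_j$; then the complementary face $F^c_j$ is identified with $F$ and $\Teich(\Pi_{F^c_j})\cong\Teich(\Pi_F)$. Because $k\geq 1$ and $F$ is acylindrical in $\mathcal{G}^{k+2}$ (guaranteed by Proposition~\ref{prop:jdg}, the reductions of \S\ref{subsec:su}, and the Relative Bounded Image Theorem~\ref{thm:bitr}), the image $\tau_{\mathcal{G}_F,\mathcal{G}^{k+2}}(\mathcal{M}(\mathcal{G}^{k+2}))$ lands inside the compact set $K_j$. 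This reproduces the commutative diagram of Lemma~\ref{lem:cf}, namely $\pi_F\circ\tau=\tau_{F_j}\circ\tau_{\mathcal{G}_F,\mathcal{G}^{k+1}}$, and factoring the derivative gives $\|d(\pi_F\circ\tau)(\mathcal{P})\|\leq \|d\tau_{F_j}\|\cdot\|d\tau_{\mathcal{G}_F,\mathcal{G}^{k+1}}\|\leq \delta\cdot 1=\delta$ whenever $\tau_{\mathcal{G}_F,\mathcal{G}^{k+1}}(\mathcal{P})\in K_j$, using Lemma~\ref{lem:nuc} together with the definition of $\delta$ as the supremum of $\|d\tau_F\|$ over the $K_i$. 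Since $K_j$ contains the geodesic joining $\tau_{\mathcal{G}_F,\mathcal{G}^{k+2}}(\mathcal{P}_1)$ and $\tau_{\mathcal{G}_F,\mathcal{G}^{k+2}}(\mathcal{P}_2)$, integrating this derivative bound along the geodesic joining $\tau(\mathcal{P}_1)$ and $\tau(\mathcal{P}_2)$ yields the per-face estimate, and taking the maximum over $F$ gives the two-step inequality.

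Combined with $d(\tau(\mathcal{P}_1),\tau(\mathcal{P}_2))\leq\min\{C,d(\mathcal{P}_1,\mathcal{P}_2)\}$ — where the $C$ bound comes from each factor landing in a set of diameter $\leq C$ and the other bound from $\tau$ being non-expanding — this is the base case $n=1$. Applying the two-step estimate with $k$ replaced by $k+1,k+2,\dots$ and composing, a straightforward induction exactly as in Corollary~\ref{cor:cf} upgrades the single factor $\delta$ to $\delta^n$ after $n+1$ skinning steps, giving the claimed bound $d(\tau^{n+1}(\mathcal{P}_1),\tau^{n+1}(\mathcal{P}_2))\leq\min\{C,d(\mathcal{P}_1,\mathcal{P}_2)\}\cdot\delta^n$.

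The only point that genuinely requires attention — and which I expect to be the main, though modest, obstacle — is checking that the compact-set machinery of \S\ref{subsec:su}, set up for the subdivision graphs $\mathcal{G}_i$, transfers to the faces of a spherical subdivision graph: one must verify that every face $F$ of $\mathcal{G}^k$ is of some polygon type $j$, is a Jordan domain, and is acylindrical in $\mathcal{G}^{k+2}$, so that the \emph{same} $K_j$ and the \emph{same} contraction constant $\delta$ apply uniformly. This is precisely the content of Proposition~\ref{prop:jdg} together with the acylindricity of $\mathcal{R}$, so no new estimate is needed and the remainder is a verbatim repetition of the argument for Lemma~\ref{lem:cf}.
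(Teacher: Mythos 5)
Your proposal is correct and coincides with the paper's own treatment: the paper proves Lemma~\ref{lem:cf} in detail and then asserts Lemma~\ref{lem:cfg} via ``a similar proof,'' which is exactly the argument you spell out — the same commutative diagram, compact sets $K_j$, derivative bound $\|d(\pi_F\circ\tau)\|\leq\delta$ integrated along a geodesic, and induction, with the external-face factor (which contributed distance $0$ in the fixed-external-class setting) replaced by the observation that in a spherical subdivision graph every face is genuinely subdivided and hence contracts. Your closing check — that each face of $\mathcal{G}^k$ with $k\geq 1$ has a polygon type, is a Jordan domain (Proposition~\ref{prop:jdg}), and is acylindrical in $\mathcal{G}^{k+2}$ so that the same $K_j$ and $\delta$ apply — is precisely the point the paper's ``similar proof'' remark leaves to the reader.
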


\subsection*{Exponential convergence of finite approximations}
The following estimate is classical and useful in our setting.
\begin{lem}\label{lem:qc}
    Let $\Psi: \widehat \C \longrightarrow \widehat \C$ be a $K$-quasiconformal map that fixes $0,1,\infty$.
    Then for $t \in X:=\widehat\C - \{0, 1,\infty\}$,
    $$
    d_{X}(t, \Psi(t)) \leq \log K,
    $$
    where $d_{X}$ is the hyperbolic metric on $X$.

    More generally, let $\Psi: \widehat \C \longrightarrow \widehat \C$ be a $K$-quasiconformal map sending $a, b, c$ to $a', b', c'$.
    Let $M$ be a M\"obius transformation that sends $a, b, c$ to $a', b', c'$.
    Then for $t \in X_{a,b,c}:=\widehat \C - \{a, b, c\}$, 
    $$
    d_{X_{a',b',c'}}(M(t), \Psi(t)) \leq \log K,
    $$
    where $d_{X_{a',b',c'}}$ is the hyperbolic metric on $X_{a',b',c'}:=\widehat\C - \{a', b', c'\}$.
\end{lem}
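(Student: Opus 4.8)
The plan is to reduce the estimate to the standard identification of the thrice-punctured sphere with the moduli space of four marked points, under which the hyperbolic metric becomes (twice) the Teichm\"uller metric, and then read off the bound from the dilatation. First I would note that restricting $\Psi$ to $X = \widehat{\C} - \{0,1,\infty\}$ carries the four-punctured sphere $S_t := \widehat{\C} - \{0,1,\infty,t\}$ to $S_{\Psi(t)} := \widehat{\C} - \{0,1,\infty,\Psi(t)\}$ by a $K$-quasiconformal homeomorphism, since deleting finitely many points does not change the dilatation. Viewing $X$ as the moduli space $\mathcal{M}_{0,4}$ of four ordered points normalized so that the first three are $0,1,\infty$, the points $t$ and $\Psi(t)$ are precisely the moduli of $S_t$ and $S_{\Psi(t)}$.

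Next I would invoke the dictionary between dilatation and Teichm\"uller distance. The map $\Psi|_{S_t}$ realizes a marking of $S_{\Psi(t)}$ relative to $S_t$, so the corresponding points of $\Teich(0,4) \cong \mathbb{H}$ lie at Teichm\"uller distance at most $\tfrac12 \log K$ by the very definition of the Teichm\"uller metric. Because the universal covering $\Teich(0,4) \to \mathcal{M}_{0,4} = X$ is distance non-increasing, the projections $t, \Psi(t)$ satisfy the same bound for the descended metric. Finally, Royden's theorem identifies the Teichm\"uller metric on the one-dimensional space $\Teich(0,4)$ with the Kobayashi (equivalently Poincar\'e) metric of curvature $-4$, which is exactly half the curvature $-1$ hyperbolic metric $d_X$ on $X$; the resulting factor of two upgrades $\tfrac12\log K$ to $d_X(t,\Psi(t)) \le \log K$.

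For the general statement I would compose with M\"obius maps. Setting $g := M^{-1}\circ\Psi$, this map is $K$-quasiconformal (as $M$ is conformal) and fixes $a,b,c$; conjugating by a M\"obius map $N$ sending $a,b,c$ to $0,1,\infty$ reduces to the first case and gives $d_{X_{a,b,c}}(t, g(t)) \le \log K$, using that $N$ is a conformal isometry between the hyperbolic metrics of $X_{a,b,c}$ and $X$. Since $M$ is a conformal isometry $X_{a,b,c} \to X_{a',b',c'}$ with $M(g(t)) = \Psi(t)$, applying $M$ yields $d_{X_{a',b',c'}}(M(t),\Psi(t)) \le \log K$, as claimed. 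If $\Psi$ is orientation-reversing one precomposes with a reflection, which is an anticonformal isometry of all the metrics involved and leaves the bound unchanged.

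The main obstacle I anticipate is pinning down the normalizations so that the bound emerges as \emph{exactly} $\log K$ rather than a fixed multiple of it: one must simultaneously respect the convention that Teichm\"uller distance equals $\tfrac12\inf\log K$, the convention that the Kobayashi metric of the disk is the curvature $-4$ Poincar\'e metric, and the convention that $d_X$ denotes the curvature $-1$ hyperbolic metric, and then verify that these combine to produce precisely the factor of two. The remaining ingredients---restriction preserving the dilatation, the covering map being $1$-Lipschitz, and Royden's identification---are standard and require no new work.
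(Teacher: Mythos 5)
Your proposal is correct and follows essentially the same route as the paper: view $t\mapsto X_t=\widehat\C-\{0,1,\infty,t\}$ as a point of the moduli space of four-punctured spheres, bound the Teichm\"uller distance by $\tfrac12\log K$ via the restricted $K$-quasiconformal map, and use the fact that the Teichm\"uller (Kobayashi) metric descends to $\tfrac12 d_X$ on $X$ to obtain $d_X(t,\Psi(t))\le\log K$, with the general case handled by composing with M\"obius maps. Your version merely makes explicit the normalization bookkeeping (Royden's theorem, curvature conventions, the $1$-Lipschitz covering projection) that the paper's two-line proof leaves implicit, and all of it checks out.
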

\begin{proof}
    Note that $t\in X$ determines a Riemann surface $X_t:=\widehat\C - \{0, 1, \infty, t\}$ in the moduli space of 4-times punctured spheres, and the map $\Psi$ gives a quasiconformal map between the two Riemann surfaces $X_t$ and $X_{\Psi(t)}$.
    Thus $d(X_t, X_{\Psi(t)}) \leq \frac{1}{2}\log K$.
    Since the Teichm\"uller metric descends to $\frac{1}{2} d_{X}$ on the moduli space, 
    $$
    d_{X}(t, \Psi(t)) = 2 d(X_t, X_{\Psi(t)}) \leq \log K.
    $$
    The more general part follows by composing with M\"obius transformations.
\end{proof}

\begin{proof}[Proof of Theorem \ref{thm:LB} and Theorem \ref{thm:B}]
    Theorem \ref{thm:LB} now follows immediately from Corollary \ref{cor:cf}, Lemma \ref{lem:qc} and the fact that the spherical metric is bounded above by the hyperbolic metric of $\widehat\C - \{0, 1, \infty\}$.
    Similarly, Theorem \ref{thm:B} follows from Lemma \ref{lem:cfg} and Lemma \ref{lem:qc}.
\end{proof}

\subsection*{Rigidity of circle packings}
We will use the exponential convergence to prove rigidity of circle packings.
\begin{theorem}\label{thm:rig}
    Let $\mathcal{R}$ be a simple irreducible acylindrical finite subdivision rule, with subdivision graphs $\{\mathcal{G}_i\}_{i=1}^k$, and external face $\{F^{\ext}_i\}_{i=1}^k$.
    Let $\mathcal{G}$ be a simple spherical subdivision graph for $\mathcal{R}$.
    Then
    \begin{itemize}
        \item there exists a unique circle packing with nerve $\mathcal{G}_i$ and external class $\sigma_i \in \Teich(\Pi_{F^{\ext}_i})$; and
        \item there exists a unique circle packing with nerve $\mathcal{G}$.
    \end{itemize}
\end{theorem}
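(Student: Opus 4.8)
The existence half of both statements is already in hand: Theorem~\ref{thm:exc} produces an infinite circle packing with nerve $\mathcal{G}_i$ (resp.\ $\mathcal{G}$) whenever $\mathcal{R}$ is acylindrical, and Proposition~\ref{prop:surj} shows the external-class map $\Phi_i$ is surjective, so every prescribed $\sigma_i\in\Teich(\Pi_{F_i})$ is realized. Thus the plan is to prove uniqueness, and the strategy is to reduce the infinite problem to the finite exponential-contraction estimates already established. Given two packings $\mathcal{P},\mathcal{P}'$ with nerve $\mathcal{G}_i$ and common external class $\sigma_i$, for each level $k$ let $\mathcal{P}^k,(\mathcal{P}')^k\in\mathcal{M}(\mathcal{G}_i^k)$ be their sub-packings on the subgraph $\mathcal{G}_i^k\subseteq\mathcal{G}_i$. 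Because the external face $F_i$ is never subdivided, it remains a face of every $\mathcal{G}_i^k$ and $\pi_{F_i}(\mathcal{P}^k)=\sigma_i=\pi_{F_i}((\mathcal{P}')^k)$, so in fact $\mathcal{P}^k,(\mathcal{P}')^k\in\mathcal{M}_{\sigma_{ext}}(\mathcal{G}_i^k)$.

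The key point is that restricting the infinite packing is compatible with the skinning maps: by definition $\tau^{n+1}(\mathcal{P}^{k+n+1})=\mathcal{P}^k$ and likewise for $\mathcal{P}'$. Applying Corollary~\ref{cor:cf} to $\mathcal{P}^{k+n+1},(\mathcal{P}')^{k+n+1}\in\mathcal{M}_{\sigma_{ext}}(\mathcal{G}_i^{k+n+1})$ then gives
$$
d(\mathcal{P}^k,(\mathcal{P}')^k)=d\bigl(\tau^{n+1}(\mathcal{P}^{k+n+1}),\tau^{n+1}((\mathcal{P}')^{k+n+1})\bigr)\le C\,\delta^{\,n}
$$
for every $n\ge1$. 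Since $\delta<1$, letting $n\to\infty$ forces $d(\mathcal{P}^k,(\mathcal{P}')^k)=0$, i.e.\ $\mathcal{P}^k$ and $(\mathcal{P}')^k$ agree in $\mathcal{M}(\mathcal{G}_i^k)$ for all $k$. The spherical case is identical, using Lemma~\ref{lem:cfg} in place of Corollary~\ref{cor:cf}; there is no external face to fix, so the bound $d(\mathcal{P}^k,(\mathcal{P}')^k)\le C\delta^{\,n}$ holds for arbitrary packings of $\mathcal{G}^{k+n+1}$ and uniqueness is unconditional.

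It remains to upgrade level-by-level M\"obius equivalence to a single global M\"obius equivalence, and this is where the only genuine subtlety lies. To handle it I would fix three distinct tangency points $p_1,p_2,p_3$ coming from edges of $\mathcal{G}_i^1$ (these exist and are distinct, since each polygon has at least three boundary edges and the disjoint-interior condition forces distinct edges to carry distinct points of tangency), and normalize $\mathcal{P}$ and $\mathcal{P}'$ by M\"obius maps sending $p_1,p_2,p_3$ and their primed counterparts to $0,1,\infty$. For each $k\ge1$ the equality $\mathcal{P}^k=(\mathcal{P}')^k$ in moduli gives a M\"obius $\eta_k$ with $\eta_k(\mathcal{P}^k)=(\mathcal{P}')^k$ as marked packings; since $\eta_k$ must carry the level-$1$ tangency points of $\mathcal{P}$ to those of $\mathcal{P}'$, it fixes $0,1,\infty$ and hence $\eta_k=\mathrm{id}$. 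Thus $\mathcal{P}^k=(\mathcal{P}')^k$ as normalized marked packings for all $k$, and passing to the direct limit $\mathcal{G}_i=\bigcup_k\mathcal{G}_i^k$ yields $\mathcal{P}=\mathcal{P}'$; the same normalization closes the spherical case. The main obstacle is precisely this compatibility of normalizations — verifying that the finite-level equivalences are forced to coincide so they glue across levels — whereas the analytic heart, the uniform contraction, is already supplied by the iterated skinning estimates.
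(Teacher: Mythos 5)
Your proposal is correct and follows essentially the same route as the paper: existence from Theorem~\ref{thm:exc} and Proposition~\ref{prop:surj}, and uniqueness by applying Corollary~\ref{cor:cf} (resp.\ Lemma~\ref{lem:cfg} in the spherical case) to the level-$(k+n+1)$ sub-packings and letting $n\to\infty$ to conclude the level-$k$ sub-packings agree up to M\"obius maps for every $k$. Your only addition is to make explicit, via the normalization at three tangency points, why the level-by-level M\"obius equivalences cohere into a single global one --- a step the paper's proof leaves implicit with ``since this is true for all $k$'' --- and that argument is valid as written.
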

\begin{proof}
    First note that the existence is guaranteed by Theorem \ref{thm:exc} and Proposition \ref{prop:surj}.
    
    Let $\mathcal{P}_1$ and $\mathcal{P}_2$ be two {infinite} circle packings with nerve $\mathcal{G}_i$ and external class $\sigma_i \in \Teich(\Pi_{F^{\ext}_i})$.
    Let $\mathcal{P}^k_j, \mathcal{P}^{k+n+1}_j, j=1,2$ be the finite sub-circle packing associated to $\mathcal{G}^k_i$ and $\mathcal{G}^{k+n+1}_i$.
    {Note that $\mathcal{P}^k_j = \tau^{n+1}(\mathcal{P}^{k+n+1}_j)$.}
    Thus by Corollary \ref{cor:cf},
    $$
        {d(\mathcal{P}^k_1, \mathcal{P}^k_2) \leq \min\{C, d(\mathcal{P}^{k+n+1}_1, \mathcal{P}^{k+n+1}_2)\} \cdot \delta^n \leq C \delta^n.}
    $$
    Since this is true for all $n$, we conclude that $\mathcal{P}^k_1$ and $\mathcal{P}^k_2$ are conformally homeomorphic, i.e., there exists a M\"obius transformation $M$ so that $M(\Lambda(\mathcal{P}^k_1)) = \Lambda(\mathcal{P}^k_2)$.
    Since this is true for all $k$, we conclude that $\mathcal{P}_1$ and $\mathcal{P}_2$ are conformally homeomorphic.

    The rigidity of circle packings for spherical subdivision graphs follows by a similar argument.
\end{proof}

Theorem \ref{thm:A} now follows immediately.
\begin{proof}[Proof of Theorem \ref{thm:A}]
    By combining Theorem \ref{thm:exc} and Theorem \ref{thm:rig}, we prove the statement.
\end{proof}

\subsection{Varying external classes}\label{subsec:vec}
We now explain how circle packings vary as we vary the external classes.

Let $\mathcal{P}$ and $\mathcal{P}'$ be two infinite circle packings with nerve $\mathcal{G}_i$ and external class $\sigma, \sigma' \in \Teich(\Pi_{F^{\ext}_i})$ respectively.
Let $\mathcal{H}$ be an induced finite subgraph of $\mathcal{G}_i$ so that every face is an induced Jordan domain.
Let $F$ be a face of $\mathcal{H}$. To simplify the notations, we define 
$$
\rho_F = \pi_F \circ \tau_{\mathcal{H}, \mathcal{G}_i}: \Teich(\mathcal{G}_i) \longrightarrow \Teich(\Pi_F).
$$
We remark that in the spacial case when $F$ is a face of $\mathcal{G}^1_i$, $\mathcal{H} = \partial F$, and $F^c$ is the other face bounded by $\mathcal{H}$, then
$$
\rho_{F^c} = \tau_F,
$$
where $\tau_F$ is the skinning map associated to a face defined in \S~\ref{sss:maps}.

{We remark that if $\mathcal{H}'$ is another such finite subgraph so that $F$ is a face of $\mathcal{H}'$, then $\pi_F \circ \tau_{\mathcal{H}, \mathcal{G}_i} = \pi_F \circ \tau_{\mathcal{H}', \mathcal{G}_i}$.}
We say a face $F$ of $\mathcal{H}$ is 
\begin{itemize}
    \item {\em external} if $F$ contains the external face $F^{\ext}_i$ of $\mathcal{G}_i$;
    \item {\em maximal} in $\mathcal{H}$ if {
    \begin{align*}
    d(\rho_{F}(\mathcal{P}), \rho_{F}(\mathcal{P}')) &= d(\tau_{\mathcal{H},\mathcal{G}_i}(\mathcal{P}), \tau_{\mathcal{H},\mathcal{G}_i}(\mathcal{P}')) \\&= \max_{F' \text{ face }\mathcal{H}} d(\rho_{F'}(\mathcal{P}), \rho_{F'}(\mathcal{P}')).
    \end{align*}}
\end{itemize}

\begin{lem}\label{lem:dl}
    Let $\mathcal{P}$ and $\mathcal{P}'$ be two infinite circle packings with nerve $\mathcal{G}_i$ and external class $\sigma, \sigma' \in \Teich(\Pi_{F^{\ext}_i})$. Then for each $n \geq 1$ and $i =1,.., k$, the external face of $\mathcal{G}^n_i$ is maximal:
    $$
    d(\sigma, \sigma') = d(\tau_{\mathcal{G}^n_i,\mathcal{G}_i}(\mathcal{P}), \tau_{\mathcal{G}^n_i,\mathcal{G}_i}(\mathcal{P}')).
    $$
\end{lem}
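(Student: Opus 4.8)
The plan is to exploit the product structure $\mathcal{M}(\mathcal{G}^1_i)=\prod_{F}\Teich(\Pi_F)$, on which the Teichm\"uller metric is the maximum of the factor metrics, and to reduce the claim to a per-face estimate. For each face $F$ of $\mathcal{G}^1_i$ write $g_F:=\rho_F\circ\Phi_i^{-1}:\Teich(\Pi_{F_i})\to\Teich(\Pi_F)$, so that $\rho_F(\mathcal{P})=g_F(\sigma)$ and $\rho_F(\mathcal{P}')=g_F(\sigma')$ (here $\Phi_i^{-1}$ exists by the rigidity Theorem~\ref{thm:rig} and surjectivity Proposition~\ref{prop:surj}). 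Then
$$d(\tau_{\mathcal{G}^1_i,\mathcal{G}_i}(\mathcal{P}),\tau_{\mathcal{G}^1_i,\mathcal{G}_i}(\mathcal{P}'))=\max_{F}d(g_F(\sigma),g_F(\sigma')).$$
For the external face $F_{ext}$ of $\mathcal{G}^1_i$, which coincides with $F_i$ since $F_i$ is never subdivided, the interstice and its conformal structure are unchanged by passing to the level-one subpacking, so $\pi_{F_{ext}}\circ\tau_{\mathcal{G}^1_i,\mathcal{G}_i}=\Phi_i$, i.e. $g_{F_{ext}}=\mathrm{id}$, and this factor contributes exactly $d(\sigma,\sigma')$. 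Thus it suffices to show that each $g_F$ is distance non-increasing; the maximum is then attained at $F_{ext}$ and equals $d(\sigma,\sigma')$.

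The key step is therefore to prove that each $g_F$ is $1$-Lipschitz, which I would deduce from holomorphy of $g_F$ together with the fact already used in this section that holomorphic maps between Teichm\"uller spaces do not expand the Teichm\"uller metric. The difficulty is that $g_F$ is defined through the \emph{infinite} packing $\mathcal{P}_\sigma$, so its holomorphy is not immediate; I would establish it by realizing $\sigma\mapsto\tau_{\mathcal{G}^1_i,\mathcal{G}_i}(\mathcal{P}_\sigma)$ as a locally uniform limit of genuinely holomorphic finite-level maps.

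Concretely, fix a basepoint $p_0$ in the interior factors and define the section $R_N(\sigma):=(\sigma,p_0)\in\mathcal{M}(\mathcal{G}^N_i)=\prod_F\Teich(\Pi_F)$, with external class $\sigma$ and all other factors frozen; this is holomorphic because the Bers identification of Theorem~\ref{thm:dc} is biholomorphic. Composing with the holomorphic skinning map gives holomorphic maps $G_N:=\tau_{\mathcal{G}^1_i,\mathcal{G}^N_i}\circ R_N:\Teich(\Pi_{F_i})\to\mathcal{M}(\mathcal{G}^1_i)$. Now $R_N(\sigma)$ and the level-$N$ truncation $\tau_{\mathcal{G}^N_i,\mathcal{G}_i}(\mathcal{P}_\sigma)$ are two packings for $\mathcal{G}^N_i$ with the same external class $\sigma$, and by transitivity of the skinning maps their level-one subpackings are $G_N(\sigma)$ and $\tau_{\mathcal{G}^1_i,\mathcal{G}_i}(\mathcal{P}_\sigma)$ respectively. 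Applying Theorem~\ref{thm:LB} with $j=1$ and $N=n+1$ gives
$$d\bigl(G_N(\sigma),\tau_{\mathcal{G}^1_i,\mathcal{G}_i}(\mathcal{P}_\sigma)\bigr)\leq C\,\delta^{\,N-1-n_0},$$
with constants independent of $\sigma$. Hence $G_N\to\tau_{\mathcal{G}^1_i,\mathcal{G}_i}\circ\Phi_i^{-1}$ uniformly on $\Teich(\Pi_{F_i})$; as a locally uniform limit of holomorphic maps the limit is holomorphic, and so is each component $g_F$. Non-expansion then yields $d(g_F(\sigma),g_F(\sigma'))\leq d(\sigma,\sigma')$ for every $F$, which finishes the proof.

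I expect the main obstacle to be precisely the holomorphy of the infinite-packing map $\sigma\mapsto\tau_{\mathcal{G}^1_i,\mathcal{G}_i}(\mathcal{P}_\sigma)$: since $\mathcal{P}_\sigma$ is only obtained as a limit of finite packings, one must upgrade pointwise convergence to a uniform one in order to preserve holomorphy, and for this the uniformity of the constants in Theorem~\ref{thm:LB} over all external classes is essential. Everything else is bookkeeping with the product metric and the observation that the external factor is fixed by the skinning map.
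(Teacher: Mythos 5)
Your argument is sound in substance but takes a genuinely different route from the paper's. The paper proves the lemma by contradiction, staying entirely with the two infinite packings: if some non-external face $F^1$ of $\mathcal{G}^1_i$ were maximal, then the strict contraction of the one-face skinning map together with the uniform contraction constant $\delta$ on the compact sets $K_j$ (Equations \ref{eq:1} and \ref{eq:2}) would force a nested sequence of non-external maximal faces $F^n$ with $d(\rho_{F^n}(\mathcal{P}),\rho_{F^n}(\mathcal{P}'))\ge\delta^{-n}\,d(\rho_{F^1}(\mathcal{P}),\rho_{F^1}(\mathcal{P}'))\to\infty$, contradicting the Relative Bounded Image Theorem (Theorem \ref{thm:bitr}). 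You instead prove the sharper per-face statement that every factor map $g_F$ is $1$-Lipschitz, by approximating the infinite-packing skinning map by the finite-level sections $G_N$ with frozen interior coordinates and using Theorem \ref{thm:LB} for convergence uniform in the external class. This is legitimate and non-circular: Theorem \ref{thm:LB}, Theorem \ref{thm:rig} and Proposition \ref{prop:surj} are all established earlier in \S\ref{sec:ism}, before the present lemma, and the paper's own proof of the lemma uses none of them. Your route buys a quantitative, constructive statement (each factor is non-expanding, with an explicit approximation rate), at the cost of importing the fixed-external-class machinery; the paper's blow-up argument is shorter and needs only the compact sets $K_j$ and Theorem \ref{thm:bitr}.

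One step as written is imprecise: the holomorphy claim. By Theorem \ref{thm:dc}, each factor $\Teich(\Pi_F)\cong\Teich^{\sigma_F}(X_F)$ is the fixed locus of an anti-holomorphic involution, i.e.\ a \emph{real slice} of the complex Teichm\"uller space $\Teich(X_F)$ --- indeed the paper itself describes the skinning map in \S\ref{subsec:mandt} as ``the restriction on some real slice of a holomorphic map''. So $R_N$, $G_N$ and the putative limit are not literally holomorphic maps between Teichm\"uller spaces, and ``a locally uniform limit of holomorphic maps is holomorphic'' does not apply as stated; extending $G_N$ holomorphically over the full complex products is conceivable, but Theorem \ref{thm:LB} only controls convergence on the real slice. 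Fortunately, holomorphy is a detour: each $G_N$ is $1$-Lipschitz outright, since $R_N$ is an isometric embedding for the sup product metric and $\tau_{\mathcal{G}^1_i,\mathcal{G}^N_i}$ does not expand the Teichm\"uller metric, being the restriction of a holomorphic map to an isometrically embedded real slice --- exactly the fact the paper uses in Lemma \ref{lem:nuc} and in the proof of Lemma \ref{lem:cf}. Then, writing $G=\tau_{\mathcal{G}^1_i,\mathcal{G}_i}\circ\Phi_i^{-1}$, your uniform estimate gives for all $\sigma,\sigma'$ and all $N$
$$
d\bigl(G(\sigma),G(\sigma')\bigr)\le 2C\delta^{N-1-n_0}+d(\sigma,\sigma'),
$$
so $G$ is $1$-Lipschitz as a limit of $1$-Lipschitz maps, and your conclusion stands; since the external factor of $G$ is the identity, the maximum over faces equals $d(\sigma,\sigma')$, which is the lemma. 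With this phrasing you can even avoid $\Phi_i^{-1}$ (hence rigidity) altogether: apply the Theorem \ref{thm:LB} bound to the level-$N$ truncations of the given $\mathcal{P}$, $\mathcal{P}'$ and route the triangle inequality through $G_N(\sigma)$ and $G_N(\sigma')$.
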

\begin{proof}
    We prove the case for $n = 1$, the general case is the same.
    
    Suppose not. Then there exists a non-external face $F^1$ of $\mathcal{G}^1_i$ that is maximal.
    We will now construct a nested sequence of non-external maximal face $F^n$ of $\mathcal{G}_i^n$ so that
    $$
    d(\rho_{F^n}(\mathcal{P}), \rho_{F^n}(\mathcal{P}')) \to \infty,
    $$
    which would give a contradiction with our Bounded Image Theorem.
        
    Let $F^{1,c} = \overline{\widehat\C - F^1}$.
    Then $F^{1,c}$ is the external face for $\mathcal{G}_{F^1}$.
    Consider the skinning map $\tau_{F^1}{=\pi_{F^{1,c}}\circ\tau_{\partial F^1,\mathcal{G}_i}}: \Teich(\mathcal{G}_i^1) \longrightarrow\Teich(\Pi_{F^{1,c}})$.
    Then by definition, $\tau_{F^1}(\tau_{\mathcal{G}^1_i,\mathcal{G}_i}(\mathcal{P})) = \rho_{F^{1,c}}(\mathcal{P})$.
    {Since $F^1$ is non-external, by Lemma~\ref{lem:nuc}}, we have
    \begin{equation}\label{eq:1}
    \begin{aligned}
    d(\rho_{F^{1,c}}(\mathcal{P}), \rho_{F^{1,c}}(\mathcal{P}')) &= 
    d(\tau_{F^1}(\tau_{\mathcal{G}^1_i,\mathcal{G}_i}(\mathcal{P})), \tau_{F^1}(\tau_{\mathcal{G}^1_i,\mathcal{G}_i}(\mathcal{P}'))) \\
    &< d(\tau_{\mathcal{G}^1_i,\mathcal{G}_i}(\mathcal{P}), \tau_{\mathcal{G}^1_i,\mathcal{G}_i}(\mathcal{P}')) \\
    &= d(\rho_{F^{1}}(\mathcal{P}), \rho_{F^{1}}(\mathcal{P}')).
    \end{aligned}
    \end{equation}

    \begin{figure}[htp]
    \centering
    \includegraphics[width=0.5\textwidth]{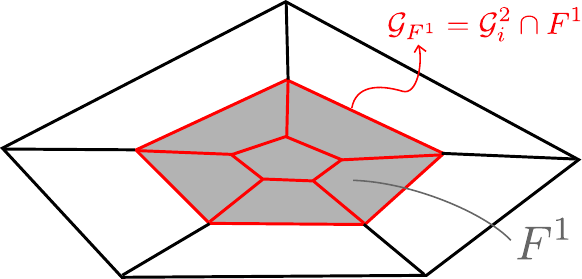}
    \caption{An illustration for the proof of Lemma~\ref{lem:dl}. The shaded region is the face $F^1$ and the graph $\mathcal{G}_{F^1}:= \mathcal{G}^2_i \cap {F^1}$ is colored in red.}
    \label{fig:G_F}
    \end{figure}

    Let $\mathcal{G}_{F^1}:= \mathcal{G}^2_i \cap {F^1}$ (see Figure~\ref{fig:G_F}).
    By the subdivision rule, $\mathcal{G}_{F^1}$ is identified with $\mathcal{G}^1_j$ for some $j = j(F^1) \in \{1,..., k\}$. {Thus, $\Teich(\mathcal{G}_{F^1}) \cong \Teich(\mathcal{G}^1_j)$}, and
    $$
    \tau_{\mathcal{G}_{F^1}, \mathcal{G}_i}(\mathcal{P}), \tau_{\mathcal{G}_{F^1}, \mathcal{G}_i}(\mathcal{P}') \in K_j \subseteq \Teich(\mathcal{G}^1_j).
    $$
    Consider the map {$\tau_{F^{1,c}} := \pi_{F^1} \circ \tau_{\partial F^1, \mathcal{G}_{F^1}}: \Teich(\mathcal{G}_{F^1}) \longrightarrow\Teich(\Pi_{F^1})$. Under the identification of $\mathcal{G}_{F^1}$ with $\mathcal{G}^1_j$, $\tau_{F^{1,c}}$ is conjugate to the skinning map $\tau_{F^{\ext}_j}$. Thus, by our choice of $\delta$ (see \S~\ref{subsec:su}), we have}
    \begin{equation}\label{eq:2}
    \begin{aligned}
    d(\rho_{F^{1}}(\mathcal{P}), \rho_{F^{1}}(\mathcal{P}')) &= d(\tau_{F^{1,c}}(\tau_{\mathcal{G}_{F^1},\mathcal{G}_i}(\mathcal{P})), \tau_{F^{1,c}}(\tau_{\mathcal{G}_{F^1},\mathcal{G}_i}(\mathcal{P}'))) \\
    &\leq \delta \cdot d(\tau_{\mathcal{G}_{F^1},\mathcal{G}_i}(\mathcal{P}), \tau_{\mathcal{G}_{F^1},\mathcal{G}_i}(\mathcal{P}')).  
    \end{aligned}
    \end{equation}
    Let $F^2$ be a maximal face in $\mathcal{G}_{F^1}$.
    By Equations \ref{eq:1} and \ref{eq:2}, we have that $ d(\rho_{F^{1,c}}(\mathcal{P}), \rho_{F^{1,c}}(\mathcal{P}')) < \delta \cdot d(\tau_{\mathcal{G}_{F^1},\mathcal{G}_i}(\mathcal{P}), \tau_{\mathcal{G}_{F^1},\mathcal{G}_i}(\mathcal{P}'))$, i.e., $F^{1,c}$ is not maximal in $\mathcal{G}_{F^1}$.
    Thus, $F^2$ is non-external in $\mathcal{G}_{F^1}$ with
    $$
    d(\rho_{F^{2}}(\mathcal{P}), \rho_{F^{2}}(\mathcal{P}')) \geq \delta^{-1} d(\rho_{F^{1}}(\mathcal{P}), \rho_{F^{1}}(\mathcal{P}')).
    $$
    Now inductively apply the same argument for $\mathcal{G}_{F^{n-1}} = \mathcal{G}^n_i \cap F^{n-1}$, we obtain a non-external maximal face $F^n$ of $\mathcal{G}_{F^{n-1}}$ with
    \begin{align*}
    d(\rho_{F^{n}}(\mathcal{P}), \rho_{F^{n}}(\mathcal{P}')) &\geq \delta^{-1} d(\rho_{F^{n-1}}(\mathcal{P}), \rho_{F^{n-1}}(\mathcal{P}')) \\
    &\geq \delta^{-n} d(\rho_{F^{1}}(\mathcal{P}), \rho_{F^{1}}(\mathcal{P}')) \to \infty.   
    \end{align*}
    But this is a contradiction to Theorem \ref{thm:bitr} as $F^n$ is acylindrical in $\mathcal{G}^{n+1}_i$.
\end{proof}
By induction, we have
\begin{prop}\label{prop:isom}
    Let $\mathcal{P}$ and $\mathcal{P}'$ be two infinite circle packings with nerve $\mathcal{G}_i$ and external class $\sigma, \sigma' \in \Teich(\Pi_{F^{\ext}_i})$.
    Then 
    $$
    d(\mathcal{P}, \mathcal{P}') = d(\sigma, \sigma').
    $$
    In particular, $\mathcal{P}$ and $\mathcal{P}'$ are quasiconformally homeomorphic.
\end{prop}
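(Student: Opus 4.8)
The plan is to reduce the equality for the infinite packings to a statement about all finite levels, prove the latter by iterating Lemma \ref{lem:dl}, and then pass to a quasiconformal limit. Write $\mathcal{P}^n=\tau_{\mathcal{G}^n_i,\mathcal{G}_i}(\mathcal{P})$ and $(\mathcal{P}')^n=\tau_{\mathcal{G}^n_i,\mathcal{G}_i}(\mathcal{P}')$ for the sub-circle packings with nerve $\mathcal{G}^n_i$, and set $D_n:=d(\mathcal{P}^n,(\mathcal{P}')^n)$. Since by Theorem \ref{thm:dc} the metric on $\mathcal{M}(\mathcal{G}^n_i)=\prod_F\Teich(\Pi_F)$ is the maximum of the factor metrics, we have $D_n=\max_F d(\rho_F(\mathcal{P}),\rho_F(\mathcal{P}'))$ over faces $F$ of $\mathcal{G}^n_i$. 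The external face of $\mathcal{G}^n_i$ is always $F_i$ and contributes the term $d(\sigma,\sigma')$, so $D_n\ge d(\sigma,\sigma')$; moreover $D_n$ is non-decreasing because $\tau$ does not expand the Teichm\"uller metric and $\mathcal{P}^n=\tau(\mathcal{P}^{n+1})$. The base case $D_1=d(\sigma,\sigma')$ is exactly Lemma \ref{lem:dl}, so it suffices to show $D_{n+1}\le D_n$ for all $n$.

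For the inductive step, fix a face $F$ of $\mathcal{G}^{n+1}_i$. It lies in a unique face $\hat{F}$ of $\mathcal{G}^n_i$, which is necessarily non-external (the external face is never subdivided), so $F$ is an internal face of $\mathcal{G}_{\hat F}:=\mathcal{G}^{n+1}_i\cap\hat{F}$, and the latter is identified by the subdivision rule with $\mathcal{G}^1_j$ for some $j$. The restriction $\mathcal{P}|_{\hat F}$ is a circle packing with nerve $\mathcal{G}_j$, whose external class is precisely the conformal structure $\rho_{\hat{F}^c}(\mathcal{P})$ of the outer interstice determined by the chain $\partial\hat{F}$. Applying Lemma \ref{lem:dl} to the pair $\mathcal{P}|_{\hat F},\mathcal{P}'|_{\hat F}$ shows the external face of $\mathcal{G}_{\hat F}$ is maximal, so $d(\rho_F(\mathcal{P}),\rho_F(\mathcal{P}'))\le d(\rho_{\hat F^c}(\mathcal{P}),\rho_{\hat F^c}(\mathcal{P}'))$. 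Finally, since $\partial\hat{F}\subseteq\mathcal{G}^n_i$, the skinning map factors as $\tau_{\partial\hat F,\mathcal{G}_i}=\tau_{\partial\hat F,\mathcal{G}^n_i}\circ\tau_{\mathcal{G}^n_i,\mathcal{G}_i}$, and non-expansion gives $d(\rho_{\hat F^c}(\mathcal{P}),\rho_{\hat F^c}(\mathcal{P}'))\le d(\mathcal{P}^n,(\mathcal{P}')^n)=D_n$. Taking the maximum over $F$ yields $D_{n+1}\le D_n$, hence $D_n=d(\sigma,\sigma')$ for every $n$.

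It remains to pass from the finite levels to $d(\mathcal{P},\mathcal{P}')$. For the lower bound, any marking-preserving $K$-quasiconformal map between $\mathcal{P}$ and $\mathcal{P}'$ restricts to one between $\mathcal{P}^n$ and $(\mathcal{P}')^n$, so $D_n\le\tfrac12\log K$; taking the infimum gives $d(\sigma,\sigma')=D_n\le d(\mathcal{P},\mathcal{P}')$. For the reverse bound I would normalize $\mathcal{P},\mathcal{P}'$ by M\"obius transformations so that three tangency points already present at level one are sent to $0,1,\infty$, and for each $n$ choose an extremal $K_n$-quasiconformal map $\Psi_n$ with $\Psi_n(\mathcal{P}^n)=(\mathcal{P}')^n$ fixing $0,1,\infty$ and $\tfrac12\log K_n=D_n=d(\sigma,\sigma')$. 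The $\Psi_n$ are uniformly $K$-quasiconformal with $K=e^{2d(\sigma,\sigma')}$ and fix three points, so they form a normal family; a locally uniform subsequential limit $\Psi$ is $K$-quasiconformal by lower semicontinuity of the dilatation. Because the finite sub-packings exhaust the infinite ones (each circle $C_{v,\mathcal{P}}$ appears for all large $n$, and $\mathcal{P}^n\to\mathcal{P}$, $(\mathcal{P}')^n\to\mathcal{P}'$ by the construction in Theorem \ref{thm:exc}), the limit satisfies $\Psi(C_{v,\mathcal{P}})=C_{v,\mathcal{P}'}$ for every vertex $v$, so it is a marking-preserving homeomorphism between $\mathcal{P}$ and $\mathcal{P}'$. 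Hence $d(\mathcal{P},\mathcal{P}')\le\tfrac12\log K=d(\sigma,\sigma')$, and combined with the lower bound this proves $d(\mathcal{P},\mathcal{P}')=d(\sigma,\sigma')$ together with the asserted quasiconformal homeomorphism.

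The main obstacle is the last paragraph: one must select the extremal maps $\Psi_n$ with a single consistent M\"obius normalization so that the ambiguity in the moduli-space distance is removed, and then guarantee that the normal-family limit is simultaneously $K$-quasiconformal and respects every marking. The first point requires pinning three combinatorial reference points that persist at all levels and in the limit, and the second relies on the genuine convergence $\mathcal{P}^n\to\mathcal{P}$ already used in the existence proof. The inductive maximality argument of the first two paragraphs, by contrast, is a direct bookkeeping application of Lemma \ref{lem:dl} and the non-expansion of skinning maps, and presents no real difficulty.
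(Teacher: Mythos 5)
Your proposal is correct and follows essentially the same route as the paper: the upper bound $d(\mathcal{P},\mathcal{P}')\le d(\sigma,\sigma')$ is obtained exactly as in the paper's proof, by applying Lemma~\ref{lem:dl} to the restricted packings $\mathcal{P}|_{\hat F},\mathcal{P}'|_{\hat F}$ (whose external classes are $\rho_{\hat F^c}(\mathcal{P}),\rho_{\hat F^c}(\mathcal{P}')$) together with non-expansion of the skinning maps, inductively yielding $d(\tau_{\mathcal{G}^n_i,\mathcal{G}_i}(\mathcal{P}),\tau_{\mathcal{G}^n_i,\mathcal{G}_i}(\mathcal{P}'))=d(\sigma,\sigma')$ for all $n$, and the lower bound by restricting a marking-preserving quasiconformal map, as in the paper. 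You merely spell out the normal-family limit of the uniformly quasiconformal maps and the three-tangency-point normalization, which the paper leaves implicit in its final sentence.
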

\begin{proof}
    Let $\Pi_{\ext}$ and $\Pi_{\ext}'$ be the interstices for $\mathcal{P}$ and $\mathcal{P}'$ that correspond to the external face of $\mathcal{G}_i$.
    Since any quasiconformal homeomorphism between $\mathcal{P}$ and $\mathcal{P}'$ sends $\Pi_{\ext}$ to $\Pi_{\ext}'$ and preserves the tangent points, and $d(\sigma, \sigma')$ gives the smallest dilatation of such maps, we have $d(\mathcal{P}, \mathcal{P}') \geq d(\sigma, \sigma')$.

    To prove the other direction, by Lemma \ref{lem:dl}, we have
    $$
    d(\tau_{\mathcal{G}^n_i, \mathcal{G}_i}(\mathcal{P}), \tau_{\mathcal{G}^n_i, \mathcal{G}_i}(\mathcal{P}')) = d(\sigma, \sigma').
    $$
    Thus, we have a sequence of uniformly quasiconformal homeomorphisms between $\tau_{\mathcal{G}^n_i, \mathcal{G}_i}(\mathcal{P}), \tau_{\mathcal{G}^n_i, \mathcal{G}_i}(\mathcal{P}')$ with dilatation $e^{2d(\sigma, \sigma')}$.
    By compactness of $K$-quasiconformal maps and taking a sub-sequential limit, $\mathcal{P}, \mathcal{P}'$ are quasiconformally homeomorphic and $d(\mathcal{P}, \mathcal{P}') \leq d(\sigma, \sigma')$.
\end{proof}

As an immediate corollary, we have
\begin{cor}\label{cor:ib}
    Let $\Phi_i: \Teich(\mathcal{G}_i) \longrightarrow \Teich(\Pi_{F^{\ext}_{i}})$ be the external class map.
    Then $\Phi_i$ is an isometric bijection with respect to the Teichm\"uller metrics.
\end{cor}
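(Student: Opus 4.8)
The plan is to assemble this corollary directly from the two results immediately preceding it, since all of the analytic work has already been done. First I would check that $\Phi_i$ is genuinely well-defined on the moduli space: a circle packing $\mathcal{P}$ with nerve $\mathcal{G}_i$ determines the interstice associated to the external face $F_i$, hence a point $\sigma \in \Teich(\Pi_{F_i})$; because this point is itself the relevant $\PSL_2(\C)$-equivalence class of ideal polygons, it is unchanged when $\mathcal{P}$ is replaced by a Möbius image, so $\Phi_i$ descends to a map on $\mathcal{M}(\mathcal{G}_i)$.

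Next I would obtain surjectivity by invoking Proposition \ref{prop:surj}, established together with the existence theorem: every prescribed external class $\sigma \in \Teich(\Pi_{F_i})$ is realized by some circle packing with nerve $\mathcal{G}_i$, produced as a geometric limit of finite packings carrying that fixed external data.

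The isometry property and injectivity both flow from Proposition \ref{prop:isom}. That proposition asserts that for any two packings $\mathcal{P}, \mathcal{P}'$ with external classes $\sigma, \sigma'$ one has $d(\mathcal{P}, \mathcal{P}') = d(\sigma, \sigma')$, which is precisely the statement that $\Phi_i$ preserves the Teichm\"uller distance, since $\Phi_i(\mathcal{P}) = \sigma$ and $\Phi_i(\mathcal{P}') = \sigma'$. Injectivity is then automatic: any distance-preserving map is injective, for if $\Phi_i(\mathcal{P}) = \Phi_i(\mathcal{P}')$ then $d(\mathcal{P}, \mathcal{P}') = d(\sigma, \sigma') = 0$, forcing $\mathcal{P} = \mathcal{P}'$ in $\mathcal{M}(\mathcal{G}_i)$; this recovers the rigidity already recorded in Theorem \ref{thm:rig}. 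Combining surjectivity with the injectivity coming from the isometry yields the claimed isometric bijection.

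I do not expect any genuine obstacle to remain, as the substantive content lives upstream: in the Bounded Image Theorem, the uniform contraction of Corollary \ref{cor:cf}, and above all the \emph{maximality of the external face} argument of Lemma \ref{lem:dl} that powers Proposition \ref{prop:isom}. The one point deserving a moment of care is to confirm that the Teichm\"uller metric placed on $\mathcal{M}(\mathcal{G}_i)$ here is exactly the one induced by quasiconformal deformations (as in the remark following Theorem \ref{thm:LR}), so that the equality $d(\mathcal{P}, \mathcal{P}') = d(\sigma, \sigma')$ is an honest identification of the two metrics in question rather than merely an inequality in one direction.
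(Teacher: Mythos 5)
Your proposal is correct, and its ingredients are exactly the paper's: the authors' entire proof reads ``By Theorem \ref{thm:rig}, the map is a bijection. By Proposition \ref{prop:isom}, this map is an isometry.'' The one place you genuinely deviate is injectivity: the paper cites the uniqueness statement of Theorem \ref{thm:rig} (itself proved via the exponential contraction of Corollary \ref{cor:cf}), whereas you deduce injectivity formally from the isometry of Proposition \ref{prop:isom}. That deduction is fine but has one hidden step worth making explicit: since $d$ on $\mathcal{M}(\mathcal{G}_i)$ is defined as an \emph{infimum} of $\tfrac{1}{2}\log K(\Psi)$ over quasiconformal homeomorphisms respecting the marking, the implication $d(\mathcal{P},\mathcal{P}')=0 \Rightarrow \mathcal{P}=\mathcal{P}'$ in $\mathcal{M}(\mathcal{G}_i)$ is not a definitional triviality; it requires a normal-families argument (normalize so the maps fix three points, extract a limit of $K_n$-quasiconformal maps with $K_n\to 1$, and observe the limit is M\"obius and still carries each marked circle to its partner). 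This is precisely the compactness fact the paper uses implicitly inside the proof of Theorem \ref{thm:rig} when it passes from $d(\mathcal{P}^k_1,\mathcal{P}^k_2)\le C\delta^n$ for all $n$ to conformal equivalence, so your route and the paper's ultimately rest on the same mechanism; yours merely repackages it, at the cost of needing to state the nondegeneracy of $d$, while the paper's buys brevity by quoting the already-proved rigidity. Your attention to well-definedness of $\Phi_i$ modulo M\"obius maps and to the identification of the two metrics is sound and consistent with the remark following Theorem \ref{thm:LR}.
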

\begin{proof}
    By Theorem \ref{thm:rig}, the map is a bijection. By Proposition \ref{prop:isom}, this map is an isometry.
\end{proof}

Theorem \ref{thm:LR} follows immediately.
\begin{proof}[Proof of Theorem \ref{thm:LR}]
    Since $\Teich(\Pi_{F^{\ext}_{i}})$ is homeomorphic to $\R^{e_i-3}$, the statement follows from Theorem \ref{thm:exc}, Theorem \ref{thm:rig} and Corollary \ref{cor:ib}.
\end{proof}

\subsection{Renormalization}\label{ss:renom}
Let $F$ be a non-external face of $\mathcal{G}^1_i$.
Then $F\cap \mathcal{G}_i$ is identified with $\mathcal{G}_{j(F)}$ for some $j(F) \in \{1,..., k\}$.
Denote this identification by
$$
\psi_F:\mathcal{G}_{j(F)} \longrightarrow F\cap \mathcal{G}_i.
$$
This identification induces an identification
$$
\psi_F^*: \Teich(F\cap \mathcal{G}_i) \longrightarrow \Teich(\mathcal{G}_{j(F)}).
$$
We define the renormalization operator
$$
\mathfrak{R}_{F}: \Teich(\mathcal{G}_i) \longrightarrow \Teich(\mathcal{G}_{j(F)})
$$
by $\mathfrak{R}_{F}(\mathcal{P}) = \psi_F^*(\tau_{F\cap \mathcal{G}_i, \mathcal{G}_i}(\mathcal{P}))$.

Let $F^1$ be a non-external face of $\mathcal{G}^1_i$, and let $F^n$ be a face of $\mathcal{G}^n_i$ with $F^n \subseteq F^{n-1}$.
We call $(F^n)_n$ a {\em nested} sequence of faces in $P_i$.

Given a nested sequence of faces $(F^n)_n$, let $j_n$ be the index so that $F^n$ is identified with $P_{j_n}$ by the subdivision rule.
Denote by $\psi_n:\mathcal{G}_{j_n} \longrightarrow F^n\cap \mathcal{G}_i$ the identification.
Then $\psi_n^{-1}(F^{n+1})$ is a non-external face of $\mathcal{G}^1_{j_n}$.
Thus, we can iterate the renormalization operator for a nested sequence of faces.
Abusing the notations, {we will omit the dependence on the nested faces when there is no ambiguity,} and write it as
$$
\mathfrak{R}^n:= \mathfrak{R}_{\psi_{n-1}^{-1}(F^n)} \circ \mathfrak{R}_{\psi_{n-2}^{-1}(F^{n-1})} \circ ... \circ \mathfrak{R}_{F^1} = \psi_n^*\circ \tau_{F^n\cap \mathcal{G}_i, \mathcal{G}_i}.
$$
\begin{prop}\label{prop:adf}
    Let $F^1,..., F^n$ be a nested sequence of faces in $P_i$, and let $\mathcal{P}, \mathcal{P}' \in \Teich(\mathcal{G}_i)$.
    Then
    \begin{align*}
    d(\mathfrak{R}^n(\mathcal{P}), \mathfrak{R}^n(\mathcal{P}')) &\leq \delta^{n-1} d(\mathfrak{R}(\mathcal{P}), \mathfrak{R}(\mathcal{P}')) \\
    &\leq \delta^{n-1} \min\{d(\mathcal{P}, \mathcal{P}'), C\}.   
    \end{align*}
\end{prop}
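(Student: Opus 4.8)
The plan is to reduce the estimate to a single-step contraction property of the renormalization operators $\mathfrak{R}_G$ attached to non-external faces $G$, and then to iterate. Since each identification $\psi_m^*$ is a graph isomorphism and hence a Teichm\"uller isometry, and since by Corollary \ref{cor:ib} the external-class map is an isometry, the distance $d(\mathfrak{R}^m(\mathcal{P}), \mathfrak{R}^m(\mathcal{P}'))$ is computed as the distance between the external classes of the renormalized packings, i.e. as $d(\rho_{F^{m,c}}(\mathcal{P}), \rho_{F^{m,c}}(\mathcal{P}'))$, where $F^{m,c}$ denotes the complementary face of $F^m$. Writing $\mathfrak{R}^{m+1} = \mathfrak{R}_{G_m}\circ\mathfrak{R}^m$ with $G_m = \psi_m^{-1}(F^{m+1})$ a non-external face of $\mathcal{G}^1_{i_m}$, the whole statement follows once I show (i) the first renormalization satisfies $d(\mathfrak{R}(\mathcal{P}),\mathfrak{R}(\mathcal{P}')) \le \min\{d(\mathcal{P},\mathcal{P}'),C\}$, and (ii) each subsequent step contracts by $\delta$.

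For (ii) the key is the factorization
\[
\rho_{G^c} = \tau_G \circ \tau_{\mathcal{G}^1_j,\mathcal{G}_j} \colon \mathcal{M}(\mathcal{G}_j)\longrightarrow \Teich(\Pi_{G^c}),
\]
obtained by splitting the skinning map along $\partial G \subseteq \mathcal{G}^1_j \subseteq \mathcal{G}_j$. The intermediate point $\tau_{\mathcal{G}^1_j,\mathcal{G}_j}(\mathcal{Q})$ factors through $\tau_{\mathcal{G}^1_j,\mathcal{G}^2_j}$, so its non-external coordinates lie in the compact sets of \S\ref{subsec:su}; combined with control of the external coordinate (see next paragraph) this places it in $K_j$. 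On $K_j$ the map $\tau_G$ has derivative norm $\le \delta$ by Lemma \ref{lem:nuc} and the definition of $\delta$, so integrating $\delta$ along the geodesic, which by construction of $K_j$ is contained in $K_j$, exactly as in Lemma \ref{lem:cf}, gives $d(\mathfrak{R}_G(\mathcal{Q}),\mathfrak{R}_G(\mathcal{Q}')) \le \delta\, d(\mathcal{Q},\mathcal{Q}')$.

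The one point where the present situation genuinely differs from Lemma \ref{lem:cf} --- and the step I expect to be the main obstacle --- is that here the external classes are not fixed, so I must verify that the external coordinate of $\tau_{\mathcal{G}^1_j,\mathcal{G}_j}(\mathcal{Q})$, which is precisely the external class of $\mathcal{Q}$, stays in the external compact set. I would establish this as a separate observation: for any non-external face $G$ of $\mathcal{G}^1_j$ and any $\mathcal{Q}\in\mathcal{M}(\mathcal{G}_j)$, the external class $\rho_{G^c}(\mathcal{Q})$ of $\mathfrak{R}_G(\mathcal{Q})$ lands in the external compact set $K_{F_{i(G)}}$ of diameter $\le C$. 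This is because $\rho_{G^c}(\mathcal{Q})$ factors through $\tau_{\partial G,\mathcal{G}^2_j}$ and $G^c$ is acylindrical in $\mathcal{G}^2_j$ (the Case (2) situation in \S\ref{subsec:su}), so the Relative Bounded Image Theorem \ref{thm:bitr} applies. This observation does double duty: applied once it yields the bound $d(\mathfrak{R}(\mathcal{P}),\mathfrak{R}(\mathcal{P}'))\le C$ in (i), the bound $\le d(\mathcal{P},\mathcal{P}')$ being immediate from non-expansion of skinning maps; and for every $m\ge 1$ it certifies that $\mathfrak{R}^m(\mathcal{P})$ has external class in the external compact set, which is exactly the hypothesis needed to run (ii) at the step $m\to m+1$.

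Finally I would assemble the pieces: step (i) controls $d(\mathfrak{R}(\mathcal{P}),\mathfrak{R}(\mathcal{P}'))$ by $\min\{d(\mathcal{P},\mathcal{P}'),C\}$, and the $n-1$ applications of (ii) along the nested faces $G_1,\dots,G_{n-1}$ each contribute a factor $\delta$, yielding
\[
d(\mathfrak{R}^n(\mathcal{P}),\mathfrak{R}^n(\mathcal{P}')) \le \delta^{n-1}\, d(\mathfrak{R}(\mathcal{P}),\mathfrak{R}(\mathcal{P}')) \le \delta^{n-1}\min\{d(\mathcal{P},\mathcal{P}'),C\},
\]
which is the assertion. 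The induction here is formally identical to that in Corollary \ref{cor:cf}.
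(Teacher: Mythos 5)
Your proposal is correct and is essentially the paper's own argument: the paper likewise identifies $d(\mathfrak{R}^m(\mathcal{P}),\mathfrak{R}^m(\mathcal{P}'))$ with the distance between external classes $d(\rho_{F^{m,c}}(\mathcal{P}),\rho_{F^{m,c}}(\mathcal{P}'))$ via Proposition \ref{prop:isom}, places the intermediate skinned packings $\tau_{\mathcal{G}^1_j,\mathcal{G}_j}(\mathcal{Q})$ in the compact sets $K_j$ (your separate observation about the external coordinate is exactly what the Case (2) construction of $K_F$ in \S\ref{subsec:su} encodes, and is what the paper invokes when asserting $\psi_1^*(\tau_{\mathcal{G}^1_{F^1},\mathcal{G}_i}(\mathcal{M}(\mathcal{G}_i)))\subseteq K_{i_1}$), integrates the bound $\|d\tau_F\|\le\delta$ from Lemma \ref{lem:nuc} along a geodesic contained in $K_j$, and inducts. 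The only cosmetic difference is that where you use Corollary \ref{cor:ib} together with non-expansion of the skinning map to bound the intermediate distance by $d(\mathcal{Q},\mathcal{Q}')$, the paper gets the corresponding identity of distances from Lemma \ref{lem:dl}.
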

\begin{proof}
    We will prove the statement for $n=2$. The general case follows by induction.
    Let $\mathcal{G}_{F^1}^1 := \mathcal{G}_i^2 \cap F^1 \cong \mathcal{G}^1_{i_1}$.
    Then 
    $$
    \psi_1^*(\tau_{\mathcal{G}_{F^1}^1, \mathcal{G}_{i}}(\Teich(\mathcal{G}_i))) \subseteq K_{i_1}.
    $$
    Note that $F^2$ is a non-external face of $\mathcal{G}_{F^1}^1$.
    Let $F^{2, c} = \overline{\widehat\C - F^2}$ be the complementary face.
    Therefore, by considering the skinning map for the face $F^2$ in $\mathcal{G}_{F^1}^1$, we have
    \begin{align*}
        d(\rho_{F^{2, c}}(\mathcal{P}), \rho_{F^{2, c}}(\mathcal{P}')) &\leq \delta \cdot d(\tau_{\mathcal{G}_{F^1}^1, \mathcal{G}_{i}}(\mathcal{P}), \tau_{\mathcal{G}_{F^1}^1, \mathcal{G}_{i}}(\mathcal{P})) \\
        & = \delta \cdot d(\rho_{F^{1, c}}(\mathcal{P}), \rho_{F^{1, c}}(\mathcal{P}')),
    \end{align*}
    where the last equality follows from Lemma \ref{lem:dl}.

    Since the diameter of $K_{i_1}$ is bounded by $C$, we have
    \begin{align*}
        d(\rho_{F^{1, c}}(\mathcal{P}), \rho_{F^{1, c}}(\mathcal{P}')) \leq \min\{d(\mathcal{P}, \mathcal{P}'), C\}.
    \end{align*}
    By Proposition \ref{prop:isom}, for any $j$,
    $$
    d(\mathfrak{R}^j(\mathcal{P}), \mathfrak{R}^j(\mathcal{P}')) = d(\rho_{F^{j, c}}(\mathcal{P}), \rho_{F^{j, c}}(\mathcal{P}')),
    $$
    where $F^{j, c} = \overline{\widehat\C - F^j}$.
    The proposition now follows.
\end{proof}
\begin{rmk}
    We remark that the statement and the techniques used are similar as in Lemma \ref{lem:cf} and Corollary \ref{cor:cf}.
    Indeed, the difference here is that renormalization corresponds to {\em zooming in}, while finite approximation as in Lemma \ref{lem:cf} corresponds to {\em zooming out}.
    
    For renormalization, it is crucial that we work with infinite circle packings with nerve $\mathcal{G}_i$, instead of $\mathcal{G}^n_i$.
    Although the renormalization $\mathfrak{R}_F:\Teich(\mathcal{G}^n_i) \longrightarrow \Teich(\mathcal{G}^{n-1}_{j(F)})$ can be defined, it is not a contraction in general.
    Analogously, for finite approximation, it is crucial that the external class for the circle packing is fixed.
\end{rmk}

\subsection*{Encoding nested faces}
The information of an infinite nested sequence of faces can be coded by a point in the limit set of the circle packing.
More precisely, let $\Pi(\mathcal{P}_i)$ be the skinning intersetice of $\mathcal{P}_i$, i.e., the component of the complement $\widehat{\mathbb{C}}-\bigcup_{v\in\partial F^{\ext}_i}\overline{D}_v$ of the union of `outermost' closed disks that has non-trivial intersection with the limit set.
Recall that
$$
\Sigma:=\bigcup_{i=1}^k\{(\mathcal{P}_i,x): \mathcal{P}_i \text{ has nerve } \mathcal{G}_i, x\in \overline{\Pi(\mathcal{P}_i)}\}/\PSL_2(\C).
$$

Let $\mathcal{Q}_j$ be the sub-circle packings of $\mathcal{P}_i$ corresponding to a face of $\mathcal{G}^1_i$, and let $\Pi(\mathcal{Q}_j)$ be the corresponding skinning interstice.
Suppose that $x \in \overline{\Pi(\mathcal{Q}_j)} \subseteq \overline{\Pi(\mathcal{P}_i)}$.
Then we define the renormalization of $(\mathcal{P}_i, x)$ by
$$
\mathfrak{R}((\mathcal{P}_i, x)) = (\mathcal{Q}_j, x) \in \Sigma.
$$

We remark that technically, if $x\in \partial \overline{\Pi(\mathcal{Q}_j)} \cap \partial \overline{\Pi(\mathcal{Q}_k)}$, then the definition of $\mathfrak{R}$ requires a choice.
To deal with this ambiguity, let $\partial_{cusp} \Pi(\mathcal{P}_i)$ be the set of cusps on $\partial \Pi(\mathcal{P}_i)$, i.e., the set of tangent points of the `outermost' closed disks.
We define the space and the projection map 
$$
\pi: \widetilde\Sigma \longrightarrow \Sigma
$$ 
by blowing up a point $(\mathcal{P}_i,x) \in \Sigma$ into two points if $x \in \overline{\Pi(\mathcal{P}_i)} - \partial_{cusp} \Pi(\mathcal{P}_i)$ and $x$ is the tangent point of two circles in $\mathcal{P}_i$.
Let 
$$
\Sigma^{\infty}:=\bigcup_{i=1}^k\{(\mathcal{P}_i,x): \mathcal{P}_i \text{ has nerve } \mathcal{G}_i, x\in \overline{\Pi(\mathcal{P}_i)} \cap \Lambda(\mathcal{P}_i)\}/\PSL_2(\C),
$$
and $\widetilde\Sigma^\infty = \pi^{-1} (\Sigma^{\infty})$.
Then elements in $\widetilde\Sigma^\infty$ are in one-to-one correspondence with an infinite nested sequence of faces.
\begin{proof}[Proof of Theorem \ref{thm:EC}]
    The statement follows from the above discussion and Proposition \ref{prop:adf}.
\end{proof}

\subsection{Teichm\"uller mapping}\label{subsec:tm}
Let $\mathcal{P}$ and $\mathcal{P}'$ be two infinite circle packings with nerve $\mathcal{G}_i$.
Then by Proposition \ref{prop:isom}, they are quasiconformally homeomorphic, i.e., there exists a quasiconformal map $\Psi: \widehat\C \longrightarrow\widehat\C$ with
$$
\Psi(\Lambda(\mathcal{P})) = \Lambda(\mathcal{P}').
$$
Note that there is no restriction of the homeomorphism in the interior of the disks of the circle packing.
In the following, we will define the Teichm\"uller mapping between the two circle packings, which satisfies some additional properties.
We will also prove the existence of the Teichm\"uller mapping.
Further properties of Teichm\"uller mappings will be explored in \S \ref{subsec:cdp}.

Let $\mathcal{P}$ be an infinite circle packing with nerve $\mathcal{G}_i$, and $\mathcal{P}^n = \tau_{\mathcal{G}^n_i, \mathcal{G}_i}(\mathcal{P})$ be the finite sub-circle packing of $\mathcal{P}$ associated to the finite subgraph $\mathcal{G}^n_i \subseteq \mathcal{G}_i$.


Suppose $F$ is a face of $\mathcal{G}^n_i$.
Let $\Pi_F$ be the interstice of $\mathcal{P}^n$ associated to the face $F$.
Let $\Gamma_F$ be the group generated by reflections along the circles in $\mathcal{P}^n$ associated to the vertices in $\partial F$.
Let $\Omega_F$ be the component of domain of discontinuity of $\Gamma_F$ that contains $\Pi_F$, and $\Lambda_F = \partial \Omega_F$ be the limit set of $\Gamma_F$.
See Figure~\ref{fig:reflection} for an illustration.
We also denote by $\mathcal{P}_{F}$ the infinite sub-circle packing associated to $F \cap \mathcal{G}_i$.

 \begin{figure}[htp]
    \centering
    \includegraphics[width=0.3\textwidth]{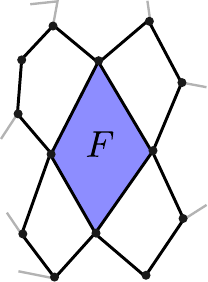}
    \includegraphics[width=0.5\textwidth]{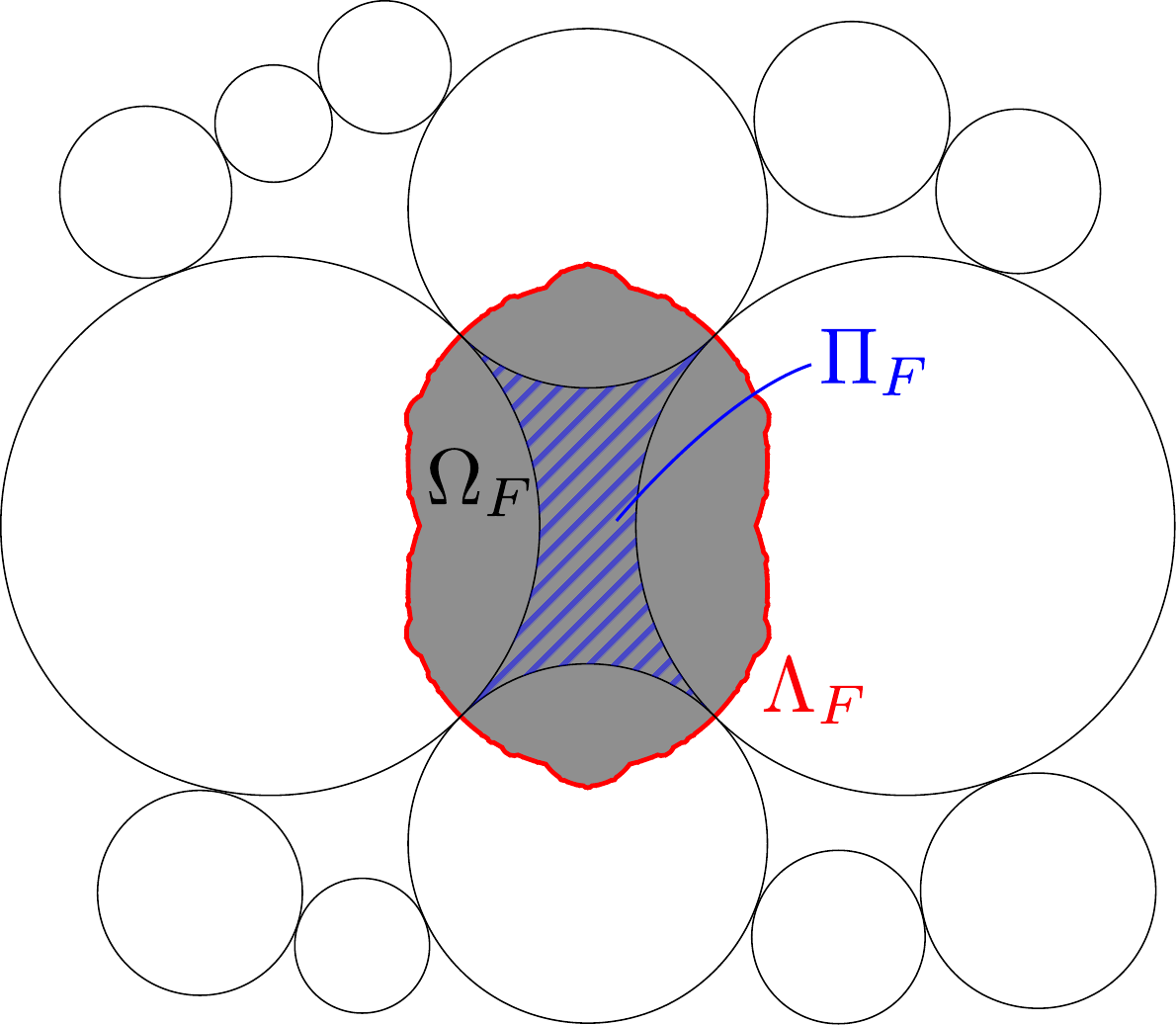}
    \caption{A face $F$, its interstice $\Pi_F$, the corresponding component of the domain of discontinuity $\Omega_F$ for the group $\Gamma_F$ generated by reflections in circles bounding $\Pi_F$, whose boundary is the limit set $\Lambda_F$.}
    \label{fig:reflection}
\end{figure}

\begin{defn}[Teichm\"uller mapping]\label{defn:tm}
    Let $\mathcal{P}, \mathcal{P}'$ be two infinite circle packings with nerve $\mathcal{G}_i$.
    A quasiconformal homeomorphism $\Psi$ between $\mathcal{P}$ and $\mathcal{P}'$ is called a {\em Teichm\"uller mapping} between $\mathcal{P}$ and $\mathcal{P}'$ if for each non-external face $F$ of $\mathcal{G}^n_i$ with $n\geq 0$, we have
    \begin{enumerate}
        \item $\frac{1}{2}\log(K(\Psi|_{\Omega_F})) = d(\mathcal{P}_{F}, \mathcal{P}'_{F})$, and
        \item $\Psi|_{\Lambda_F}: \Lambda_F \longrightarrow \Lambda'_F$ conjugates the dynamics of $\Gamma_F$ and $\Gamma'_F$.
    \end{enumerate}
    In particular, a Teichm\"uller mapping minimizes the dilatation in $\Omega(F)$ for {\em every} non-external face $F$ of $\mathcal{G}^n_i$.
\end{defn}

By Proposition \ref{prop:adf}, if $F$ is a non-external face of $\mathcal{G}^n_i$ with $n \geq 1$, then
$$
d(\mathcal{P}_{F}, \mathcal{P}'_{F}) \leq \delta^{n-1}\min\{d(\mathcal{P}, \mathcal{P}'), C\}.
$$
Therefore, as an immediate corollary, we have
\begin{cor}
     Let $\mathcal{P}, \mathcal{P}'$ be two infinite circle packings with nerve $\mathcal{G}_i$, and $\Psi:\widehat\C\longrightarrow\widehat\C$ be a Teichm\"uller mapping between them.
     Then 
     \begin{enumerate}
         \item $\frac{1}{2}K(\Psi) = d(\mathcal{P}, \mathcal{P}')$; and
         \item for each non-external face $F$ of $\mathcal{G}^n_i$ with $n \geq 1$, we have
          $$
         \frac{1}{2}K(\Psi|_{\Omega_F})\leq \delta^{n-1}\min\{d(\mathcal{P}, \mathcal{P}'), C\}.
         $$
         
     \end{enumerate}
\end{cor}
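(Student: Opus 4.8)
The plan is to read both assertions off the definition of a Teichm\"uller mapping, feeding in the contraction estimate of Proposition~\ref{prop:adf} (and the displayed consequence $d(\mathcal{P}_F,\mathcal{P}'_F)\le\delta^{n-1}\min\{d(\mathcal{P},\mathcal{P}'),C\}$ stated just above the corollary) for part (2), and Proposition~\ref{prop:isom} for part (1). Almost everything is bookkeeping once the correspondence between a deep face $F$ and an iterate of the renormalization operator is made explicit; the only genuinely geometric point is locating where the maximal dilatation of $\Psi$ is attained.

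For part (2), I would fix a non-external face $F$ of $\mathcal{G}^n_i$ with $n\geq 1$. Since $\mathcal{G}^{m-1}_i\subseteq\mathcal{G}^m_i$, the face $F$ is contained in a unique face at each lower level, producing a unique nested sequence $F^1\supseteq F^2\supseteq\cdots\supseteq F^n=F$ with $F^1$ non-external in $\mathcal{G}^1_i$. By the construction of the renormalization operator, $\mathfrak{R}^n$ along this sequence is exactly $\psi_n^*\circ\tau_{F\cap\mathcal{G}_i,\mathcal{G}_i}$, where $\psi_n^*$ is the relabeling isometry identifying $\mathcal{M}(F\cap\mathcal{G}_i)$ with $\mathcal{M}(\mathcal{G}_{i_n})$. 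Hence $d(\mathfrak{R}^n(\mathcal{P}),\mathfrak{R}^n(\mathcal{P}'))=d(\mathcal{P}_F,\mathcal{P}'_F)$, and Proposition~\ref{prop:adf} bounds this by $\delta^{n-1}\min\{d(\mathcal{P},\mathcal{P}'),C\}$. Property~(1) in the definition of a Teichm\"uller mapping gives $\tfrac12\log K(\Psi|_{\Omega_F})=d(\mathcal{P}_F,\mathcal{P}'_F)$, so the desired inequality is immediate.

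For part (1), I would apply the same defining property to the interior (non-external) face $F$ of $\mathcal{G}^0_i=\partial P_i$. Here $F\cap\mathcal{G}_i=\mathcal{G}_i$, so $\mathcal{P}_F=\mathcal{P}$ and $\mathcal{P}'_F=\mathcal{P}'$, yielding $\tfrac12\log K(\Psi|_{\Omega_F})=d(\mathcal{P},\mathcal{P}')$; in particular $\tfrac12\log K(\Psi)\geq d(\mathcal{P},\mathcal{P}')$. The reverse inequality is where the geometry enters: one decomposes $\widehat\C$, off the measure-zero limit set $\Lambda(\mathcal{P})$, into the disk interiors (on which $\Psi$ may be taken conformal), the external interstice (where, by Proposition~\ref{prop:isom}, the dilatation equals $e^{2d(\sigma,\sigma')}=e^{2d(\mathcal{P},\mathcal{P}')}$), and the pieces attached to the non-external faces of the various $\mathcal{G}^n_i$, on which by part~(2) the dilatation is at most $e^{2d(\mathcal{P},\mathcal{P}')}$. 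Taking the essential supremum shows $K(\Psi)=e^{2d(\mathcal{P},\mathcal{P}')}$, i.e.\ $\tfrac12\log K(\Psi)=d(\mathcal{P},\mathcal{P}')$.

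The main obstacle, and the only step beyond pure bookkeeping, is making the exhaustion in part~(1) rigorous: one must verify that these regions genuinely cover $\widehat\C$ up to a null set and that the maximal dilatation is attained at the top level rather than accumulating along $\Lambda(\mathcal{P})$ through the deepening faces. This is precisely the subtlety flagged in Remark~\ref{rmk:nt}, where the naive limit of lifts fails to be a Teichm\"uller mapping because its dilatation does \emph{not} decay with the level. Here the defining properties force the dilatation on $\Omega_F$ to decay like $\delta^{n-1}$, so no accumulation occurs and the supremum is controlled by the external interstice, giving equality in part~(1).
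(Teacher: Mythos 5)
Your part (2) is correct and is exactly the paper's argument: the corollary is presented there as an immediate consequence of the displayed inequality $d(\mathcal{P}_F,\mathcal{P}'_F)\le \delta^{n-1}\min\{d(\mathcal{P},\mathcal{P}'),C\}$ (Proposition~\ref{prop:adf}, via the identity $\mathfrak{R}^n=\psi_n^*\circ\tau_{F^n\cap\mathcal{G}_i,\mathcal{G}_i}$, with $\psi_n^*$ a relabeling isometry) together with item (1) of the definition of a Teichm\"uller mapping; your bookkeeping with the nested sequence $F^1\supseteq\cdots\supseteq F^n=F$ is precisely the justification of that display, and your observation that $F^1$ is automatically non-external is the only point that needs checking.

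Part (1) is where there is a genuine gap, and it sits exactly at the step you flag as ``where the geometry enters.'' The lower bound is easier than you make it: $d(\mathcal{P},\mathcal{P}')$ is by definition the infimal $\tfrac12\log K$ over quasiconformal homeomorphisms respecting the marking, so $\tfrac12\log K(\Psi)\ge d(\mathcal{P},\mathcal{P}')$ outright; alternatively, the $n=0$ instance of the defining property applied to the unique non-external face $P_i$ of $\mathcal{G}^0_i$ (where $\mathcal{P}_{P_i}=\mathcal{P}$) gives $\tfrac12\log K(\Psi|_{\Omega_{P_i}})=d(\mathcal{P},\mathcal{P}')$. Note that this single face already handles the entire ``inner'' side: $\Omega_{P_i}$ contains every interior disk and the inner half of each boundary disk, so the essential supremum there is pinned at exactly $e^{2d(\mathcal{P},\mathcal{P}')}$ and your worry about accumulation along deepening faces does not arise; in particular you do not need a separate disk piece, and in any case you cannot ``take $\Psi$ conformal'' on disk interiors, since $\Psi$ is given rather than constructed. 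The unresolved region is the other complementary component $\Omega_{out}$ of the level-zero necklace limit set $\Lambda_{\partial P_i}$, namely the external interstice $\Pi_{ext}$ together with its reflected copies filling the outer halves of the boundary disks. The definition of a Teichm\"uller mapping constrains $\Psi$ only on the $\Omega_F$ for \emph{non-external} faces, all of which lie on the inner side, so it imposes nothing on $\Omega_{out}$; and your appeal to Proposition~\ref{prop:isom} does not fill this, because that proposition is an identity of Teichm\"uller \emph{distances}, $d(\mathcal{P},\mathcal{P}')=d(\sigma,\sigma')$, not a statement that the particular map $\Psi$ restricts extremally to $\Pi_{ext}$ with dilatation $e^{2d(\sigma,\sigma')}$. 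So your essential-supremum argument does not close on the external side. For comparison, the paper gives no proof at all here: part (1) is read off as the $n=0$ case of the defining property, with the dilatation off $\Omega_{P_i}$ implicitly understood to be no worse --- which is true for the mapping constructed in Theorem~\ref{thm:tm}, since it modifies the map $\Psi_0$ of Remark~\ref{rmk:nt} (whose global dilatation is exactly $e^{2d(\mathcal{P},\mathcal{P}')}$) only on the $\Omega_F$'s, where the dilatation decreases. A complete argument for an arbitrary Teichm\"uller mapping needs either this external control verified for the map at hand or built into the definition.
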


{We now show Teichm\"uller mappings exist. We first show there exist quasiconformal maps that satisfy the second condition of Definition~\ref{defn:tm}
\begin{lem}\label{lem:quasiconformalconjugacyinfinitegen}
    Let $\mathcal{P}, \mathcal{P}'$ be two infinite circle packings with nerve $\mathcal{G}_i$.
    Then there exists a quasiconformal homeomorphism $\Psi$ between $\mathcal{P}$ and $\mathcal{P}'$ so that
    \begin{enumerate}
         \item $\frac{1}{2}\log(K(\Psi)) = d(\mathcal{P}, \mathcal{P}')$, and
        \item $\Psi|_{\Lambda(F)}: \Lambda(F) \longrightarrow \Lambda'(F)$ conjugates the dynamics of $\Gamma(F)$ and $\Gamma'(F)$ for each non-external face $F$ of $\mathcal{G}^n_i$ with $n \geq 0$.
    \end{enumerate}
\end{lem}
\begin{proof}
    Let $\Gamma^n, (\Gamma')^n$ be the reflection group generated by circles in $\mathcal{P}^n$ and $(\mathcal{P}')^n$ respectively.
    Then there exists a unique Teichm\"uller mapping $\psi_n: \Gamma^n\backslash\Omega^n \longrightarrow (\Gamma')^n\backslash(\Omega')^n$ that lifts to a quasiconformal homeomorphism $\Psi^n:\widehat\C \longrightarrow \widehat\C$ between the two finite circle packings $\mathcal{P}^n$ and $(\mathcal{P}')^n$.
    By Proposition \ref{prop:isom}, we have
    $$
    \frac{1}{2}\log(K(\Psi^n)) = d(\sigma, \sigma') = d(\mathcal{P}, \mathcal{P}'),
    $$
    where $\sigma, \sigma'$ are the external classes for $\mathcal{P}$ and $\mathcal{P}'$.
    By compactness of $K$-quasiconformal maps, let $\Psi:= \lim_{n_k} \Psi_{n_k}$ for some convergent subsequence, which is a quasiconformal map satisfying the required conditions.
\end{proof}
}

{
\begin{rmk}\label{rmk:nt}
    We remark that the map $\Psi$ constructed in the proof of Lemma~\ref{lem:quasiconformalconjugacyinfinitegen} is never a Teichm\"uller mapping in the non-trivial cases.
    In fact, this map $\Psi_\infty$ gives a quasiconformal conjugacy between the infinitely generated reflection groups $\Gamma^\infty, (\Gamma')^\infty$, which is too restrictive to be a Teichm\"uller mapping: the quasiconformal map between the external classes is reflected into circles of every level, and so the restriction of $\Psi_\infty$ to $\Omega_F$ for any non-external face $F$ always has dilatation $e^{2d(\sigma,\sigma')}$. 
\end{rmk}
}

\begin{theorem}[Existence of Teichm\"uller mappings]\label{thm:tm}
    Let $\mathcal{P}, \mathcal{P}'$ be two infinite circle packings with nerve $\mathcal{G}_i$.
    Then there exists a Teichm\"uller mapping between $\mathcal{P}, \mathcal{P}'$.
\end{theorem}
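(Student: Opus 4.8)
The plan is to construct $\Psi$ recursively over the hierarchy of faces, using the renormalization operator together with the exponential contraction of Proposition \ref{prop:adf}, rather than by taking a naive limit of lifts of finite Teichm\"uller mappings. By Proposition \ref{prop:isom} we already know $d(\mathcal P,\mathcal P')=d(\sigma,\sigma')$, where $\sigma,\sigma'\in\Teich(\Pi_{F_i})$ are the external classes; let $\phi_0$ be the extremal quasiconformal map between the external interstices realizing this distance, so that $\frac12\log K(\phi_0)=d(\sigma,\sigma')$. Remark \ref{rmk:nt} shows that extending $\phi_0$ equivariantly, i.e. reflecting it into every circle, produces a conjugacy of the infinitely generated reflection groups whose dilatation is $e^{2d(\sigma,\sigma')}$ on \emph{every} $\Omega_F$, which violates condition (1). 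The point of the construction is therefore to keep $\phi_0$ only on the outermost frame and to replace the reflected copies inside each face by the genuinely smaller extremal maps supplied by renormalization.

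More precisely, for each non-external face $F$ of $\mathcal G^1_i$ the identification $\psi_F:\mathcal G_{i(F)}\to F\cap\mathcal G_i$ exhibits the sub-packing $\mathcal P_F$ as a circle packing of the same type, with some external class $\sigma_F$, and Proposition \ref{prop:adf} gives $d(\mathcal P_F,\mathcal P'_F)\le\min\{d(\mathcal P,\mathcal P'),C\}$, with the sharper bound $\delta^{n-1}\min\{d,C\}$ at level $n$. I would define approximants $\Psi_N$ by a truncated recursion: on the outer frame (the external interstice together with the outermost disk-holes) take $\phi_0$, extended across the disk-holes by a fixed quasiconformal extension of the prescribed boundary values; on each $\Omega_F$ at levels $1\le n<N$ insert, via $\psi_F$, the corresponding extremal map for $(\sigma_F,\sigma'_F)$; and at level $N$ stop by inserting any fixed uniformly quasiconformal homeomorphism between $\mathcal P_F$ and $\mathcal P'_F$. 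By construction each $\Psi_N$ is a quasiconformal homeomorphism carrying $\mathcal P$ to $\mathcal P'$ whose dilatation on each level-$n$ quasidisk $\Omega_F$ is at most $e^{2\delta^{n-1}\min\{d,C\}}$; in particular $K(\Psi_N)\le e^{2d(\sigma,\sigma')}$ uniformly in $N$, the maximum being attained only on the outer frame.

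The maps $\Psi_N$ and $\Psi_{N+1}$ agree outside the level-$N$ quasidisks $\Omega_F$, and on those quasidisks both have dilatation bounded by $e^{2\delta^{N-1}\min\{d,C\}}\to 1$. Since along any nested sequence of faces one has $\diam(\Omega_{F^n})\to 0$ (the faces $F^n$ shrink to points of $\Lambda(\mathcal P)$), the two maps differ only on a union of quasidisks of vanishing diameter on which both are asymptotically conformal; a standard normal-families and compactness argument then shows that $\{\Psi_N\}$ is uniformly Cauchy and converges to a quasiconformal homeomorphism $\Psi:\widehat\C\to\widehat\C$ with $K(\Psi)\le e^{2d(\sigma,\sigma')}$. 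By the nested construction, the restriction of $\Psi$ to $\overline{\Omega_F}$ coincides with the infinite Teichm\"uller mapping of $\mathcal P_F$ transported by $\psi_F$, and $\Psi$ still carries $\mathcal P$ onto $\mathcal P'$.

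It then remains to verify the two defining properties. Condition (2), that $\Psi|_{\Lambda_F}$ conjugates $\Gamma_F$ to $\Gamma'_F$, holds because each $\Psi_N$ conjugates the corresponding finite boundary groups and conjugacy is preserved under the uniform limit. For condition (1), the construction gives the upper bound $\frac12\log K(\Psi|_{\Omega_F})\le d(\mathcal P_F,\mathcal P'_F)$, while the reverse inequality follows from Proposition \ref{prop:isom} applied to $\mathcal P_F$: any quasiconformal homeomorphism inducing the prescribed conjugacy on $\Lambda_F$ must have dilatation at least $e^{2d(\mathcal P_F,\mathcal P'_F)}$ on $\Omega_F$, so the extremal value is attained. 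I expect the main obstacle to be precisely the consistency of this recursive gluing along the fractal boundaries $\Lambda_F$: one must check that the boundary values prescribed by the ambient (parent) map agree with those of the recursively inserted extremal map on each $\Omega_F$, and that the resulting piecewise definition is globally quasiconformal rather than merely quasiconformal on each piece. This is where the exponential contraction of Proposition \ref{prop:adf} is essential, as it forces the deep-level discrepancies to decay geometrically, guaranteeing both the uniform convergence above and the removability of the seams $\bigcup_F\Lambda_F$, which form a set of measure zero.
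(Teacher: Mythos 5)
Your architecture is the same as the paper's: start from a globally defined quasiconformal map between the two packings (the paper uses the map of Remark \ref{rmk:nt}), recursively perform surgery on each $\Omega_F$ by inserting the extremal map of the sub-packing, keep uniform dilatation bounds via Proposition \ref{prop:adf}, and pass to a limit of truncated approximants. But the one step you yourself single out as the main obstacle --- consistency of the gluing along the seams $\Lambda_F$ --- is exactly the step you do not close, and the mechanism you propose for it cannot work. Exponential contraction can only force \emph{approximate} agreement of boundary values at deep levels, and approximate agreement is useless for surgery: to splice two homeomorphisms along $\Lambda_F$ their boundary values must agree \emph{exactly}, otherwise $\Psi_N$ is not even continuous, let alone quasiconformal. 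The paper closes this with a short exact argument: the ambient map $\Psi_0$ of Remark \ref{rmk:nt} conjugates the dynamics of $\Gamma_F$ and $\Gamma'_F$ on $\Lambda_F$, the inserted extremal map $\Psi_F$ (itself built by Remark \ref{rmk:nt} applied to the sub-packing) does too, and a marked conjugacy $\Lambda_F\to\Lambda'_F$ is unique, being pinned down on the dense set of fixed points of $\Gamma_F$; hence $\Psi_F|_{\Lambda_F}=\Psi_0|_{\Lambda_F}$ identically, and quasiconformality of the spliced map follows because $\Lambda_F$ is a quasicircle, hence quasiconformally removable. Note that this forces a requirement on your construction which your outer frame does not obviously satisfy: assembling $\phi_0$ on the external interstice with ``a fixed quasiconformal extension'' into the disk-holes does not automatically produce a map conjugating the group dynamics on every $\Lambda_F$, whereas the equivariant limit of Remark \ref{rmk:nt} has this property by construction.

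Two smaller points. First, your appeal to removability of the union $\bigcup_F\Lambda_F$ is unnecessary (and would not follow from its measure being zero): each truncated approximant involves only finitely many surgeries, each across a single removable quasicircle, and a locally uniform limit of uniformly $K$-quasiconformal maps is $K$-quasiconformal, so the infinite union of seams never needs to be removed --- this is how the paper's inductive scheme sidesteps the issue entirely. Second, your verification of condition (1) is more elaborate than needed: once the gluing is exact, $\Psi|_{\Omega_F}$ literally equals $\Psi_F|_{\Omega_F}$, whose dilatation realizes $d(\mathcal{P}_F,\mathcal{P}'_F)$ by construction, so no separate lower-bound argument via Proposition \ref{prop:isom} is required. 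With the uniqueness-of-conjugacy observation supplied, your proposal collapses to the paper's proof; without it, the recursive gluing on which everything rests is unjustified.
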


\begin{proof}
    We construct the map by taking the limit of a sequence of quasiconformal homeomorphism between the infinite circle packings $\mathcal{P}$ and $\mathcal{P}'$.

    To start, let $\Psi_0$ be a quasiconformal homeomorphism between $\mathcal{P}$ and $\mathcal{P}'$ constructed in Lemma~\ref{lem:quasiconformalconjugacyinfinitegen}.
    Let $F$ be a non-external face of $\mathcal{G}_i^1$.
    Then by construction, it is easy to see that 
    $$
    \Psi_0|_{\Lambda_F}: \Lambda_F \longrightarrow \Lambda'_F
    $$ 
    conjugates the dynamics of $\Gamma_F$ and $\Gamma'_F$.
    Let $\Psi_F$ be a quasiconformal homeomorphism between $\mathcal{P}_F$ and $\mathcal{P}'_F$ as constructed in Lemma~\ref{lem:quasiconformalconjugacyinfinitegen}.
    Then $\frac{1}{2}K(\Psi_F) = d(\mathcal{P}_F, \mathcal{P}'_F)$.
    Since $\Psi_F|_{\Lambda_F}: \Lambda_F \longrightarrow \Lambda'_F$ also conjugates the dynamics of $\Gamma_F$ and $\Gamma'_F$, $\Psi_F|_{\Lambda_F} = \Psi_0|_{\Lambda_F}$.
    
    We define $\Psi_1$ by replacing $\Psi_0|_{\Omega_F}$ with $\Psi_F|_{\Omega_F}$ for each non-external face $F$ of $\mathcal{G}^1_n$.
    Since $\Lambda_F$ is a quasicircle, it is quasiconformally removable.
    Therefore $\Psi_1$ is quasiconformal, and satisfies
    $$
    \frac{1}{2}\log(K(\Psi|_{\Omega_F})) = d(\mathcal{P}_{F}, \mathcal{P}'_{F})
    $$
    for each non-external face $F$ of $\mathcal{G}^1_i$.

    Inductively, we construct quasiconformal homeomorphism $\Psi_n$ between $\mathcal{P}$ and $\mathcal{P}'$ so that 
    $$
    \frac{1}{2}\log(K(\Psi|_{\Omega_F})) = d(\mathcal{P}_{F}, \mathcal{P}'_{F})
    $$
    for each non-external face $F$ of $\mathcal{G}^j_i$ with $j \leq n$.
    {By compactness of $K$-quasiconformal maps,} we can take a sub-sequential limit and conclude that a Teichm\"uller mapping exists.
\end{proof}

\begin{rmk}
It is easy to see that Teichm\"uller mappings between $\mathcal{P}, \mathcal{P}'$ are not unique, as we can introduce some small perturbations in the interiors of the disks in the circle packing.
\end{rmk}

\section{Universality and regularity of local symmetries}\label{sec:url}
In this section, we apply the results of exponential convergence for circle packings to prove the universality and regularity of local symmetries.

{This section is organized as follows. In \S~\ref{subsec:asympconf}, we prove that any homeomorphism between two infinite circle packings is $C^{1+\alpha}$-conformal (Theorem~\ref{thm:ac}). We prove a stronger form of asymptotic conformality for combinatorially deep points in \S~\ref{subsec:cdp}. We study renormalization periodic points in \S~\ref{subsec:renormalizationPeriodicPoints} and finish the proof of Theorem~\ref{thm:LS}. Finally, we give an explicit construction of infinite circle packings which are periodic under the renormalization in \S~\ref{subsec:explicitcon}.}

\subsection{Asymptotic conformality}\label{subsec:asympconf}
In this subsection, we fix $\mathcal{R}=\{P_i\}_{i=1}^k$ to be a simple irreducible acylindrical finite subdivision rule.
The main objective for this subsection is the following theorem.
\begin{theorem}\label{thm:ac}
    Let $\mathcal{P}$ and $\mathcal{P}'$ be two circle packings with nerve $\mathcal{G}_i$, and $\Psi$ a homeomorphism between them.
    Then $\Psi|_{\Lambda(\mathcal{P}) \cap \overline{\Pi(\mathcal{P})}}$ is $C^{1+\alpha}$-conformal.
\end{theorem}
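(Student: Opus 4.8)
The plan is to reduce the statement to the Teichm\"uller mapping and then exploit the exponential contraction of renormalization to show that, after rescaling, the map linearizes at every point of $\Lambda(\mathcal P)\cap\overline{\Pi(\mathcal P)}$. First I would observe that the restriction $\Psi|_{\Lambda(\mathcal P)\cap\overline{\Pi(\mathcal P)}}$ is purely combinatorial: a point $z\in\Lambda(\mathcal P)\cap\overline{\Pi(\mathcal P)}$ is infinitely renormalizable, so it corresponds to a nested sequence of faces $F^1\supseteq F^2\supseteq\cdots$ whose interstices $\Pi(\mathcal Q_{F^n})$ shrink to $z$; any labeling-preserving homeomorphism must send $z$ to the point cut out by the corresponding nested sequence of faces of $\mathcal P'$. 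Hence $\Psi$ agrees on this set with the Teichm\"uller mapping $\Psi_T$ of Theorem \ref{thm:tm}, and it suffices to prove the statement for $\Psi_T$.

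Next, fixing such a $z$ and its nested sequence of faces, the renormalizations $\mathfrak R^n((\mathcal P,z))$ and $\mathfrak R^n((\mathcal P',\Psi(z)))$ are defined for all $n$. I would normalize each renormalization by a M\"obius map $\phi_n$ (resp. $\phi_n'$) carrying the level-$n$ interstice to the standard position of $\mathfrak R^n(\mathcal P)$ (resp. $\mathfrak R^n(\mathcal P')$), so that the conjugate $\Psi_n:=\phi_n'\circ\Psi_T\circ\phi_n^{-1}$ is a quasiconformal map between the two normalized renormalized packings. Because $\Psi_T$ restricted to $\Omega_{F^n}$ has dilatation exactly $\tfrac12\log K=d(\mathcal P_{F^n},\mathcal P'_{F^n})$, Proposition \ref{prop:adf} (equivalently Theorem \ref{thm:EC}) gives $\tfrac12\log K(\Psi_n|_{\Omega^{(n)}})\le C\delta^{\,n}$ on the outermost domain $\Omega^{(n)}$ of the renormalized packing. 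Since the renormalized packings realize only finitely many combinatorial types and stay in a bounded region of moduli space, a compactness and rigidity argument shows the normalized configurations subconverge to a common limit, and the $K$-quasiconformal maps $\Psi_n$ with $K\to1$ converge to a $1$-quasiconformal, hence M\"obius, limit $\Psi_\infty$; the quantitative comparison of a quasiconformal map with its nearest M\"obius map (as in Lemma \ref{lem:qc}) upgrades this to convergence at the geometric rate $O(\delta^{\,n})$.

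Then I would transfer back. Writing $M_n:=(\phi_n')^{-1}\circ\Psi_\infty\circ\phi_n$, a M\"obius map, the exponential convergence of $\Psi_n$ to $\Psi_\infty$ yields $\Psi_T(z+t)=M_n(z+t)+O(\delta^{\,n}\,r_n)$ whenever $z+t\in\Lambda(\mathcal P)\cap\overline{\Pi(\mathcal P)}$ lies at scale $r_n:=\diam\Pi(\mathcal Q_{F^n})$, using $\diam\Pi(\mathcal Q'_{F^n})\asymp r_n$ by bounded distortion. Bounded geometry of the finitely many renormalization types forces $r_n\asymp\lambda^{\,n}$ for some $\lambda<1$, so $|t|\asymp\lambda^{\,n}$; the derivatives $M_n'(z)$ then form a Cauchy sequence converging to some $a$ at rate $\delta^{\,n}$, and collecting terms gives $\Psi_T(z+t)=\Psi_T(z)+a\,t+O(|t|^{1+\alpha})$ with $\alpha=\log\delta/\log\lambda>0$. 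This is exactly $C^{1+\alpha}$-conformality at $z$, with uniform constants since the entire construction depends only on the finitely many renormalization types.

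The main obstacle is the geometric control at cusp points, namely tangency points of the outermost disks, where the local renormalization is parabolic and the scaling factor equals $1$. There the consecutive scales $r_{n+1}/r_n$ tend to $1$ rather than staying bounded below, so the clean relation $|t|\asymp\lambda^{\,n}$ fails and the naive conversion $\delta^{\,n}\asymp|t|^{\alpha}$ breaks down. Handling this requires replacing the geometric zoom by the parabolic linearization of the renormalization limit and arguing with \emph{combinatorially deep} points (cf.\ Theorem \ref{thm:tmac}): the exponential contraction in the Teichm\"uller metric still holds, but one must combine it with the polynomial rate of the parabolic scales to recover a (possibly smaller) H\"older exponent.
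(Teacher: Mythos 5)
Your reduction to the combinatorially determined restriction is fine, and your renormalization-zoom scheme does recover the conclusion at combinatorially deep points --- indeed it is essentially the paper's proof of Theorem~\ref{thm:tmac}, where the nested domains $\Omega^n$ are uniform quasidisks separated by annuli of definite modulus (Lemma~\ref{lem:uni}), so consecutive scales are exponentially comparable and your conversion $\delta^{n}\asymp|t|^{\alpha}$ is legitimate. But Theorem~\ref{thm:ac} asserts $C^{1+\alpha}$-conformality at \emph{every} point of $\Lambda(\mathcal{P})\cap\overline{\Pi(\mathcal{P})}$, including tangency points of the outermost disks, and your closing paragraph concedes that your argument breaks exactly there without repairing it. That is a genuine gap, not a loose end: at a cusp the renormalization limit is parabolic with scaling factor of modulus $1$ (Theorem~\ref{thm:LS}(3)), the interstices shrink subexponentially, and the ``parabolic linearization'' you gesture at is never supplied --- as written, the proposal proves a strictly weaker statement. (A secondary overclaim: bounded geometry of finitely many renormalization types does \emph{not} force $r_n\asymp\lambda^n$ for a single $\lambda$ even at deep points; one only gets two-sided exponential bounds with different bases, though that alone would not sink your argument.)

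The device the paper uses, and which your proposal is missing, sidesteps the cusp dichotomy entirely. Instead of tying the Euclidean scale $|t|$ to the renormalization depth, the paper builds \emph{pinched neighborhoods} $\Delta^n=\Omega^n\cup\bigcup_j\Omega^n(j)$, buffering the domain of $F^n$ by the domains of all faces sharing a boundary edge with it, and proves only a \emph{one-sided} size bound: $B(0,\epsilon^{n})\cap\Lambda(\mathcal{P})\subseteq\overline{\Delta^n}$ (Lemma~\ref{lem:ep}, via the arc-length estimate $|I_v|\geq\alpha^n$). For a given $z$ it then chooses $n$ from the Euclidean scale, $\epsilon^{3n+3}\leq|z|\leq\epsilon^{3n}$, so that automatically $z\in\overline{\Delta^{3n}}\subseteq\overline{\Delta^n}$, whence $\Psi(z)=f_n(z)$ for a quasiconformal $f_n$ with $\tfrac{1}{2}\log K(f_n)=O(\delta^n)$ (Lemma~\ref{lem:ag} combined with Proposition~\ref{prop:adf}); the comparison with the M\"obius map $M_n$ matched at three points is then run in the hyperbolic metric of the thrice-punctured sphere (Lemmas~\ref{lem:qc}, \ref{lem:R}, \ref{lem:est}), whose density near the puncture behaves like $1/(|x|\log(1/|x|))$ and hence costs only a factor $n$, absorbed as $n\delta^n=O(\delta^{n/2})$. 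No upper bound on the decay of $\Delta^n$ --- no relation of the form $|t|\asymp\lambda^n$ --- is ever used, so the parabolic slow-down at cusps is harmless and the exponent $\alpha=\log\delta/(6\log\epsilon)$ is uniform over all points. To close your gap you would either need to import this pinched-neighborhood construction, or genuinely carry out the parabolic analysis you defer, which the paper shows is unnecessary.
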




{We first normalize by M\"obius transformations so that $\overline{\Pi(\mathcal{P})}$ and $\overline{\Pi(\mathcal{P}')}$ are bounded subsets of $\C$.}
Recall definition of $C^{1+\alpha}$-conformality in \S~\ref{subsec:univeralregular}. 
Let $x' = \Psi(x)$ be the corresponding points in $\Lambda(\mathcal{P}')$.

The proof of asymptotic conformality of $\Psi$ at $x$ consists of two steps.
{We normalize by translation so that $x = x' = 0$. Note that this normalization by translations does not change the exponent $\alpha$ nor the constant in $O(|t|^{1+\alpha})$ in Equation~\eqref{eqn:asymp} for the definition of $C^{1+\alpha}$-conformality.}
\begin{enumerate}
    \item We will first construct a sequence of pinched neighborhoods $\Delta^n$ and $(\Delta')^n$ of $0$ so that $\Lambda(\mathcal{P}) \cap \overline{\Delta^n}$ and $\Lambda(\mathcal{P}') \cap \overline{(\Delta')^n}$ are quasiconformally homeomorphic with dilatation converges to $1$ exponentially fast.
    This allows us to approximate the homeomorphism by M\"obius transformation with error $O(\delta^n|z|\log(\frac{1}{|z|}))$.
    \item We show that these pinched neighborhoods do not shrink to $0$ too fast so that $O(\delta^n|z|\log(\frac{1}{|z|})) = O(|z|^{1+\alpha})$ on $(\Delta^n - \overline{\Delta^{n+1}}) \cap \Lambda(\mathcal{P})$.
\end{enumerate}
This allows us to prove $\Psi(z) = \Psi'(0)z+O(|z|^{1+\alpha})$ on the limit set.

\subsection*{A sequence of pinched neighborhoods $\Delta^n$}
Let $F^n$ be a nested sequence of faces in $P_i$ associated to $x\in \Lambda(\mathcal{P}) \cap \overline{\Pi(\mathcal{P})}$. {We normalize by some translation so that $x = 0$.}
We remark that there are potentially $2$ different nested sequences associated to $0$.
This happens when $0$ is the tangent point of two circles in $\mathcal{P}$ and is in the interior $\Pi(\mathcal{P})$ (see \S~\ref{ss:renom}).

Let $D^n$ be a finite union of faces of $\mathcal{G}^n_i$ that share common boundary edges with $F^n$, and $A^n = D^n - \Int(F^n)$.
We denote the faces appeared in $A^n$ as $F^n(1),..., F^n(k_n)$.
Intuitively, one should think of $D^n$ is a combinatorial neighborhood of $F^n$ and provides some buffer for $F^n$ from the rest of the faces of $\mathcal{G}^n_i$ {(see Figure~\ref{fig:pinched_nbhd})}.
    


Let $\mathcal{P}$ be a circle packing with nerve $\mathcal{G}_i$, and let $\mathcal{P}^n$ be the finite sub-circle packing associated to $\mathcal{G}^n_i$.
Let $\Pi^n = \Pi_{F^n}$ be the interstice of $\mathcal{P}^n$ associated to the face $F^n$.
Let $\Gamma^n$ be the reflection group generated by circles in $\mathcal{P}$ associated to $\partial F^n$, and let $\Omega^n$ be the component of the domain of discontinuity of $\Gamma^n$ that contains $\Pi^n$.

We define $\Pi^n(j)$ and $\Omega^n(j)$ similarly for the face $F^n(j)$.
We define
$$\Delta^n = \Omega^n \cup \bigcup_j\Omega^n(j)$$
and $\overline{\Delta^n}$ is its closure.
We remark that similar as $D^n$ gives combinatorial protection to $F^n$, $\Delta^n$ gives some buffer zone for $\Pi^n$ from the other interstices of the circle packing $\mathcal{P}^n$ {(see Figure~\ref{fig:pinched_nbhd})}.

Note that by construction, it is clear that $\Pi^n \subseteq \Omega^n \subseteq \Delta^n$ and
$$
...\subseteq \Delta^n \subseteq \Delta^{n-1} \subseteq... \subseteq \Delta^0.
$$

\subsection*{Controlling the sizes of $\Delta^n$}
Let $F$ be a non-external face of $\mathcal{G}^n_i$, and $\Pi_F$ be the corresponding interstice in $\mathcal{P}^n$.
The boundary consists of finitely many circular arcs
$$
\partial \Pi_F = \bigcup_{v \text{ vertex in } \partial F} I_v.
$$

\begin{lem}
    Let $\mathcal{P}$ be a circle packing with nerve $\mathcal{G}_i$ normalized so that $\Lambda(\mathcal{P})$ is a bounded set in $\C$.
    There exists a constant $\alpha>0$ so that for any non-external face $F$ of $\mathcal{G}^n_i$,
    every circular arc $I_v$ on the boundary of the corresponding interstice $\Pi_F$ has length $|I_v| \geq \alpha^n$.
\end{lem}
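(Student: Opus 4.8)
The plan is to exploit the self-similar (renormalization) structure of the packing together with the compactness supplied by the Bounded Image Theorem. First I would record the reduction. A non-external face $F$ of $\mathcal{G}^n_i$ determines a nested sequence of faces $P_i = F^0 \supset F^1 \supset \cdots \supset F^n = F$, where $F^{j}$ is a non-external face of $\mathcal{G}^{j}_i$ and the identification $\psi_{j}\colon \mathcal{G}_{i_j}\to F^j\cap\mathcal{G}_i$ carries the sub-packing $\mathcal{P}_{F^j}$ to a packing $\mathcal{Q}_j := \mathfrak{R}^j(\mathcal{P}) \in \mathcal{M}(\mathcal{G}_{i_j})$ via a normalizing M\"obius map $M_j$. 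The interstice $\Pi_F$ is exactly the external interstice of $\mathcal{P}_{F^n}$, so each arc $I_v$ on $\partial\Pi_F$ equals $M_n^{-1}(J_v)$ for a boundary arc $J_v$ of the external interstice of the normalized packing $\mathcal{Q}_n$. Thus it suffices to bound the arcs $J_v$ from below on the normalized side, and to control the contraction of the normalizing maps $M_n^{-1}$ as $n$ grows.

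Second, I would establish the compactness that makes everything uniform. By the construction of the compact sets $K_i$ in \S\ref{subsec:su} (equivalently, by the Relative Bounded Image Theorem \ref{thm:bitr} together with Corollary \ref{cor:ib}), the external classes of the renormalizations $\mathcal{Q}_j$, $j\ge 1$, lie in a bounded, hence precompact, subset of $\bigcup_i \Teich(\Pi_{F_i}) \cong \bigcup_i \mathcal{M}(\mathcal{G}_i)$; let $\mathcal{K}$ denote its compact closure. I would then fix once and for all a continuous normalization of each $\mathcal{M}(\mathcal{G}_i)$ (sending three combinatorially marked cusps of the external interstice to fixed points of $\widehat{\mathbb{C}}$) and measure all lengths in the spherical metric. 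On the compact family $\mathcal{K}$ the external interstice has finitely many boundary arcs, each of positive spherical length depending continuously on the packing, so there is a uniform lower bound $|J_v| \ge c_0 > 0$.

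Third, I would control the normalizing maps. Writing $N_j$ for the M\"obius map with $N_j \circ M_j = M_{j+1}$ (so $N_j$ carries the normalized level-one sub-packing of $\mathcal{Q}_j$ determined by $\psi_{j}^{-1}(F^{j+1})$ onto the normalized $\mathcal{Q}_{j+1}$), the map $N_j$ depends continuously on $\mathcal{Q}_j\in\mathcal{K}$ and on the level-one face $\psi_{j}^{-1}(F^{j+1})$, which ranges over a finite set; hence $\{N_j\}$ lies in a compact family of M\"obius transformations. For such a family the spherical derivative is uniformly controlled: there are constants $0<m\le 1\le M<\infty$ with $m\le N_j^{\#}\le M$ on $\widehat{\mathbb{C}}$ (the upper bound is $\ge 1$ since a M\"obius map preserves total spherical area). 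Applying the chain rule to $M_n^{-1}=M_0^{-1}\circ N_0^{-1}\circ\cdots\circ N_{n-1}^{-1}$ gives a spherical derivative at least $c\,M^{-n}$ at every point, where $c>0$ absorbs the fixed map $M_0^{-1}$. Combining with the previous step, $I_v=M_n^{-1}(J_v)$ has spherical length at least $c\,c_0\,M^{-n}$; since $\mathcal{P}$ is a fixed packing confined to a bounded region, spherical and Euclidean lengths are comparable there, and after absorbing constants one obtains $|I_v|\ge \alpha^n$ for a suitable $\alpha\in(0,1)$.

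The main obstacle I anticipate is exactly the third step: arc length is not a M\"obius invariant, so the normalizations $M_j$ cannot be taken to be similarities and one must genuinely bound the distortion of the family $\{N_j\}$. The only mechanism preventing this distortion from accumulating is that the renormalizations remain in the compact set $\mathcal{K}$; I would need to check carefully that the poles of $N_j^{\pm1}$ stay uniformly away from the normalized interstices, so that $N_j^{\#}$ is uniformly bounded away from $0$ and $\infty$ on the regions actually used — which is again a consequence of the compactness of $\mathcal{K}$ and the continuous dependence of $N_j$ on its data. A secondary point to verify is that the precompactness of the renormalizations follows from the bounded-image estimates uniformly over \emph{all} nested sequences, and not merely along a single one.
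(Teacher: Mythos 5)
Your proposal is correct and follows essentially the same route as the paper: its proof also passes to the nested sequence $F^0\supset F^1\supset\cdots\supset F^n=F$, invokes the Bounded Image Theorem (Theorem \ref{thm:bitr}) to place the packings associated to $\partial F^n$ in a fixed finite collection of compact sets, and concludes that the ratio $\min_{v\in\partial F^{n+1}}|I_v|/\min_{w\in\partial F^{n}}|I_w|$ is uniformly bounded below, which iterates to give $|I_v|\geq\alpha^n$. Your third step — compactness of the family of normalizing M\"obius maps $\{N_j\}$ and the resulting uniform spherical-derivative bounds — is precisely the justification the paper compresses into that one-line ratio claim, and both of the caveats you flag (pole control and uniformity over all nested sequences) are settled exactly as you anticipate, by the finiteness of the list of compact sets $K_i$ coming from the Bounded Image Theorem.
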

\begin{proof}
    Let $F^n$ is a nested sequence of faces.
    Note that the subdivision rule $\mathcal{R}$ is acylindrical, so by the Bounded Image Theorem (Theorem \ref{thm:bitr}), the circle packings associated to $\partial F^n$ for each $n\geq 1$ belong to a fixed collection of compact sets of the corresponding Teichm\"uller spaces. {Thus the interstices $\Pi^n = \Pi_{F^n}$ are ideal hyperbolic polygons with uniformly bounded extremal width between any pairs of non-adjacent arcs of $\Pi^n$. 
    This means either all arcs of $\partial \Pi^n$ have comparable length or there is a pair of adjacent arcs $I_a, I_b$ of $\partial \Pi^n$ with much bigger length than the rest of the arcs (see Figure~\ref{fig:parabolicnorm}). In the first case, we use an affine map $\Phi$ to normalize all circles associated to $\partial \Pi^n$ of comparable and definite size. In the second case, let $\{p\}=I_a \cap I_b$ and we first use a parabolic M\"obius map $\Phi$ with fixed point $p$ to normalize so that all circles associated to $\partial \Pi^n$ of comparable size, and then apply some affine map to make them of comparable and definite size (see Figure~\ref{fig:parabolicnorm}).
    By Theorem \ref{thm:bitr}, the finite sub-circle packing $\mathcal{P}_{\mathcal{G}_{F^n}}$ for $\mathcal{G}_{F^n}:=\mathcal{G}^{n+1}_i \cap F^n$ is contained in a bounded set up to M\"obius maps.
    Thus after the normalization, $\mathcal{P}_{\mathcal{G}_{F^n}}$ is contained in a bounded set. In either case of the normalization, one sees
    $$
    \min_{v \in \partial F^{n+1}} |I_v| / \min_{w \in \partial F^{n}} |I_v| \asymp \min_{v \in \partial F^{n+1}} |\Phi(I_v)| / \min_{w \in \partial F^{n}} |\Phi(I_v)|.
    $$
    Therefore, the ratio $\min_{v \in \partial F^{n+1}} |I_v| / \min_{w \in \partial F^{n}} |I_w|$ is in a bounded set of $(0, \infty)$.}
    The lemma now follows.
\end{proof}
    \begin{figure}[htp]
    \centering
    \includegraphics[width=0.5\textwidth]{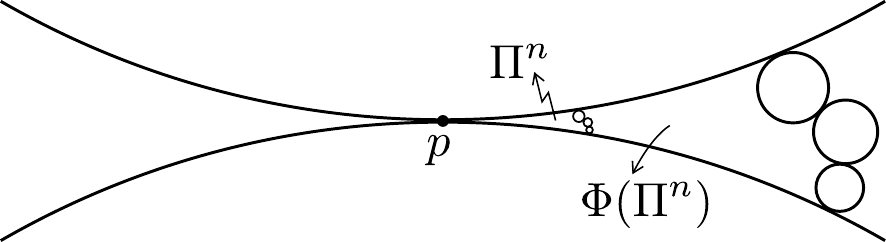}
    \caption{An illustration of the parabolic normalization.}
    \label{fig:parabolicnorm}
    \end{figure}

The previous lemma allows us to control the size of the neighborhood $\Delta^n$.
\begin{lem}\label{lem:ep}
    Let $\mathcal{P}$ be a circle packing with nerve $\mathcal{G}_i$ normalized so that $\Lambda(\mathcal{P})$ is a bounded set in $\C$..
    There exists a constant $\epsilon>0$ so that if $x\in \Lambda(\mathcal{P}) - \overline{\Delta^n}$, then $|x| \geq \epsilon^{n}$.
    Equivalently, we have
    $$
    B(0,\epsilon^{n}) \cap \Lambda(\mathcal{P}) \subseteq \overline{\Delta^n}.
    $$
\end{lem}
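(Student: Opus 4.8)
The plan is to reduce the statement to a uniform lower bound on the Euclidean (spherical) distance from $0$ to the limit points lying outside the buffer $\overline{\Delta^n}$, and then to obtain that bound by combining the previous lemma with the compactness furnished by the Relative Bounded Image Theorem. Write $s_n := \min_{v \in \partial F^n} |I_v|$ for the minimal boundary arc length of the interstice $\Pi^n$, so that $s_n \geq \alpha^n$ by the previous lemma, and set $d_n := \operatorname{dist}(0, \Lambda(\mathcal{P}) \setminus \overline{\Delta^n})$. Since $0 \in \overline{\Pi^n} \subseteq \overline{\Delta^n}$, any $x \in \Lambda(\mathcal{P}) \setminus \overline{\Delta^n}$ satisfies $|x| \geq d_n$; hence it suffices to prove $d_n \geq \epsilon^n$ for a suitable fixed $\epsilon \in (0,1)$, which is precisely the inclusion $B(0,\epsilon^n) \cap \Lambda(\mathcal{P}) \subseteq \overline{\Delta^n}$.

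The heart of the matter is the scale-invariant comparison $d_n \geq \rho\, s_n$ for a constant $\rho > 0$ independent of $n$. First I would normalize by a M\"obius map $M_n$ carrying two designated tangent circles of $\partial F^n$ to unit circles, so that $M_n(\Pi^n)$ has unit size and $M_n$ has bounded distortion near $0$ with linear scale comparable to $s_n$. The combinatorial neighborhood $D^n = F^n \cup \bigcup_j F^n(j)$ realizes one of only \emph{finitely many} combinatorial types: $F^n$ is identified by the subdivision rule with one of the polygons $P_{i_n}$, it has a bounded number of edge-neighbors, and each neighbor is again of bounded type. By the Relative Bounded Image Theorem the interstice shapes of the non-external faces of $D^n$ lie in fixed compact subsets of their Teichm\"uller spaces, while external neighbors, if present, contribute domains of macroscopic size and so only enlarge $\Delta^n$. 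Consequently the normalized data — the sub-packing of $\mathcal{P}$ carried by $\overline{D^n}$ together with the marked point $M_n(0)$ and the buffer $M_n(\Delta^n)$ — ranges over a precompact family $\mathcal{K}$.

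In every configuration of $\overline{\mathcal{K}}$ the marked point is combinatorially surrounded by the neighboring faces, so $\overline{\Delta}$ contains all limit points within some positive radius of the marked point; by compactness this radius is bounded below by a uniform $\rho_0 > 0$. Pulling back by $M_n^{-1}$ then gives $B(0, \rho s_n) \cap \Lambda(\mathcal{P}) \subseteq \overline{\Delta^n}$, i.e. $d_n \geq \rho s_n$. Combining this with $s_n \geq \alpha^n$ yields $d_n \geq \rho\, \alpha^n$, and choosing $\epsilon \in (0,1)$ with $\epsilon \leq \rho\alpha$ gives $d_n \geq \rho\alpha^n \geq \epsilon^n$ for all $n \geq 1$, shrinking $\epsilon$ if necessary to absorb the finitely many small indices, for which $d_n > 0$ in any case. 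This is the desired conclusion.

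The hard part will be the uniform lower bound $\rho_0 > 0$ on the buffer radius across $\overline{\mathcal{K}}$: one must rule out that, in some limiting configuration, limit points approach the marked point from outside $\overline{\Delta}$, which would force $\rho_0 = 0$. The delicate case is when $0$ is a tangency cusp shared by two neighboring interstices; there one must verify that $\overline{\Delta^n}$ fills an entire limit-set neighborhood of $0$ out of the two adjacent domains of discontinuity \emph{together with their reflections into the mutually tangent circles}. This is exactly the place where the combinatorial surrounding of $F^n$ by all its edge-neighbors in $D^n$, and the tangency structure of the packing, are essential, and where the argument must be carried out with care.
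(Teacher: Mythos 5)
Your overall skeleton agrees with the paper's: the paper's proof also consists of the previous lemma (all boundary arcs of interstices in the configuration have length $\geq \alpha^n$) plus the assertion that any limit point outside $\overline{\Delta^n}$ is at distance $\geq \epsilon^n$ from $0$ once $\epsilon$ is taken smaller than $\alpha$, and your bookkeeping $d_n \geq \rho s_n \geq \rho\alpha^n \geq \epsilon^n$ is exactly that reduction. The difference is how the scale-invariant comparison $d_n \geq \rho s_n$ is obtained, and here your argument has a genuine gap: the precompactness of the normalized family $\mathcal{K}$ does not follow from what you cite. The marked configuration carried by $\overline{D^n}$ is a circle realization of the induced subgraph on the vertices of $\partial F^n \cup \bigcup_j \partial F^n(j)$, and by Theorem \ref{thm:dc} its moduli include the Teichm\"uller datum of the \emph{complementary (outer) face} of that subgraph, not only the interstice shapes of $F^n$ and the $F^n(j)$. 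The Relative Bounded Image Theorem (Theorem \ref{thm:bitr}) controls a face's projection only when that face is acylindrical in the ambient graph, and the outer face of the local configuration need not be: for instance, when $F^n$ has edges on $\partial P_i$, the outer face contains the never-subdivided external face, so no connecting paths ever appear there and the corresponding modulus is genuinely unbounded (this is exactly the cylindrical mechanism in the ``only if'' direction of Theorem \ref{thm:bitr}). Concretely, two ring faces sharing only an edge with $F^n$ have a one-parameter relative sliding freedom (a parabolic fixing the two shared tangent circles) which is precisely an outer-face modulus; nothing you invoke bounds it. Your parenthetical that external neighbors ``only enlarge $\Delta^n$'' conflates the external face occurring as an edge-neighbor with the outer face of the local graph, and in any case asserts rather than proves the needed uniformity. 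Moreover, the step you flag as ``the hard part'' --- positivity of the buffer radius in limiting configurations, in particular at tangency cusps --- is exactly the content of the lemma and is left unproved, so even granting precompactness the proposal is incomplete.

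It is worth noting that the paper's proof avoids compactness over configurations entirely, and the local analysis it encodes is also the cleanest way to repair your argument. Within distance $c\alpha^n$ of $0$, every point is (i) in an open disk of the packing, which $\Lambda(\mathcal{P})$ never meets since the disks have disjoint interiors; (ii) on a boundary arc of $\Pi^n$ or of a neighboring interstice $\Pi^n(j)$, all of which have length $\geq \alpha^n$ (so the supporting circles have radii $\geq \alpha^n/2\pi$), and whose interiors lie in $\overline{\Omega^n}$ or $\overline{\Omega^n(j)}$; or (iii) in one of the two cusp sectors at a tangency point of $\partial F^n$, which belong to the interstices of the two faces adjacent along that edge --- both in $\overline{\Delta^n}$ --- since by planarity of the nerve the circles of the full packing lying in the region of a face $F$ correspond to vertices inside $F$ and hence accumulate inside $\overline{\Omega_F}$. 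This yields $B(0,c\alpha^n)\cap\Lambda(\mathcal{P})\subseteq\overline{\Delta^n}$ directly from the previous lemma together with combinatorial facts about the packing, with no normalization or limiting-configuration analysis; the uncontrolled outer-face parameter simply never enters, which is why the paper's two-line argument suffices where your compactness formulation runs into trouble.
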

\begin{proof}
    By the previous lemma, in the pinched neighborhood $\Delta^n$ (see Figure~\ref{fig:pinched_nbhd}), all circular arcs in the shaded region has length $\ge\alpha^n$. Any point $x\in\Lambda(\mathcal{P})-\overline{\Delta^n}$, lies outside the shaded region, as well as all the disks forming the interstices in the region. Since $0$ is in the center component (the darker shaded region), by choosing $\epsilon$ small enough (and smaller than $\alpha$), we can make sure that the distance from $x$ to $0$ is $\ge\epsilon^n$, as desired.
\end{proof}

\begin{figure}[htp]
    \centering
    \includegraphics[width=0.45\textwidth]{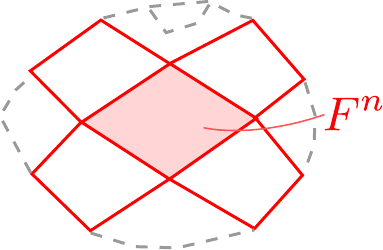}
    \includegraphics[width=0.4\textwidth]{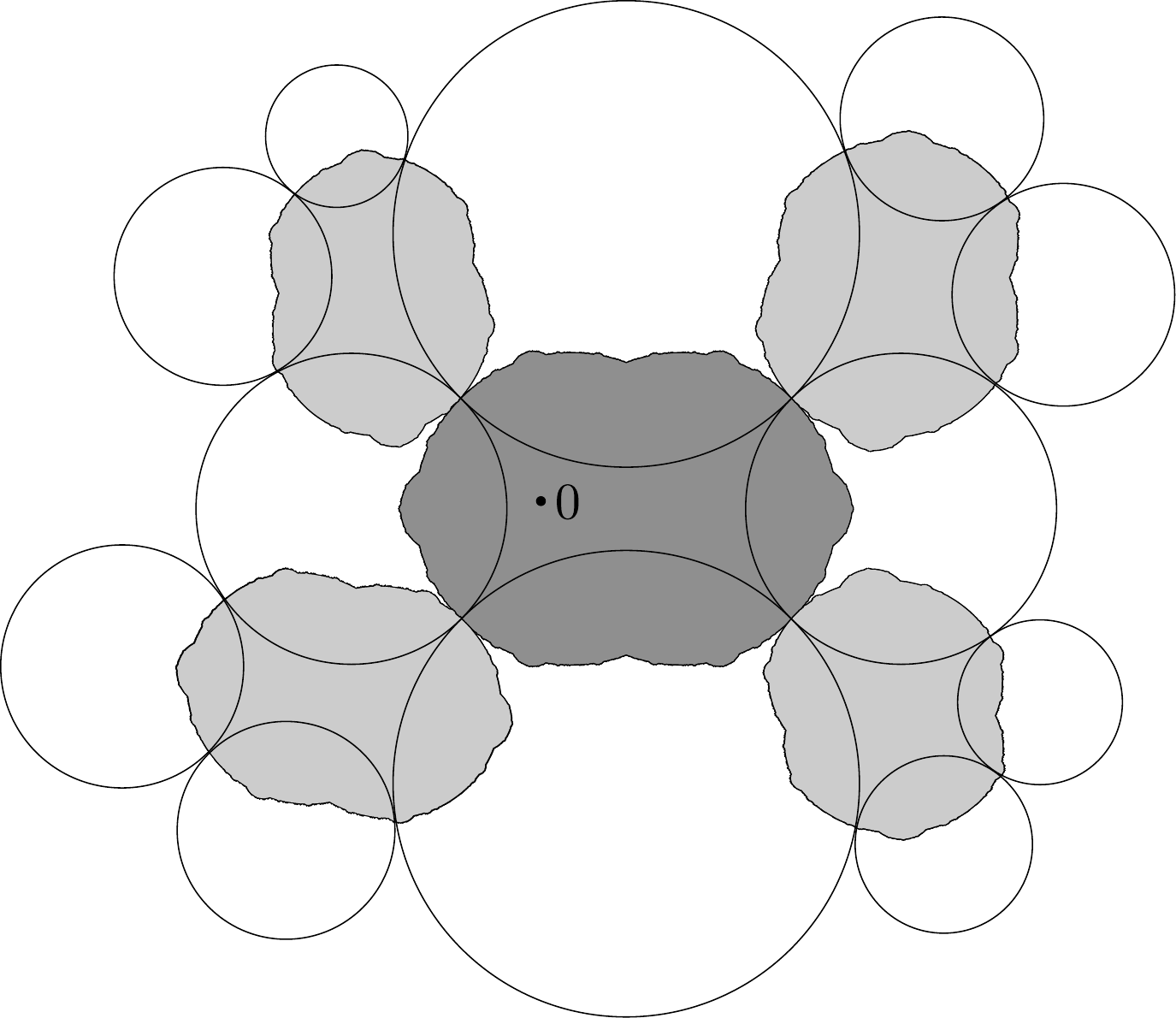}
    \caption{A pinched neighborhood in the graph with the corresponding pinched neighborhood of $0$.}
    \label{fig:pinched_nbhd}
\end{figure}

\subsection*{Local quasiconformal homeomorphism}
Let $\mathcal{P}'$ be another circle packing with nerve $\mathcal{G}_i$.
By Proposition \ref{prop:adf}, there exists a quasiconformal map $f_n: \widehat \C \longrightarrow \widehat \C$ with 
$$
f_n(\Lambda(\mathcal{P}) \cap \overline{\Omega^n}) = \Lambda(\mathcal{P}') \cap \overline{(\Omega')^n}
$$
so that $\frac{1}{2}\log K(f_n) = O(\delta^n)$.
Since $D^n$ is a finite union of adjacent faces of $\mathcal{G}^n_i$, the argument {in \S~\ref{ss:renom}} applies to $\Delta^n$ and we have
\begin{lem}
    Let $\mathcal{P}, \mathcal{P}'$ be two circle packings with nerve $\mathcal{G}_i$.
    There exists a quasiconformal map $f_n: \widehat \C \longrightarrow \widehat \C$ with 
$$
f_n(\Lambda(\mathcal{P}) \cap \overline{\Delta^n}) = \Lambda(\mathcal{P}') \cap \overline{(\Delta')^n}
$$
so that $\frac{1}{2}\log K(f_n) = O(\delta^n)$.
\end{lem}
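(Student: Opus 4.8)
The plan is to reduce to the per-face exponential estimates of Proposition \ref{prop:adf} and then produce a \emph{single global} quasiconformal map from a deformation-space identification, rather than by extending a boundary correspondence. Write $D^n=F^n\cup\bigcup_{j}F^n(j)$, where the neighbors $F^n(1),\dots,F^n(k_n)$ share an edge with the central face $F^n$. Since $F^n$ is identified by the subdivision rule with one of the finitely many polygons $P_{i_n}$, it has a bounded number of boundary edges, so $k_n$ is bounded independently of $n$; and for $n$ large every face in $D^n$ is a non-external face of $\mathcal{G}^n_i$. The corollary to Proposition \ref{prop:adf} then gives $d(\mathcal{P}_F,\mathcal{P}'_F)=O(\delta^n)$ for each such face $F$. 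One is tempted to build $f_n$ by patching the corresponding global Teichm\"uller maps $g_F$ on $\overline{\Delta^n}=\overline{\Omega^n}\cup\bigcup_j\overline{\Omega^n(j)}$; this succeeds on $\overline{\Delta^n}$ itself, but extending such a patchwork across the quasicircle $\partial\Delta^n$ by reflection or welding only yields bounded dilatation, losing the exponential bound.

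To keep the dilatation $O(\delta^n)$ globally, I would instead treat $D^n$ as a single combinatorial unit. The complex $D^n\cap\mathcal{G}_i$ is a finite union of subdivision-graph pieces glued along boundary edges, carrying one distinguished external face $F_{ext,D^n}=\overline{\widehat\C-D^n}$; the deformation theory of \S\ref{sec:teich} and the arguments of Lemma \ref{lem:dl} and Proposition \ref{prop:isom} carry over to identify $\mathcal{M}(D^n\cap\mathcal{G}_i)$ isometrically with the external Teichm\"uller space $\Teich(\Pi_{\partial D^n})$. Since $\mathcal{P}$ and $\mathcal{P}'$ differ only in their top external class (Corollary \ref{cor:ib}) and $\Pi_{\partial D^n}$ is a depth-$n$ interstice, the same compounding contraction that underlies Proposition \ref{prop:adf} (the ``zooming in'' estimate) shows the external class of $D^n$ varies by $O(\delta^n)$, that is $d(\mathcal{P}_{D^n},\mathcal{P}'_{D^n})=O(\delta^n)$. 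The isometry then produces directly a global quasiconformal map $f_n\colon\widehat\C\to\widehat\C$ with $\tfrac12\log K(f_n)=d(\mathcal{P}_{D^n},\mathcal{P}'_{D^n})=O(\delta^n)$ conjugating the reflection group of $\mathcal{P}_{D^n}$ to that of $\mathcal{P}'_{D^n}$.

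It remains to check that this $f_n$ does what is claimed on the limit set. Conjugating the reflection group of $\mathcal{P}_{D^n}$ means $f_n$ respects the entire cellular structure inside $D^n$; in particular it carries $\Lambda(\mathcal{P})\cap\overline{\Omega_{D^n}}$ onto $\Lambda(\mathcal{P}')\cap\overline{(\Omega')_{D^n}}$ and maps the sub-interstices $\Omega^n$ and each $\Omega^n(j)$ to their primed counterparts. As $\overline{\Delta^n}=\overline{\Omega^n}\cup\bigcup_j\overline{\Omega^n(j)}\subseteq\overline{\Omega_{D^n}}$, this gives $f_n(\Lambda(\mathcal{P})\cap\overline{\Delta^n})=\Lambda(\mathcal{P}')\cap\overline{(\Delta')^n}$, as required. (The patching viewpoint is consistent with this: the quasidisks $\overline{\Omega^n},\overline{\Omega^n(j)}$ meet only at the tangent points $C_v\cap C_w$ of shared edges, where every face map necessarily agrees, and each boundary quasicircle is quasiconformally removable as in the proof of Theorem \ref{thm:tm}.)

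The step I expect to be the main obstacle is precisely the exponential decay $d(\mathcal{P}_{D^n},\mathcal{P}'_{D^n})=O(\delta^n)$: this is what upgrades the individual per-face bounds into one global estimate, and it requires setting up the deformation-space identification for the multi-face complex $D^n$ and verifying that its boundary interstice really is contracted at depth $n$. The two uniform inputs that make everything quantitative are the Bounded Image Theorem (Theorem \ref{thm:bitr}), which confines the packings of $\partial F^n$, $\partial F^n(j)$ and $\partial D^n$ to fixed compact sets so that all interstices and quasicircles have uniformly bounded geometry, and Lemma \ref{lem:qc}, which controls the $C^0$-distortion of the normalized maps.
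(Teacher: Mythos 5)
Your proposal is correct and takes essentially the same route as the paper: the paper's own proof consists of invoking Proposition~\ref{prop:adf} for $\Omega^n$ and then remarking that ``since $D^n$ is a finite union of adjacent faces of $\mathcal{G}^n_i$, the argument applies to $\Delta^n$'' --- which is precisely your strategy of treating $D^n$ as a single combinatorial unit and running the bounded-image/contraction machinery on it to produce one global quasiconformal map with $\tfrac{1}{2}\log K(f_n)=d(\mathcal{P}_{D^n},\mathcal{P}'_{D^n})=O(\delta^n)$. Your expanded justification (the isometric identification of $\mathcal{M}(D^n\cap\mathcal{G}_i)$ with the external Teichm\"uller space via the arguments of Lemma~\ref{lem:dl} and Proposition~\ref{prop:isom}, together with the depth-$n$ contraction of the external class) is exactly what the paper's one-line remark leaves implicit, and your observation that patching per-face Teichm\"uller maps would only control the dilatation on $\overline{\Delta^n}$ rather than on all of $\widehat\C$ correctly explains why the union must be handled as one unit.
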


We remark that by definition, these quasiconformal homeomorphisms respect the marking, i.e., $f_n$ sends a circle $C_v$ of $\mathcal{P}$ to the corresponding circle $C'_v$ of $\mathcal{P}'$.
Thus, we have the following lemma.
\begin{lem}\label{lem:ag}
    Suppose that $z \in \Lambda(\mathcal{P}) \cap \overline{\Delta^n}$.
    Then $f_n(z) = \Psi(z)$.
\end{lem}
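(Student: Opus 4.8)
The plan is to show that on the limit set the two maps are forced to coincide, because both are \emph{marking-preserving} homeomorphisms and the tangency points—on which such a map is rigidly pinned down—are dense in $\Lambda(\mathcal{P})$.

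First I would record the rigidity of marking-preserving maps at tangency points. By hypothesis $\Psi$ carries each circle $C_v$ of $\mathcal{P}$ to the corresponding circle $C'_v$ of $\mathcal{P}'$, and the remark preceding the lemma records that $f_n$ likewise respects the marking, sending each circle $C_v$ meeting $\overline{\Delta^n}$ to $C'_v$. Now suppose $p$ is a tangency point, say $\{p\}=C_v\cap C_w$ for adjacent vertices $v,w$. Since $\Psi$ is a homeomorphism with $\Psi(C_v)=C'_v$ and $\Psi(C_w)=C'_w$, we get $\Psi(p)\in C'_v\cap C'_w=\{p'\}$, where $p'$ is the corresponding tangency point of $\mathcal{P}'$; the identical argument gives $f_n(p)=p'$. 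Hence $f_n$ and $\Psi$ agree at every tangency point lying in $\overline{\Delta^n}$.

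Next I would prove that the tangency points are dense in $\Lambda(\mathcal{P})\cap\overline{\Delta^n}$. Any $z$ in the limit set either lies in a nested sequence of faces or on a circle, and the corners of the corresponding interstices $\Pi_{F^m}$ are tangency points. For a point $z$ not on any circle these interstices shrink to $z$, so their corners converge to $z$; for a point on a circle $C_v$ the acylindricity of $\mathcal{R}$, through the well-connectedness of Proposition \ref{prop:cp}, supplies chains of tangent circles abutting $C_v$ arbitrarily close to $z$ at every level of the subdivision, so again tangency points accumulate at $z$. In either case the approximating tangency points may be taken among circles meeting $\overline{\Delta^n}$.

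Finally, since $f_n$ (being quasiconformal) and $\Psi$ (being a homeomorphism) are continuous and agree on the dense set of tangency points of the closed set $\Lambda(\mathcal{P})\cap\overline{\Delta^n}$, they coincide on all of it; in particular $f_n(z)=\Psi(z)$. The main obstacle is the density step: one must argue that the tangency points accumulate everywhere on the limit set, including along the fixed circles $C_v$ whose arcs do not shrink under further subdivision, and it is precisely the acylindrical hypothesis—via the connecting paths of Proposition \ref{prop:cp}—that guarantees enough tangent chains to force this density.
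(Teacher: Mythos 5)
Your overall strategy is exactly the one the paper intends: the paper's entire proof of this lemma is the one-sentence remark preceding it, that $f_n$ (like $\Psi$) respects the marking, and your argument --- agreement at tangency points because $C_v\cap C_w$ is pinned down by the marking, then density of tangency points in $\Lambda(\mathcal{P})\cap\overline{\Delta^n}$, then continuity --- is the natural expansion of that one-liner. Your first and third steps are correct as written (with the minor caveat, which you essentially note, that $f_n$ is only known to respect the marking of circles belonging to the sub-packings over $D^n$, which suffices since you only evaluate inside $\overline{\Delta^n}$).

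The genuine gap is in your density step for points $z$ lying on a circle $C_v$. Proposition \ref{prop:cp} produces, for a single face $F$ and a single non-adjacent pair $v,w\in\partial F$, a path through $\Int(F)$; it does not by itself supply ``chains of tangent circles abutting $C_v$ arbitrarily close to $z$''. What one can extract from acylindricity is combinatorial: if the two neighbors $a,b$ of $v$ along the nested faces $F^n$ at $z$ persisted for all $n$, then (since $\partial F^n$ cannot remain the triangle $vab$ under a nontrivial subdivision) $a,b$ are eventually non-adjacent in $\partial F^n$, and Lemma \ref{lem:sep} applied to the pair $\{a,b\}$ forces a path from $v$ to the opposite arc inside $F^n$, i.e.\ a new edge at $v$ and hence a new tangency point strictly inside the arc of $C_v$ between $C_a$ and $C_b$. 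But this only shows the nested arcs of $C_v$ containing $z$ strictly decrease; their tangency-point endpoints could a priori converge to the endpoints of a limiting arc of positive length with $z$ in its interior, in which case no tangency points accumulate at $z$ and your continuity argument says nothing about $\Psi(z)$. To close this you need the geometric input of the Bounded Image Theorem (Theorem \ref{thm:bitr}): the interstice shapes at every level range over a fixed compact family (this is the same compactness used in the lemma giving $|I_v|\ge\alpha^n$), so each combinatorial subdivision of an arc shortens it by a definite factor, and the nested arcs do shrink to $\{z\}$. The same remark applies to your sentence ``for a point $z$ not on any circle these interstices shrink to $z$'': either justify it by the same compactness, or avoid it altogether by noting that such $z$ is a limit of circles $C_{v_k}$ with diameters tending to $0$, whence $\Psi(z)=\lim C'_{v_k}=f_n(z)$ directly from the marking, with no density needed.
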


\subsection*{M\"obius approximation}
Note that we have $f_n(0) = 0$.
Let $q_n = f_n^{-1}(1)$ and $p_n= f_n^{-1}(\infty)$.
We define an approximating M\"obius transformation $M_n \in \PSL_2(\C)$ so that
$$
M_n(0) = 0, M_n(q_n) = f_n(q_n) = 1, \text{ and } M_n(p_n) = f_n(p_n) = \infty.
$$
Denote $a_n = M_n'(0)$, and $X:= \widetilde \C - \{0, 1, \infty\}$.
Let $\epsilon$ be the constant in Lemma \ref{lem:ep}.
By shrinking $\epsilon$ if necessary, we assume that $\epsilon \leq \delta$.

\begin{lem}\label{lem:R}
    We have $p_n^{-1}, q_n^{-1}, a_n, a_n^{-1} = O(\epsilon^{-n})$.
    Thus, for all $|z| < \epsilon^{3n}$,
    $$
    M_n(z) = a_n z + O(\epsilon^{2n} |a_nz|) = a_n z + O(\epsilon^{n} |z|).
    $$ 
\end{lem}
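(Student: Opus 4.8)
The plan is to write $M_n$ in closed form, read the four quantities off it, and reduce all of them to a single scale estimate for $q_n$ and $p_n$, after which the Taylor bound is pure algebra. Since $M_n(0)=0$ and $M_n(p_n)=\infty$, we have $M_n(z)=\lambda_n z/(z-p_n)$ for some $\lambda_n$, and $M_n(q_n)=1$ forces $\lambda_n=1-p_n/q_n$. A direct simplification gives
\[
M_n(z)=\frac{a_n z}{1-z/p_n},\qquad a_n=M_n'(0)=\frac{1}{q_n}-\frac{1}{p_n}.
\]
In this form the four bounds become transparent. Since $|a_n|\le |q_n|^{-1}+|p_n|^{-1}$, the bound $a_n=O(\epsilon^{-n})$ follows from the lower bounds $|q_n|,|p_n|\gtrsim\epsilon^{n}$, which are the same as $q_n^{-1},p_n^{-1}=O(\epsilon^{-n})$; so the whole statement reduces to proving $|q_n|,|p_n|\gtrsim\epsilon^{n}$ together with the non-degeneracy bound $|a_n|\gtrsim\epsilon^{n}$ needed for $a_n^{-1}=O(\epsilon^{-n})$.

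For the lower bounds on $q_n,p_n$ I would use the geometry of the shrinking neighborhoods. The reference points $1,\infty$ sit at unit scale, hence for large $n$ they lie outside the pinched neighborhood $\overline{(\Delta')^n}$; since $f_n$ fixes $0$ and carries $\overline{\Delta^n}$ onto $\overline{(\Delta')^n}$, their preimages $q_n=f_n^{-1}(1)$ and $p_n=f_n^{-1}(\infty)$ lie outside $\overline{\Delta^n}$. Combining this with Lemma \ref{lem:ep}, which forces limit-set points outside $\overline{\Delta^n}$ to have modulus $\ge\epsilon^{n}$, and with the uniform quasisymmetric control coming from $\tfrac12\log K(f_n)=O(\delta^n)$ and $f_n(0)=0$, I obtain $|q_n|,|p_n|\gtrsim\epsilon^{n}$. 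For the lower bound on $|a_n|$ I would test $M_n$ against a point $w_n\in\Lambda(\mathcal{P})\cap\partial\Omega^n$ with $|w_n|\asymp\epsilon^{n}$ (whose existence at this scale follows from the preceding length estimate and the Bounded Image Theorem): by Lemma \ref{lem:ag}, $f_n(w_n)=\Psi(w_n)$ again lies at scale $\asymp\epsilon^{n}$, and Lemma \ref{lem:qc} gives $d_X(M_n(w_n),f_n(w_n))\le\log K(f_n)=O(\delta^n)$. Converting this $d_X$-estimate to a Euclidean one near the puncture $0$ (where $d_X$ is comparable to $|dw|/(|w|\log(1/|w|))$) shows $|M_n(w_n)|\asymp\epsilon^{n}$; since $M_n(w_n)=a_n w_n/(1-w_n/p_n)$, this pins $|a_n|$ between two constants, in particular $|a_n|\gtrsim\epsilon^{n}$.

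Finally, the Taylor step is algebra. For $|z|<\epsilon^{3n}$ the bound $|p_n|\gtrsim\epsilon^{n}$ gives $|z/p_n|\lesssim\epsilon^{2n}<\tfrac12$, so
\[
M_n(z)-a_n z=a_n z\cdot\frac{z/p_n}{1-z/p_n},\qquad |M_n(z)-a_n z|\le 2|a_n z|\,\frac{|z|}{|p_n|}=O(\epsilon^{2n}|a_n z|),
\]
and substituting $a_n=O(\epsilon^{-n})$ rewrites the right-hand side as $O(\epsilon^{n}|z|)$, as claimed. I expect the genuine difficulty to be the two-sided control of $a_n$, especially the lower bound $a_n^{-1}=O(\epsilon^{-n})$: it concerns the behaviour of $f_n$ at unit scale, where $f_n$ need not agree with the fixed map $\Psi$, and it is precisely here that the uniform dilatation bound $\tfrac12\log K(f_n)=O(\delta^n)$ and the distortion estimate of Lemma \ref{lem:qc} are indispensable.
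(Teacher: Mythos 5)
Your reduction to the closed form $M_n(z)=a_nz/(1-z/p_n)$ with $a_n=1/q_n-1/p_n$, and the final Taylor step, are exactly as in the paper; the difficulty is indeed concentrated in the scale bounds, but both of your arguments for those bounds have genuine gaps. First, the bound $|p_n|,|q_n|\gtrsim\epsilon^n$: you cannot get it from Lemma \ref{lem:ep}, because that lemma only asserts $B(0,\epsilon^n)\cap\Lambda(\mathcal{P})\subseteq\overline{\Delta^n}$, and $p_n=f_n^{-1}(\infty)$, $q_n=f_n^{-1}(1)$ have no reason to lie in the limit set. The complement of the pinched neighborhood $\overline{\Delta^n}$ contains non-limit-set points (e.g.\ interiors of packing disks meeting at the pinch points) arbitrarily close to $0$, so ``outside $\overline{\Delta^n}$'' yields no lower bound on the modulus. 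Worse, $f_n$ is only pinned down on the sub-packing (Lemma \ref{lem:ag}); at the points $1,\infty$ it is an arbitrary quasiconformal extension depending on $n$, and since dilatation is M\"obius-invariant, the bound $\tfrac12\log K(f_n)=O(\delta^n)$ alone gives no control on where $p_n,q_n$ sit without a three-point normalization at the correct scale. Second, your lower bound on $|a_n|$ is circular: you assume $|\Psi(w_n)|\asymp\epsilon^n$, i.e.\ that $\Psi$ preserves scales at the rate $\epsilon^n$, which is essentially the asymptotic conformality being proved. A priori you only know $\Psi(w_n)\in\overline{(\Delta')^n}$, giving upper and lower bounds at two \emph{different} geometric rates (the decay rate of $\operatorname{diam}(\Delta')^n$ versus the constant of Lemma \ref{lem:ep} for $\mathcal{P}'$), so $|a_n|$ is not pinned between constants and $a_n^{-1}=O(\epsilon^{-n})$ does not follow with the stated $\epsilon$.

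The paper's proof supplies precisely the missing normalization by comparing $f_n$ with $f_m$ for a \emph{fixed} large $m$: by Lemmas \ref{lem:ep} and \ref{lem:ag}, the composition $f_n^{-1}\circ f_m$ fixes the three limit-set points $0,\alpha_n,\beta_n$ with $|\alpha_n|=\epsilon^n/2$, $|\beta_n|=\epsilon^n$, and has dilatation $1+O(\delta^m)$. Applying Lemma \ref{lem:qc} in cross-ratio coordinates then forces $\frac{p_n-\beta_n}{p_n-\alpha_n}$ to be close to $1$ (since $\frac{p_m-\beta_n}{p_m-\alpha_n}\to1$ for the fixed $m$), which gives $|p_n|\geq\epsilon^n$, and similarly for $q_n$; and a second cross-ratio comparison, combined with the lower bound $\frac{|dx|}{|x||\log|Bx||}$ for the hyperbolic metric of $X$ near the puncture (which makes $d_X(\epsilon^n,\epsilon^{2n})$ bounded below), rules out $|a_n|=\frac{|p_n-q_n|}{|p_nq_n|}$ decaying faster than $\epsilon^n$. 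If you want to salvage your outline, you must replace both of your direct geometric steps with comparisons of this type; as written, neither the location of $p_n,q_n$ nor the non-degeneracy of $a_n$ is established.
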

\begin{proof}
    Let $n \geq m$, and consider the map $f_{n}^{-1}\circ f_m$.
    By Lemma \ref{lem:ep}, we can choose points $\alpha_{n}, \beta_{n} \in \Lambda(\mathcal{P}) \cap \overline{\Delta^{n}}$ with $|\alpha_n| = \frac{\epsilon^n}{2}$ and $|\beta_n| = \epsilon^n$.
    By Lemma \ref{lem:ag}, we have that $f_{n}^{-1}\circ f_m$ fixes $0, \alpha_{n}, \beta_{n}$.
    Consider the cross-ratio map $R(z)=( \alpha_n, \beta_n; 0, z) = \frac{\alpha_n}{\beta_n} \cdot \frac{z-\beta_n}{z-\alpha_n}$.  
    By applying Lemma \ref{lem:qc} to $R\circ f_{n}^{-1}\circ f_m$ and $R$, we get
    $$
     d_{X}(\frac{\alpha_n}{\beta_n} \cdot \frac{p_m-\beta_n}{p_m-\alpha_n}, \frac{\alpha_n}{\beta_n} \cdot \frac{p_n-\beta_n}{p_n-\alpha_n}) = d_{X}(R(p_m), R\circ f_{n}^{-1}\circ f_m(p_m)) = O(\delta^m).
    $$
    Note that $\frac{p_m-\beta_n}{p_m-\alpha_n} \to 1$ as $n\to\infty$, and $\frac{|\alpha_n|}{|\beta_n|} = \frac{1}{2}$.
    By fixing $m$ large enough, we conclude that $\frac{\alpha_n}{\beta_n} \cdot \frac{p_n-\beta_n}{p_n-\alpha_n}$ is close to $\frac{\alpha_n}{\beta_n}$ for all large $n$.
    Thus, $\frac{p_n-\beta_n}{p_n-\alpha_n}$ is close to $1$ for all sufficiently large $n$.
    Therefore, $|p_n| \geq \epsilon^n$ for all sufficiently large $n$.
    Similarly, we have $|q_n| \geq \epsilon^n$ for all sufficiently large $n$.

    A simple computation shows that $a_n = \frac{1}{q_n}-\frac{1}{p_n}$. Thus, $a_n = O(\epsilon^{-n})$.
    Note that the map $f_n^{-1}\circ f_m$ sends $0, q_m, \alpha_n, p_m$ to $0, q_n, \alpha_n, p_n$.
    Thus, 
    $$
    d_X((0, q_m; \alpha_n, p_m), (0, q_n; \alpha_n, p_n)) = d_X(\frac{-\alpha_n}{q_m-\alpha_n}\cdot \frac{p_m-q_m}{p_m}, \frac{-\alpha_n}{q_n-\alpha_n}\cdot \frac{p_n-q_n}{p_n}) =  O(\delta^m).
    $$
    We claim that $\frac{|p_n-q_n|}{|p_nq_n|} \geq A\epsilon^n$ for some $A>0$ and all large $n$.
    Otherwise, after passing to a subsequence, we have $|\frac{-\alpha_n}{q_m-\alpha_n}\cdot \frac{p_m-q_m}{p_m}|$ behaves like $\epsilon^n$, and $|\frac{-\alpha_n}{q_n-\alpha_n}\cdot \frac{p_n-q_n}{p_n}| = O(\epsilon^{2n})$.
    Since the hyperbolic metric on $X$ near $0$ is bounded below by $\frac{|dx|}{|x||\log|Bx||}$ for some $B> 0$ (see e.g.~\cite[Lemma~2.5]{Hem79}), we have $d_X(\epsilon^n, \epsilon^{2n})$ is bounded below by a constant.
    This is a contradiction.
    Thus, we have $a_n^{-1} = \frac{p_nq_n}{p_n-q_n} = O(\epsilon^{-n})$.
    
    Note $M_n(z) = \frac{a_n z}{1-\frac{z}{p_n}}$.
    Thus, if $|z|< \epsilon^{3n}$, we have 
    $$
    |M_n(z)-a_n z| = \frac{|\frac{a_n z^2}{p_n}|}{|1-\frac{z}{p_n}|} = |a_nz| \frac{|\frac{z}{p_n}|}{|1-\frac{z}{p_n}|} = O(\epsilon^{2n}\cdot |a_nz|).
    $$
    Since $|a_n| = O(\epsilon^{-n})$, $\epsilon^{2n} |a_nz| = O(\epsilon^{n} |z|)$ and the lemma follows.
\end{proof}

\begin{lem}\label{lem:est}
    For all $\epsilon^{3n+3} \leq |z|\leq \epsilon^{3n}$, we have
    $$
    f_n(z) = M_n(z) + O(\delta^{n/2}|z|) = a_n z + O(\delta^{n/2}|z|).
    $$
\end{lem}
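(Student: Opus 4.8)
The plan is to prove the two displayed equalities separately. The second one, $M_n(z) = a_n z + O(\delta^{n/2}|z|)$, is immediate from Lemma \ref{lem:R}: for $|z| < \epsilon^{3n}$ that lemma gives $M_n(z) = a_n z + O(\epsilon^n|z|)$, and since we have arranged $\epsilon \le \delta < 1$ we have $\epsilon^n \le \delta^n \le \delta^{n/2}$, so the error is already of the required form. Thus all the content lies in the first equality $f_n(z) = M_n(z) + O(\delta^{n/2}|z|)$ on the annulus $\epsilon^{3n+3}\le |z|\le \epsilon^{3n}$.

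To compare $f_n$ with its osculating Möbius map, I would invoke Lemma \ref{lem:qc}. The map $f_n$ sends the triple $0,q_n,p_n$ to $0,1,\infty$, and $M_n$ is by definition the Möbius map doing the same; applying the general form of Lemma \ref{lem:qc} to $f_n$ with $M = M_n$ gives, for every $t\in\widehat\C-\{0,q_n,p_n\}$,
\[
d_X\bigl(M_n(t), f_n(t)\bigr) \le \log K(f_n) = O(\delta^n),
\]
where $X = \widehat\C - \{0,1,\infty\}$. On the given annulus, Lemma \ref{lem:R} shows $|M_n(z)| = |a_n z|(1+o(1))$ is a fixed power of $\epsilon$ between $\epsilon^{4n+3}$ and $\epsilon^{2n}$, so both $M_n(z)$ and $f_n(z)$ sit deep inside the cusp of $X$ at $0$, at modulus $\asymp |a_nz|$ with cusp depth $\log\tfrac1{|M_n(z)|} \asymp n$.

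The next step converts this hyperbolic bound into a Euclidean one. Using the standard lower bound $\rho_X(w)\gtrsim \frac{1}{|w|\log(1/|w|)}$ on the hyperbolic density near a puncture (the estimate from \cite[Lemma~2.5]{Hem79} already used in the proof of Lemma \ref{lem:R}), working in the logarithmic coordinate $u = \log w$ turns the cusp into a half-plane of depth $\asymp n$, so the displayed distance integrates to $\bigl|\log\bigl(f_n(z)/M_n(z)\bigr)\bigr| = O(\delta^n n)$, i.e.
\[
f_n(z) = M_n(z)\bigl(1 + O(\delta^n n)\bigr), \qquad\text{hence}\qquad f_n(z)-M_n(z) = O\!\bigl(\delta^n\, n\, |a_n|\,|z|\bigr),
\]
the last step using $|M_n(z)| = |a_n z|(1+o(1))$. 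Finally one must see that $\delta^n\, n\, |a_n| = O(\delta^{n/2})$, after which the first equality follows, and combining with the second equality finishes the lemma.

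The main obstacle is precisely this last estimate: the relative error $O(\delta^n n)$ is clean, but passing to an absolute error measured against $|z|$ reintroduces the scale factor $|a_n|$, since $M_n$ dilates by $a_n$ near $0$. Controlling the product $\delta^n n |a_n|$ by $\delta^{n/2}$ is the delicate point, and it is exactly here that the two-sided bounds on $a_n,p_n,q_n$ from Lemma \ref{lem:R}, the choice of the working scale $|z|\asymp\epsilon^{3n}$ well inside the radius $\epsilon^{3n}$ where $M_n$ is affine to leading order, and the cusp depth $\log\tfrac1{|M_n(z)|}\asymp n$ (so that the logarithmic factor is genuinely $O(n)$ and can be absorbed into the exponential by slightly enlarging $\delta$) all have to be combined with care.
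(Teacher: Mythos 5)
Your overall strategy coincides with the paper's: the second equality is read off from Lemma \ref{lem:R}, and the first is obtained by converting the hyperbolic bound $d_X(f_n(z),M_n(z))=O(\delta^n)$ from Lemma \ref{lem:qc} into a Euclidean estimate using the cusp density bound $\rho_X(w)\gtrsim 1/(|w|\log(1/|w|))$ at depth $O(n)$; your multiplicative form $f_n(z)=M_n(z)\bigl(1+O(n\delta^n)\bigr)$ is equivalent to the paper's direct integration along the hyperbolic geodesic, which the paper first traps in the annulus $\tfrac12|M_n(z)|\le|x|\le 2|M_n(z)|$ (a locality point your half-plane picture handles implicitly). But there is a genuine gap at exactly the step you flag and defer: you need $n\,\delta^n\,|a_n|=O(\delta^{n/2})$, and you propose to extract it from ``the two-sided bounds on $a_n,p_n,q_n$ from Lemma \ref{lem:R}'' combined with the choice of working scale and cusp depth. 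This cannot work: Lemma \ref{lem:R} only gives $|a_n|,|a_n|^{-1}=O(\epsilon^{-n})$, and since $\epsilon\le\delta$ the resulting bound $n\,\delta^n\,|a_n|=O\bigl(n(\delta/\epsilon)^n\bigr)$ is at least of order $n$ and may grow exponentially; no choice of annulus $\epsilon^{3n+3}\le|z|\le\epsilon^{3n}$ and no absorption of the factor $n$ into $\delta^{n/2}$ repairs an exponentially weak bound on $|a_n|$.

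The missing ingredient is the claim, proved first in the paper's argument, that $a_n=O(1)$. This is not a consequence of Lemma \ref{lem:R}; it requires the compatibility of the approximating maps at \emph{consecutive} scales. By Lemma \ref{lem:ag}, $f_n$ and $f_{n-1}$ both agree with $\Psi$ on $\Lambda(\mathcal{P})\cap\overline{\Delta^n}$, so at a limit-set point $z_n$ with $|z_n|=\epsilon^{3n}$ one has $f_n(z_n)=f_{n-1}(z_n)$. Comparing each of $f_n(z_n)$, $f_{n-1}(z_n)$ with $a_nz_n$, $a_{n-1}z_n$ via Lemmas \ref{lem:qc} and \ref{lem:R} gives $d_X(a_nz_n,a_{n-1}z_n)=O(\delta^n)$, and the same cusp density estimate at depth $O(n)$ then yields $\bigl|\log\bigl(|a_n|/|a_{n-1}|\bigr)\bigr|=O(n\delta^n)=O(\delta^{n/2})$; summing this geometric series shows $|a_n|$ is bounded. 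With $a_n=O(1)$ in hand your computation closes, since then $|f_n(z)-M_n(z)|=O(n\delta^n|M_n(z)|)=O(n\delta^n|z|)=O(\delta^{n/2}|z|)$. Without this bootstrapping across scales the argument does not go through.
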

\begin{proof}
    We first claim that $a_n = O(1)$.
    \begin{proof}[Proof of the claim]
    Let $z_n \in \Lambda(\mathcal{P})$ with $|z_n|= \epsilon^{3n}$.
    Then by Lemma \ref{lem:ag}, $f_{n-1}(z_n) = f_n(z_n)$.
    By Lemma \ref{lem:R}, $M_n(z_n) = a_n z_n + O(\epsilon^{2n} |a_nz_n|)$, so 
    $$
    d_X(M_n(z_n), a_nz_n) = O(\epsilon^{2n}) = O(\delta^n).
    $$
    Similarly, we have that $d_X(M_{n-1}(z_n), a_{n-1}z_n) = O(\delta^n)$.
    By Lemma \ref{lem:qc},
    $d_X(f_n(z_n), M_n(z_n)) = O(\delta^n)$.
    Similarly, we have $d_X(f_{n-1}(z_n), M_{n-1}(z_n)) = O(\delta^n)$.
    Hence, we have 
    $$
    d_X(a_nz_n, a_{n-1}z_n) = O(\delta^n).
    $$

    Note that near $0$, the hyperbolic metric on $X$ is bounded below by $\frac{|dx|}{|x||\log|Ax||}$ for some $A> 0$.
    By Lemma \ref{lem:R}, $|\log|a_nz_n|| = O(n)$ and $|\log|a_{n-1}z_n|| = O(n)$, so the hyperbolic metric between the interval bounded by $|a_nz_n|$ and $|a_{n-1}z_n|$ is bounded below by $\frac{|dx|}{B n|x|}$ for some $B>0$.
    Thus
    $$
    O(\delta^n) = d_X(a_nz_n, a_{n-1}z_n) \geq d_X(|a_nz_n|, |a_{n-1}z_n|) \geq \frac{|\log\frac{|a_n|}{|a_{n-1}|}|}{Bn}.
    $$
    Hence, $|\log\frac{|a_n|}{|a_{n-1}|}| = O(n\delta^n) = O(\delta^{n/2})$.
    Therefore, $a_n = O(1)$.
    \end{proof}   
    
    By Lemma \ref{lem:R}, $|M_n(z)| \sim |a_nz|$, so $|\log |M_n(z)|| = O(n)$ for all $\epsilon^{3n+3} \leq |z|\leq \epsilon^{3n}$.
    Thus the hyperbolic metric on the interval between $|M_n(z)|/2$ and $2|M_n(z)|$ is bounded below by $\frac{|dx|}{B n|x|}$ for some $B>0$.

    Let $\gamma(t), t\in I$ be the hyperbolic geodesic connecting $M_n(z)$ and $f_n(z)$.
    We claim that $1/2\leq |\gamma(t)|/|M_n(z)|\leq 2$ for all large $n$ and $t\in I$.
    Without loss of generality, suppose for contradiction that $|\gamma(t)| \geq 2 |M_n(z)|$.
    Then 
    $$
    d_X(\gamma(t), M_n(z)) \geq d_X(|\gamma(t)|, |M_n(z)|) \geq d_X(2|M_n(z)|, |M_n(z)|) \geq \frac{\log 2}{Bn}.
    $$
    Thus, $\log 2 = O(n \delta^n)$ which is a contradiction.

    Let $|\gamma|$ denote the Euclidean length of $\gamma$.
    Then $|\gamma| \geq |f_n(z)-M_n(z)|$.
    Let $\rho_X(x)|dx|$ be the hyperbolic metric on $X$.
    Note that $\rho_X(x)|dx| \geq \frac{|dx|}{2Bn|M_n(z)|}$ for all $x\in \gamma$.
    Therefore, we have
    $$
    d_X(f_n(z), M_n(z)) = \int_{\gamma} \rho_X(x)|dx| \geq |\gamma| \cdot \frac{1}{2Bn|M_n(z)|} \geq \frac{|f_n(z) - M_n(z)|}{2Bn|M_n(z)|}.
    $$
    By Lemma \ref{lem:qc}, $d_X(f_n(z), M_n(z)) = O(\delta^n)$.
    Thus, 
    $$|f_n(z) - M_n(z)| = O(n\delta^n|M_n(z)|) = O(\delta^{n/2}|z|),$$
    where the last equality holds as $a_n = O(1)$.
    By Lemma \ref{lem:R}, $M_n(z) = a_nz +O(\epsilon^{n} |z|)$.
    Since $\epsilon \leq \delta$, we have $f_n(z) = a_n z + O(\delta^{n/2}|z|)$.
\end{proof}

\begin{lem}\label{lem:der}
    The derivative $a_n = M_n'(0)$ satisfies $|a_n - a_{n-1}| = O(\delta^{n/2})$.
    In particular, the limit $a = \lim_n a_n$ exists and $|a - a_n| = O(\delta^{n/2})$.
\end{lem}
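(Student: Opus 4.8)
The plan is to play the two successive Möbius approximations $M_n$ and $M_{n-1}$ against each other at a single common scale, exploiting that $f_n$ and $f_{n-1}$ agree on the limit set near $0$. Concretely, I would fix $n$ and choose a point $z_n \in \Lambda(\mathcal{P})$ with $|z_n| = \epsilon^{3n}$, exactly as in the proof of Lemma \ref{lem:est}. Since $|z_n| = \epsilon^{3n} \le \epsilon^{n}$, Lemma \ref{lem:ep} places $z_n$ in $\overline{\Delta^n} \subseteq \overline{\Delta^{n-1}}$, so applying Lemma \ref{lem:ag} at both levels gives $f_n(z_n) = \Psi(z_n) = f_{n-1}(z_n)$.

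The crucial observation is that the radius $\epsilon^{3n}$ is simultaneously the upper endpoint of the level-$n$ window $[\epsilon^{3n+3},\epsilon^{3n}]$ and the lower endpoint of the level-$(n-1)$ window $[\epsilon^{3n},\epsilon^{3n-3}]$ appearing in Lemma \ref{lem:est}. Applying that lemma at level $n$ and at level $n-1$ to the \emph{same} point $z_n$ therefore yields $f_n(z_n) = a_n z_n + O(\delta^{n/2}|z_n|)$ and $f_{n-1}(z_n) = a_{n-1} z_n + O(\delta^{(n-1)/2}|z_n|)$. Subtracting these and using $f_n(z_n) = f_{n-1}(z_n)$ gives $(a_n - a_{n-1})z_n = O(\delta^{(n-1)/2}|z_n|)$; dividing by $|z_n| \neq 0$ and absorbing the fixed factor $\delta^{-1/2}$ into the implied constant produces the claimed bound $|a_n - a_{n-1}| = O(\delta^{n/2})$. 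Note that this clean subtraction sidesteps any need for the auxiliary estimate $a_n = O(1)$.

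Once this difference estimate is in hand, the existence of the limit is immediate. Since $\delta < 1$, the bound $|a_n - a_{n-1}| = O(\delta^{n/2})$ makes $(a_n)$ a Cauchy sequence, so $a := \lim_n a_n$ exists; summing the geometric tail $\sum_{m>n} O(\delta^{m/2})$ then yields $|a - a_n| = O(\delta^{n/2})$, as desired.

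I expect the main obstacle to be the tightness of the overlap: the two validity windows of Lemma \ref{lem:est} meet only at the single scale $|z| = \epsilon^{3n}$, so the argument forces me to evaluate at a limit-set point lying \emph{exactly} at that radius, and I must invoke (as in the proof of Lemma \ref{lem:est}) that $\Lambda(\mathcal{P})$ supplies such a point. The other delicate ingredient is already packaged inside Lemma \ref{lem:est}, whose error term $O(\delta^{n/2}|z|)$ rather than merely $O(\delta^n|z|)$ is precisely what guarantees that, after dividing out $z_n$, the resulting exponent in $|a_n - a_{n-1}| = O(\delta^{n/2})$ remains summable and hence forces convergence of $(a_n)$.
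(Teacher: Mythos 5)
Your proposal is correct and follows essentially the same route as the paper's own proof: the paper likewise evaluates at a point $z_n \in \Lambda(\mathcal{P})$ with $|z_n| = \epsilon^{3n}$, uses Lemma \ref{lem:ep} and Lemma \ref{lem:ag} to get $f_n(z_n) = \Psi(z_n) = f_{n-1}(z_n)$, applies Lemma \ref{lem:est} at the two levels where that radius lies in both validity windows, and subtracts to obtain $|a_n - a_{n-1}| = O(\delta^{n/2})$, with the Cauchy/geometric-tail argument giving the limit. Your side remark is also accurate: the bound $a_n = O(1)$ is already packaged inside the proof of Lemma \ref{lem:est} and is not invoked separately here.
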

\begin{proof}
    Choose a sequence $z_n \in \Lambda(\mathcal{P})$ so that $|z_n| = \epsilon^{3n}$.
    By Lemma \ref{lem:ep}, we have $z_n \in \overline{\Delta^n}$.
    Therefore, by Lemma \ref{lem:ag}, $\Psi(z_n)=f_n(z_n) = f_{n-1}(z_n)$.
    Hence, by Lemma \ref{lem:est},
    $$
    0 = \frac{f_n(z_n)-f_{n-1}(z_n)}{z_n} = (a_n-a_{n-1}) + O(\delta^{n/2}).
    $$
    Therefore, $|a_n - a_{n-1}|  = O(\delta^{n/2})$.
\end{proof}

\subsection*{Proof of asymptotic conformality}
We are ready to prove the regularity of local symmetries.
\begin{proof}[Proof of Theorem \ref{thm:ac}]
    Let $z \in \Lambda(\mathcal{P})$ with $\epsilon^{3n+3} \leq |z|\leq \epsilon^{3n}$.
    By Lemma \ref{lem:ep}, $z\in \overline{\Delta^{3n}} \subseteq \overline{\Delta^{n}}$.

    Let $\alpha = \frac{\log \delta}{6\log \epsilon} > 0$.
    Then $\delta^{n/2} = \epsilon^{3n\alpha} = O(|z|^\alpha)$.
    By Lemma \ref{lem:ag}, $\Psi(z) = f_n(z)$.
    By Lemma \ref{lem:R}, Lemma \ref{lem:est} and Lemma \ref{lem:der}, we have
    \begin{align*}
        \Psi(z) = f_n(z) &= M_n(z) + O(\delta^{n/2}|z|)\\
        &= a_nz + O(\delta^{n/2}|z|)\\
        &= a z + O(\delta^{n/2}|z|)\\
        &= a z+O(|z|^{1+\alpha}).
    \end{align*}
    This holds for any $n$, we conclude that $\Psi|_{\Lambda(\mathcal{P})}$ is $C^{1+\alpha}$-conformal at $0$.
\end{proof}

\subsection{Combinatorially deep points}\label{subsec:cdp}
Let $\mathcal{P}$ be a circle packing with nerve $\mathcal{G}_i$.
Let $x\in \Lambda(\mathcal{P}) \cap \overline{\Pi(\mathcal{P})}$, and let $F^n$ be a nested sequence of faces associated to $x$.
We say $x$ is a {\em combinatorially deep point} if for each $n$, $F^{n+1} \cap \partial F^n$ is either empty or a single vertex.
We remark that for combinatorially deep points, the corresponding nested sequence is unique.
For combinatorially deep points, we can improve the result of asymptotic conformality.
\begin{theorem}\label{thm:tmac}
     Let $\mathcal{P}$ and $\mathcal{P}'$ be two circle packings with nerve $\mathcal{G}_i$, and $\Psi$ be a Teichm\"uller mapping between them.
    Then $\Psi$ is $C^{1+\alpha}$-conformal at all combinatorially deep points.
\end{theorem}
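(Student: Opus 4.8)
The plan is to run the argument of Theorem~\ref{thm:ac} again, exploiting the two features special to this statement — that $\Psi$ is a Teichm\"uller mapping and that $x$ is combinatorially deep — in order to promote the expansion $\Psi(z)=az+O(|z|^{1+\alpha})$ from $\Lambda(\mathcal{P})$ to a full neighborhood of $x$. After normalizing by M\"obius maps so that $x=\Psi(x)=0$ and $\overline{\Pi(\mathcal{P})},\overline{\Pi(\mathcal{P}')}$ are bounded subsets of $\C$, it suffices to produce $a=\Psi'(0)\neq0$ with $\Psi(z)=az+O(|z|^{1+\alpha})$ for every $z$ in a round ball $B(0,r)$. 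I would keep the (now unique) nested faces $F^n$ associated to $x$, the pinched neighborhoods $\Delta^n=\Omega^n\cup\bigcup_j\Omega^n(j)$, the global maps $f_n$ with $\tfrac12\log K(f_n)=O(\delta^n)$, and the M\"obius approximants $M_n$ from the proof of Theorem~\ref{thm:ac}.

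The first new input is analytic and upgrades Lemma~\ref{lem:ag}. Each piece $\Omega^n,\Omega^n(j)$ of $\Delta^n$ is the domain $\Omega_F$ of a non-external face $F$ of $\mathcal{G}^n_i$, so by the corollary following the definition of a Teichm\"uller mapping, $\tfrac12\log K(\Psi|_{\Omega_F})=O(\delta^n)$; thus $f_n$ and $\Psi$ both restrict on each such $\Omega_F$ to the extremal (Teichm\"uller) map of the sub-packing $\mathcal{P}_F$, and since that extremal map is unique away from the interiors of the disks, $f_n=\Psi$ not merely on $\Lambda(\mathcal{P})\cap\overline{\Delta^n}$ but on all of $\overline{\Pi(\mathcal{P})}\cap\overline{\Delta^n}$, i.e. on the interstices as well. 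The crucial observation is then that the proofs of Lemmas~\ref{lem:R}, \ref{lem:est} and \ref{lem:der} never use $z\in\Lambda(\mathcal{P})$: they use only the distortion estimate of Lemma~\ref{lem:qc} for $f_n$ together with the size bounds of Lemma~\ref{lem:ep}. Hence the same computation yields $\Psi(z)=a_nz+O(\delta^{n/2}|z|)$, and $a_n\to a$ with $|a-a_n|=O(\delta^{n/2})$, for \emph{every} $z\in\overline{\Pi(\mathcal{P})}$ in the annulus $\epsilon^{3n+3}\le|z|\le\epsilon^{3n}$, giving $\Psi(z)=az+O(|z|^{1+\alpha})$ with $\alpha=\tfrac{\log\delta}{6\log\epsilon}$ throughout $\overline{\Pi(\mathcal{P})}$ near $0$.

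The second new input is geometric and accounts for the remaining points of $B(0,r)$, namely those in the interiors of the complementary disks, where a Teichm\"uller mapping is only bounded-quasiconformal. Here I would first show that combinatorial deepness forces $0$ to be a \emph{deep point} of $\Lambda(\mathcal{P})$: because $F^{n+1}\cap\partial F^n$ is at most a single vertex, the interstice $\Pi^{n+1}$ lies at definite combinatorial depth inside $F^n$, and since the Bounded Image Theorem (Theorem~\ref{thm:bitr}) confines the finitely many interstice shapes to compact families, one obtains a uniform $\gamma>0$ with $\dist(z,\Lambda(\mathcal{P}))=O(|z|^{1+\gamma})$ for $z$ near $0$. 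Consequently any complementary disk $D$ meeting $B(0,r)$ has $\diam D=O(|z|^{1+\gamma})$ for $z\in D$; choosing $\zeta\in\partial D\subseteq\Lambda(\mathcal{P})$ nearest to $z$ and using that $\Psi$ is globally $K$-quasiconformal gives $|\Psi(z)-\Psi(\zeta)|=O(|z|^{1+\gamma})$, while the expansion of the previous paragraph controls $\Psi(\zeta)$. Replacing $\alpha$ by $\min\{\gamma,\tfrac{\log\delta}{6\log\epsilon}\}$ then yields $\Psi(z)=az+O(|z|^{1+\alpha})$ for all $z\in B(0,r)$, which is the desired $C^{1+\alpha}$-conformality at $x$.

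The step I expect to be the main obstacle is converting combinatorial deepness into the quantitative geometric deep-point estimate $\dist(z,\Lambda(\mathcal{P}))=O(|z|^{1+\gamma})$, and then dovetailing it with the two regimes above, since the Teichm\"uller control that handles the interstices and the bare quasiconformal bound that handles the disk interiors must be calibrated to the \emph{same} H\"older exponent. A secondary subtlety is justifying that $f_n$ and $\Psi$ genuinely agree on the interstices (and not only on the limit set), for which one needs the uniqueness of the extremal map off the disks.
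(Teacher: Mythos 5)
Your first regime is close in spirit to the paper's argument, but your second regime rests on a claim that is actually false. At a combinatorially deep point $x=0$, the limit set $\Lambda(\mathcal{P})$ is \emph{porous}, not deep in McMullen's sense: at every level $n$ the disks $D_w$, $w\in\partial F^n$, have diameter comparable to $\diam \Omega^n$ and lie at distance comparable to $\diam\Omega^n$ from $0$ (their shapes range over a compact family by the Bounded Image Theorem), and since $\Lambda(\mathcal{P})\cap \Int D_w=\emptyset$, a point $z\in D_w$ at depth comparable to $\diam D_w$ satisfies $\dist(z,\Lambda(\mathcal{P}))\asymp |z|$. So no estimate of the form $\dist(z,\Lambda(\mathcal{P}))=O(|z|^{1+\gamma})$ can hold, and your transfer step collapses: with complementary disks of size $\asymp|z|$, a merely $K$-quasiconformal bound on $\Psi$ allows errors of size $\asymp|z|$ inside the disks, not $O(|z|^{1+\alpha})$. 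This is exactly why the theorem fails for general quasiconformal homeomorphisms and why the Teichm\"uller mapping hypothesis must be used \emph{on the disk interiors themselves}, not only on $\overline{\Pi(\mathcal{P})}$.

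The paper's proof makes your second regime unnecessary, and simultaneously removes your reliance on uniqueness of extremal maps. Since at a combinatorially deep point $F^{n+1}$ and $F^n$ share no edge, the domains $\Omega^{n+1}\Subset\Omega^n$ are uniformly nested quasidisks, and Lemma \ref{lem:uni} gives the \emph{full-ball} containment $B(0,\epsilon^n)\subseteq\overline{\Omega^n}$ --- strictly stronger than Lemma \ref{lem:ep}, which only places $B(0,\epsilon^n)\cap\Lambda(\mathcal{P})$ inside the pinched $\overline{\Delta^n}$. Because $\Psi$ is a Teichm\"uller mapping, $\tfrac12\log K(\Psi|_{\Omega^n})=d(\mathcal{P}_{F^n},\mathcal{P}'_{F^n})=O(\delta^n)$ by Proposition \ref{prop:adf}, and one simply \emph{chooses} $f_n$ to be a globally $(1+O(\delta^n))$-quasiconformal map with $f_n|_{\Omega^n}=\Psi|_{\Omega^n}$. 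Then $f_n(z)=\Psi(z)$ for \emph{every} $z$ with $\epsilon^{3n+3}\le|z|\le\epsilon^{3n}$, including points inside disks, and --- as you correctly observed --- Lemmas \ref{lem:R}, \ref{lem:est} and \ref{lem:der} use only Lemma \ref{lem:qc} and the scale bounds, so they run verbatim and give $\Psi(z)=az+O(|z|^{1+\alpha})$ on a full ball in one stroke. By contrast, your route deduces $f_n=\Psi$ on the interstices from uniqueness of the extremal map off the disks: this is established nowhere in the paper (the $f_n$ produced by Proposition \ref{prop:adf} is just \emph{some} near-extremal global map, with no reason for its restriction to $\Omega_F$ to be extremal, nor for extremal maps of the infinitely-constrained packing problem to be unique on interstices). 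Agreement of $f_n$ with $\Psi$ should be arranged by construction, as above, not inferred; and once it is arranged on all of $\Omega^n$, the disk interiors come for free.
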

\begin{rmk}\label{rmk:cm}
    We remark that the assertion of Theorem \ref{thm:tmac} is much stronger than the restriction $\Psi_{\Lambda(\mathcal{P}) \cap \overline{\Pi(\mathcal{P})}}$ is $C^{1+\alpha}$-conformal.
    It asserts that for a combinatorially deep point $x$,
    $$
    \Psi(x+t) = \Psi(x)+\Psi'(x)t + O(|t|^{1+\alpha})
    $$
    for all small $t$, while Theorem \ref{thm:ac} only asserts that for small $t$ with $x+t \in \Lambda(\mathcal{P}) \cap \overline{\Pi(\mathcal{P})}$.
    We remark that in order to control the behavior of points outside the limit set, it is crucial that $\Psi$ is a Teichm\"uller mapping.
\end{rmk}

\subsection*{Nested annuli with bounded modulus}
Let $\mathcal{P}$ be a circle packing with nerve $\mathcal{G}_i$.
Let $x$ be a combinatorially deep point, with nested sequence of faces $F^n$.
In contrast to the pinched neighborhoods that we construct for general points, we can construct {nested neighborhoods} of $x$ that are simply connected with good geometric control.

Let $\Omega^n= \Omega_{F^n}$ be the component of domain of discontinuity associated to $F^n$.
Since $F^n$ and $F^{n+1}$ {do} not share a common edge, it is easy to see that $\Omega^{n+1}$ is compactly contained in $\Omega^n$.
Let $A^n:= \Omega^n-\overline{\Omega^{n+1}}$ be the corresponding annulus.
By the Bounded Image Theorem (Theorem \ref{thm:bitr}), we have
\begin{lem}\label{lem:uni}
    The domain $\Omega^n$ is a uniform quasi-disk and the annulus $A^n$ has uniformly bounded modulus.
    More precisely, there exists a constant $K, M > 1$ so that for each $n \geq 1$,
    \begin{itemize}
        \item $\Omega^n$ is a $K$-quasi-disk, and 
        \item $\Mod(A_n) \in [1/M, M]$.
    \end{itemize}
    In particular, by compactness, there exists a constant $\epsilon>0$ so that
    $$
    B(0,\epsilon^{n})\subseteq \overline{\Omega^n}.
    $$
\end{lem}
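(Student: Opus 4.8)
The plan is to derive all three assertions from a single compactness statement: although the faces $F^n$ lie at ever deeper levels, after Möbius normalization the circle configurations that determine $\Omega^n$, $\Omega^{n+1}$ and $A^n$ range over only a compact family. This is where the finiteness of $\mathcal{R}$ meets the Bounded Image Theorem. First I would use the subdivision maps to identify each $F^n$ with one of the finitely many polygons $P_{i_n}\in\{P_1,\dots,P_k\}$, and the nested pair $(F^n,F^{n+1})$ with a pair $(P_{i_n},\text{a face of }\mathcal{R}(P_{i_n}))$; since each polygon has only finitely many sub-faces, only finitely many combinatorial types of pairs $(F^n,F^{n+1})$ occur. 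The sub-circle packing carried by $\overline{F^n}$ at level $n+1$ has nerve $\mathcal{G}^1_{i_n}$, so it determines a point of $\mathcal{M}(\mathcal{G}^1_{i_n})=\prod_F\Teich(\Pi_F)$. Each internal face of $\mathcal{G}^1_{i_n}$ is a face of $\mathcal{G}_i$ lying inside $F^n$, hence acylindrical in $\mathcal{G}_i$, while the external factor records the outer interstice of the necklace $\partial F^n$, which is conformally the orientation-reversing double of $\Pi_{F^n}$ and so is again governed by $F^n$ being acylindrical in $\mathcal{G}_i$. Thus by the Relative Bounded Image Theorem (Theorem \ref{thm:bitr}) together with Proposition \ref{prop:fnb}, every factor, and therefore the whole point, stays in a fixed compact subset of $\mathcal{M}(\mathcal{G}^1_{i_n})$; taking the finite union over $i_n$ produces one compact family $\mathcal{K}$ of normalized configurations.

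For the quasidisk assertion I would observe that $\Omega^n$ is the component of the domain of discontinuity, containing $\Pi^n$, of the kissing reflection group generated by the necklace $\partial F^n$. Because $F^n$ is a Jordan domain, $\Omega^n$ is simply connected and $\partial\Omega^n=\Lambda_{F^n}$ is a Jordan curve (Theorem \ref{thm:jordan}), in fact a quasicircle since $\widetilde\Gamma^n$ is quasi-Fuchsian; moreover $\partial F^n$ is a polygon with at most $\max_i|\partial P_i|$ circles, so the group has uniformly bounded rank. The dilatation of the quasiconformal conjugacy to a fixed Fuchsian model is a continuous function of the configuration on the compact family $\mathcal{K}$, hence uniformly bounded, and $\Omega^n$ is a $K$-quasidisk with $K$ independent of $n$.

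For the modulus I would invoke the discussion preceding the lemma: combinatorial deepness forces $F^{n+1}\cap\partial F^n$ to be empty or a single vertex, so $F^n$ and $F^{n+1}$ share no edge and $\overline{\Omega^{n+1}}$ is compactly contained in $\Omega^n$. Hence $A^n=\Omega^n-\overline{\Omega^{n+1}}$ is a genuine, nondegenerate annulus and $\Mod(A^n)$ is a strictly positive, finite, continuous function on $\mathcal{K}$. The upper bound $\Mod(A^n)\le M$ again uses Theorem \ref{thm:bitr}, which prevents the level-$(n+1)$ circles of $\partial F^{n+1}$ from becoming vanishingly small relative to the level-$n$ necklace, so $\Omega^{n+1}$ cannot collapse inside $\Omega^n$; the lower bound $\Mod(A^n)\ge 1/M$ is exactly the compact containment, uniform over $\mathcal{K}$. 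Continuity plus compactness of $\mathcal{K}$ then yields a uniform $M$.

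Finally, for $B(0,\epsilon^n)\subseteq\overline{\Omega^n}$, note that $0=x\in\overline{\Omega^{n+1}}\subseteq\Omega^n$, so $0$ is an interior point of every $\Omega^n$. The bound $\Mod(A^n)\le M$ forces $\diam\Omega^{n+1}\ge c\,\diam\Omega^n$ by a standard Gr\"otzsch--Teichm\"uller modulus estimate, whence $\diam\Omega^n\ge c^n\,\diam\Omega^0$; the bound $\Mod(A^n)\ge 1/M$ together with the $K$-quasidisk property keeps $\overline{\Omega^{n+1}}$ a definite proportion of $\diam\Omega^n$ away from $\partial\Omega^n$, giving $\dist(0,\partial\Omega^n)\ge\dist(\overline{\Omega^{n+1}},\partial\Omega^n)\ge c'\,\diam\Omega^n$. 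Combining these yields $\dist(0,\partial\Omega^n)\ge\epsilon^n$ for a suitable $\epsilon$, as claimed. The step I expect to be the main obstacle is the compactness input itself: one must verify that the \emph{entire} normalized configuration (not merely each individual interstice) stays in a compact set, and that the compact containment $\overline{\Omega^{n+1}}\subseteq\Omega^n$ persists without degenerating in limits --- in particular handling the case where $F^{n+1}$ meets $\partial F^n$ in a single vertex, where a tangency could a priori push $\overline{\Omega^{n+1}}$ against $\partial\Omega^n$ and drive the modulus to zero.
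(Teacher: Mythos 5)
Your proposal is correct and follows essentially the same route as the paper, whose entire proof is a one-line appeal to the Bounded Image Theorem (Theorem \ref{thm:bitr}), after the observation preceding the lemma that $F^n$ and $F^{n+1}$ share no common edge and hence $\overline{\Omega^{n+1}}$ is compactly contained in $\Omega^n$; your expansion via a compact family of normalized configurations is exactly the intended argument. The single-vertex worry you flag at the end resolves just as you anticipate: closures of distinct components of the domain of discontinuity can meet only at parabolic (tangency) fixed points, and a shared tangency point would force a shared edge of the nerve, so in the combinatorially deep situation $\Lambda_{F^{n+1}}\cap\Lambda_{F^n}=\emptyset$ and the compact containment is uniformly strict over your compact family.
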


\subsection*{Proof of asymptotic conformality at combinatorially deep points}
\begin{proof}[Proof of Theorem \ref{thm:tmac}]
Similar as before, by Proposition \ref{prop:adf}, there exists a quasiconformal map $f_n: \widehat \C \longrightarrow \widehat \C$ with 
$$
f_n(\Lambda(\mathcal{P}) \cap \overline{\Omega^n}) = \Lambda(\mathcal{P}') \cap \overline{\Omega^n}
$$
so that $\frac{1}{2}\log K(f_n) = O(\delta^n)$. 
Since $\Psi$ is a Teichm\"uller mapping, we may choose $f_n$ so that
$$
\Psi|_{\Omega^n} = f_n|_{\Omega^n}.
$$
By replacing the pinched neighborhoods $\Delta^n$ and Lemma \ref{lem:ep} in the proof of Theorem \ref{thm:ac} by $\Omega^n$ and Lemma \ref{lem:uni}, the same arguement proves that $\Psi$ is $C^{1+\alpha}$ at all deep points.
\end{proof}

\subsection{Renormalization periodic points}\label{subsec:renormalizationPeriodicPoints}
In this section, we study renormalization periodic points.
Without loss of generality and for simplicity of the presentation, let us assume the period is $1$.

First, let $(\mathcal{P}_0, x_0)$ be a renormalization fixed point.
Let $F^n$ be a corresponding nested sequence of faces.
We remark that the corresponding nested sequence of faces $F^n$ is unique (see Remark \ref{rmk:uniq} below).
To describe the combinatorics, we give the following definition.
\begin{defn}
    A nested sequence of faces $(F^n)$ is called a subdivision fixed point if
    \begin{enumerate}
        \item each $F^n$ is identified with $F^0$, by a cellular homeomorphism 
        $$
        \psi_n: F^0 \longrightarrow F^n;
        $$
        \item $\psi_n \circ \psi_1 = \psi_{n+1}$ for all $n$.
    \end{enumerate}
\end{defn}
\begin{prop}
    Renormalization fixed points are in one to one correspondence with subdivision fixed points.
\end{prop}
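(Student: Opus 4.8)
The plan is to exhibit two mutually inverse constructions, one sending each renormalization fixed point to a subdivision fixed point and one going back, so that the asserted bijection follows. Throughout, I write a renormalization fixed point as a class $(\mathcal{P}_0,x_0)\in\Sigma^\infty$ (lifted to $\widetilde\Sigma^\infty$ if necessary) with $\mathcal{P}_0$ of nerve $\mathcal{G}_i$ and $\mathfrak{R}((\mathcal{P}_0,x_0))=(\mathcal{P}_0,x_0)$, and recall that for such a periodic point the associated nested sequence of faces $(F^n)$ is unique by Remark \ref{rmk:uniq}; here $F^0=P_i$.

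\emph{Forward direction.} Given $(\mathcal{P}_0,x_0)$, I would unwind the definition $\mathfrak{R}((\mathcal{P}_0,x_0))=(\mathcal{Q}_{F^1},x_0)$, where $\mathcal{Q}_{F^1}$ is the subpacking on $F^1\cap\mathcal{G}_i$ regarded, via $\psi_{F^1}$, as a packing of nerve $\mathcal{G}_{i(F^1)}$. The fixed-point identity supplies a $\phi\in\PSL_2(\C)$ carrying the marked packing $\mathcal{Q}_{F^1}$ onto the marked packing $\mathcal{P}_0$ with $\phi(x_0)=x_0$. Because the marking is preserved, the nerves must agree, forcing $i(F^1)=i$; set $\psi_1:=\psi_{F^1}\colon F^0\to F^1$. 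Since a $\PSL_2(\C)$-equivalence preserves the combinatorially defined (hence invariant) nested sequence of a marked pair, comparing the nested sequence of $(\mathcal{Q}_{F^1},x_0)$ — which in the coordinates of $F^0$ is $\psi_1^{-1}(F^{n+1})$ — with that of $(\mathcal{P}_0,x_0)$ yields $F^{n+1}=\psi_1(F^n)$ for all $n$. Setting $\psi_n:=\psi_1^{\circ n}\colon F^0\to F^n$ then gives cellular identifications of each $F^n$ with $F^0$ satisfying $\psi_{n+1}=\psi_n\circ\psi_1$, i.e.\ a subdivision fixed point.

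\emph{Reverse direction.} Given a subdivision fixed point $(F^n)$ with $F^0=P_i$ and $\psi_1\colon F^0\to F^1$, condition (1) forces $F^1$ to have type $P_i$, so $\mathfrak{R}_{F^1}$ is a self-map of $\mathcal{M}(\mathcal{G}_i)$; moreover the relation $\psi_1(F^n)=F^{n+1}$ makes its iterates coincide with the maps $\mathfrak{R}^n$ of Proposition \ref{prop:adf}. Hence $\mathfrak{R}_{F^1}^2$ contracts the Teichm\"uller metric by $\delta<1$ on the complete space $\mathcal{M}(\mathcal{G}_i)\cong\Teich(\Pi_{F_i})\cong\R^{e_i-3}$, so the contraction mapping principle produces a unique $\mathcal{P}_0$ fixed by $\mathfrak{R}_{F^1}^2$, and then by uniqueness also by $\mathfrak{R}_{F^1}$. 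The sequence $(F^n)$ is a legitimate infinite nested sequence of faces of $\mathcal{P}_0$, hence corresponds via the tautological bijection between $\widetilde\Sigma^\infty$ and infinite nested sequences to a point $x_0\in\Lambda(\mathcal{P}_0)\cap\overline{\Pi(\mathcal{P}_0)}$; concretely $x_0=\bigcap_n\overline{\Pi^n}$. The $\phi\in\PSL_2(\C)$ realizing $\mathfrak{R}_{F^1}(\mathcal{P}_0)=\mathcal{P}_0$ satisfies $\phi(F^{n+1})=F^n$, whence $\phi(x_0)=x_0$, and therefore $\mathfrak{R}((\mathcal{P}_0,x_0))=(\mathcal{Q}_{F^1},x_0)\sim(\mathcal{P}_0,\phi(x_0))=(\mathcal{P}_0,x_0)$; thus $(\mathcal{P}_0,x_0)$ is a renormalization fixed point whose nested sequence is exactly $(F^n)$.

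Finally, these two constructions are mutually inverse: the reverse construction manifestly returns the sequence it started from, while the forward construction followed by the reverse one recovers the original $(\mathcal{P}_0,x_0)$, because a renormalization fixed point is determined up to $\PSL_2(\C)$ by its combinatorics $(F^n)$ through the rigidity of Theorem \ref{thm:rig} and Corollary \ref{cor:ib}. The step I expect to be most delicate is the existence and correct placement of $x_0$ in the reverse direction: one must know that the nested regions $\overline{\Pi^n}$ (equivalently $\overline{\Omega^n}$) have diameters tending to $0$, so that their intersection is a single point fixed by $\phi$, which rests on the uniform bounded geometry of the interstices along a nested sequence furnished by the Bounded Image Theorem (Theorem \ref{thm:bitr}) rather than on any soft compactness.
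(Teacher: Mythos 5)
Your proof is correct and takes essentially the same route as the paper's: the forward direction makes precise the paper's one-line observation that the nested sequence of a renormalization fixed point is a subdivision fixed point, and the reverse direction is exactly the paper's appeal to Proposition~\ref{prop:adf}, iterating the contraction $\mathfrak{R}$ determined by $(F^n)$ to obtain the unique fixed packing (your Banach fixed-point phrasing and the paper's ``iterate the renormalization operator'' are the same argument). Your added care in locating $x_0$ --- showing the nested regions shrink to a single point fixed by $\phi$ --- fills in a step the paper leaves implicit in its identification of $\widetilde\Sigma^\infty$ with infinite nested sequences of faces, so it is welcome detail rather than a different approach.
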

\begin{proof}
    Suppose $(\mathcal{P}_0, x_0)$ is a renormalization fixed point.
    Let $(F^n)$ be a corresponding nested sequence of faces.
    Then it is clear that $(F^n)$ is a subdivision fixed point.

    Conversely, let $(F^n)$ be a subdivision fixed point.
    By Proposition \ref{prop:adf}, we can find the unique renormalization fixed point by iterating the renormalization operator $\mathfrak{R}$ given by the combinatorial data $(F^n)$.
\end{proof}

\begin{rmk}\label{rmk:uniq}
    Let $(F^n)$ be a subdivision fixed point.
    If $F^1$ does not share an edge with $F^0$. Then $(F^n)$ gives a combinatorially deep point, and $x_0$ is not the tangent point of two circles of $\mathcal{P}_0$.
    If $F^1$ shares an edge with $F^0$. Then $x_0$ is the tangent point of two circles of $\mathcal{P}_0$ and is on the boundary $\partial \Pi(\mathcal{P}_0)$.
    Thus, in either case, the preimage $\pi^{-1}((\mathcal{P}_0, x_0)) \subseteq \widetilde\Sigma^\infty$ is a singleton.
\end{rmk}

We are ready to assemble the proof of the universality and regularity theorem.
\begin{proof}[Proof of Theorem \ref{thm:LS}]
    The statement (1) follows from Theorem \ref{thm:ac}, and the statement (2) follows from the above discussion.
    
    By Remark \ref{rmk:uniq}, $x_0$ is either a combinatorially deep point, or a cusp point on the boundary $\partial \Pi(\mathcal{P}_0)$.
    In the first case, we have that the scaling factor $|\mu| > 1$ by Lemma \ref{lem:uni}.
    In the second case, the M\"obius transformation $\phi$ preserves the two circles in $\mathcal{P}_0$ that passes through $x_0$.
    Thus $\phi$ is a parabolic map with a fixed point at $x_0$.
    This proves (3).

    The last statement follows from the fact that the map $\Psi|_{\Lambda(\mathcal{P}_0) \cap \overline{\Pi(\mathcal{P}_0)}}$ is $C^{1+\alpha}$-conformal.
\end{proof}

\subsection{Explicit constructions}\label{subsec:explicitcon}
{In the following, we give a more descriptive construction of the renormalization fixed point, which is in the same spirit of the {\em tower construction} in \cite{McM96}.
The construction also relates the renormalization fixed point and infinite circle packings for spherical subdivision graphs.}

\subsection*{Enlargement of $\mathcal{R}$}
    Let $(F^n)$ be a subdivision fixed point.
    We may assume that $F^0 = P_i$.
    To construct the renormalization fixed point, we will first construct a new finite subdivision rule $\mathcal{R}'$ which enlarges $\mathcal{R}$.
    This new $\mathcal{R}'$ is constructed by adding a new polygon that is {\em inverted} copy of $F^1$.

    More precisely, we define
    $Q:= \overline{\widehat \C - F^1}$, and $Q':=  \overline{\widehat \C - P_i}$.
    Let us denote the subdivision $\mathcal{R}(P_i)$ as 
    $$
    P_i = F^1 \cup \bigcup_{j} P_{i,j}.
    $$
    Then $\bigcup_{j} P_{i,j} \cup Q'$ gives a subdivision of $Q$.
    Since $F^0 = P_i$ and $F^1$ are identified by $\psi_1$, it induces an identification between $Q$ and $Q'$.
    We remark that this also induces a natural orientation reversing identification of $\partial P_i$ with $\partial Q$.
    
    We extend $\mathcal{R}$ to $\mathcal{R}'$ so that it consists of $\{P_1,..., P_k, Q\}$, where the subdivision $\mathcal{R}'(P_i) = \mathcal{R}(P_i)$ and $\mathcal{R}'(Q)$ as constructed as above.
    It is easy to see that we can identify each face of the subdivision with a polygon in the list $\{P_1,..., P_k, Q\}$, and thus $\mathcal{R}'$ is a finite subdivision rule.
    Since $\mathcal{R}$ is simple, irreducible and acylindrical, it is easy to see that so is $\mathcal{R}'$.

\subsection*{$\mathcal{R}'$-complex and renormalization fixed point}
    Let $X$ be the $\mathcal{R}'$-complex by obtained gluing $P_i$ with $Q$ using the natural orientation reversing identification between $\partial P_i$ and $\partial Q$.
    Then $X$ is homeomorphic to a $2$-sphere.

    Let $\mathcal{G}^n$ be the 1-skeleton of $\mathcal{R}^n(X)$ and $\mathcal{G} = \bigcup_n \mathcal{G}^n$.
    Then it is easy to verify that $\mathcal{G}$ is a simple graph.
    By Theorem \ref{thm:A}, there exists a unique circle packing $\mathcal{P}$ with nerve $\mathcal{G}$.

    Let $\mathcal{G}_{F^n} = \mathcal{G} \cap F^n$, where $F^n$ is naturally identified as a subset of $P_i$.
    By construction, it is easy to see that there exists a homeomorphism $\tilde \phi$ of $\mathcal{G}$ extending the local homeomorphism $\mathcal{G}_{F^1} \longrightarrow \mathcal{G}_{F^0}$.
    Thus, by Theorem \ref{thm:A}, there exists a M\"obius transformation $\phi$ that fixes $\mathcal{P}$.

    Let $\mathcal{P}_0$ be the sub-circle packing of $\mathcal{P}$ associated to $\mathcal{G}_{F^0}$, and $x_0$ is the repelling fixed point of $\phi$.
    Then it is easy to see that $(\mathcal{P}_0, x_0)$ is the corresponding renormalization fixed point.

\section{Kleinian circle packings}\label{sec:kcp}

Recall that a circle packing $\mathcal{P}$ is called \emph{Kleinian} if its limit set $\Lambda(\mathcal{P})$ is the same as the limit set of some Kleinian group $\Gamma$. We emphasize that by convention $\mathcal{P}$ has connected nerve. Our main result is the existence of subdivision rules for geometrically finite Kleinian circle packings.
\begin{theorem}\label{thm:ksdr}
Let $\mathcal{P}$ be a Kleinian circle packing, with the associated Kleinian group $\Gamma$. Suppose $\Gamma$ is finitely generated. Then the nerve $\mathcal{G}$ of $\mathcal{P}$ is a spherical subdivision graph
\begin{itemize}
    \item for a finite subdivision rule if and only if $\Gamma$ is geometrically finite and contains no rank-2 parabolic subgroup;
    \item for a $\mathbb{Z}^2$-subdivision rule if and only if $\Gamma$ is geometrically finite and contains at least one rank-2 parabolic subgroup.
\end{itemize}
\end{theorem}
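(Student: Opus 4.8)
The plan is to set up a dictionary between the tangency combinatorics recorded by $\mathcal{G}$ and the geometry of $M=\Gamma\backslash\mathbb{H}^3$, using the topological characterization behind Theorem \ref{thm:GFAC1} as the bridge; throughout I assume $\Gamma$ finitely generated, as in the generalization quoted before the statement. The basic observation is that for a Kleinian circle packing the domain of discontinuity $\Omega=\widehat{\mathbb{C}}-\Lambda(\Gamma)$ consists \emph{precisely} of the round disks of $\mathcal{P}$, so the whole sphere is partitioned into these disks and the residual limit set. This is exactly why there is no unsubdivided external face and one lands on a \emph{spherical} subdivision graph; it also identifies a subdivision-rule structure on $\mathcal{G}$ with a self-similar, finite-type description of the $\Gamma$-orbit pattern of disks and interstices. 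I would first prove the implication ``$\mathcal{G}$ is a (finite or $\mathbb{Z}^2$) spherical subdivision graph $\Rightarrow$ $\Gamma$ is geometrically finite'': iterated subdivision produces only finitely many combinatorial types of faces and of their nestings, whereas a geometrically infinite end, carrying a nontrivial ending lamination, forces infinitely many inequivalent local interstice patterns accumulating on the degenerate end. Thus a degenerate end is incompatible with either kind of subdivision rule, and $\Gamma$ must be geometrically finite.

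Assume now $\Gamma$ is geometrically finite. I would fix a finite-sided fundamental polyhedron (Ford or Dirichlet); its finitely many faces cut $\Omega/\Gamma$ into finitely many pieces and select finitely many $\Gamma$-orbits of interstices of $\mathcal{P}$. Taking orbit representatives as the polygons $P_i$, letting the next generation of circles meeting each interstice define the subdivision $\mathcal{R}(P_i)$, and using the group elements that identify one interstice with another as the cellular maps $\psi_{i,j}$, one obtains a candidate subdivision rule whose $\mathcal{R}$-complex assembles into $S^2=\widehat{\mathbb{C}}$. Finite-sidedness guarantees that each interstice not pinned to a rank-$2$ cusp is subdivided into finitely many pieces, so if $\Gamma$ has \emph{no} rank-$2$ parabolic this construction is an honest finite subdivision rule; the acylindricity supplied by Theorem \ref{thm:GFAC1} ensures it is acylindrical (and it is simple since $\mathcal{G}$ is, and may be made irreducible by the cutting remark following Definition \ref{defn:cyl}). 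This proves the reverse implication of the first bullet.

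For the rank-$2$ case I would analyse the packing near a rank-$2$ parabolic fixed point $p$. Its stabilizer $\Gamma_p\cong\mathbb{Z}^2$ acts properly discontinuously on a horoball at $p$, and the circles of $\mathcal{P}$ accumulating at $p$ form a $\mathbb{Z}^2$-periodic array; in the horoball chart this is exactly a $\mathbb{Z}^2$-tiling in the sense of \S\ref{sec:z2sr}, with one interstice as fundamental domain. Hence the interstice pinned at $p$ subdivides into infinitely many $\mathbb{Z}^2$-periodic faces, which is the defining move of a $\mathbb{Z}^2$-subdivision rule and cannot occur in a finite one. Collecting the finitely many rank-$2$ cusp orbits produces the complementary polygons $Q_i=\overline{\widehat{\mathbb{C}}-P_i}$ together with their $\mathbb{Z}^2$-tilings $X_i$, giving a $\mathbb{Z}^2$-spherical subdivision graph. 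Conversely, a $\mathbb{Z}^2$-tiling inside the rule forces a $\mathbb{Z}^2$-periodic family of circles with a single accumulation point; by combinatorial rigidity of Kleinian circle packings the lattice of deck translations is realized by Möbius maps in $\Gamma$ fixing that point, and discreteness forces this $\mathbb{Z}^2$ to be parabolic, i.e.\ a rank-$2$ cusp. Since ``every face subdivides into finitely many pieces'' and ``some face subdivides $\mathbb{Z}^2$-periodically'' are mutually exclusive, this cleanly separates the two bullets.

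The main obstacle is the passage from combinatorics to geometry, namely that the bare existence of a finite (or $\mathbb{Z}^2$) subdivision rule on $\mathcal{G}$ forces geometric finiteness, with rank-$2$ cusps matched exactly to the $\mathbb{Z}^2$-tiling data. Because $\Gamma$ is only assumed finitely generated, I must exclude geometrically infinite ends directly from the self-similar finiteness of the rule rather than from rigidity alone: concretely, I would show that a finite or $\mathbb{Z}^2$ rule makes the germs of the packing fall into finitely many $\Gamma$-orbits, translate this into cocompactness of the $\Gamma$-action on $M$ away from horoball neighborhoods of the cusps using the topological model underlying Theorem \ref{thm:GFAC1}, and invoke the standard characterization of geometric finiteness. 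The remaining delicate point is distinguishing the $\mathbb{Z}^2$-periodic accumulation of a genuine rank-$2$ parabolic from the $\mathbb{Z}$-chains of tangencies that produce only rank-$1$ parabolics; this is resolved by observing that a two-dimensional lattice of tangency relations can be realized conformally only by a rank-$2$ parabolic stabilizer.
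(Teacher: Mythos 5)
Your skeleton (build a rule from geometric finiteness via a fundamental-domain-type decomposition, match rank-2 cusps to $\Z^2$-tilings, exclude degenerate ends combinatorially) is the same as the paper's, but in both directions your sketch assumes away the step that actually carries the proof. In the forward direction the paper does not use a Ford or Dirichlet polyhedron: it cuts the compression body $N$ of Theorem~\ref{thm:GFAC1} along a maximal collection of compressing disks, and the whole content of Proposition~\ref{prop:curve_and_subdivision} is that this collection must be \emph{admissible} --- two conditions on how the rank-1 parabolic locus curves $\gamma_i$ meet the disk boundaries $\eta_j$ --- before the curves $\widetilde\eta_j^\pm$ at infinity define cycles bounding \emph{Jordan} domains in $\mathcal{G}$ and the cut pieces (balls and thickened tori) yield a bona fide subdivision whose faces are identified with the listed polygons by group elements. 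Achieving admissibility requires the inductive disk-surgery of \S\ref{sec:kcp} (the modifications in Figures~\ref{fig:disk-pushing-torus} and~\ref{fig:disk_pushing_ball}). Your proposal ``orbit representatives of interstices as the $P_i$, next generation of circles as $\mathcal{R}(P_i)$'' skips this: the faces of a finite-sided fundamental polyhedron bear no canonical relation to the tangency structure of $\mathcal{P}$ (their boundary curves need not meet circles only at tangency points, hence need not determine cycles in $\mathcal{G}$ at all), the induced cycles can repeat vertices or edges and so bound non-Jordan faces, and nothing in your setup enforces that edges of $\partial P_i$ acquire no interior vertices under subdivision. You never engage the rank-1 cusps, which is exactly where the difficulty sits.

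In the converse direction your key mechanism is wrong: you claim a geometrically infinite end ``forces infinitely many inequivalent local interstice patterns,'' but the paper itself (remark following Proposition~\ref{prop:tree}, using the construction from \cite{NS12}) exhibits geometrically infinite Kleinian packings whose nerve is a tree --- maximally homogeneous, with essentially a single local pattern, and moreover non-rigid, so rigidity-based repairs also fail there. The paper's actual exclusions run case-by-case through the four topological types of $N$: the tree case (type (3)) is ruled out because spherical subdivision graphs contain cycles while trees do not; the mixed case (type (4)) is ruled out by a growth-rate invariant --- the cycles of disks separating the limit set of a degenerate subgroup $\Gamma'$ grow \emph{exponentially} in combinatorial length as one approaches it (since $\Gamma'$ contains free groups), whereas a $\Z^2$-rule permits only linear growth and a finite rule keeps the relevant cycle data bounded; and a rank-2 cusp excludes a finite rule because nested polygons containing the parabolic fixed point have unboundedly many sides, an invariant of $\mathcal{G}$ itself rather than of your chosen rule (your mutual-exclusivity argument only addresses the rule you constructed). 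Your rank-2 dichotomy and the use of combinatorial rigidity (Theorems~\ref{thm:A} and~\ref{thm:Z2EU}) to realize the $\Z^2$-action by parabolic M\"obius maps do match the paper, but without some quantitative combinatorial invariant of the sort above, your finiteness-of-types heuristic cannot eliminate the tree or mixed cases.
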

Here, a \emph{$\mathbb{Z}^2$-subdivision rule} is a generalization of the theory we have developed so far, modeled on the $\mathbb{Z}^2$-tiling under a rank-2 parabolic subgroup. Our theory is well-suited for this generalization, and we have an existence and uniqueness theorem in this setting (see Theorem~\ref{thm:Z2EU}). To avoid repetition and streamline our arguments, this generalization and others are included in Appendix~\ref{sec:z2sr}.

Combined with Theorem~\ref{thm:A} (without rank-2 cusps) or Theorem~\ref{thm:Z2EU} (with rank-2 cusps), we immediately obtain Theorem~\ref{thm:C}. Note that we need not check acylindricity of the subdivision rule, since a circle packing already exists.

{The remainder of the section is organized as follows. In \S\ref{subsec:g&t} we recall some basic 3-dimensional geometry and topology, and state Theorem~\ref{thm:GFAC1} that characterizes hyperbolic 3-manifolds with circle packing limit sets. In \S\ref{subsec:ksr} and \S\ref{subsec:coae}, we construct subdivision rules of such limit sets in two steps: we first reduce it to a topological problem in \S\ref{subsec:ksr}, and then solve the topological problem in our setting in \S\ref{subsec:coae}. We finally provide a topological point of view of our subdivision rule in \S\ref{subsec:atm}.}

\subsection{Background on topology and geometry of 3-manifolds}\label{subsec:g&t}
Recall that a Kleinian group $\Gamma$ (and the corresponding hyperbolic 3-manifold $M$) is called geometrically finite if there exists a finite-sided fundamental polyhedron for the action of $\Gamma$ on $\mathbb{H}^3$. Equivalently, the unit neighborhood of the \emph{convex core} of $M$, {defined by}
$$\core(M):=\Gamma\backslash\chull(\Lambda),$$
has finite volume, where $\chull(\Lambda)$ is the convex hull of the limit set $\Lambda$ of $\Gamma$ in $\mathbb{H}^3$.

We have discussed incompressibility and acylindricity in the context of kissing reflection groups, cf.\ \S\ref{subsec:td}. In general, let $(N,P)$ be a \emph{pared 3-manifold}, where $N$ is a compact oriented 3-manifold with boundary, and $P\subseteq\partial N$ is a submanifold consisting of incompressible tori and annuli. See \cite{hyperbolization1} for a precise definition in arbitrary dimension. Set $\partial_0N=N-P$. Then we say $(N,P)$ is \emph{acylindrical} if each component of $\partial_0N$ is incompressible, and $N$ contains no essential cylinder with both ends in $\partial_0N$.

For a geometrically finite $M$, let $\core_\epsilon(M)$ be the convex core of $M$ minus $\epsilon$-thin cuspidal neighborhoods for all cusps. Here $\epsilon$ is always chosen small enough, say smaller than the Margulis constant in dimension 3. Let $P\subseteq\partial\core_\epsilon(M)$ be the union of boundaries of all cuspidal neighborhoods, then $(\core_\epsilon(M),P)$ is a pared 3-manifold, and we say $M$ (and the corresponding Kleinian group $\Gamma$) is acylindrical if $(\core_\epsilon(M),P)$ is. Note that the connected components of $\partial\core_\epsilon(M)-P$ are in one-to-one correspondence with the boundary components of the Kleinian manifold $\overline{M}$.

A compact, orientable, irreducible manifold with boundary $(N,\partial N)$ is called a \emph{compression body} if it is homeomorphic to the boundary connected sum of a solid 3-ball with a collection of solid tori and a collection of trivial interval bundles over closed surfaces, so that the other summands are all attached to the 3-ball along disjoint disks. Either collection may be empty.

For one boundary component $\partial_eN$ of $N$, the inclusion induces a surjection on $\pi_1$; we refer to this component as the \emph{exterior} boundary. As a matter of fact, this characterizes compression bodies: $(N,\partial N)$ is a compression body if and only if the inclusion of one component of $\partial N$ induces a surjection on $\pi_1$ (see e.g.\ \cite[Lem.~4.1]{BJM13}). All other boundary components of $N$ are incompressible; we refer to them as \emph{interior} boundary components.

A result of Otal, called ``disk-busting" \cite[Rem.~1.5]{Ota88}, gives a criterion for $(N,P)$ to be a pared acylindrical manifold, where $P$ is the union of all toroidal boundary components and a collection of simple curves on $\partial_eN$.

We have the following topological characterization of geometrically finite Kleinian circle packings.
\begin{theorem}\label{thm:GFAC1}
Let $\mathcal{P}$ be a Kleinian circle packing with nerve $\mathcal{G}$. Let $\Gamma$ be the associated Kleinian group, and $M=\Gamma\backslash\mathbb{H}^3$ the corresponding hyperbolic 3-manifold. Suppose $\Gamma$ is geometrically finite. Then $M$ is acylindrical, and homeomorphic to the interior of a compression body with empty or only toroidal interior boundary components.
\end{theorem}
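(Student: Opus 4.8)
The plan is to read off the topology of $M$ from the geometry of $\Lambda=\Lambda(\mathcal P)=\overline{\bigcup_v C_v}$, and to transport the acylindricity and incompressibility criteria of Theorems~\ref{thm:acy} and~\ref{thm:jordan} from the finite combinatorial setting into the geometrically finite world. First I would record the shape of the domain of discontinuity: since $\Lambda$ is the closure of a family of round circles bounding disks with disjoint interiors, the complement $\Omega=\widehat{\mathbb C}-\Lambda$ is a disjoint union of round disks $\{D_v\}$, and each bounding circle $C_v=\partial D_v$ lies in $\Lambda$. Because $C_v\subseteq\Lambda$, the totally geodesic plane $\widetilde H_v\subseteq\mathbb H^3$ spanned by $C_v$ is the convex hull of $C_v$, hence a support plane of the convex hull of $\Lambda$ contained in its boundary. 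Thus every component of $\Omega$ is simply connected, and, away from the cusps, $\core(M)$ meets the end facing $D_v$ in a totally geodesic face $\widetilde H_v/\Stab_\Gamma(D_v)$. This is the geometric input that replaces the $\psi_F$-picture of \S\ref{sec:teich}.

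Next I would invoke tameness: as $\Gamma$ is geometrically finite, $M$ is the interior of a compact manifold $N$ (Marden's theorem), whose boundary splits into the conformal surfaces $\Omega/\Gamma$ and the tori arising from the rank-two parabolic subgroups of $\Gamma$. Simple connectivity of each $D_v$ feeds into the proof of Theorem~\ref{thm:jordan} to control which boundary pieces are incompressible, while the rank-two cusps—points at which infinitely many circles of $\mathcal P$ accumulate, modeled by the $\Z^2$-subdivision rule of \S\ref{sec:z2sr}—account for all toroidal ends. For acylindricity I would run the dichotomy in the proof of Theorem~\ref{thm:acy}: an essential cylinder meeting the outer boundary would produce two non-adjacent round circles $C_v,C_w$ co-bounding a common complementary region, i.e.\ a shared loxodromic or parabolic identifying their walls, and the well-connectedness supplied by Proposition~\ref{prop:cp} applied to the packing's nerve shows that in a genuine circle packing every such cylinder is forced to be boundary parallel. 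Theorem~\ref{thm:csk} guides how the reflection-locus translates into these cylinders.

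Finally I would assemble $N$ into a compression body. The step I expect to be hardest is precisely reconciling the two descriptions: the totally geodesic faces of Step~1 are each $\pi_1$-injective, so the conformal boundary \emph{looks} incompressible, yet the conclusion demands only toroidal interior (incompressible) boundary. The resolution is that the relevant acylindricity is \emph{relative}—only cylinders meeting the outer face are controlled, exactly as in Theorem~\ref{thm:acy}—so the genuinely incompressible part of $\partial N$ is carried solely by the rank-two cusp tori, with the circles of $\mathcal P$ furnishing a defining disk system along the remaining boundary. Making this precise requires excising the rank-two cusps, exhibiting a compressing-disk system whose complementary pieces are products, and checking that the support-plane geometry of Step~1 (which degenerates at the cusps and must be supplemented by horoball estimates) is compatible with this handle structure; combined with McMullen's rigidity for acylindrical manifolds \cite{McM90}, this yields the stated model. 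The cusp bookkeeping, rather than the convex-core geometry, is the real crux.
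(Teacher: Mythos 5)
Your outline gets the external scaffolding right (tameness, a compression-body criterion, cusps as the source of tori), but both halves of the actual argument have gaps, and the acylindricity half runs in the wrong logical direction. Theorem~\ref{thm:acy} and Proposition~\ref{prop:cp} are statements about a face of a \emph{finite} subgraph $\mathcal H\subseteq\mathcal G$, and Proposition~\ref{prop:cp} is a \emph{consequence} of combinatorial acylindricity, not a tool for establishing that a Kleinian manifold is acylindrical; nothing in your sketch shows that the infinite nerve of a Kleinian packing satisfies such a hypothesis a priori (in the paper the subdivision-rule structure on $\mathcal G$ is extracted only afterwards, in Theorem~\ref{thm:ksdr}, and its acylindricity is deduced \emph{from} the existence of the packing, not the other way around). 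Worse, your ``resolution'' in the final paragraph --- that the relevant acylindricity is merely relative and that the incompressible part of $\partial N$ is carried solely by the rank-two cusp tori --- contradicts the statement being proved: acylindricity here is for the pared manifold $(\core_\epsilon(M),P)$, where $P$ includes annuli around all rank-one parabolics (the tangency points), and \emph{every} component of $\partial_0N$, i.e.\ every component of $\Omega/\Gamma$, is incompressible in this pared sense precisely because each component of $\Omega$ is a round disk. The paper's actual mechanism is Lemma~\ref{lem:pfp} (via Maskit \cite{Mas74}: $\overline{\Omega_1}\cap\overline{\Omega_2}=\Lambda(\Gamma_1\cap\Gamma_2)$, so homotopic essential curves on two boundary components would force at least two shared boundary points, whereas closures of complementary disks of a packing meet in at most one point) combined with Proposition~\ref{prop:acy_geom_fin} and \cite[Lemma~11.2]{BO22}; none of this appears in your sketch, and McMullen's rigidity theorem plays no role in the topological statement.

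The compression-body half is likewise missing its key step. You correctly reduce to tameness plus the criterion that $N$ is a compression body if one boundary component $\pi_1$-surjects (\cite[Lem.~4.1]{BJM13} in the paper), but you never verify the surjectivity: this is Lemma~\ref{lem:comp_body}, whose proof shows that the preimage of $\partial\core_\epsilon'(M)$ in $\mathbb H^3$ is connected because the components of $\partial\chull(\Omega)$ facing tangent disks are joined by flat strips over the tangency parabolic fixed points, and the nerve $\mathcal G$ is connected --- this is exactly where the circle-packing hypothesis enters. Your alternative route (exhibit a compressing-disk system with product complement) is precisely what you flag as the crux and leave unproved; the paper does construct such an admissible disk system, but only later in the section and for the purpose of producing the subdivision rule, not for Theorem~\ref{thm:GFAC1}. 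Finally, ``empty or only toroidal interior boundary'' requires an argument you do not supply: a non-toroidal incompressible interior boundary component would have to face a geometrically infinite end (otherwise the gasket structure of the limit set is violated), so geometric finiteness excludes it (Corollary~\ref{cor:geom_finite}); your appeal to rank-two cusps explains the tori that do occur but does not rule out the non-toroidal components.
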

The proof of this theorem is contained in Appendix~\ref{appx:topology}.

\subsection{Kleinian subdivision rules}\label{subsec:ksr}
In this subsection and the next, we associate a finite or $\mathbb{Z}^2$-subdivision rule $\mathcal{R}$ to the nerve $\mathcal{G}$ of the circle packing limit set of a geometrically finite Kleinian group $\Gamma$. This gives the ``if" part of Theorem~\ref{thm:ksdr}. The ``only if" part is given by Proposition~\ref{prop:topology} in Appendix~\ref{appx:topology}.

The proof comes in two steps: in this section we first reduce the question to a topological problem, and then in the next subsection \S\ref{subsec:coae} we show that this topological problem can always be solved in our setting.

By Theorem~\ref{thm:GFAC1}, the hyperbolic 3-manifold $M=\Gamma\backslash\mathbb{H}^3$ is homeomorphic to the interior of a compact compression body $N$ whose interior boundary is either empty or consists of tori. Moreover, a collection of disjoint simple closed curves $\{\gamma_i\}$ on $\partial_e N$ gives the parabolic locus $P=\sqcup P_i$, where each $P_i\subseteq\partial N$ is an annulus with core curve $\gamma_i$ (see Figure~\ref{fig:handlebody} for an example). {Moreover, since $M$ is acylindrical, $\partial _eN-\cup_i\gamma_i$ consists of incompressible surfaces, and $N$ contains no essential cylinders with both ends in $\partial _eN-\cup_i\gamma_i$.}

\begin{figure}[htp]
    \centering
    \includegraphics[width=0.5\linewidth]{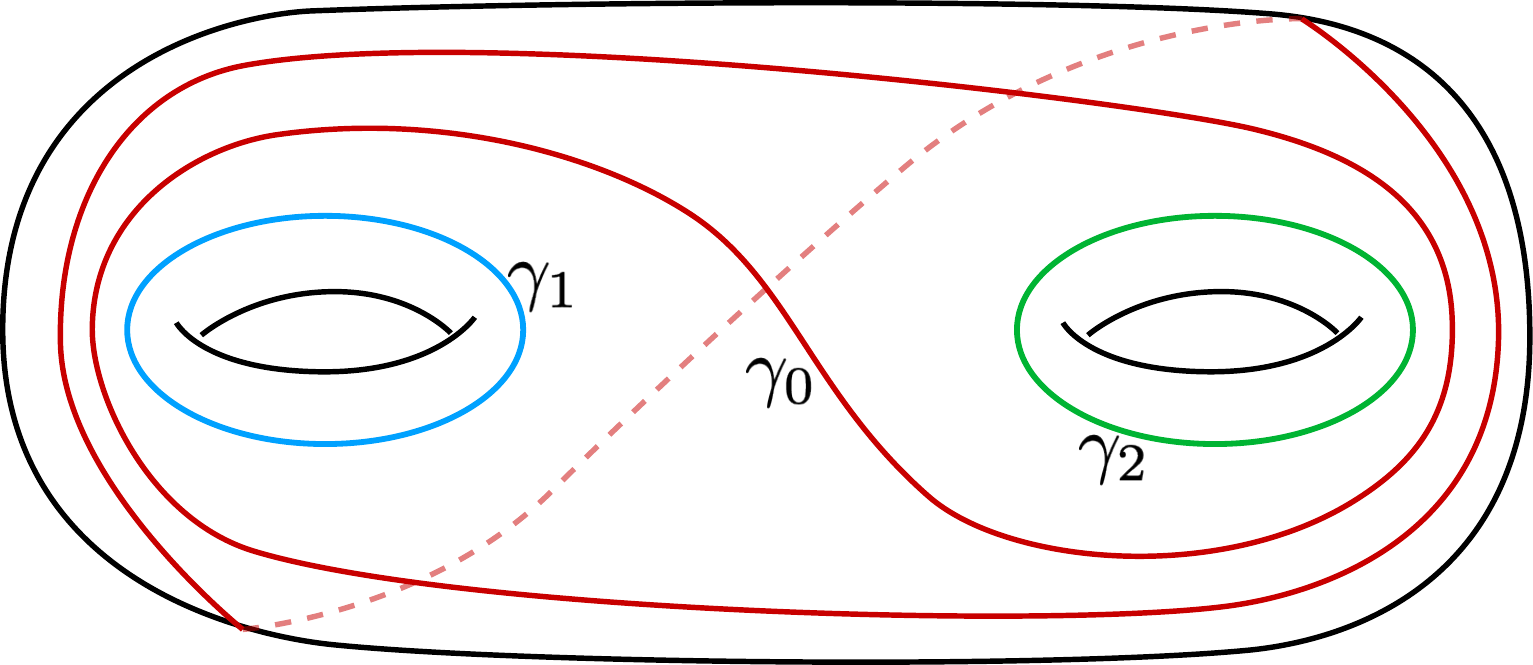}
    \caption{A Kleinian manifold whose limit set is the Apollonian gasket, homeomorphic to a genus two handlebody with parabolic locus marked. {See \cite[Appx.~A]{Zha22} for a detailed discussion of this picture.}}
    \label{fig:handlebody}
\end{figure}

{Suppose $\partial_eN$ is a surface of genus $g$, and there are $s$ toroidal interior boundary components (clearly $s\le g$).} Let $\mathcal{E}=\{D_j\}$ be a collection of disjoint compressing disks in $N$. Note that $\eta_j=\partial D_j$ forms a collection of disjoint simple closed curves on $\partial_e N$. {We always assume curves in this collection are not homotopic to each other. The collection $\mathcal{E}$ is said to be \emph{optimal} if cutting $N$ along all of $D_j$ produces a topological ball $B$ and a collection of thickened tori $\{T_l\}$. The only exception is $g=s=2$, in which case an optimal collection $\mathcal{E}$ consists of one compressing disk dividing $N$ into two thickened tori.}

From now on we always assume $\mathcal{E}$ is {optimal}.
After cutting along $\{D_j\}$, each $D_j$ becomes two disjoint disks $D_j^+$ and $D_j^-$. For the topological ball $B$, on its boundary lies the disjoint union of several of the disks $D_j^\pm$. Denote the portion of $\partial B$ coming from $\partial_eN$ by $\partial'B$. {Note that $\partial'B$ is an $n$-holed sphere for some $n\ge3$}. For each thickened torus $T_l$, one of its boundary components $\partial _eT_l$ contains one among the pair $D_j^\pm$ for some $j$. As a convention, we will always assume $D_j^+$ bounds the torus (and so $D_j^-$ bounds the ball). Denote the portion of $\partial _eT_l$ from $\partial_eN$ by $\partial'T_l$. {Note that $\partial'T_l$ is a one-holed torus}.

Since each $\gamma_i$ is homotopically nontrivial in $N$, it must intersect $\cup_j\eta_j$. Each segment of $\cup_i\gamma_i-\cup_j\eta_j$ lies on some $\partial'T_l$ or $\partial' B$, and connects two curves (possibly identical) among $\{\eta_j^\pm\}$, where $\eta_j^\pm=\partial D_j^\pm$. {For an example, see Figure~\ref{fig:handlebody} with Figure~\ref{fig:apollonian_ball}(A)}.
We have the following simple lemma.
\begin{lem}\label{lem:connects}
    The collection of curves $\{\gamma_i\}\cup\{\eta_j\}$ fills the surface $\partial_eN$, i.e. $\partial_eN-\cup_i\gamma_i-\cup_j\eta_j$ consists of disks.
\end{lem}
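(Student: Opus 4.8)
The plan is to prove the equivalent statement that every connected region $R$ of $\partial_eN-\cup_i\gamma_i-\cup_j\eta_j$ is a disk, arguing by contradiction. Suppose some $R$ is not a disk; then $\pi_1(R)\neq 1$, so $R$ contains a simple closed curve $c$ that is essential in $R$ and disjoint from all the $\gamma_i$ and $\eta_j$. Since $c$ misses every $\eta_j$, it lies in a single piece of the decomposition of $\partial_eN$ cut along $\cup_j\eta_j$: either in some $\partial'B_k$ (a planar surface, the sphere $\partial B_k$ with the disks $D_j^\pm$ removed) or in some $\partial'T_l$ (a once-holed torus, one boundary torus of $T_l\cong T^2\times[0,1]$ with the single disk $D_j^\pm$ removed). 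I would treat the two cases separately, aiming in each to produce either an essential compressing disk disjoint from $\cup_i\gamma_i$ or a curve on the convex core boundary whose holonomy is at once loxodromic and parabolic.

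For the planar case $c\subseteq\partial'B_k$, I first record that each boundary circle $\eta_j^\pm$ is essential in $\partial_eN$, being the boundary of a non-trivial compressing disk. If $c$ bounds a disk inside $\partial'B_k$, that disk contains no hole, hence no endpoint of a $\gamma$-arc and therefore no $\gamma$-arc at all, so it lies in $R$ and $c$ is inessential in $R$, a contradiction. Thus $c$ separates the holes of $\partial'B_k$ into two nonempty groups, each carrying an essential curve $\eta_j^\pm$, whence $c$ is essential in $\partial_eN$. But $c$ bounds a disk in the ball $B_k$, giving an essential compressing disk whose boundary is disjoint from $\cup_i\gamma_i$, contradicting the disk-busting property of the parabolic locus.

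In the toroidal case $c\subseteq\partial'T_l$ there are three possibilities. If $c$ bounds a disk in $\partial'T_l$, the same hole-counting argument shows $c$ is inessential in $R$. If $c$ is parallel to the single boundary circle $\eta_j^\pm$, capping the annulus between them with $D_j$ realizes $c$ as the boundary of a compressing disk, again essential in $\partial_eN$ and disjoint from $\cup_i\gamma_i$, contradicting disk-busting. The remaining possibility is that $c$ is essential in $T^2$; then the vertical annulus $c\times[0,1]\subseteq T_l$ homotopes $c$ onto the interior toroidal boundary, which represents a rank-$2$ parabolic subgroup, so $c$ would have parabolic holonomy. On the other hand, $c$ lies on $\partial_eN$, which by Proposition~\ref{prop:comp_body} is identified with the finite-area hyperbolic convex core boundary $\partial\core'_\epsilon(M)$, and $c$ is non-peripheral there since it is disjoint from the puncture curves $\gamma_i$; hence its holonomy is loxodromic. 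No element of $\Gamma$ is both parabolic and loxodromic, the desired contradiction.

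The two inputs I expect to be decisive are disk-busting for the parabolic locus and the loxodromic-versus-parabolic dichotomy on the hyperbolic boundary. The hard part will be justifying disk-busting, that no essential compressing disk is disjoint from $\cup_i\gamma_i$: I would deduce this from Otal's criterion \cite{Ota88} together with the fact, established through Corollary~\ref{cor:geom_finite} and Proposition~\ref{prop:acy_geom_fin}, that $(N,P)$ is pared acylindrical. One must also keep careful track that the decomposition into balls and thickened tori, and the identification of $\partial_eN$ with the convex core boundary, are applied consistently, so that every curve $c$ produced genuinely lives on the finite-area hyperbolic surface and the geometric dichotomy is available.
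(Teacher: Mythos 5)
Your proof is correct and follows essentially the same route as the paper's: assume an essential curve in a complementary region, observe that it lies on some $\partial'B_k$ or $\partial'T_l$, and derive a contradiction from incompressibility of the components of $\partial_eN-\cup_i\gamma_i$ (your ``disk-busting'') in the ball case, and from the absence of rank-2 cuspidal curves on those components in the torus case. Two minor remarks: the ``hard part'' you anticipate is already in hand, since incompressibility of $\partial_0N$ is part of the pared acylindrical structure established via Proposition~\ref{prop:acy_geom_fin} (Otal's criterion runs in the opposite direction and is not needed), and your inference that $c$ is non-peripheral merely because it is disjoint from the $\gamma_i$ is not valid as stated, though harmless --- even a peripheral $c$ would have rank-1 parabolic holonomy and so still could not be homotopic into a rank-2 cusp torus.
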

\begin{proof}
    Suppose by contradiction that $\partial_eN-\cup_i\gamma_i-\cup_j\eta_j$ contains a nontrivial curve $\gamma$. Then in particular it lies on $\partial'B$ or some $\partial'T_l$. In the former case, $\gamma$ is homotopically trivial in $N$, and in the latter case, it is homotopic to a curve on an interior toroidal boundary component. Moreover, since $\gamma$ does not intersect any $\gamma_i$, it is contained in a component $S'$ of $\partial_eN-\cup_i\gamma_i$. But $S'$ is incompressible in $N$, and no curve on $S'$ represents a cuspidal curve around a rank-2 cusp.
\end{proof}

The collection $\mathcal{E}$ is called \emph{admissible} if it is optimal, and satisfies the following conditions.
\begin{itemize}
    \item For each segment of $\cup_i\gamma_i-\cup_j\eta_j$ contained in $\partial B$, its endpoints do not lie on the same $\eta_j^\pm$.
    \item Any essential simple path on $\partial'B$ connecting some $\eta_j^\pm$ to itself intersects $\cup_i\gamma_i$ at least twice.
\end{itemize}
Here an essential path connecting two boundary components of a $n$-holed sphere is one that cannot be homotoped rel boundary into the boundary.

The following result reduces finding a subdivision rule to finding an admissible collection of compression disks.
\begin{prop}\label{prop:curve_and_subdivision}
    Any admissible collection $\mathcal{E}$ of disjoint compressing disks gives an irreducible simple finite or $\mathbb{Z}^2$-subdivision rule $\mathcal{R}$ on the graph $\mathcal{G}$ so that each face of the subdivision is an induced Jordan domain.
\end{prop}
\begin{remark}
    We note that because of the arguments in \S \ref{subsec:sjd}, to obtain rigidity it is not necessary to make sure every face of the subdivision is a Jordan domain nor require irreducibility.
    However, the geometrically meaningful modification given here ensures that the first level of subdivision can be described in a single fundamental domain. {This turns out to have dynamical and number-theoretic applications in an upcoming joint work of S.~Lim, K.~Park and the second-named author.}
\end{remark}

\subsection*{Subdivisions and compressing disks}\label{subsec:sacd}
To give a precise statement and prepare the proof of Proposition~\ref{prop:curve_and_subdivision}, we first discuss how to translate the topological data of compressing disks to combinatorial information about subdivision of the graph $\mathcal{G}$.

Via the homeomorphism between the Kleinian manifold $(\overline{M},\partial\overline{M})$ and the compression body $(N,\partial N-P)$, objects discussed above (compressing disks, fragments of their boundary curves, etc.) are also present in $\overline{M}$, which we denote by the same notations. 

In the universal cover $\mathbb{H}^3\cup\Omega$, choose a lift $\tilde B$ of $B$. Then
\begin{itemize}
    \item $\widetilde B$ is bounded by lifts $\widetilde D_{j}^\pm$ of compression disks;
    \item The boundary at infinity of each $\widetilde D_{j}^\pm$, denoted by $\widetilde\eta_j^\pm$, is a closed curve in $\widehat\C$ whose only intersections with circles in the packing $\mathcal{P}$ are points of tangency;
    
    \item The outward-pointing normals along each $\widetilde D_j^\pm$ points to a region $E_j^\pm$ bounded by $\widetilde\eta_j^\pm$, and $\widehat{\mathbb{C}}-\bigcup_{D^\pm_j \text{ bounds }B}\overline{E_j^\pm}$ is contained entirely in the domain of discontinuity -- it is a lift of $\partial'B-\cup_i\gamma_i$.

    \item If $D_j^{+}$ bounds a thickened torus, then $D_j^{-}$ bounds $B$, with associated lift $\widetilde D_j^{-}$ and region $E_j^{-}$. We will define $E_j^+$ the complement of $E_j^{-}$ and $\tilde\eta^{+}=\tilde\eta^{-}$. 
    
    \item For each pair $D_j^{\pm}$ not bounding a thickened torus, there exists a pair of gluing maps $g_j^{\pm}\in\Gamma$ that glues back together $\widetilde D_j^{\pm}$. More precisely, $g_j^{+}=(g_j^{-})^{-1}$, and $g_j^{\pm}$ maps the complement of $E_j^{\mp}$ to $E_j^{\pm}$.
    \item If $D_j^{+}$ bounds a thickened torus $T_l$, there exists a rank-2 parabolic subgroup $\Gamma_l\subseteq\Gamma$ such that $\overline{E_j^{-}}=\overline{\bigcup_{\gamma\in\Gamma_l-\{id\}}\gamma\cdot E_j^+}$.
\end{itemize}
Note that each $\widetilde\eta_j^\pm$ determines a cycle $\partial P_j^{\pm}$ in $\mathcal{G}$, which bounds a polygon whose face corresponds to the region $E_j^{\pm}$. The third point above implies that the union of these polygons is the sphere. We claim
\begin{lem}
    If the collection $\mathcal{E}$ is admissible, then the cycles $\partial P_j^\pm$ are induced subgraphs in $\mathcal{G}$ bounding a Jordan domain.
\end{lem}
\begin{proof}
    If a cycle fails to bound a Jordan domain, then some vertices or edges are repeated. If an edge is repeated, the the corresponding curve $\widetilde\eta_j^\pm$ passes through a point of tangency twice. Since points of tangecy are parabolic fixed points, this only happens if $\eta_j^\pm$ is connected to itself by a segment of $\cup_i\gamma_i-\cup_j\eta_j$, contradicting admissibility. Technically, there might be several nested curves (which are orbits of some of $\widetilde\eta_t^\pm$ under the Kleinian group) passing through the same tangency twice (see Figure~\ref{fig:nested}), but we can apply the argument above to the innermost curve.
    \begin{figure}[htp]
        \centering
        \includegraphics[width=0.35\textwidth]{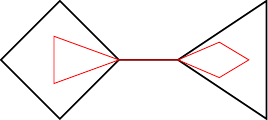}
        \caption{Nested cycle repeating the same edge}
        \label{fig:nested}
    \end{figure}
    
    Similarly, if some vertices (but none of the edges) are repeated, then there exist two distinct segments of $\widetilde\eta_j^\pm$ that lie in the same disk of the packing, and we can connect them via a path in that disk. If this path is contained in $E^{\pm}_j$ and $D_j^{\pm}$ both bound $B$, then we can use $g^{\mp}_j$ to obtain a similar path in the complement of $E^{\mp}_j$, and it can be modified to remain in the domain of discontinuity. If $D_j^+$ bounds a thickened torus, then this path cannot be in $E_j^{-}$ (since it contains a $\mathbb{Z}^2$-tiling of $E_j^{+}$), and as above, we can modify this path to remain in the domain of discontinuity. Thus the path projects down to a simple path on $\partial' B$ violating admissibility. See the picture on the left of Figure~\ref{fig:cycles}.

    Finally, if $\partial P_j^{\pm}$ is not an induced subgraph, then there exists two vertices $v,w$ nonadjacent in $\partial P_j^{\pm}$ but the corresponding disks $D_v$ and $D_w$ are tangent. Then two distinct segments of $\widetilde\eta_j^\pm$ lie in these two disks, and we can connect them via a path that passes through only one tangency point. As above, this contradicts admissibility. See the picture on the right of Figure~\ref{fig:cycles}.
    \begin{figure}[htp]
        \centering
        \includegraphics[width=0.35\textwidth]{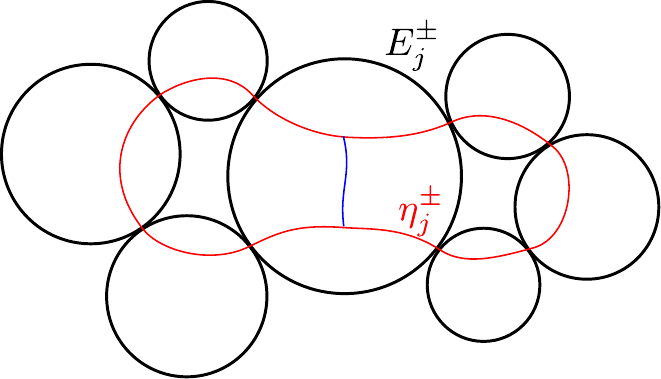}
        \includegraphics[width=0.35\textwidth]{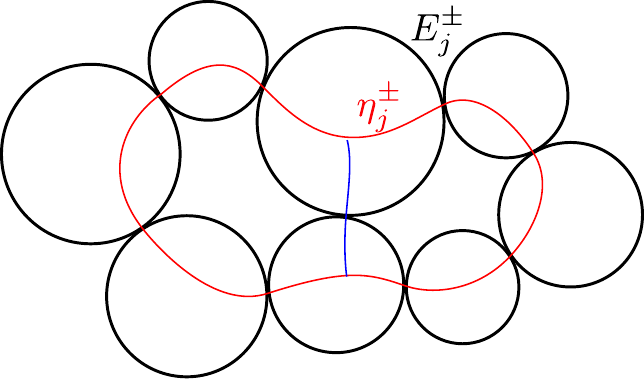}
        \caption{In the case where $\partial P_j^{\pm}$ does not bound Jordan domain (left), or is not an induced graph (right), we can find paths contradicting admissibility.}
        \label{fig:cycles}
    \end{figure}
\end{proof}

We now proceed to describe how to associate a subdivision rule to an admissible collection $\mathcal{E}$ of compressing disks. For simplicity, first assume that the Kleinian group $\Gamma$ contains no rank-2 parabolic subgroups.
Then we can define a finite subdivsion rule $\mathcal{R}$ as follows.

\subsubsection*{The polygons}
The collection of polygons is $\{P_j^\pm\}$.

\subsubsection*{The $\mathcal{R}$-complex}
We first identify a spherical CW-complex whose faces are identified with the given polygons. Recall that each $P_j^{\pm}$ comes from a cycle in the nerve $\mathcal{G}$, together with one of the Jordan domains bounded by the cycle. The union of these cycles gives a spherical graph, whose faces are precisely the polygons $P_j^{\pm}$.

\subsubsection*{The subdivision and the cellular maps}
For each $P_j^\pm$, the gluing maps $g_{j}^{\mp}$ between $\widetilde D_j^{\pm}$ induce homeomorphisms between $P_j^{\pm}$ and the complement of $P_j^{\mp}$. Note that the complement of $P_j^{\mp}$ is the union of all the polygons except $P_j^{\mp}$, so this gives a subdivision of $(P_j^{\mp})^c$. Pulling this subdivision back via $g_j^{\mp}$, we get a subdivision of $P_j^{\pm}$. Moreover, the restriction of $g_j^{\mp}$ to each polygon in the subdivision gives the required cellular maps.

\subsubsection*{Rank-2 cusps}
We briefly discuss the case where $\Gamma$ contains at least one rank-2 parabolic subgroup. Without loss of generality, suppose $D_1^+$ bounds a thickened torus $T_1$.
Then the corresponding rank-2 parabolic subgroup $\Gamma_1$ induces a tiling of $P_1^{-}$ by translates of $P_1^{+}$.
Using the notation of \S\ref{sec:z2sr}, we set $Q_1=P_1^-$. Then
$$Q_1=\overline{\bigcup_{\gamma\in\Gamma_1-\{id\}}\gamma\cdot P_1^+}.$$
Now suppose $P_1^+,Q_1,\ldots,P_s^+,Q_s$ are all the polygons obtained as above for thickened tori $\{T_l\}$, and $P_{s+1}^\pm,\ldots,P_{t}^\pm$ the remaining polygons obtained from other compressing disks, then one can define subdivisions of each polygon similarly as above. This gives a $\Z^2$-subdivision rule, as desired.

\subsection{Construction of admissible $\mathcal{E}$}\label{subsec:coae}
It remains to explicitly choose an admissible collection $\mathcal{E}$ of compressing disks on $N$. Suppose $\partial_eN$ is a surface of genus $g$, and there are $s$ toroidal boundary components (clearly $s\le g$).

It is easy to see that we can find $g$ nontrivial compressing disks $D_1,\ldots, D_g$ (with boundary $\eta_1,\ldots,\eta_g$), so that $D^+_l$ bounds thickened tori $T_l$ for $1\le l\le s$, and $N-\bigcup_{l=1}^s T_l-\bigcup _{j=s+1}^gD_j$ is a connected ball $B$. (The only exception is $g=2$ and $s=2$, where we only need one compressing disk $D_1$ to decompose $N$ into two thickened tori.) If the collection $\mathcal{E}:=\{D_1,\ldots, D_s\}$ is already admissible, then we are done. Otherwise, we can modify the collection as follows.

First consider $D_1$; the other disks bounding tori can be modified similarly. First suppose on $B$, there is a segment $L$ of $\cup\gamma_i-\cup_j\eta_j$ on $B$ connecting $\eta_1$ to itself. The end points of $L$ divides $\eta_1$ into two parts $\eta_1'$ and $\eta_1''$. We first claim
\begin{lem}
    There exists a segment $L'$ of $\cup\gamma_i-\cup_j\eta_j$ on $\partial'T_1$ connecting the interior of $\eta_1'$ to that of $\eta_1''$.
\end{lem}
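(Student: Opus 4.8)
The plan is to exploit the special topology of $\partial' T_1$. Since $T_1$ is a thickened torus realizing a rank-$2$ cusp and the disk $D_1^+$ is glued along $\eta_1=\eta_1^+$, the piece $\partial' T_1$ of $\partial_eN$ is a once-holed torus $\Sigma$ with $\partial\Sigma=\eta_1$. Because $\eta_1$ is the only compressing curve lying on $\Sigma$, every segment of $\cup_i\gamma_i$ contained in $\Sigma$ is an embedded arc with both endpoints on $\eta_1$. By Lemma~\ref{lem:connects} the curves $\{\gamma_i\}\cup\{\eta_j\}$ fill $\partial_eN$, so in particular these arcs cut $\Sigma$ into disks. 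The endpoints $p,q$ of $L$ lie on $\eta_1$, and the continuations of $\gamma_i$ past $p$ and $q$ are themselves two such arcs $\alpha,\beta$ emanating into $\Sigma$. The goal is then to produce an arc of $\cup_i\gamma_i\cap\Sigma$ with one endpoint in $\Int(\eta_1')$ and the other in $\Int(\eta_1'')$.

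First I would argue by contradiction, assuming no arc of $\cup_i\gamma_i\cap\Sigma$ joins $\Int(\eta_1')$ to $\Int(\eta_1'')$. The key point is that a once-holed torus cannot be cut into disks by arcs that all fail to cross a fixed partition of its boundary into two sub-arcs, once one also records that $p$ and $q$ are occupied by $\alpha,\beta$: the filling property forces the arcs to realize both generators of $H_1(\Sigma,\partial\Sigma)$, so two of them must interleave around $\eta_1$, and since $\alpha,\beta$ locally separate the $\eta_1'$ side from the $\eta_1''$ side at $p$ and at $q$, I can either exhibit a crossing arc directly, or else construct an embedded arc $\mu\subseteq\Sigma$ from $p$ to $q$ meeting $\cup_i\gamma_i$ only at its endpoints.

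In the latter case I would push the segment $L$ across the ball $B_1$ relative to its endpoints (possible since $B_1$ is a ball and its boundary is planar), and then across $\mu$, to homotope $\gamma_i$ onto the torus bounding $T_1$. This would make the rank-$1$ parabolic $\gamma_i$ freely homotopic into the rank-$2$ cusp of $T_1$, which is impossible. This contradiction yields the desired arc $L'$.

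The hard part will be the middle step: rigorously converting the hypothesis that no arc crosses from $\eta_1'$ to $\eta_1''$ into either a direct crossing arc or the disjoint arc $\mu$, and then into an actual homotopy of $\gamma_i$ into the rank-$2$ cusp torus. This is a piece of surface bookkeeping on $\Sigma$, tracking the disk decomposition induced by the filling arcs together with the marked points $p,q$, combined with the fact, coming from Proposition~\ref{prop:acy_geom_fin} and the incompressibility of the components of $\partial_eN-\cup_i\gamma_i$, that $\gamma_i$ is a genuine rank-$1$ parabolic and so cannot be absorbed into a rank-$2$ cusp. A cleaner alternative I would also consider is a purely combinatorial Euler-characteristic count: an arc system cutting $\Sigma$ into disks has a prescribed number of arcs relative to the number of complementary regions, and analyzing the induced chord diagram on $\eta_1$ relative to $\{p,q\}$ should force a chord joining the two sides.
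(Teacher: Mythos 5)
Your contradiction scheme is built around the right mechanism, which is in fact the paper's: form a closed curve out of $L$ and an auxiliary arc in $\partial'T_1$, use the ball $B_1$ to homotope $L$ rel endpoints onto $\eta_1'$, so that the closed curve becomes a curve on the boundary of the thickened torus $T_1$, hence (if essential) a rank-2 cuspidal class; and play this off against the fact that a curve missing $\cup_i\gamma_i$ lies in an incompressible component of $\partial_eN-\cup_i\gamma_i$, which carries no rank-2 cuspidal classes (this is exactly the mechanism in the proof of Lemma~\ref{lem:connects}). But as written there are two genuine gaps. First, your contradiction is aimed at the wrong object: moving only the segment $L$ cannot homotope the closed curve $\gamma_i$ into $T_1$ --- the rest of $\gamma_i$ is untouched --- and ``pushing $L$ across $\mu$'' would require $L$ and $\mu$ to be homotopic rel endpoints, i.e., the closed curve $L\cup\mu$ to be nullhomotopic, which is the opposite of what you need. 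The curve that gets homotoped into the torus is the auxiliary closed curve $L\cup\mu$ itself (the paper's $L\cup J$), and the impossibility is not ``a rank-1 parabolic absorbed into a rank-2 cusp'' but ``an essential rank-2 cuspidal curve homotopic into an incompressible component of $\partial_eN-\cup_i\gamma_i$.''

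Second --- and this is where your self-declared ``hard part'' hides the actual content --- an arc $\mu$ that merely avoids $\cup_i\gamma_i$ is useless: you need $\mu$ not homotopic rel endpoints to either $\eta_1'$ or $\eta_1''$. If $\mu\simeq\eta_1'$, then $L\cup\mu$ is nullhomotopic in $N$ (push $L$ across the ball); if $\mu\simeq\eta_1''$, then $L\cup\mu\simeq\eta_1$, which bounds the compressing disk $D_1$. In either case there is no contradiction, and your construction never imposes or verifies this essentiality; the homological observation that the filling arcs realize both generators of $H_1(\Sigma,\partial\Sigma)$ does not by itself force a chord of the diagram on $\eta_1$ to separate $p$ from $q$, precisely because arcs on a one-holed torus with both endpoints on one side of $\{p,q\}$ can still be essential. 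The paper sidesteps all of this bookkeeping by reversing the quantifiers: it first chooses a simple arc $J\subseteq\partial'T_1$ joining the endpoints of $L$ and not homotopic to $\eta_1'$ or $\eta_1''$ (such $J$ exists by elementary topology of the one-holed torus, with no appeal to filling), notes that $L\cup J$ is then an essential curve homotopic in $N$ to a curve on the cusp torus, and concludes that $J$ must cross some segment $L'$ of $\cup_i\gamma_i-\cup_j\eta_j$, which is then seen to join $\eta_1'$ to $\eta_1''$. If you keep your contrapositive route, you must both (i) replace the object of the homotopy by $L\cup\mu$ and (ii) prove, under the hypothesis that no arc crosses, that an \emph{essential} avoiding $\mu$ exists --- which is exactly the step you left open, so the proposal is a plan rather than a proof at the decisive point.
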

\begin{proof}
    We can connect the two endpoints of $L$ in $\partial'T_1$ via a simple segment $J$ not homotopic to either $\eta_1'$ nor $\eta_1''$. Then $L\cup J$ is homotopic in $N$ to a simple close curve on the torus boundary. On the other hand, if $J$ does not intersect any $\gamma_i$, the curve $L\cup J$ can be homotoped in $\partial_eN$ to be disjoint from $\cup\gamma_i$. But such a curve cannot be homotopic to a cuspidal curve around the rank-2 cusp, a contradiction. So $J$ must intersect some segment $L'$ of $\gamma_i$ in $\partial'T_1$, and it is then easy to see that $L'$ connects $\eta_1'$ to $\eta_1''$.
\end{proof}
\begin{figure}[htp]
    \centering
    \includegraphics[width=0.5\textwidth]{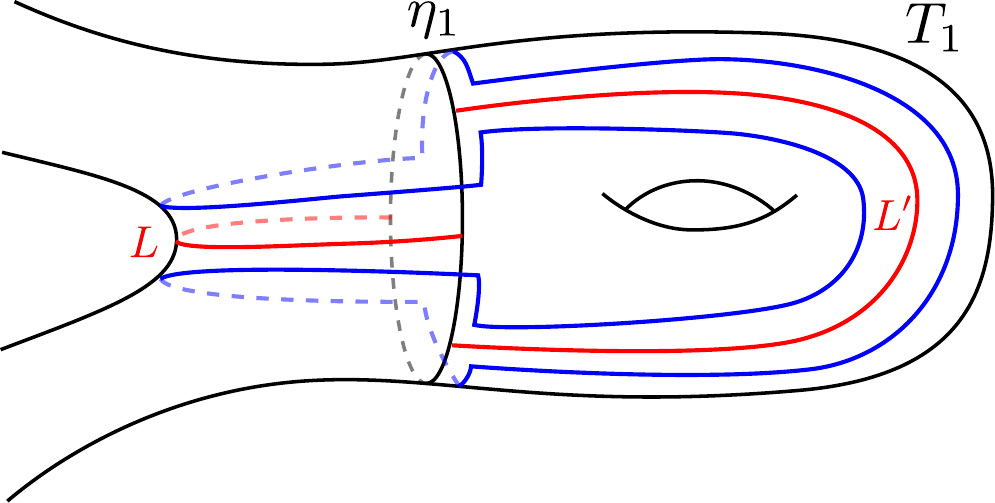}
    \caption{Modifying the disk bounding a thickened torus. To avoid crowding the figure, the torus interior boundary component is not drawn.}
    \label{fig:disk-pushing-torus}
\end{figure}
Now as in Figure~\ref{fig:disk-pushing-torus}, tracing two parallel copies of $L$, one copy of $\eta_1$, and two parallel copies of $L'$, we obtain a curve $\eta_1'$ and it is easy to see that it bounds a compressing disk $D_1'$. Clearly there are fewer segments of $\cup_i\gamma_i-\cup_j\eta_j$ connecting $\eta_1'$ to itself, and this does not affect the disks bounding other thickened tori. Inductively, we can thus modify $D_1,\ldots,D_s$ so that no segment of $\cup_i\gamma_i-\cup_j\eta_j$ connects $\eta_j^-$ to itself for $1\le j\le s$. An entirely analogous argument also ensures that the second condition in the definition of admissibility is satisfied for these disks.

Next we consider $D_g$ assuming it does not bounds a thickened torus. Suppose there is a segment $L$ of $\cup_i\gamma_i-\cup_j\eta_j$ connecting $D_g^+$ to itself in $B_1$. Tracing two parallel copies of $L$ and a copy of $\eta_g$, we obtain two compressing disks $D_g^1$ and $D_g^2$; see Figure~\ref{fig:disk_pushing_ball}.
\begin{figure}[htp]
    \centering
    \includegraphics[width=0.45\textwidth]{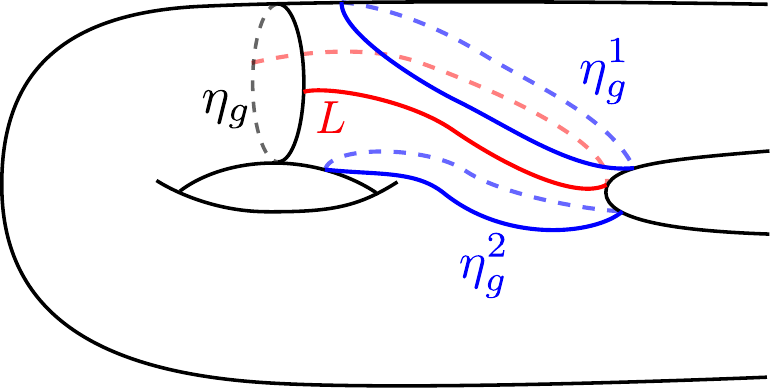}
    \caption{Modifying a disk not bounding a thickened torus}
    \label{fig:disk_pushing_ball}
\end{figure}
One of $D_g^1$ and $D_g^2$, say $D_g^1$, is either homotopic to one of the other disks, or a separating disk (for otherwise, the genus of $\partial_eN$ would be larger than $g$).

Replacing $D_g$ with $D_g^2$, note that the number of segments violating the first condition in admissibility either reduces by 1, or remain the same, if the two segments in $\cup_i\gamma_i-\cup_j\eta_j$ extending $L$ beyond its two endpoints terminate at the same $\eta_j^\pm$. We can then replace $D_j$ similarly by one of the two disks $D_j^1$ and $D_j^2$. This process can be repeated, and terminates when one of the following happens:
\begin{itemize}
    \item The two endpoints of extended $L$ are connected by some segment of $\cup_k\gamma_k-\cup_j\eta_j$. This is impossible since we now have a curve among $\gamma_i$ that is homotopically trivial.
    \item The two segments extending $L$ terminate at two different $\eta_j^\pm$. In this case, the number of segments violating the first condition in admissibility decreases by 1.
    \item The two endpoints of $L$ lie on some $\eta_j$ bounding a thickened torus. In this case, modify the disk as in Figure~\ref{fig:disk-pushing-torus}, we again reduce the number of segments violating the first condition by 1.
\end{itemize}
Inductively, we can thus guarantee the first condition is satisfied. An entirely analogous argument also guarantees the second condition is satisfied.

In this way, we obtain an admissible collection of compressing disks, as desired.

\subsection{A topological model}\label{subsec:atm}
We give a purely topological way of visualizing the subdivision rule. For simplicity, we focus on one simple example given in Figure~\ref{fig:handlebody}.

It is easy to find an admissible collection of compressing disks. In fact, for one such collection, after cutting the handlebody along the disks, we obtain a topological ball $B$ as in Figure~\ref{subfig:ball_model}. The subdivisions are also illustrated on the Apollonian circle packing in Figure~\ref{subfig:triadic}.
\begin{figure}[htp]
    \centering
    \begin{subfigure}[b]{0.35\textwidth}
        \centering
        \includegraphics[width=\textwidth]{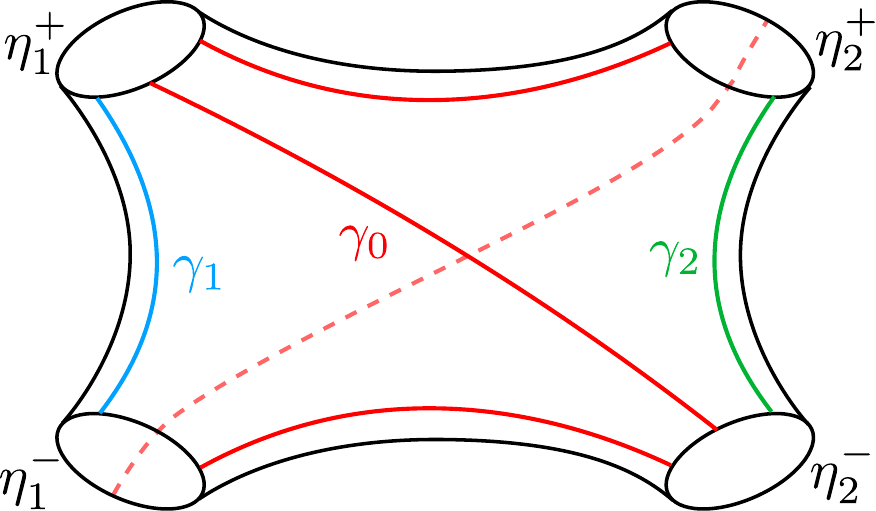}\vspace{15pt}
        \caption{}
        \label{subfig:ball_model}
    \end{subfigure}
    \begin{subfigure}[b]{0.55\textwidth}
        \centering
        \includegraphics[width=\textwidth]{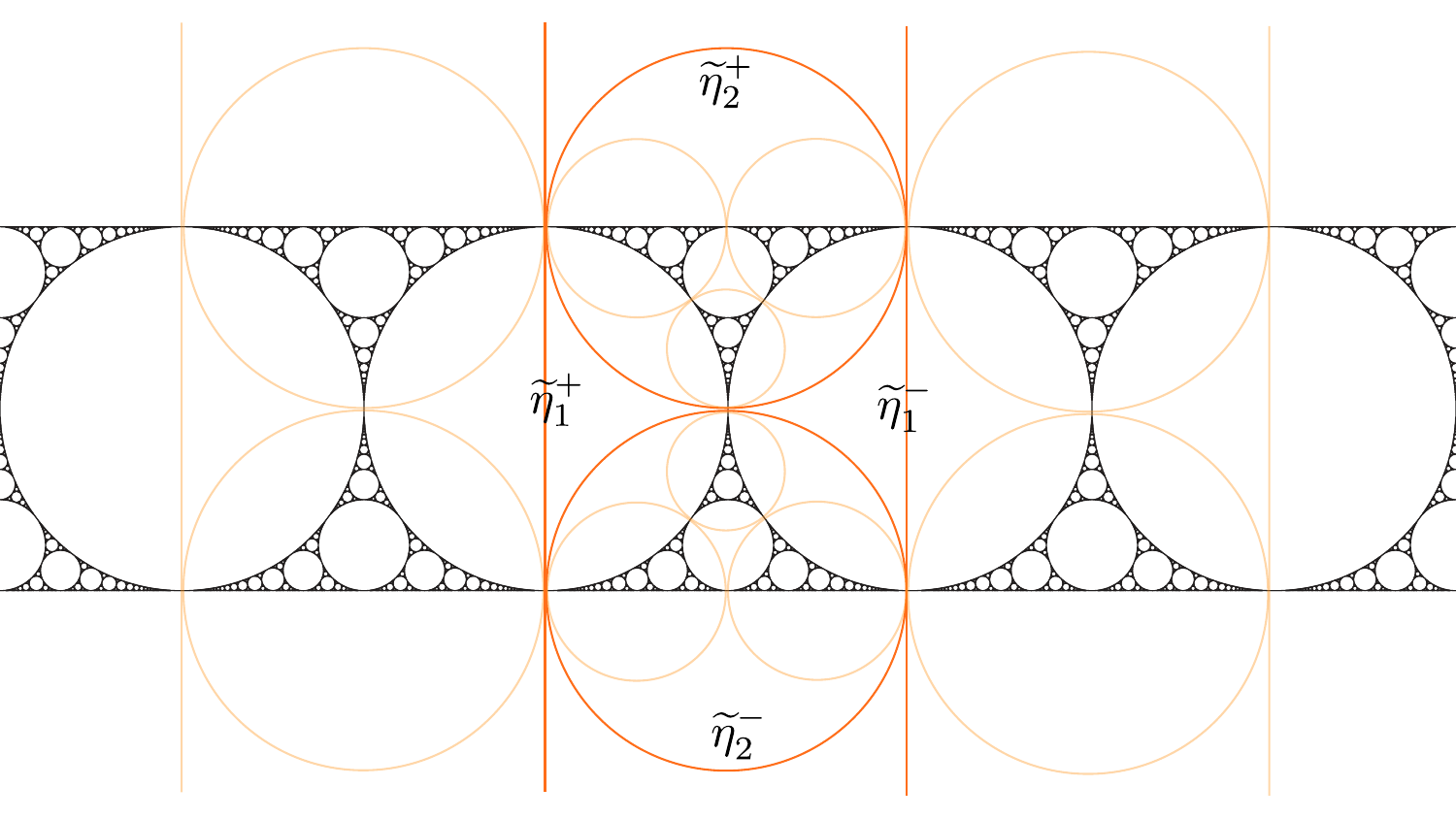}
        \caption{}
        \label{subfig:triadic}
    \end{subfigure}
    \caption{Cutting along an admissible collection of compressing disks, topologically (A) or on the limit set (B)}
    \label{fig:apollonian_ball}
\end{figure}

We can also use $B$ as a fundamental domain to build up a universal cover for the handlebody, which is homotopic to a infinite 4-valent tree (note that $\gamma_1$ and $\gamma_2$ generates the fundamental group); see Figure~\ref{fig:infinite_tree}.
\begin{figure}[htp]
    \centering
    \includegraphics[width=0.5\textwidth]{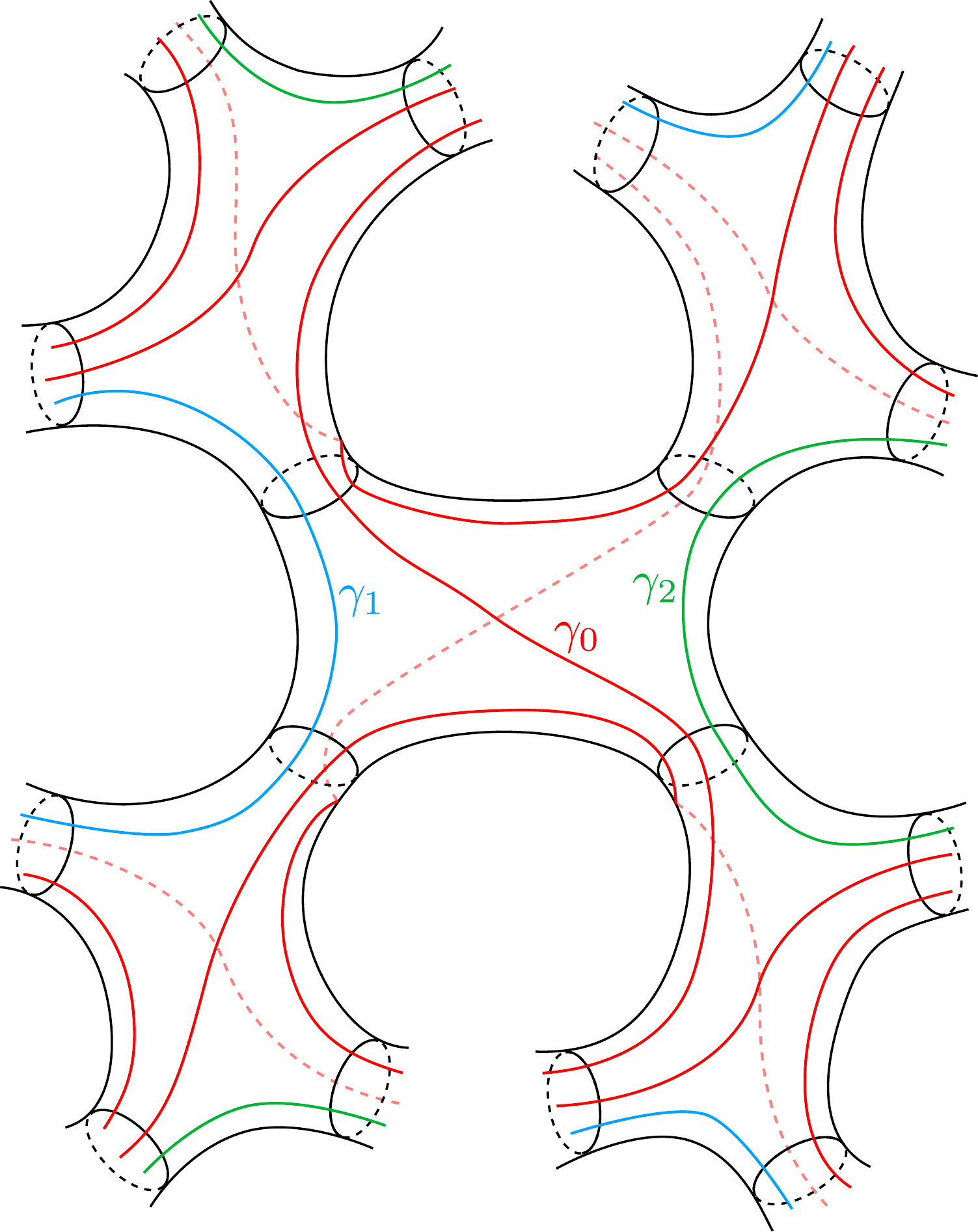}
    \caption{An infinite tree model for the universal cover}
    \label{fig:infinite_tree}
\end{figure}
In this picture, the parabolic locus is unraveled to be a collection of infinite lines. Each component of the complement of these lines on the boundary corresponds to a disk in the packing. Entering a new copy of the ball $B$ is tantamount to entering a piece of the subdivision at the next level. The end space of the tree is naturally identified with the limit set with each parabolic fixed point blown up to be two points, which are also the ends of the line associated to the corresponding parabolic element.

\appendix

\section{Generalizations of subdivision rules}\label{sec:z2sr}
In this appendix, we will explain how our methods can be generalized for graphs with other types of subdivision rules.

\subsection{Interpolations of finite subdivision rules}
In this section, we generalize the finite subdivision rules.
This generalization allows us to study subdivisions that are not periodic, and can be regarded as the interpolation of multiple finite subdivision rules.

\begin{defn}\label{defn:ifsr}
An {\em interpolation of finite subdivision rules} $\mathcal{R}$ consists of
\begin{enumerate}
\item a finite collection of {oriented} polygons $\{P_i: i=1,..., k\}$;
\item a subdivision $\mathcal{R}(P_i)$ that decomposes each polygon $P_i$ into $m_i \geq 2$ closed 2-cells
$$P_i = \bigcup_{j=1}^{m_i} P_{i, j}$$
so that each edge of $\partial P_i$ contains no vertices of $\mathcal{R}(P_i)$ in its interior {and each 2-cell $P_{i,j}$ inherits the orientation of $P_i$}; and
\item {a map $\sigma:\bigcup_{i=1}^k\bigcup_{j=1}^{m_i}\left(\{(i,j)\}\times\{1,\ldots,k_{i,j}\}\right)\to\{1,\ldots,k\}$ and a collection of orientation preserving cellular maps for each $i\in\{1,\ldots,k\}, j\in\{1,\ldots,m_i\}, l\in\{1,\ldots,k_{i,j}\}$, denoted by
$$
\psi_{i,j,l}: P_{\sigma(i,j,l)} \longrightarrow P_{i,j},
$$
that are homeomorphisms between the open 2-cells.}
\end{enumerate}
\end{defn}
\begin{rmk}
    We remark that the difference between an interpolation of finite subdivision rules and a finite subdivision rule is in the condition $(3)$.
    For a finite subdivision rule, there is a {\em unique} identification between a face of the subdivision with a polygon in the original list, while an interpolation of finite subdivision rules allows multiple different ways of identifications.
\end{rmk}

Let $\mathcal{R}$ be an interpolation of finite subdivision rules.
Let $\mathcal{R}(P_i)$ be the subdivision of $P_i$ and $\mathcal{G}^1_i$ be the corresponding 1-skeleton.
\begin{defn}\label{defn:icyl}
Let $\mathcal{R}$ be an interpolation of finite subdivision rules. 
We say it is 
\begin{itemize}
    \item {\em simple} if for each $i$, $\mathcal{G}^1_i$ is a simple graph;
    \item {\em irreducible} if for each $i$, $\partial P_i$ is an induced subgraph of $\mathcal{G}^1_i$;
    \item {\em acylindrical} if for each $i$, each face of $\mathcal{R}(P_i)$ is an induced Jordan domain, and the face $P_i$ is acylindrical in $\mathcal{G}^1_i$.
\end{itemize}
\end{defn}

\begin{figure}[htp]
    \centering
    \includegraphics[width=0.25\textwidth]{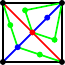}    \caption{The first few levels of an interpolation of the two finite subdivision rules shown in Figure~\ref{fig:subd}. Note that this example is not acylindrical.}
    \label{fig:interpolation}
\end{figure}

Let $P\in \{P_1,..., P_k\}$ be a polygon.
To iterate $\mathcal{R}$ on $P$, we need to make a choice of the identification of the face of $\mathcal{R}(P)$ with a polygon in $\{P_i: i=1,..., k\}$.
Suppose that such a choice is made at every iteration.
Abusing the notations, we denote the sequence of subdivisions by $\mathcal{R}^n(P)$. 
Let $\mathcal{G}^n$ be the 1-skeleton of $\mathcal{R}^n(P)$, and $\mathcal{G}$ be the direct limit of $\mathcal{G}^n$.
Let $F$ be the external face for $\mathcal{G}$.

We call the infinite graph $\mathcal{G}$ an {\em admissible subdivision graph} for $\mathcal{R}$.
We remark that there can be uncountably many different admissible subdivision graph for $\mathcal{R}$.

The same proof as Theorem \ref{thm:LR} gives the following.
\begin{theorem}\label{thm:ILR}
Let $\mathcal{R}$ be a simple, irreducible, acylindrical interpolation of finite subdivision rules.
Let $\mathcal{G}$ be an admissible subdivision graph for $\mathcal{R}$.
Then the Teichm\"uller space $\Teich(\mathcal{G})$ is isometrically homeomorphic to $\Teich(\Pi_{F})$.
In particular, any two circle packings with nerve $\mathcal{G}$ are quasiconformally homeomorphic.
\end{theorem}

Given a sequence of nested faces for $\mathcal{G}$, we can define the corresponding iteration of renormalizations $\mathfrak{R}^n$.
The same proofs of Theorems~\ref{thm:LB} and \ref{thm:EC} also give
\begin{theorem}\label{thm:IEC}
Let $\mathcal{R}$ be an interpolation of finite subdivision rules.
There exist constants $\delta < 1$ and $C>0$ {depending only on $\mathcal{R}$} such that the following holds.

Let $\mathcal{G}$ be an admissible subdivision graph for $\mathcal{R}$.
\begin{itemize}
\item(Finite approximation) Suppose $\mathcal{P}, \mathcal{P}' \in \Teich(\mathcal{G}^{n+k})$ have the same external class.
Let $\mathcal{P}^k, (\mathcal{P}')^k$ be the sub-circle packing associated to $\mathcal{G}^k$. Then
$$
d(\mathcal{P}^k, (\mathcal{P}')^k) \leq \delta^{n-1} \min\{d(\mathcal{P}, \mathcal{P}'), C\} = O(\delta^n).
$$
    \item(Renormalization) Let $\mathcal{P}, \mathcal{P}' \in \Teich(\mathcal{G})$. Then given a sequence of nested faces for $\mathcal{G}$, we have
\begin{align*}
 d(\mathfrak{R}^n((\mathcal{P}, x)), \mathfrak{R}^n((\mathcal{P}', x')) &\leq \delta^{n-1} d(\mathfrak{R}((\mathcal{P}, x)), \mathfrak{R}((\mathcal{P}', x')) \\
 &\leq \delta^{n-1} \min\{d((\mathcal{P}, x), (\mathcal{P}', x')), C\} \\&= O(\delta^n). 
\end{align*}
\end{itemize}
\end{theorem}

\subsection{$\Z^2$-subdivision rules}
In this subsection, we discuss what happens if the subdivision rule is not finite.
We illustrate the ideas using a special type of subdivision rules, called {\em $\Z^2$-subdivision rule}.
Such subdivision rules arise naturally from rank 2 cusps of a Kleinian groups, and will be applied in \S \ref{sec:kcp}.

\subsection*{$\Z^2$-tilings}
A CW-complex $X$ that is homeomorphic to $\C$ is called a {\em $\Z^2$-tiling} if there exists a properly discontinuous action of $\Z^2$.
Note that each element of $\Z^2$ is a cellular homeomorphism of $X$, thus $X$ necessarily contains infinitely many 2-cells.

Let $P$ be a fundamental domain for this $\Z^2$-action, which we assume to be a Jordan domain.
Let $Q = \overline{\widehat\C - P}$ be the complementary polygon associated to $P$.
Then the $\Z^2$-tiling generates a subdivision of $Q$. More precisely, we can write
$$
Q = \overline{\bigcup_{g \in \Z^2-\{id\}}g\cdot P},
$$
where $(g\cdot \Int(P)) \cap (g'\cdot \Int(P)) = \emptyset$ if $g\neq g'$.
We remark that technically, this is a subdivision of the punctured polygon $Q^*:= Q - \{\infty\}$.
To avoid complicated notations, we will abuse the terminology and simply call it a subdivision of $Q$.

\subsection*{$\Z^2$-subdivisions}
A $\Z^2$-subdivision rule is a generalization of a finite subdivision rule, that allows infinitely many faces in the subdivision.
\begin{defn}
    A $\Z^2$-subdivision rule $\mathcal{R}$ consists
\begin{enumerate}
\item a finite collection of {oriented} polygons $\{P_1,..., P_k, Q_1, ..., Q_l\}$ with $l \leq k$ and $Q_i = \overline{\widehat\C - P_i}$ is the complementary polygon of $P_i$;
\item a subdivision $\mathcal{R}(P_i)$ that decomposes each polygon $P_i$ into at least two closed 2-cells
$$P_i = \bigcup_{j=1}^{m_i} P_{i, j} \cup \bigcup_{j=1}^{n_i} Q_{i,j}$$
so that each edge of $\partial P_i$ contains no vertices of $\mathcal{R}(P_i)$ in its interior {and each 2-cell $P_{i,j},Q_{i,j}$ inherits the orientation of $P_i$}; 
\item {two maps $\sigma$, $\tau$ specifying types and a collection of {orientation-preserving} cellular maps, denoted by}
$$
\psi_{i,j}: P_{\sigma(i,j)} \longrightarrow P_{i,j}, \;\; \phi_{i,j}: Q_{\tau(i,j)} \longrightarrow Q_{i,j}
$$
that are homeomorphisms between the open 2-cells;
\item a $\Z^2$-tiling $X_i$ with fundamental domain $P_i$ which induces a subdivision $\mathcal{R}(Q_i)$ by
$$
Q_i = \overline{\bigcup_{g \in \Z^2-\{id\}}g\cdot P_i},
$$
where each closed 2-cell of $\mathcal{R}(Q_i)$ is naturally identified with $P_i$.
\end{enumerate}
\end{defn}

Similar as a finite subdivision rule, we can iterate this procedure.
We define $\mathcal{G}^n_i$ and $\mathcal{H}^n_i$ as the 1-skeleton of $\mathcal{R}^n(P_i)$ and $\mathcal{R}^n(Q_i)$ respectively.
Let $\mathcal{G}_i = \bigcup_n \mathcal{G}^n_i$ and $\mathcal{H}_i = \bigcup_n \mathcal{H}^n_i$.
We say $\mathcal{R}$ is {\em simple} if $\mathcal{G}_i$ and $\mathcal{H}_i$ are simple.

Following the discussion in \S \ref{subsec:sjd}, we say $\mathcal{R}$ is {\em acylindrical} if
\begin{itemize}
    \item each face of $\mathcal{G}^1_i$ is an induced Jordan domain;
    \item the subgraph $\mathcal{G}^0_i$ is acylindrical in $\mathcal{G}^1_i$.
\end{itemize}
We remark that by our definition, the above two properties hold automatically for $\mathcal{H}^1_i$ and $\mathcal{H}^0_i$.

We can define an $\mathcal{R}$-complex $X$ similar as in the case of finite subdivision rule.
If $X$ is homeomorphic to the Riemann sphere $\widehat\C$, we say the corresponding graph $\mathcal{G} = \bigcup_n \mathcal{G}^n$ a spherical subdivision graph with $\Z^2$-subdivision rule.

\begin{figure}[htp]
    \centering
    \includegraphics[width=0.75\textwidth]{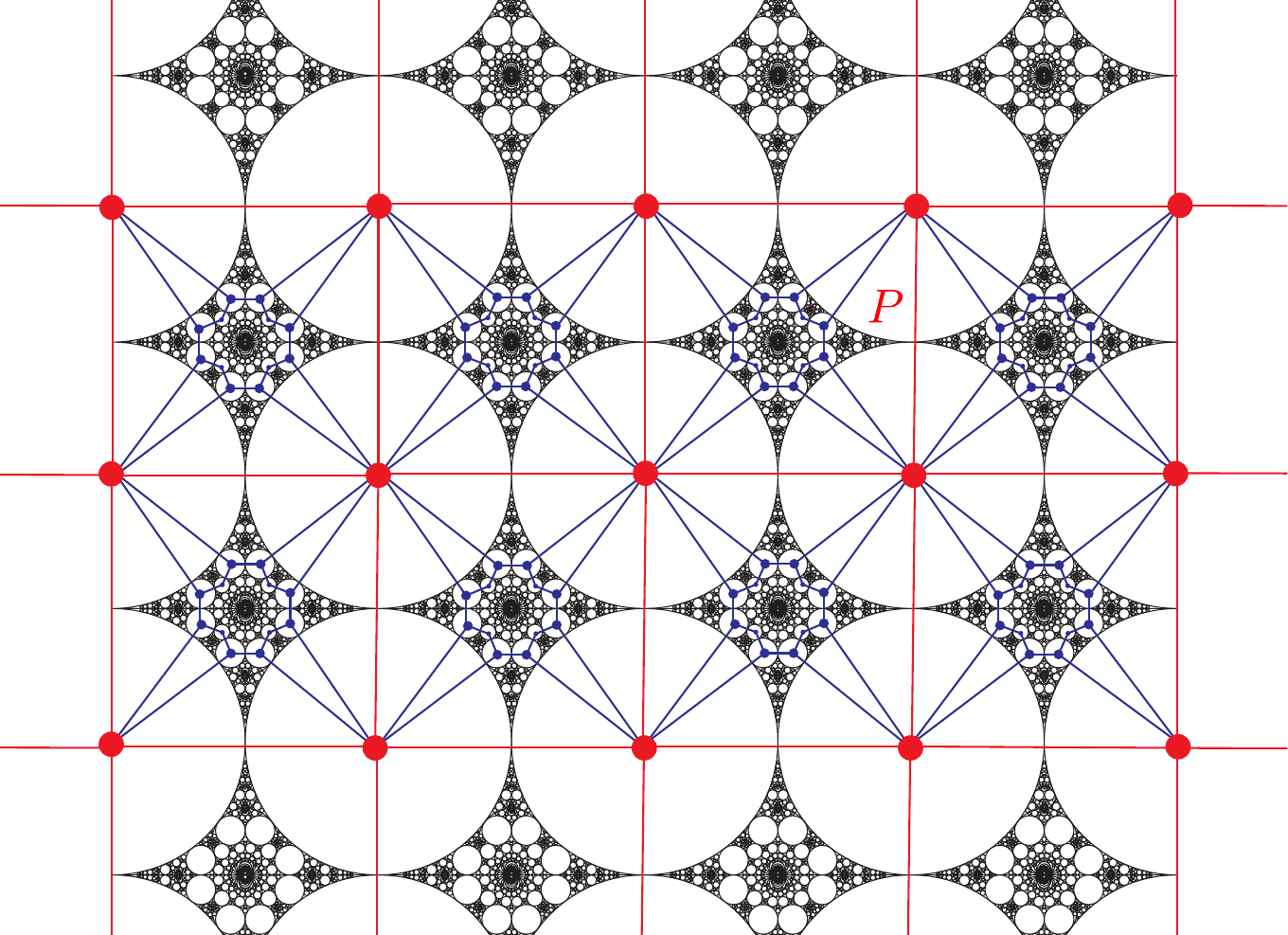}
    \caption{An example of a subdivision graph for a $\Z^2$-subdivision rule with the associated circle packing, given by a Kleinian group}
    \label{fig:z2sreg}
\end{figure}

\subsection*{Existence and rigidity of $\Z^2$-subdivision rule}
\begin{theorem}\label{thm:Z2EU}
    Let $\mathcal{R}$ be a simple acylindrical $\Z^2$-subdivision rule.
    \begin{itemize}
        \item The Teichm\"uller space
        $\Teich(\mathcal{G}_i)$ is isometrically homeomorphic to $\Teich(\Pi_{F_i^{\ext}})$, where $F_i^{\ext}$ is the external face of $\mathcal{G}_i$.
        \item Suppose $\mathcal{G}$ is a simple spherical subdivision graph with $\Z^2$-subdivision rule $\mathcal{R}$.
        Then there exists a unique circle packing $\mathcal{P}$ whose nerve is $\mathcal{G}$.
    \end{itemize}
\end{theorem}

We remark that with some minor adjustments, the proof is almost identical to the case of a finite subdivision rule.
We will only sketch the proof and emphasize the one subtle difference here.

Consider the graph $\mathcal{H}^1_i$.
Note that it is simply the 1-skeleton of a $\Z^2$-tiling, thus it is easy to construct circle packings with nerve $\mathcal{H}^1_i$.
Each face of $\mathcal{H}^1_i$ can be identified with $P_i$.
Hence, for each face $F$, we can construct a map
$$
\pi_F: \Teich(\mathcal{H}^1_i) \longrightarrow \Teich(\Pi_{P_i}).
$$
Similar as in \S \ref{subsec:su}, we can construct a compact set $K_i \subseteq \Teich(\Pi_{P_i})$.
Consider the set
$$
\Teich_{K_i}(\mathcal{H}^1_i) = \{\mathcal{P} \in\Teich(\mathcal{H}^1_i): \pi_F(\mathcal{P}) \in K_i \text{ for all faces } F \}.
$$
Let 
$$
\rho_{Q_i} = \pi_{Q_i}\circ \tau_{\mathcal{H}^0_i, \mathcal{H}^1_i}: \Teich(\mathcal{H}^1_i) \longrightarrow \Teich(\Pi_{Q_i})
$$
be the composition of a projection map with the skinning map.

\begin{lem}\label{lem:uci}
    Let $\mathcal{P}_1, \mathcal{P}_2 \in \Teich_{K_i}(\mathcal{H}^1_i)$.
    Then there exists a constant $\delta$ so that
    $$
    d(\rho_{Q_i}(\mathcal{P}_1), \rho_{Q_i}(\mathcal{P}_2)) \leq \delta \cdot d(\mathcal{P}_1, \mathcal{P}_2) = \delta \cdot \sup_{F}d(\pi_F(\mathcal{P}_1)), \pi_F(\mathcal{P}_2)).
    $$
\end{lem}
\begin{proof}
    Since $\pi_F(\mathcal{P}_1)$ (and $\pi_F(\mathcal{P}_2)$) is contained in a compact set, 
    $\mathcal{P}_1$ and $\mathcal{P}_2$ are quasiconformally homeomorphic.
    It follows from the discussion in \cite[\S 3]{He91} that the limit set of the group generated by reflection along circles in $\mathcal{P}_1$ (and $\mathcal{P}_2$) has zero measure.
    Therefore 
    $d(\mathcal{P}_1, \mathcal{P}_2) = \sup_{F}d(\pi_F(\mathcal{P}_1)), \pi_F(\mathcal{P}_2)) \leq C_i$, where $C_i$ is the diameter of $K_i$.

    Let $F^{\ext}_i$ be the external face of $\mathcal{H}^1_i$. Then $\mathcal{H}^1_i = \bigcup_{g\in \Z^2} g\cdot \partial F^{\ext}_i$.
    To get the uniform contraction, we can consider the subgraph 
    $$
    \mathcal{H}^1_i(3) = \bigcup_{g=(a,b), |a|, |b| \leq 1} g\cdot \partial F^{\ext}_i.
    $$
    Let $H$ be the unique face of $\mathcal{H}^1_i(3)$ that is not a face of $\mathcal{H}^1_i$.

    Since $d(\mathcal{P}_1, \mathcal{P}_2) \leq C_i$, the diameter of $\rho_H(\Teich_{K_i}(\mathcal{H}^1_i)) = \pi_H \circ \tau_{\mathcal{H}^1_i(3), \mathcal{H}^1_i}(\Teich_{K_i}(\mathcal{H}^1_i))$ is bounded by $C_i$, and hence it is contained in some compact set $K_H \subseteq \Teich(\Pi_{H})$.
    Consider the map $\pi_{Q_i} \circ \tau_{\mathcal{H}^0_i, \mathcal{H}^1_i(3)}: \Teich(\mathcal{H}^1_i(3)) = \Teich(\Pi_{P_i})^9 \times \Teich(\Pi_{H}) \longrightarrow \Teich(\Pi_{P_i})$.
    Thus, there exists $\delta<1$ so that 
$$|d(\pi_{Q_i} \circ \tau_{\mathcal{H}^0_i, \mathcal{H}^1_i(3)})| \leq \delta$$
    on the compact set $K_i^9 \times K_H \subseteq \Teich(\Pi_{P_i})^9 \times \Teich(\Pi_{H})$.
    The lemma now follows from a similar argument as in Lemma \ref{lem:cf}.
\end{proof}

\begin{proof}[Proof of Theorem \ref{thm:Z2EU}]
    The existence part follows by a similar argument as in Theorem \ref{thm:exc} by taking geometric limit of finite circle packings.

    To prove the uniqueness, let $\mathcal{P}_1, \mathcal{P}_2\in \Teich(\mathcal{G}_i)$ be two circle packings with the same external class.
    Let $\mathcal{P}^n_i$ be the sub-circle packings of $\mathcal{P}_i$ associated to $\mathcal{G}^n_i$.
    By applying Theorem \ref{thm:bitr} to subdivisions of $P_i$, and combining Corollary \ref{cor:cf} and Lemma \ref{lem:uci}, the same argument as in Theorem \ref{thm:rig} gives
    $$
    d(\mathcal{P}^n_1, \mathcal{P}^n_2) = O(\delta^n).
    $$
    Since this is true for any $n$, we conclude that $\mathcal{P}_1, \mathcal{P}_2$ are conformally homeomorphic.

    A similar argument also allows us to prove the case for graph $\mathcal{G}$.
\end{proof}

\section{Hyperbolic 3-manifolds with gasket limit sets}\label{appx:topology}
In this appendix, we characterize topologically hyperbolic 3-manifolds whose limit sets are homeomorphic to circle packings. Our main results here are Proposition~\ref{prop:comp_body} and Corollary~\ref{cor:geom_finite}.


\subsection{Acylindricity from limit sets}\label{subsec:acy}
In this subsection, we recall a well-known result on recognizing acylindricity from the limit set in the geometrically finite case, in the spirit of Theorems~\ref{thm:jordan} and \ref{thm:acy}. We start with the following lemma:
\begin{lem}\label{lem:pfp}
Let $\Gamma$ be any nonelementary Kleinian group, and $\Omega_1$, $\Omega_2$ two connected components of its domain of discontinuity $\Omega$. Let $\Gamma_i$ be the stabilizer of $\Omega_i$ in $\Gamma$, and assume $X_i:=\Gamma_i\backslash\Omega_i$ is a Riemann surface of finite type. Suppose $\overline{\Omega_1}\cap\overline{\Omega_2}$ consists of one point $p$. Then
\begin{enumerate}[label=\normalfont{(\arabic*)}]
    \item $p$ is a parabolic fixed point;
    \item Let $\sigma_i$ be a curve on $X_i$ that is not null-homotopic in $\overline{M}:=\Gamma\backslash\{\mathbb{H}^3\cup\Omega\}$. Then $\sigma_1$ and $\sigma_2$ are not homotopic in $\overline{M}$.
\end{enumerate}
\end{lem}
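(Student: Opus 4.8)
The plan is to prove the two assertions in turn, with the finiteness of $X_1$ and $X_2$ as the essential input. For (1), I would first normalize by a M\"obius transformation so that $p=\infty$; then $\overline{\Omega_1}$ and $\overline{\Omega_2}$ become disjoint closed unbounded subsets of $\mathbb{C}$ that are tangent only at $\infty$. Since $X_i=\Gamma_i\backslash\Omega_i$ has finite type, Ahlfors finiteness shows $\Gamma_i$ is finitely generated, and uniformizing the invariant component $\Omega_i\cong\mathbb{D}$ realizes $\Gamma_i$ as a finite-coarea Fuchsian group acting on its own component. For such a group every point of $\partial\Omega_i=\Lambda(\Gamma_i)$ is either a conical limit point or a bounded parabolic fixed point, the latter corresponding exactly to the punctures of $X_i$ (carried to $\partial\Omega_i$ through the Carath\'eodory boundary). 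Thus (1) reduces to showing that the tangency point $p$ cannot be a conical limit point of $\Gamma_1$.

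To rule out the conical case I would argue by contradiction using the bounded-geometry description of conical points: there would exist $g_n\in\Gamma$ and round balls $B_n\ni p$ with $\diam(B_n)\to 0$ such that $g_n$ carries $B_n$ onto a fixed ball $B_0$ with uniformly bounded distortion. Because $p\in\partial\Omega_1\cap\partial\Omega_2$, both $\Omega_1$ and $\Omega_2$ meet every $B_n$, so $g_n\Omega_1$ and $g_n\Omega_2$ are distinct components of $\Omega$, each meeting the fixed ball $B_0$, whose closures still touch only at $g_n(p)$. Passing to a subsequence, I would extract two limiting components of definite size separated by a genuine gap inside $B_0$, and iterating this would produce infinitely many disjoint components of $\Omega$ of definite spherical area, contradicting $\sum\operatorname{area}\le 4\pi$. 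The main obstacle here is controlling \emph{how} $\Omega_2$ approaches $p$: one must exclude the possibility that $\Omega_2$ is tangent to $\Omega_1$ along an infinitely thin, cusp-like channel (which would make the rescaled images negligible). I expect to handle this by estimating the conformal moduli of the round annuli separating $\overline{\Omega_1}$ from $\overline{\Omega_2}$ near $p$: finiteness of both quotients forces these moduli to be summable in the non-cusp case, while a single-point tangency gives infinite total modulus, and infinite total separating modulus is precisely the analytic signature of a puncture. Either way $p$ is forced to be a parabolic fixed point, proving (1).

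For (2) I would argue again by contradiction. A free homotopy $\sigma_1\simeq\sigma_2$ in $\overline{M}$ means the holonomies $\gamma_1\in\Gamma_1$ and $\gamma_2\in\Gamma_2$ are conjugate in $\Gamma$, say $\gamma_2=g\gamma_1g^{-1}$; since $\sigma_i$ are essential and non-peripheral, each $\gamma_i$ is loxodromic, with $\Fix(\gamma_1)\subseteq\partial\Omega_1$ and $\Fix(\gamma_2)\subseteq\partial\Omega_2$. From (1) the tangency produces a rank-one parabolic $h\in\Gamma_1\cap\Gamma_2$ fixing $p$ and realizing the common puncture of $X_1$ and $X_2$; the single-point tangency is exactly the configuration to which Maskit's combination theorem applies, decomposing $\Gamma$ (or the relevant factor) as an amalgam over the cyclic parabolic group $\langle h\rangle$ with $\Gamma_1$ and $\Gamma_2$ in distinct factors. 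By the normal-form (Bass--Serre) theory of such amalgams, an element of one factor is conjugate to an element of the other only if both are conjugate into the edge group $\langle h\rangle$, i.e.\ only if $\sigma_1$ and $\sigma_2$ are peripheral loops around the shared cusp $p$. This contradicts non-peripherality, so $\sigma_1\not\simeq\sigma_2$. I expect the delicate point in this part to be justifying the splitting inside the \emph{full} group $\Gamma$ (rather than merely in $\langle\Gamma_1,\Gamma_2\rangle$) and tracking the invariant components of $\gamma_1$, namely $\Omega_1$ and $g^{-1}\Omega_2$, whose closures would otherwise share both points of $\Fix(\gamma_1)$; ruling out this two-point sharing by discreteness together with the established single-point tangency is where the argument must be made airtight.
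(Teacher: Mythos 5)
Your proposal does not close, and the central failure is in part (1): the dichotomy you invoke --- ``every point of $\partial\Omega_i$ is either a conical limit point or a bounded parabolic fixed point'' --- is the Beardon--Maskit characterization of \emph{geometric finiteness}, and nothing in the hypotheses makes $\Gamma_i$ (or $\Gamma$) geometrically finite. A singly degenerate surface group is the standard counterexample: it has a simply connected invariant component with finite-type quotient, yet is geometrically infinite and has limit points that are neither conical nor bounded parabolic. Passing to the Fuchsian model does not repair this, because the Riemann map conjugates only the action on $\Omega_i$; conicality and bounded parabolicity at a point of $\partial\Omega_i\subset\widehat\C$ are statements about the M\"obius action on the sphere and are not transported by an interior conformal conjugacy. (You also assume $\Omega_i$ is simply connected, which is not a hypothesis.) So even if your zoom/area argument successfully rules out the conical case --- which is plausible --- you cannot conclude ``parabolic'' in the generality required; note that the lemma is applied in the paper (e.g.\ in Lemma~\ref{lem:comp_body}) to possibly geometrically infinite groups, where your modulus-summability fallback is far too vague to substitute for the missing dichotomy.

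Part (2) has two further gaps. First, you silently strengthen the hypothesis to ``non-peripheral'' so that the holonomies $\gamma_i$ are loxodromic; the parabolic/peripheral case needs to be addressed or explicitly excluded. More seriously, the splitting $\Gamma_1\ast_{\langle h\rangle}\Gamma_2$ is never justified --- the Klein--Maskit combination theorem requires a precisely invariant configuration (e.g.\ a Jordan curve through $p$ separating the two components, precisely invariant under the edge group), which you do not produce --- and even granting it, Bass--Serre normal forms control conjugacy only inside $\langle\Gamma_1,\Gamma_2\rangle$, whereas the homotopy in $\overline M$ produces a conjugating element in all of $\Gamma$; you flag this yourself but leave it open, so the argument is incomplete as it stands. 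The paper bypasses everything above by quoting Maskit's intersection theorem \cite[Theorem~3]{Mas74}: $\overline{\Omega_1}\cap\overline{\Omega_2}=\Lambda(\Gamma_1\cap\Gamma_2)$. A one-point limit set forces $\Gamma_1\cap\Gamma_2$ to be a parabolic group, giving (1); and a homotopy between $\sigma_1$ and $\sigma_2$ would place a loxodromic element in (a conjugate of) both component stabilizers, forcing at least two common boundary points, giving (2). In effect your proposal attempts to re-prove a special case of Maskit's theorem from scratch, and at the two crucial junctures it needs input of exactly that strength.
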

\begin{proof}
By \cite[Theorem~3]{Mas74}, $p=\overline{\Omega_1}\cap\overline{\Omega_2}=\Lambda(\Gamma_1\cap\Gamma_2)$, so it must be a parablic fixed point. Moreover, if two curves $\sigma_1$ and $\sigma_2$ are homotopic, then $\Gamma_1\cap\Gamma_2$ contains a hyperbolic element, and thus $\overline{\Omega_1}\cap\overline{\Omega_2}$ contains at least two points.
\end{proof}
\begin{prop}[Characterization of geom.\ finite acy.\ Kleinian groups]\label{prop:acy_geom_fin}
Suppose $\Gamma$ is nonelementary and geometrically finite of infinite volume. Then $\Gamma$ is acylindrical if and only if any connected component of the domain of discontinuity $\Omega$ is a Jordan domain, and the closures of any pair of connected components share at most one point. Moreover, any common point of the closures of two connected components is a parabolic fixed point, and any rank-1 parabolic fixed point arises this way.
\end{prop}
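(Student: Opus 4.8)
The plan is to run the whole argument through the pared $3$-manifold $(\core_\epsilon(M),P)$ introduced above, matching its two defining acylindricity conditions---incompressibility of every component of $\partial_0 N=\partial\core_\epsilon(M)-P$, and boundary-parallelism of every essential cylinder with both ends in $\partial_0 N$---against the two conditions in the statement. Recall that the components of $\partial_0 N$ are in bijection with the quotient surfaces $X_\Omega=\Stab_\Gamma(\Omega)\backslash\Omega$, as $\Omega$ ranges over $\Gamma$-orbits of components of the domain of discontinuity. First I would establish, by the identical argument as in Theorem~\ref{thm:jordan} (using that $\Hyp^3\cup\Omega$ is the universal cover of $\overline M=\Gamma\backslash(\Hyp^3\cup\Omega)$, so that a loop on $X_\Omega$ dies in $\overline M$ exactly when it already bounds in the component $\Omega$), that $X_\Omega$ is incompressible if and only if $\Omega$ is simply connected, equivalently $\partial\Omega$ is connected. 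Geometric finiteness then upgrades this to a Jordan domain: the limit set is locally connected, $\partial\Omega=\Lambda(\Stab_\Gamma(\Omega))$, and the stabilizer of a simply connected invariant component of a geometrically finite group is quasi-Fuchsian, whose limit set is a Jordan curve. This identifies condition (a) with the requirement that every component be a Jordan domain.

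Next I would treat the cylinder condition in the spirit of Theorem~\ref{thm:acy}. An essential cylinder with both ends in $\partial_0 N$ that is not boundary parallel is equivalent to a pair of essential curves $\sigma_1\subseteq X_{\Omega_1}$ and $\sigma_2\subseteq X_{\Omega_2}$ that are freely homotopic in $\overline M$ but not within a single boundary surface; after conjugation such a homotopy produces a hyperbolic $\gamma\in\Stab_\Gamma(\Omega_1)\cap\Stab_\Gamma(\Omega_2)$, whose two fixed points lie in $\overline{\Omega_1}\cap\overline{\Omega_2}$. Conversely, if $\overline{\Omega_1}\cap\overline{\Omega_2}$ has at least two points, then by \cite[Theorem~3]{Mas74} it equals $\Lambda(\Stab_\Gamma(\Omega_1)\cap\Stab_\Gamma(\Omega_2))$, so this intersection contains a hyperbolic element and the forbidden cylinder reappears; when $\Omega_1\neq\Omega_2$ it cannot be boundary parallel, while the Jordan hypothesis of the previous paragraph removes the remaining self-cylinders. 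Hence condition (b) holds exactly when the closures of any two distinct components meet in at most one point, completing the claimed equivalence in both directions.

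For the two ``moreover'' clauses: in the acylindrical regime two components share at most one boundary point, so Lemma~\ref{lem:pfp}(1) identifies any such point as a parabolic fixed point. Conversely, given a rank-$1$ parabolic fixed point $p$, geometric finiteness presents its cyclic stabilizer $\Stab_\Gamma(p)$ as a cusp that pairs two punctures of $\partial_0 N$; unwinding this pairing on the sphere exhibits $p$ as the unique common point of the closures of the two components adjacent across the cusp, so every rank-$1$ parabolic point arises this way.

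I expect the main obstacle to be this cylinder-to-boundary-point dictionary of the second paragraph: one must verify that two or more shared limit points really yield a non-boundary-parallel essential cylinder---ruling out that the homotopy could be absorbed into $P$ or into a single surface---and, in the reverse direction, that every offending cylinder supplies a shared hyperbolic axis rather than a mere parabolic identification. This is exactly where \cite[Theorem~3]{Mas74} and a careful split according to whether the two ends lie on the same or different components of $\partial_0 N$ carry the weight; by contrast the incompressibility--Jordan equivalence of the first paragraph is a routine reprise of Theorem~\ref{thm:jordan} together with the local connectivity supplied by geometric finiteness.
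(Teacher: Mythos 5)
Your second paragraph is essentially the paper's own argument for the ``if'' direction: incompressibility from simple connectivity of the components, and the observation that a non--boundary-parallel essential cylinder produces a hyperbolic element lying in the stabilizers of two components, whose pair of fixed points contradicts the one-point hypothesis --- this is exactly Lemma~\ref{lem:pfp}(2), resting on \cite[Theorem~3]{Mas74} as you propose. (Two small points there are fine but should be said: the self-cylinder case, where both ends lift to the \emph{same} component $\Omega_1$, is boundary parallel because simple connectivity of $\Omega_1$ identifies free homotopy classes on $X_{\Omega_1}$ with conjugacy classes in $\Stab_\Gamma(\Omega_1)$; and the identity $\partial\Omega=\Lambda(\Stab_\Gamma(\Omega))$ is a fact about geometrically finite groups, not a formality.)

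The genuine gap is in your first paragraph. You assign the Jordan-domain condition entirely to incompressibility, via the claim that ``the stabilizer of a simply connected invariant component of a geometrically finite group is quasi-Fuchsian.'' That claim is false: geometrically finite b-groups with accidental parabolics --- cusp groups, obtained for instance by pinching a simple closed curve on one side of a quasi-Fuchsian group --- are geometrically finite, have a simply connected invariant component, yet are not quasi-Fuchsian, and that component is \emph{not} a Jordan domain (local connectivity of the limit set makes the Riemann map extend continuously, but it fails to be injective at the pinch points). What rules out accidental parabolics in the component stabilizers is not incompressibility but the cylinder condition: an accidental parabolic yields an essential cylinder with both ends on $\partial_0N$ running alongside the cusp annulus in $P$, and such a cylinder is not boundary parallel. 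So your clean one-to-one matching --- condition (a) with incompressibility, condition (b) with the no-cylinder condition --- breaks down; a correct direct proof of the ``only if'' direction must feed acylindricity into the Jordan-domain conclusion as well (geometric finiteness $+$ simply connected invariant component $+$ no accidental parabolics does give quasi-Fuchsian, hence a Jordan curve boundary). The paper avoids this entire direction, together with the ``moreover'' clause, by citing \cite[Lemma~11.2]{BO22}; your reconstruction of it is repairable but, as written, rests on a false intermediate statement.
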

\begin{proof}
For the ``only if" and the ``moreover" part, see \cite[Lemma~11.2]{BO22}. For the ``if" part, note that each boundary component of $\overline{M}$ is incompressible since each component of $\Omega$ is simply connected. Moreover, $\overline{M}$ is acylindrical, for any essential cylinder gives curves on $\partial_0N$ violating Part (2) of the previous lemma. 
\end{proof}

Recall that the quasiconformal deformation space $\mathcal{QC}(\Gamma)$ can be identified with the Teichm\"uller space $\Teich(\partial \overline{M})$ if every boundary component is incompressible. Moreover, when $\Gamma$ is acylindrical, there exists a unique geometrically finite $\Gamma'\in \mathcal{QC}(\Gamma)$ so that the convex core of the corresponding 3-manifold has totally geodesic boundary \cite[Corollary 4.3]{McM90}. This implies the limit set $\Lambda$ of $\Gamma$ is quasiconformally homeomorphic to the complement of countably many open round disks.

\subsection{Compression bodies and gasket limit sets}\label{subsec:comp_gasket}
In this section, we describe what hyperbolic 3-manifolds associated to Kleinian circle packings look like. In fact, we work in a more general setting and consider Kleinian groups with gasket limit sets. Here, a {\em gasket} is a closed subset of $\widehat\C$ so that
\begin{enumerate}
    \item each complementary component is a Jordan domain;
    \item any two complementary components touch at most at 1 point;
    \item no three complementary components share a boundary point;
    \item the {\em nerve}, obtained by assigning a vertex to each complementary component and an edge if two touch, is connected.
\end{enumerate}
Clearly the limit set of a circle packing is a gasket.

For the remainder of this subsection, we assume $\Gamma$ is finitely generated, whose limit set $\Lambda$ is a gasket with nerve $\mathcal{G}$. Let $\core'_\epsilon(M)$ be the convex core of the corresponding hyperbolic 3-manifold $M$ minus $\epsilon$-thin cuspidal neighborhoods for all cusps of rank $1$. We will show
\begin{prop}\label{prop:comp_body}
The manifold $M$ is homeomorphic to the interior of a compression body $(N,\partial N)$, with exterior boundary $\partial_eN$ homeomorphic to $S=\partial \core'_\epsilon(M)$.
\end{prop}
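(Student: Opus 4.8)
The plan is to realize the compression body structure on a compact model of $M$ built from its convex core, and to verify the three defining properties of a compression body (irreducibility, incompressible interior boundary, and a $\pi_1$-surjective exterior boundary component) directly from the gasket structure of $\Lambda$.

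First I would reduce to a compact model. Since $\Gamma$ is finitely generated, $M$ is topologically tame by Agol and Calegari--Gabai, so $M$ is homeomorphic to the interior of a compact $3$-manifold; moreover $M$ is aspherical (its universal cover is $\mathbb{H}^3$) and hence irreducible. I would work with $\core'_\epsilon(M)$, the convex core with the rank-$1$ cuspidal neighborhoods removed, together with a compactification $N$ obtained by truncating the rank-$2$ cusps to tori. The inclusion $\core(M)\hookrightarrow M$ is a homotopy equivalence, and each removed rank-$1$ cusp region is a solid annular cusp $(\text{annulus})\times[c,\infty)$ glued to $\core'_\epsilon(M)$ along a single annulus, so by van Kampen removing it does not change $\pi_1$. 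Hence $\pi_1(N)\cong\pi_1(M)=\Gamma$ and $M\cong\Int(N)$.

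Next I would analyze $\partial N$. Each component $\Omega_v$ of $\Omega$ is a Jordan domain, hence simply connected, so each component of the conformal boundary $\Gamma\backslash\Omega$ is incompressible (the same fact used in Theorem~\ref{thm:jordan}). By the gasket hypotheses two complementary components meet in at most one point and no point is shared by three, so every tangency point is the fixed point of a rank-$1$ parabolic subgroup; this is exactly the setting of Lemma~\ref{lem:pfp}. Removing the rank-$1$ cuspidal neighborhoods replaces each paired puncture by an annulus $A_p$ whose two boundary circles lie on the two incompressible surfaces meeting at $p$. Thus $S=\partial\core'_\epsilon(M)$ is obtained from the truncated conformal boundary surfaces by gluing them along the annuli $A_p$ according to the tangency pattern, i.e.\ according to the nerve $\mathcal{G}$. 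Since $\mathcal{G}$ is connected, $S$ is connected, and it is the only boundary component of $N$ that is not a rank-$2$ cusp torus $T_i$; the tori are incompressible since $\mathbb{Z}^2\hookrightarrow\Gamma$.

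Finally, and this is the main point, I would show that $\pi_1(S)\to\pi_1(N)=\Gamma$ is surjective, so that the standard characterization of compression bodies (an irreducible compact $3$-manifold with a boundary component $\partial_e N$ whose fundamental group surjects onto $\pi_1(N)$ and with all other boundary components incompressible is a compression body with exterior boundary $\partial_e N$) applies with $\partial_e N=S$. The idea is that $\Gamma$ acts on the connected nerve $\mathcal{G}$ with vertex stabilizers the disk stabilizers $\Gamma_v=\pi_1(X_v)$ and edge stabilizers the rank-$1$ parabolics, exhibiting $\Gamma$ as the fundamental group of a graph of groups over $\mathcal{G}/\Gamma$; each vertex group is carried by a surface $X_v\subseteq S$ and each edge (or stable-letter) generator by a path in $S$ crossing a connecting annulus $A_p$, so every element of $\Gamma$ is homotopic into $S$. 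Equivalently, connectedness of $\Lambda$ obstructs a free splitting transverse to $S$. The hard part is making this last step fully rigorous: one must check that the convex-core gluing along the rank-$1$ cusp annuli genuinely recovers all of $\Gamma$ — in particular that the rank-$2$ parabolics are already carried by $S$ — which I would establish by a van Kampen argument on the decomposition of $\core'_\epsilon(M)$ into the blocks facing the $\Gamma$-orbits of disks, joined along the annuli $A_p$.
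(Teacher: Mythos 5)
Your proposal follows the same skeleton as the paper: tameness (Agol, Calegari--Gabai) supplies the compact model $N$, connectedness of the nerve gives connectedness of $S$, and everything is fed into the characterization of compression bodies by a single $\pi_1$-surjective boundary component \cite[Lem.~4.1]{BJM13}. Where you diverge is the mechanism for surjectivity of $\pi_1(S)\to\Gamma$ --- and that is exactly the step you leave open. The paper's argument (Lemma~\ref{lem:comp_body}) closes your flagged gap in one stroke: describe $\pi^{-1}(S)\subseteq\mathbb{H}^3\cup\Omega$ as $\partial\chull(\Omega)$ with a horoball removed at each parabolic fixed point and the flat strips (or half-planes) over those horoballs added back; each tangency contributes a strip joining the hull boundaries of the two tangent disks, so connectedness of $\mathcal{G}$ makes $\pi^{-1}(S)$ connected; since $\Gamma$ acts freely and properly discontinuously on $\pi^{-1}(S)$ with quotient $S$, the covering $\pi^{-1}(S)\to S$ is regular with deck group $\Gamma$, whence $\Gamma\cong\pi_1(S)/\pi_1(\pi^{-1}(S))$ is a quotient of $\pi_1(S)$. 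This makes your ``hard part'' automatic: rank-$2$ parabolics need no separate treatment, being deck transformations like every other element. Your Bass--Serre alternative can be pushed through --- for an action on a connected graph the natural map from the fundamental group of the quotient graph of groups onto $\Gamma$ is surjective, and the vertex groups $\Gamma_v\cong\pi_1(X_v)$ are visibly carried by $S$ --- but realizing the stable letters by loops in $S$ amounts to connecting lifts of a basepoint inside $\pi^{-1}(S)$, i.e.\ to the same connectedness-plus-covering-space argument; by contrast, your proposed van Kampen argument on a decomposition of $\core_\epsilon'(M)$ into ``blocks facing the $\Gamma$-orbits of disks'' is not a well-defined decomposition of the $3$-manifold and would have to be replaced by exactly this.

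Two genuine errors of scope remain. The proposition is stated for all finitely generated $\Gamma$ with gasket limit set, not only geometrically finite ones, and in the geometrically infinite case your boundary description fails: $N$ can have non-toroidal interior boundary components coming from degenerate ends (this is precisely the source of cases (3)--(4) in the paper's subsequent discussion), so your assertion that $S$ ``is the only boundary component of $N$ that is not a rank-$2$ cusp torus'' is false in general --- harmless for the \cite{BJM13} criterion, which needs only one surjective component, but it should not be claimed. Relatedly, singly cusped rank-$1$ parabolic points (which occur only in the geometrically infinite case) contribute half-planes rather than strips to $\pi^{-1}(S)$, so $S$ is then noncompact; the paper accommodates this by deleting annular neighborhoods of the corresponding curves from $\partial N$ and working with \emph{punctured} compression bodies. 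Your proposal is silent on both points, so as written it establishes only the geometrically finite case.
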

This proposition is closely related to the characterization in \cite{HPW23}, cf. Theorems B and C there.

\subsection*{(Punctured) compression bodies}
Recall that a compression body is a compact, orientable, irreducible manifold with boundary $(N,\partial N)$ homeomorphic to the boundary connected sum of a solid 3-ball with a collection of solid tori and a collection of trivial interval bundles over closed surfaces.

More generally, a \emph{punctured} compression body also allows in the definition above gluing trivial interval bundles over punctured surfaces. In this case, the exterior boundary is a punctured surface, so are some of the interior boundary components. From now on for simplicity, when we use ``compression body", the punctured case is also included. It is easy to see that in this general setting, compression bodies are still characterized by the fact that the inclusion of the exterior boundary induces a surjection on $\pi_1$.

\subsection*{Gasket limit sets}
We are now ready to prove Proposition~\ref{prop:comp_body}. We start with the following lemma.
\begin{lem}\label{lem:comp_body}
The boundary $S:=\partial\core'_\epsilon(M)$ is connected, and the map $\pi_1(S)\to\pi_1(M)$ induced by inclusion is a surjection.
\end{lem}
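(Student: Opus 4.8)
The plan is to reduce both assertions — connectivity of $S$ and surjectivity of $\pi_1(S)\to\pi_1(M)$ — to the connectivity of the nerve $\mathcal{G}$, using the geometry of the convex hull boundary together with a standard covering-space criterion. First I would set up a concrete model of $S$ and of its lift to $\mathbb{H}^3$. By Ahlfors finiteness, $\Gamma\backslash\Omega$ has finitely many components, so there are finitely many $\Gamma$-orbits of complementary components of $\Lambda$; these orbits are the vertices of the finite quotient graph $\Gamma\backslash\mathcal{G}$. For each component $\Omega_j$ of $\Omega$, the boundary $\partial\chull(\Lambda)$ contains a pleated plane (a ``dome'') facing $\Omega_j$, and $\partial\core'_\epsilon(M)$ is the quotient of the union of these domes after deleting the open horoball neighborhoods of the rank-$1$ parabolic fixed points and inserting the corresponding horospherical walls. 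By the gasket axioms together with Lemma \ref{lem:pfp}, every point where the closures of two complementary components meet is a single rank-$1$ parabolic fixed point, and conversely each such cusp corresponds to exactly one edge of $\mathcal{G}$; the wall inserted at such a point is, downstairs, an annulus that bridges the two subsurfaces of $S$ coming from the two tangent components.

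For connectivity of $S$, I would observe that the subsurfaces of $S$ are indexed by the vertices of $\Gamma\backslash\mathcal{G}$ and the bridging annuli by its edges, so $S$ is connected because $\Gamma\backslash\mathcal{G}$, being a quotient of the connected graph $\mathcal{G}$, is itself connected. For surjectivity I would invoke the criterion that, since $M$ is connected with universal cover $\mathbb{H}^3$, the inclusion $\pi_1(S)\to\pi_1(M)$ is surjective if and only if the full preimage $q^{-1}(S)\subseteq\mathbb{H}^3$ of $S$ is connected, where $q:\mathbb{H}^3\to M$ is the universal covering (the components of $q^{-1}(S)$ are in bijection with the cosets $\mathrm{im}(\pi_1 S)\backslash\pi_1 M$). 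This preimage is precisely the truncated union of all domes with the horospherical walls inserted. Two domes facing tangent components $\Omega_1,\Omega_2$ lie in a common component of $q^{-1}(S)$: their shared boundary point $p$ is a rank-$1$ parabolic fixed point, and the horoball wall removed at $p$ joins the two domes. Since tangency of components is exactly adjacency in $\mathcal{G}$, connectivity of $\mathcal{G}$ propagates along these bridges to show that all domes lie in a single component, so $q^{-1}(S)$ is connected and the inclusion is a surjection.

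The main obstacle I anticipate is the local analysis at a rank-$1$ cusp: verifying that the two domes adjacent at a tangency point are genuinely joined (rather than merely tangent at infinity) once the horoball is truncated — that is, that the inserted horospherical wall meets \emph{both} domes inside the truncated convex hull — and that these bridges are in bijection with the edges of $\mathcal{G}$ with no spurious identifications or extra components; in particular one must check that the rank-$2$ cusps, which are \emph{not} truncated in $\core'_\epsilon(M)$, contribute no additional components. This reduces to pinning down the precise shape of $\partial\chull(\Lambda)$ near a parabolic fixed point shared by two Jordan complementary domains, for which Lemma \ref{lem:pfp} and the gasket axioms supply the needed input.
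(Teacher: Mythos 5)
Your proposal is correct and takes essentially the same route as the paper's proof: the paper likewise identifies $\pi^{-1}(S)\subseteq\mathbb{H}^3$ with the convex-hull domes facing the components of $\Omega$, truncated at horoballs based at the parabolic fixed points supplied by Lemma~\ref{lem:pfp} and rejoined by horospherical pieces, deduces connectivity of this preimage (hence of $S$) from connectivity of the nerve $\mathcal{G}$, and obtains surjectivity from the fact that $\Gamma$ acts freely and properly discontinuously on the connected set $\pi^{-1}(S)$ with quotient $S$, so $\pi_1(M)\cong\Gamma$ is a quotient of $\pi_1(S)$. The local issue you flag is resolved in the paper exactly as you anticipate: lifts of the inserted walls at tangency cusps are flat strips bounded by \emph{two} horocycles, one on each adjacent dome, while at singly cusped rank-one parabolic points (present only in the geometrically infinite case, which you should note is included in the lemma's hypotheses) they are half-planes bounded by a single horocycle, which attach to one dome and create no new components.
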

\begin{proof}
Note that $X:=\Gamma\backslash\Omega$ is a union of finite area hyperbolic surfaces by the Ahlfors finiteness theorem. Components of $X$ are in one-to-one correspondence with components of $\partial\core(M)$. Moreover, $\partial\core(M)\cap\partial\core_\epsilon'(M)$ consists of $\epsilon$-thick parts of components of $\partial\core(M)$, and $\partial\core_\epsilon'(M)-\partial\core(M)$ consists of boundaries of cuspidal neighborhoods.

Each component $\widetilde{S'}$ of $\partial\chull(\Omega)$ faces a component $\Omega'$ of $\Omega$, and covers a component $S'$ of $\partial\core(M)$. The $\epsilon$-part of $S'$ is covered by $\widetilde{S'}$ minus disjoint horoballs based at all the parabolic fixed points on $\partial\Omega'$. Note that by Lemma~\ref{lem:pfp}, these in particular include all common boundary points between $\Omega'$ and another component of $\Omega$. In fact by Proposition~\ref{prop:acy_geom_fin}, in the geometrically finite case, parabolic fixed points on $\partial\Omega'$ are precisely common boundary points, but in general the former also include singly cusped parabolic points.

Furthermore, lifts of each component of $\partial\core_\epsilon'(M)-\partial\core(M)$ are either flat strips bounded by two horocycles based at a parabolic fixed point at infinity (when the parabolic fixed point is a common boundary point), or half planes bounded by a single horocycle (when the parabolic fixed point is singly cusped).

Thus the preimage of $S$ under the projection $\pi:\mathbb{H}^3\to M$ can be described as follows. Take $\partial\chull(\Omega)$, minus a horoball based at each parabolic fixed point, and then add back the flat strips or half planes on those horoballs contained in $\chull(\Omega)$. Note that for any pair of components $\Omega',\Omega''$ of $\Omega$ sharing a common boundary point, the corresponding components $\widetilde{S'},\widetilde{S''}$ of $\partial\chull(\Omega)$ are connected by a strip. As the nerve $\mathcal{G}$ is connected, this implies that $\pi^{-1}(S)$ is connected, and hence $S$ is connected as well.

Finally note that $\Gamma$ acts on $\pi^{-1}(S)$ freely and properly discontinuously. Hence $\Gamma\cong\pi_1(M)$ is a quotient of $\pi_1(S)$, as desired.
\end{proof}

\begin{proof}[Proof of Proposition~\ref{prop:comp_body}]
By the tameness theorem of Agol \cite{Ago04} and Calegari-Gabai \cite{CG06}, $M$ is homeomorphic to the interior of a compact 3-manifold with boundary $N$. If there are any singly cusped parabolic points, we modify $N$ by deleting an annular neighborhood on $\partial N$ of the corresponding simple curve representing the parabolic element.

By the previous lemma, one boundary component of $N$ induces a surjection on $\pi_1$, so $N$ must be a compression body, as desired.
\end{proof}
Moreover, we note that any non-toroidal interior boundary component of $N$ must correspond to a geometrically infinite end of $M$, for otherwise the limit set is not a gasket. Also, as singly cusped parabolic points only exist when $M$ is geometrically infinite, $N$ is compact (i.e.\ not punctured) when $M$ is geometrically finite. Thus we have the following corollary:
\begin{cor}\label{cor:geom_finite}
    The 3-manifold $M$ is geometrically finite if and only if the interior boundary of $N$ is empty or consists entirely of tori. Moreover, in this case
    \begin{itemize}
        \item the compression body $N$ is compact;
        \item the 3-manifold $M$ is acylindrical; and hence
        \item the limit set $\Lambda$ is quasiconformally homeomorphic to the limit set of a Kleinian circle packing.
    \end{itemize}
\end{cor}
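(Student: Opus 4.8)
The plan is to read off everything from the compression-body structure already established in Proposition~\ref{prop:comp_body} and Lemma~\ref{lem:comp_body}, together with the standard dictionary between the ends of a tame hyperbolic $3$-manifold and the boundary components of its compact core. First I would recall that, by Lemma~\ref{lem:comp_body}, the exterior boundary $\partial_e N\cong S=\partial\core'_\epsilon(M)$ is connected and carries a $\pi_1$-surjection; it is assembled, through the flat strips at the doubly cusped parabolic points, out of all the components of $X=\Gamma\backslash\Omega$, that is, out of \emph{every} geometrically finite end coming from the domain of discontinuity, together with the rank-$1$ cusps. Consequently each interior (incompressible) boundary component of $N$ must be an end that $S$ does not see: it is either a torus, arising from a rank-$2$ parabolic subgroup, or, if non-toroidal, corresponds to a geometrically infinite end, exactly as noted in the remark preceding the statement. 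Indeed, a non-toroidal geometrically finite interior component would produce a further component of $\partial\core(M)$ disjoint from $S$, contradicting the connectedness of $S$.

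With this identification the equivalence is immediate. For a tame hyperbolic $3$-manifold, geometric finiteness is equivalent to the absence of geometrically infinite (simply degenerate) ends; by the previous paragraph the geometrically infinite ends are precisely the non-toroidal interior boundary components of $N$. Hence $M$ is geometrically finite if and only if the interior boundary of $N$ is empty or consists entirely of tori, which is the asserted $\iff$.

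For the ``moreover'' clauses I would argue as follows. Compactness of $N$: in the construction in the proof of Proposition~\ref{prop:comp_body} the only punctures arise from the annular deletions performed at singly cusped parabolic points, and such points occur only when $M$ is geometrically infinite; therefore in the geometrically finite case no deletion is needed and $N$ is the compact core itself. Acylindricity: since $\Lambda$ is a gasket, every component of $\Omega$ is a Jordan domain and the closures of any two components meet in at most one point, which are exactly the hypotheses of Proposition~\ref{prop:acy_geom_fin}; applied to the geometrically finite group $\Gamma$ it yields that $\Gamma$, and hence $M$, is acylindrical. Finally, once $M$ is geometrically finite and acylindrical, the result of McMullen recalled at the end of \S\ref{subsec:acy} provides a unique $\Gamma'\in\mathcal{QC}(\Gamma)$ whose convex core has totally geodesic boundary; its domain of discontinuity is then bounded by round circles, so $\Lambda(\Gamma')$ is the limit set of a Kleinian circle packing, and the quasiconformal deformation conjugating $\Gamma$ to $\Gamma'$ restricts to a quasiconformal homeomorphism $\Lambda\to\Lambda(\Gamma')$.

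The step I expect to require the most care is the correspondence in the first paragraph between the \emph{types} of interior boundary components and the \emph{types} of ends: one must check that no geometrically finite end is hidden as an interior component and that rank-$1$ cusps contribute no spurious interior boundary. This rests on the connectedness of $S$ from Lemma~\ref{lem:comp_body} and on the classification of ends of tame manifolds; the remaining three assertions then follow essentially formally from Propositions~\ref{prop:comp_body} and \ref{prop:acy_geom_fin} and the cited totally-geodesic-boundary theorem.
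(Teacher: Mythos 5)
Your proposal is correct and takes essentially the same route as the paper: the equivalence rests on identifying non-toroidal interior boundary components with geometrically infinite ends (the paper compresses your connectedness-of-$S$ argument into the remark that otherwise ``the limit set is not a gasket''), compactness of $N$ comes from the absence of singly cusped parabolic points in the geometrically finite case, acylindricity from Proposition~\ref{prop:acy_geom_fin}, and the final claim from McMullen's totally geodesic boundary theorem \cite[Cor.~4.3]{McM90}, exactly as in the paper's proof. One cosmetic slip: a geometrically finite non-toroidal interior end would not produce a component of $\partial\core(M)$ ``disjoint from $S$'' --- its thick part would by definition lie in $\partial\core'_\epsilon(M)$ --- so the correct contradiction is that $S$ would then be disconnected (equivalently, $M-\core'_\epsilon(M)$ would acquire a second component), contrary to Lemma~\ref{lem:comp_body}.
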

\begin{proof}
    The fact that $M$ is acylindrical follows from Proposition~\ref{prop:acy_geom_fin}, and the last statement then follows from \cite[Cor.~4.3]{McM90}.
\end{proof}
So as soon as we have a geometrically finite gasket limit set, we can quasiconformally deform it to obtain a circle packing. In practice, it is much easier to construct geometrically finite Kleinian circle packings directly, e.g. the Apollonian gasket, and many examples studied in \cite{LLM22}.

\subsection{Topology from the nerve.}
We want to extract more topological information about the 3-manifold from the nerve. For example, is it possible to see whether the interior boundary of the compression body $N$ is empty from $\mathcal{G}$? Since the topology of $(N,\partial N)$ determines many geometric properties of $M$ (e.g.\ geometric finiteness by Corollary~\ref{cor:geom_finite}), this is the same as asking if these geometric properties can be read from $\mathcal{G}$.

By the discussion above, four distinct cases arise:
\begin{enumerate}
    \item The interior boundary of $N$ is empty, i.e.~$N$ is a handlebody;
    \item The interior boundary of $N$ consists only of tori;
    \item $N$ is homeomorphic to $\partial_eN\times [0,1]$;
    \item The interior boundary of $N$ has at least one non-toroidal component, and $N$ is not homeomorphic to $\partial_eN\times [0,1]$.
\end{enumerate}
The first two cases have been discussed in \S\ref{sec:kcp}. We now discuss the remaining two cases.

\subsection*{Thickened surfaces and trees}
We can distinguish the third case easily from the graph $\mathcal{G}$.
\begin{prop}\label{prop:tree}
The compression body $N$ is homeomorphic to $\partial_eN\times [0,1]$ if and only if $\mathcal{G}$ is a tree. In this case, $M$ has a geometrically finite end and an geometrically infinite end.
\end{prop}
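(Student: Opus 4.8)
The plan is to convert this topological statement about the compression body $N$ into a $\pi_1$-injectivity question, and then detect that injectivity from the homotopy type of the preimage $\pi^{-1}(S)$, which I claim is homotopy equivalent to the nerve $\mathcal{G}$. First I would invoke the standard structure theory of compression bodies already quoted via \cite[Lem.~4.1]{BJM13}: the inclusion always induces a surjection $\pi_1(\partial_eN)\to\pi_1(N)$, and $N$ is a trivial interval bundle $\partial_eN\times[0,1]$ exactly when $\partial_eN$ is incompressible, i.e.\ exactly when this surjection is also injective. By Proposition~\ref{prop:comp_body} I may identify $\partial_eN$ with $S=\partial\core'_\epsilon(M)$ and $\pi_1(N)$ with $\pi_1(M)\cong\Gamma$, so the whole proposition reduces to deciding when the inclusion-induced map $\iota_*\colon\pi_1(S)\to\Gamma$ is injective; it is already surjective by Lemma~\ref{lem:comp_body}.

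The central step is to determine $\tilde S:=\pi^{-1}(S)\subseteq\mathbb{H}^3$ up to homotopy. From the explicit description in the proof of Lemma~\ref{lem:comp_body}, $\tilde S$ is assembled from one convex-hull face $\widetilde{S'}$ for each component $\Omega'$ of $\Omega$ — which remains a disk, hence contractible, even after deleting horoballs and adjoining half-planes at singly cusped parabolic points — together with one flat strip (also contractible) joining $\widetilde{S'}$ to $\widetilde{S''}$ for each pair of components whose closures meet, that is, for each tangency point, which is parabolic by Lemma~\ref{lem:pfp}. Each strip is glued to a face along a single horocyclic arc, and distinct strips on the same face are glued along disjoint arcs, so $\tilde S$ collapses onto the $1$-complex whose vertices are the components of $\Omega$ and whose edges are the tangencies. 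By the gasket conditions (any two complementary components touch in at most one point, and no three share a point) this $1$-complex is precisely the nerve $\mathcal{G}$. Hence $\tilde S\simeq\mathcal{G}$. This homotopy equivalence is the step I expect to require the most care: one must verify that removing horoballs and adjoining the half-planes does not alter the homotopy type of a face, and that the attaching arcs of the strips are disjoint, so that a clean deformation retraction onto $\mathcal{G}$ exists.

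To conclude the equivalence, I would note that $\pi\colon\mathbb{H}^3\to M$ restricts to a regular covering $\tilde S\to S$ with deck group $\Gamma$, and that this is exactly the covering classified by $\iota_*$, being the pullback of the universal cover $\mathbb{H}^3\to M$ along $S\hookrightarrow M$. Consequently $\ker\iota_*\cong\pi_1(\tilde S)$, so $\iota_*$ is injective if and only if $\tilde S$ is simply connected, which by the previous paragraph holds if and only if the connected graph $\mathcal{G}$ is a tree. Combined with the first step, this yields $N\cong\partial_eN\times[0,1]$ if and only if $\mathcal{G}$ is a tree.

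For the final assertion, suppose $N\cong S\times[0,1]$, so that $M\cong S\times(0,1)$ has two ends. The exterior end faces the domain of discontinuity $\Omega$ and is therefore geometrically finite. The interior boundary $\partial_iN\cong S$ is incompressible and non-toroidal — it can be neither a sphere nor a torus, since otherwise $\Gamma$, a quotient of $\pi_1(S)$, would be elementary — so by the remark following Corollary~\ref{cor:geom_finite} it corresponds to a geometrically infinite end. Thus $M$ has one geometrically finite and one geometrically infinite end, as claimed.
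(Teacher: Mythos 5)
Your proposal is correct and is essentially the paper's own argument: the paper likewise reduces the product structure of $N$ to injectivity of $\pi_1(S)\to\Gamma$ and observes, from the description of $\pi^{-1}(S)$ in the proof of Lemma~\ref{lem:comp_body}, that $\pi^{-1}(S)$ is simply connected precisely when $\mathcal{G}$ is a tree, while for the ends it notes that $\Gamma\backslash\Omega$ gives a geometrically finite end and the other end must be geometrically infinite lest the limit set fail to be a gasket. Your write-up simply makes explicit what the paper leaves implicit --- the deformation retraction of $\pi^{-1}(S)$ onto the nerve $\mathcal{G}$, the covering-space identification $\ker\iota_*\cong\pi_1(\pi^{-1}(S))$, and the compression-body dichotomy --- and note only that the remark you invoke actually precedes, rather than follows, Corollary~\ref{cor:geom_finite}.
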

\begin{proof}
Note that in the proof of Lemma~\ref{lem:comp_body}, $\pi^{-1}(S)$ is simply connected if and only if $\mathcal{G}$ is a tree, and thus $\Gamma\cong\pi_1(S)$ if and only if $\mathcal{G}$ is a tree.

Moreover, $\Gamma\backslash\Omega$ corresponds to a geometrically finite end of $M$. The other end must be geometrically infinite, for otherwise some component of $\Omega$ meets others in infinitely many points.
\end{proof}
It is easy to see that for the last case, we can find a collection of compressing disks to cut off non-toroidal interior boundary components. The structure of the nerve graph is thus a hybrid of the geometrically finite case and the tree case. We will remark on this case in greater detail later.

We remark that we can construct examples of Kleinian circle packings whose nerve is a tree. Indeed, Let $S$ be a surface of genus $g\ge2$, and $\mu$ a maximal collection of disjoint simple closed curve on $S$ (in particular $S-\mu$ is a collection of pairs of pants), and $\lambda$ a minimal lamination so that $S-\lambda$ consists of ideal polygons. By \cite[Theorem~1.3]{NS12}, there exists a Kleinian surface group with ending lamination $\lambda$, and parabolic locus $\mu$. The convex core boundary consists of three-punctured spheres, so must be totally geodesic. Therefore each domain of discontinuity is a round disk. As mentioned in the introduction, varying $\lambda$ while fixing $\mu$ gives conformally different packings with the same nerve, so no rigidity can be obtained in general for this case.

An entirely analogous construction also provides many examples for the last case, again applying \cite[Theorem~1.3]{NS12}.

\subsection*{With non-toroidal interior boundary components}
When $N$ has at least one non-toroidal interior boundary component, and is not homoemorphic to $\partial_eN\times[0,1]$, we can still cut off each interior boundary component with a compressing disk. However, it might not be possible to modify these disks so that the corresponding cycles in the nerve $\mathcal{G}$ bounds a Jordan domain.

Moreover, choose a compressing disk $D$ so that cutting $N$ along $D$ produces a component homeomorphic to the trivial interval bundle over a non-toroidal interior boundary component, and let $P$ be the polygon in $\mathcal{G}$ bounded by the cycle determined by $D$. Then $\mathcal{G}-\bigcup_{\gamma\in\Gamma}\gamma\cdot P$ contains no loops, due to a similar argument as the tree case.

\subsection*{Determining topology from the nerve}
We come back to the question of extracting topological information about the 3-manifold from the nerve. We have the converse to the construction in \S\ref{subsec:ksr}, giving the ``only if" part of Theorem~\ref{thm:ksdr}.
\begin{prop}\label{prop:topology}
    If $\mathcal{G}$ is a spherical subdivision graph for a finite subdivision rule, then $N$ is a handlebody. Similarly, if $\mathcal{G}$ is a spherical subdivision graph for a $\mathbb{Z}^2$-subdivision rule, then the interior boundary of $N$ consists of tori.
\end{prop}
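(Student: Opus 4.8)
The plan is to match the four mutually exclusive topological types of the compression body $N$ — (1) handlebody, (2) only toroidal interior boundary, (3) the product $\partial_eN\times[0,1]$, and (4) a non-toroidal interior boundary component with $N$ not a product — against the combinatorial type of $\mathcal{G}$, and to eliminate the two geometrically infinite cases (3) and (4) whenever $\mathcal{G}$ carries a subdivision rule. Once only cases (1) and (2) remain, both are geometrically finite by Corollary~\ref{cor:geom_finite}, and I separate them by detecting rank-$2$ cusps.

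First I would rule out case (3). By Proposition~\ref{prop:tree} this case occurs precisely when $\mathcal{G}$ is a tree. But a spherical subdivision graph is never a tree: its level-zero graph $\mathcal{G}^0$ is the $1$-skeleton of the sphere $\mathcal{R}$-complex $X$, and each $2$-cell of $X$ is a polygon whose boundary is an embedded cycle in $\mathcal{G}^0\subseteq\mathcal{G}$. Hence $\mathcal{G}$ contains a loop and case (3) is impossible, for either a finite or a $\Z^2$ subdivision rule.

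Next I would rule out case (4). By the discussion preceding the proposition, if $N$ has a non-toroidal interior boundary component and is not a product, then there is a compressing disk $D$ whose boundary cycle $P\subseteq\mathcal{G}$ satisfies that $\mathcal{G}-\bigcup_{\gamma\in\Gamma}\gamma\cdot P$ contains no loop. I would contradict this using the recurrence of the subdivision: since every polygon of $\mathcal{R}$ subdivides into at least two faces, each face of $\mathcal{G}^n$ acquires, after one further subdivision, the boundary cycle of a proper subface lying in its interior. Because $\mathcal{R}$ has only finitely many face types while the orbit $\bigcup_\gamma\gamma\cdot P$ occupies, at each scale, only the positions dictated by the single cycle $P$, one can locate arbitrarily deep faces whose closures avoid $\bigcup_\gamma\gamma\cdot P$; the boundary cycle of such a face survives in $\mathcal{G}-\bigcup_\gamma\gamma\cdot P$, a contradiction. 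Thus case (4) is excluded and $N$ is geometrically finite, i.e.\ of type (1) or (2).

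It remains to separate (1) from (2) according to the type of subdivision rule, which I would do through the presence of a rank-$2$ cusp. A $\Z^2$-subdivision rule has, by definition, a polygon $Q_i=\overline{\widehat\C-P_i}$ whose subdivision is the $\Z^2$-tiling $Q_i=\overline{\bigcup_{g\in\Z^2-\{\mathrm{id}\}}g\cdot P_i}$; the center of this tiling is a point of $\Lambda(\mathcal{P})$ whose $\Z^2$ of combinatorial translations is promoted, by the combinatorial rigidity of Theorem~\ref{thm:Z2EU}, to a $\Z^2$ of parabolic M\"obius maps, so $\Gamma$ has a rank-$2$ parabolic subgroup and by Corollary~\ref{cor:geom_finite} the interior boundary of $N$ consists of tori. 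By contrast, a finite subdivision rule subdivides each polygon into only finitely many faces and produces no such $\Z^2$-periodic configuration, so every tangency point is a rank-$1$ parabolic fixed point, $\Gamma$ has no rank-$2$ parabolic subgroup, and Corollary~\ref{cor:geom_finite} forces the interior boundary of $N$ to be empty, i.e.\ $N$ is a handlebody. (Equivalently, once geometric finiteness is known, one may invoke the already-established ``if'' directions of Theorem~\ref{thm:ksdr} and observe that a graph cannot be both a finite and a $\Z^2$ subdivision graph.) The main obstacle will be making the case-(4) exclusion rigorous — precisely, showing that deleting the $\Gamma$-orbit of a single cycle from a subdivision graph cannot destroy all loops — together with verifying that the $\Z^2$-periodic tangency pattern is the unambiguous combinatorial hallmark of a rank-$2$ cusp, so that the two subdivision types genuinely correspond to the two geometrically finite topologies.
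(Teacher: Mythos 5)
Your overall framework --- the four topological types, ruling out the tree case (3) because every polygon of the $\mathcal{R}$-complex contributes an embedded cycle, and ruling out the handlebody case under a $\Z^2$-rule by using combinatorial rigidity (Theorem~\ref{thm:Z2EU}) to promote the combinatorial $\Z^2$-action to M\"obius maps, which forces a rank-$2$ cusp --- coincides with the paper's proof. However, the two load-bearing steps are not actually carried out, and you have correctly flagged one of them yourself. First, your exclusion of case (4) does not work as stated: the claim that one can ``locate arbitrarily deep faces whose closures avoid $\bigcup_\gamma\gamma\cdot P$'' is unjustified, because the $\Gamma$-orbit of $P$ accumulates on the entire limit set, so translates of $P$ appear at every scale near every limit point; the finiteness of face types gives no control over which deep faces meet which translates, and nothing in ``the positions dictated by the single cycle $P$'' prevents every face-boundary cycle of every level from meeting the removed set. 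The paper avoids this route entirely and instead makes a quantitative growth comparison: in case (4) the subgroup $\Gamma'$ corresponding to a geometrically infinite end contains free groups, so the combinatorial length of the cycles of disks surrounding (translates of) $\Lambda(\Gamma')$ grows \emph{exponentially} with depth, whereas a finite subdivision rule produces faces with a bounded number of sides and a $\Z^2$-subdivision rule produces surrounding cycles of at most \emph{linear} growth. That rate comparison is the missing ingredient, and it cannot be recovered from your qualitative loop-counting.

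Second, your separation of cases (1) and (2) under a finite subdivision rule begs the question: asserting that a finite rule ``produces no such $\Z^2$-periodic configuration, so every tangency point is a rank-$1$ parabolic fixed point'' is exactly the statement to be proved, and the parenthetical fallback --- that a graph cannot be a spherical subdivision graph for both a finite and a $\Z^2$-subdivision rule --- is the same unproven dichotomy in disguise (note the level structure $\mathcal{G}^0\subseteq\mathcal{G}^1\subseteq\cdots$ is not a priori canonical to the abstract graph, so this dichotomy genuinely requires an argument, not an observation). The paper's proof is again a growth argument: at a rank-$2$ parabolic fixed point the nested faces containing the point have an unbounded number of sides, which is incompatible with the finitely many polygon types of a finite subdivision rule; the same mechanism, compared against linear growth, underlies the full trichotomy in the paper's proof. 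So while your skeleton and the rigidity step for the $\Z^2$/handlebody clash match the paper, the proposal has genuine gaps precisely where the paper's proof does its real work.
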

\begin{proof}
    We have the topology of $N$ has one of the types (1)-(4) listed just above Proposition~\ref{prop:tree}. When $N$ is not a handlebody, say of type (2), then the corresponding Kleinian group has a rank-2 parabolic subgroup. Smaller and smaller polygons containing the corresponding parabolic fixed point has more and more sides, so no finite subdivision rule exists. Similar arguments can be made for all other cases.

    Suppose now $\mathcal{G}$ is a spherical subdivision graph for a $\mathbb{Z}^2$-subdivision rule. Then we have a $\mathbb{Z}^2$-action on $\mathcal{G}$ coming from the subdivision rule. If $N$ is a handlebody, by combinatorial rigidity in this case the $\mathbb{Z}^2$-action realizes as M\"obius transformations fixing the circle packing. But this is only possible with rank-2 cusps.
    
    On the other hand, clearly $\mathcal{G}$ cannot be a tree.
    Finally, if $N$ is of type (4), the limit set of any subgroup $\Gamma'$ of $\Gamma$ corresponding to a geometrically infinite end is surrounded by cycles of disks in the packing. The length of the cycle grows exponentially as we approach the limit set (since $\Gamma'$ contains free groups); but this means $\mathcal{G}$ does not come from a $\mathbb{Z}^2$-subdivision rule, since the length of the cycles can only grow linearly in that case.
\end{proof}

\end{document}